\newcommand\w[1]{\makebox[1em]{$#1$}}
\LetLtxMacro\SavedIncludeGraphics\includegraphics
\def\includegraphics#1#{
 \IncludeGraphicsAux{#1}%
}%
\newcommand*{\IncludeGraphicsAux}[2]{%
 \XeTeXLinkBox{%
  \SavedIncludeGraphics#1{#2}%
}}
\newcommand\orcidicon[1]{\href{https://orcid.org/#1}{\includegraphics[scale=0.02]{orcid.pdf}}}
\def\d{\,\mathrm{d}}
\def\p{\partial}
\def\wideubar{\underaccent{{\cc@style\underline{\mskip10mu}}}}
\def\Wideubar{\underaccent{{\cc@style\underline{\mskip8mu}}}}
\def\widebar{\accentset{{\cc@style\underline{\mskip10mu}}}}
\def\Widebar{\accentset{{\cc@style\underline{\mskip8mu}}}}
\newcommand*\xbar[1]{%
\hbox{%
\vbox{%
\hrule height 0.4pt 
\kern0.3ex
\hbox{%
\kern-0.1em
\ensuremath{#1}%
\kern-0.1em
}}}}
\titleformat{\section}{\large\bfseries\centering}{\thesection}{1em}{}
\titleformat{\subsection}{\it\centering\bfseries}{\thesubsection}{0.5em}{}
\titleformat{\subsubsection}[runin]{\bfseries}{\thesubsubsection.}{0.4em}{}[.]
\numberwithin{equation}{section}
\newtheorem{lemma}{Lemma}[section]
\newtheorem{proposition}[lemma]{Proposition}
\newtheorem{theorem}{Theorem}[section]
\newtheorem{corollary}[lemma]{Corollary}
\newtheorem{definition}{Definition}[section]
\newtheorem{remark}{Remark}[section]
\definecolor{DarkRed}{RGB}{139,0,0}
\definecolor{Purple}{RGB}{128,0,128}
\begin{document}

\title{\bf Well-Posedness of Free Boundary Problem in Non-relativistic and Relativistic Ideal Compressible Magnetohydrodynamics\let\thefootnote\relax\footnotetext{
The research of {\sc Yuri Trakhinin} 
was partially supported by RFBR (Russian Foundation for Basic Research) under Grant 19-01-00261-a and by Mathematical Center in Akademgorodok under agreement No.~075-15-2019-1675 with the Ministry of Science and Higher Education of the Russian Federation.
The research of {\sc Tao Wang}
was partially supported by the National Natural Science Foundation of China under Grants 11971359 and 11731008.}
}

\author{
{\sc Yuri Trakhinin}\thanks{e-mail: trakhin@math.nsc.ru}\\
{\footnotesize Sobolev Institute of Mathematics, Koptyug av.~4, 630090 Novosibirsk, Russia}\\
{\footnotesize Novosibirsk State University, Pirogova str.~1, 630090 Novosibirsk, Russia}\\[2mm]
{\sc Tao Wang}\thanks{e-mail: tao.wang@whu.edu.cn}\\
{\footnotesize School of Mathematics and Statistics, Wuhan University, Wuhan 430072, China}
}

\date{\today}
\maketitle

\begin{abstract}

We consider the free boundary problem for non-relativistic and relativistic ideal compressible magnetohydrodynamics in two and three spatial dimensions with the total pressure vanishing on the plasma--vacuum interface. We establish the local-in-time existence and uniqueness of solutions to this nonlinear characteristic hyperbolic problem under the Rayleigh--Taylor sign condition on the total pressure. The proof is based on certain tame estimates in anisotropic Sobolev spaces for the linearized problem and a modification of the Nash--Moser iteration scheme. Our result is uniform in the light speed and appears to be the first well-posedness result for the free boundary problem in ideal compressible magnetohydrodynamics with zero total pressure on the moving boundary.

\vspace*{4mm}

\noindent{\bf Keywords}:
Ideal compressible magnetohydrodynamics,
Plasma--vacuum interface,
Free characteristic boundary,
Rayleigh--Taylor sign condition,
Nash--Moser iteration

\vspace*{2mm}

\noindent{\bf Mathematics Subject Classification (2010)}:\quad
35L65,  
76N10, 
76W05, 
35Q60, 
35Q75 
\end{abstract}

\setcounter{tocdepth}{3} 
\tableofcontents

\section{Introduction}\label{sec:intro}

This paper concerns the well-posedness of the free boundary problem for a plasma--vacuum interface in  non-relativistic and relativistic, ideal ({\it i.e.}, neglecting the effect of viscosity and electrical resistivity), compressible magnetohydrodynamics (MHD) for the space dimension $d=2,3$.

Let $\varOmega(t)\subset \mathbb{R}^d$ denote the moving domain occupied by the plasma.
We first consider the following equations of non-relativistic ideal compressible MHD (see {\sc Landau--Lifshitz} \cite[\S 65]{LL84MR766230}):
\begin{subequations} \label{MHD1}
\begin{alignat}{3}
	&\p_t \rho+\nabla\cdot (\rho v)=0
	&& \textrm{in } \varOmega(t), \label{MHD1a} \\
	&\p_t (\rho v)+\nabla\cdot (\rho v\otimes v-H\otimes H)+\nabla q=0
	&& \textrm{in } \varOmega(t), \label{MHD1b} \\
	&\p_t H-\nabla\times(v\times H)=0
	&& \textrm{in } \varOmega(t), \label{MHD1c} \\
	&\p_t(\rho E+\tfrac{1}{2}|H|^2) +\nabla\cdot ( v  (\rho E+p)+H\times(v\times H) )=0
	&\quad& \textrm{in } \varOmega(t), \label{MHD1d}
\end{alignat}
\end{subequations}
together with
\begin{align} \label{divH1}
	\nabla\cdot H=0 \quad \textrm{in } \varOmega(t),
\end{align}
which describe the motion of a perfectly conducting fluid in a magnetic field.
Here the density $\rho$, velocity $v\in\mathbb{R}^d$, magnetic field $H\in\mathbb{R}^d$, and pressure $p$ are unknown functions of the time $t$ and spatial variables $x=(x_1,\ldots,x_d)$.
The symbols $q=p+\frac{1}{2}|H|^2$ and $E=e+\frac{1}{2}|v|^2$ denote the total pressure and specific total energy, respectively,
where $e$ is the specific internal energy.
Equations \eqref{MHD1} constitute a closed system of $2d+2$ conservation laws through the smooth constitutive relations $\rho=\rho(p, S)$ and $e=e(p, S)$,
where $S$ is the specific entropy.
The thermodynamic variables $\rho$, $p$, $e$, and $S$ satisfy the Gibbs relation
\begin{align} \label{Gibbs}
	\vartheta \d S=\d e-\frac{p}{\rho^2} \d \rho,
\end{align}
where $\vartheta>0$ is the absolute temperature.
Identity \eqref{divH1} is preserved by the evolution, provided it holds at the initial time.
Throughout this paper, we denote by $\p_t:=\p/\p t$ the time derivative and by $\nabla:=(\p_1,\ldots,\p_d)^{\mathsf{T}}$ the gradient with $\p_i:=\p/\p x_i$.
See Appendix \ref{sec:appA} for the conventional notation in the vector calculus.

System \eqref{MHD1} is supplemented with the initial conditions
\begin{align} \label{IC1}
	\varOmega(0)=\varOmega_0,\quad
	(\rho,v,H,S)|_{t=0}=(\rho_0,v_0,H_0,S_0) \textrm{ on } \varOmega_0,
\end{align}
where the domain $\varOmega_0\subset\mathbb{R}^d$ and the initial data $(\rho_0,v_0,H_0,S_0)$ are already given and satisfy the constraint $\nabla\cdot H_0=0$.
On the free vacuum boundary $\varSigma(t)$, we require the following boundary conditions:
\begin{subequations} \label{BC1}
\begin{alignat}{3}
	&v\cdot \bm{n}=\mathcal{V} (\varSigma(t))
	&& \textrm{on } \varSigma(t), \label{BC1a} \\
	&q=0
	&& \textrm{on } \varSigma(t), \label{BC1b} \\
	&H\cdot \bm{n}=0
	&\quad& \textrm{on } \varSigma(t), \label{BC1c}
\end{alignat}
\end{subequations}
where $\bm{n}$ is the exterior unit normal to $\varOmega(t)$ on the boundary $\varSigma(t)$ and $\mathcal{V}(\varSigma(t))$ is the normal velocity of $\varSigma(t)$.
Condition \eqref{BC1a} means that the interface $\varSigma(t)$ moves with the fluid.
Condition \eqref{BC1b} comes from the vanishing vacuum magnetic field;
see {\sc Goedbloed et al.}~\cite[\S 4.6]{GKP19} for physical models of  plasma--vacuum interfaces.
Condition \eqref{BC1c} corresponds to the constraint \eqref{divH1} and should also be regarded as a constraint on the initial data.

We aim to prove the local-in-time existence and uniqueness of solutions to the free boundary problem \eqref{MHD1}, \eqref{IC1}--\eqref{BC1b}, 
provided the hyperbolicity assumption $\rho|_{\varOmega(t)}>\rho_*>0$ and the following Rayleigh--Taylor sign condition on the total pressure hold initially:
\begin{align} \label{RT1}
	\nabla_{\bm{n}} q \leq -\kappa_{0} <0
	\quad \textrm{on }\varSigma(t),
\end{align}
where $\rho_*$ and $\kappa_{0}$ are positive constants, and $\nabla_{\bm{n}} :=\bm{n}\cdot \nabla$.
This problem is a  nonlinear hyperbolic problem with a free {\it characteristic} boundary due to the condition \eqref{BC1a}.
Assumption \eqref{RT1} is also the natural physical condition for the incompressible MHD;
see {\sc Hao--Luo} \cite{HL14MR3187678} for {\it a priori} estimates through a geometrical point of view and {\sc Gu--Wang} \cite{GW19MR3980843} for local well-posedness.
We also refer to {\sc Hao--Luo} \cite{HL19} for a recent ill-posedness result of the 2D incompressible MHD when the condition \eqref{RT1} is violated.
For the vacuum free boundary problem of incompressible and compressible liquids, 
without magnetic fields, the natural physical assumption reads
\begin{align} \label{RT.p}
	\nabla_{\bm{n}} p \leq -\kappa_{0} <0
	\quad \textrm{on }\varSigma(t).
\end{align}
See, for instance, the works \cite{L05AnnMR2178961,CS07MR2291920, ZZ08MR2410409} and \cite{L05CMPMR2177323,T09CPAMMR2560044} respectively for incompressible and compressible liquids with a free surface under the assumption \eqref{RT.p}.
It is important to point out that under the initial constraint $H|_{\varSigma(0)}= 0$ (implying $H|_{\varSigma(t)}=0$ for $t>0$),
the stability condition \eqref{RT1} can be reduced to \eqref{RT.p}.

Our construction of solutions is motivated by that of the first author in \cite{T09ARMAMR2481071,T09CPAMMR2560044} for compressible current-vortex sheets and compressible Euler equations in vacuum.
The approach involves, in particular, the reduction to a fixed domain, the application of the ``good unknown'' of {\sc Alinhac} \cite{A89MR976971}, suitable tame estimates in certain function spaces for the linearized problem, and an appropriate Nash--Moser iteration scheme.
This approach has also been applied to the study of rarefaction waves \cite{A89MR976971}, compressible vortex sheets \cite{CS08MR2423311}, compressible current-vortex sheets \cite{CW08MR2372810,T05MR2187618,T09ARMAMR2481071}, MHD contact discontinuities \cite{MTT18MR3766987}, relativistic vortex sheets \cite{CSW19MR3925528}, among others.
More precisely, we consider the liquids, for which the density $\rho$ is supposed to be uniformly bounded from below by a positive constant, 
so that the equations \eqref{MHD1} can be rewritten equivalently as a symmetric hyperbolic system for sufficiently smooth solutions.
Furthermore, we suppose that the moving boundary $\varSigma(t)$ has the form of a graph, 
which enables us to reduce the problem \eqref{MHD1}, \eqref{IC1}--\eqref{BC1b} to a fixed domain by the standard partial hodograph transformation.
We first study the well-posedness for the effective linear problem,
resulting from the variable coefficient linearized problem by use of the ``good unknown'' and neglect of some zero-th order terms.
In the basic $L^2$ estimate, the sign condition \eqref{RT1} provides a good term for the interface function. 
In general, there is a loss of normal derivatives in higher-order energy estimates for hyperbolic problems with a characteristic boundary.
Especially for ideal compressible MHD, {\sc Ohno--Shirota} \cite{OS98MR1658400} have proved that a characteristic fixed-boundary linearized problem is always ill-posed in the standard Sobolev spaces $H^m$ for $m\geq 2$.
Rather than in the standard Sobolev spaces $H^m$, we shall work in the anisotropic Sobolev spaces $H_*^m$, first introduced by {\sc Chen} \cite{C07MR2289911}.
These function spaces, taking into account the loss of normal derivatives to the characteristic boundary, turn out to be appropriate for investigating symmetric hyperbolic, characteristic problems.  
See {\sc Secchi} \cite{S96MR1405665} for a general theory and \cite{YM91MR1092572,S95MR1314493,T09ARMAMR2481071,CW08MR2372810} for the study of other characteristic problems in ideal compressible MHD.
Having the well-posedness and tame estimates for the effective linear problem in hand, 
we deduce the local existence and uniqueness of solutions for our nonlinear problem ({\it cf.}~Theorem \ref{thm:1}) by a modification of the Nash--Moser iteration scheme.

Moreover, we can employ the approach outlined above to show a counterpart of Theorem \ref{thm:1} for the relativistic version of the problem \eqref{MHD1}, \eqref{IC1}--\eqref{BC1b} in the Minkowski spacetime $\mathbb{R}^{1+d}$ ({\it cf.}~Theorem \ref{thm:2}).
The main ingredient is that one can symmetrize the following equations of ideal relativistic magnetohydrodynamics (RMHD)
(see {\sc Lichnerowicz} \cite[\S\S 30--34]{Lichnerowicz67}):
\begin{subequations} \label{RMHD1}
	\begin{alignat}{3}
	&\nabla_{\alpha} (\rho u^{\alpha})=0
	\quad &&\textnormal{(conservation of matter)}, \label{RMHD1a} \\
	&\nabla_{\alpha}  T^{\alpha\beta}=0
	\quad &&\textnormal{(conservation of energy--momentum)}, \label{RMHD1b} \\
	&\nabla_{\alpha}(u^{\alpha}b^{\beta}-u^{\beta}b^{\alpha})=0
	\quad &&\textnormal{(relevant Maxwell's equations)}. \label{RMHD1c}
	\end{alignat}
\end{subequations}
The symmetrization has been derived in {\sc Freist\"{u}hler--Trakhinin} \cite{FT13MR3044369} by properly applying the Lorentz transformation;
also see Appendix \ref{sec:appB} for a direct verification.
Here $\nabla_{\alpha}$ is the covariant derivative with respect to the Minkowski metric $(g_{\alpha \beta})$ for
\begin{align*}
(g_{\alpha \beta}):=
	\left\{
	\begin{aligned}
	&\mathrm{diag}\,(-1,1,1)\quad &&\textrm{if } d=2,\\
	&\mathrm{diag}\,(-1,1,1,1)\quad &&\textrm{if }d=3,
	\end{aligned}
	\right.
\end{align*}
the symbol $\rho$ is the particle number density, $u^{\alpha}$ and $b^{\alpha}$ are, respectively, the components of the $d$-velocity and the magnetic field $d$-vector with respect to the plasma velocity such that
\begin{align} \label{u.relation}
	g_{\alpha \beta} u^{\alpha} u^{\beta}=-1,
	\quad g_{\alpha \beta} u^{\alpha} b^{\beta}=0.
\end{align}
The total energy--momentum tensor $T^{\alpha\beta}$ has the form
\begin{align} \nonumber 
	T^{\alpha\beta}=(\rho h+|b|^2)u^{\alpha}u^{\beta}
	+\epsilon^2 q g^{\alpha\beta} -b^{\alpha}b^{\beta},
\end{align}
where $h:=1+\epsilon^{2}(e+ p/\rho)$ is the index of the fluid, 
$\epsilon^{-1}$ is the speed of light, 
$e$ is the specific internal energy, 
$p$ is the pressure, 
$|b|^2:=g_{\alpha \beta} b^{\alpha} b^{\beta}$, 
and $q:=p+\frac{1}{2} \epsilon^{-2}|b|^2$ is the total pressure.
The density $\rho=\rho(p,S)$ and internal energy $e=e(p,S)$ are given smooth functions of $p$ and $S$ that satisfy the Gibbs relation \eqref{Gibbs}.
Throughout this paper, we adopt the Einstein summation convention, for which Greek and Latin indices range from $0$ to $d$ and from $1$ to $d$ respectively.

It is worth mentioning that our result is uniform in the light speed and appears to be the first well-posedness result for the free boundary problem in ideal compressible magnetohydrodynamics with zero total pressure on the moving vacuum boundary.
However, our approach relies heavily on the hyperbolicity condition $\rho|_{\varOmega(t)}> \rho_*>0$,
and it is an open problem to extend our result to the motion of gases, namely $\rho|_{\varSigma(t)}=0$.
For ideal gases without magnetic fields, 
the local well-posedness for the 3D compressible Euler equations with a physical vacuum boundary has been independently established by {\sc Coutand--Shkoller} \cite{CS12MR2980528} and {\sc Jang--Masmoudi} \cite{JM15MR3280249} via the Lagrangian reformulation and nonlinear energy estimates;
also see {\sc Luo et al.}~\cite{LXZ14MR3218831} for a general uniqueness result.

The case with nonvanishing total pressure on the interface corresponds to that with nonzero vacuum magnetic field $\mathcal{H}$.
Considering the pre-Maxwell equations for $\mathcal{H}$ in the vacuum region, 
{\sc Secchi--Trakhinin} have proved in \cite{ST14MR3151094} the nonlinear local well-posedness for the full plasma--vacuum interface problem in 3D ideal compressible MHD based on their linear well-posedness results in \cite{T10MR2718711,ST13MR3148595}.
Notice that the results in \cite{ST14MR3151094,ST13MR3148595} require the non-collinearity condition $|H\times \mathcal{H}|\geq \kappa_0>0$ on the interface, which stems from the current-vortex sheet problems ({\it cf.}~\cite{T05MR2187618,T09ARMAMR2481071}) and enhances the regularity of the free interface. 
Clearly, the assumption $\mathcal{H}=0$ under our consideration violates the non-collinearity condition.
Nevertheless, this assumption makes some difficult boundary terms arising in the energy estimates for the full plasma--vacuum interface model (such as the term $\widetilde{\mathcal{Q}}$ in {\sc Trakhinin} \cite[(62)--(63)]{T16MR3503661}) disappear, which enables us to solve the problem under the Rayleigh--Taylor sign condition only.
We point out that the local well-posedness of the full plasma--vacuum interface problem in ideal compressible MHD is still unknown without the non-collinearity condition; see \cite{T16MR3503661} for a thorough discussion of this issue.
As for the incompressible case, 
we refer to {\sc Morando et al.}~\cite{MTT14MR3237563} for well-posedness of the linearized problem, {\sc Hao} \cite{H17MR3614754} for  nonlinear {\it a priori} estimates, and {\sc Sun et al.}~\cite{SWZ19MR3981394} for nonlinear well-posedness.
For the plasma--vacuum interface problem in RMHD, where the vacuum electric and magnetic fields satisfy Maxwell's equations,  an {\it a priori} estimate for the linearized problem in the anisotropic Sobolev space $H_*^1$ was provided by the first author in \cite{T12MR2974767}.

The plan of this paper is as follows.
In Section \ref{sec:nonlinear}, we reformulate the problem \eqref{MHD1}, \eqref{IC1}--\eqref{BC1b} to that in a fixed domain and state the main theorems of this paper, 
namely Theorems \ref{thm:1}--\ref{thm:2}.
Section \ref{sec:linear} is devoted to the proof of Theorem \ref{thm:3},
that is, the well-posedness of the effective linear problem in anisotropic Sobolev spaces $H_*^m$.
In Section \ref{sec:Nash}, we employ a modification of the Nash--Moser iteration scheme to construct the solutions of our nonlinear problem and prove Theorem \ref{thm:1}.
In Section \ref{sec:rela}, we sketch the proof of Theorem \ref{thm:2} for ideal RMHD in vacuum.
For the self-containedness of the paper,
we collect the conventional notation of the vector calculus for two and three spatial dimensions in Appendix \ref{sec:appA} and present a direct calculation for the symmetrization of ideal RMHD in Appendix \ref{sec:appB}.

\section{Nonlinear Problems and Main Theorems} \label{sec:nonlinear}

This section is dedicated to reducing the free boundary problem \eqref{MHD1}, \eqref{IC1}--\eqref{BC1b} to a fixed domain and stating the main theorems of this paper.

Let us first introduce the symmetric hyperbolic form of the MHD equations \eqref{MHD1}.
For this purpose, we suppose that the sound speed $a:=a(\rho,S)$ is smooth and satisfies
\begin{align} \label{a.def}
	a(\rho,S):=\sqrt{\frac{\p p}{\p \rho} (\rho, S)}>0
	\quad \textrm{for all \ } \rho \in (\rho_*,\rho^*) ,
\end{align}
where $\rho_*$ and $\rho^*$ are positive constants with $\rho_*<\rho^*$.
By virtue of \eqref{divH1}--\eqref{Gibbs},
we have the following equivalent system for smooth solutions of \eqref{MHD1}:
\begin{align} \label{MHD2}
\left\{
	\begin{aligned}
	&(\p_t +v\cdot\nabla)p+\rho a^2\nabla\cdot v=0,\\
	&\rho (\p_t +v\cdot\nabla)v -(H\cdot \nabla) H+\nabla q=0,\\
	&(\p_t +v\cdot\nabla)H-(H\cdot \nabla)v+H \nabla\cdot v=0,\\
	&(\p_t +v\cdot\nabla)S=0.
	\end{aligned}
\right.
\end{align}
Thanks to \eqref{a.def}, the system \eqref{MHD2} is symmetrizable hyperbolic when
\begin{align} \label{hyperbolicity}
\rho_*< \rho < \rho^*.
\end{align}
In light of \eqref{BC1b}, as in {\sc Secchi--Trakhinin} \cite{ST13MR3148595},
we take $U:=(q,v,H,S)^{\mathsf{T}}$ as the primary unknowns and obtain from \eqref{MHD2} the symmetric system
\begin{align} \label{MHD.vec}
	A_0(U)\p_t U+A_i(U)\p_i U=0\quad \textrm{in } \varOmega(t),
\end{align}
where
\setlength{\arraycolsep}{4pt}
\begin{align}
	&A_0(U):=
	\begin{pmatrix}
	\dfrac{1}{\rho a^2} & 0 & -\dfrac{1}{\rho a^2} H^{\mathsf{T}} & 0\\[2.5mm]
	0 & \rho I_d & O_d & 0\\[1.5mm]
	-\dfrac{1}{\rho a^2} H & O_d & I_d+\dfrac{1}{\rho a^2}H\otimes H & 0\\[1.5mm]
	\w{0}  & \w{0}  & \w{0} & \w{1}
	\end{pmatrix}, 
\label{A0.def} \\
	&A_i(U):=
	\begin{pmatrix}
	\dfrac{v_i}{\rho a^2} & \bm{e}_i^{\mathsf{T}} & -\dfrac{v_i}{\rho a^2} H^{\mathsf{T}} & 0\\[2.5mm]
	\bm{e}_i & \rho v_i I_d & -H_i I_d & 0\\[1.5mm]
	-\dfrac{v_i}{\rho a^2} H & -H_i I_d & v_iI_d+\dfrac{v_i}{\rho a^2}H\otimes H & 0\\[1.5mm]
	\w{0}  & \w{0}  & \w{0} & \w{v_i}
	\end{pmatrix}
\label{Ai.def}
\end{align}
for $i=1,\ldots,d$. 
Here and below, $O_m$ and $I_m$ denote the zero and identity matrices of order $m$, respectively, 
$\bm{e}_i:=(\delta_{i1},\ldots,\delta_{id})^{\mathsf{T}}$, and $\delta_{ij}$ denotes the Kronecker delta.

For technical simplicity, 
we assume that the space domain $\varOmega(t)$ occupied by the plasma takes the form
\begin{align*}
	\varOmega(t):=
	\left\{x\in\mathbb{R}^d: x_1>\varphi(t,x'),  x':=(x_2,\ldots,x_d)\in \mathbb{T}^{d-1} \right\},
\end{align*}
where $\mathbb{T}^{d-1}$ denotes the $(d-1)$--torus and the interface function $\varphi(t,x')$ is to be determined.
Then the free interface $\varSigma(t)$ is given by
\begin{align*}
	\varSigma(t):=
	\left\{x\in\mathbb{R}^d:  x_1=\varphi(t,x'),   x'\in \mathbb{T}^{d-1} \right\}.
\end{align*}
Denoting by $N:=(1,-\p_2\varphi,\ldots,-\p_d\varphi)^{\mathsf{T}}$ the normal to $\varSigma(t)$,
we can rewrite the boundary conditions \eqref{BC1} as
\begin{alignat}{3}
	&\p_t \varphi=v_N,\quad q=0
	\quad &&\textrm{on }\varSigma(t),
\label{BC2} \\
	&H_N=0
	\quad &&\textrm{on }\varSigma(t),
\label{HN2}
\end{alignat}
where $v_N:=v\cdot N$ and $H_N:=H\cdot N$.
Moreover, the Rayleigh--Taylor sign condition \eqref{RT1} becomes
\begin{align} \label{RT2}
	N\cdot \nabla q\geq \kappa_{0} |N|>0
	\quad \textrm{on }\varSigma(t).
\end{align}
In view of \eqref{BC2}--\eqref{HN2}, 
the boundary matrix for the problem \eqref{MHD1}, \eqref{IC1}--\eqref{BC1b} on $\varSigma(t)$ reads
\begin{align*}
(\p_t\varphi  A_0(U) -N_{i}A_i(U) )|_{\varSigma(t)}
	=\left.
	\begin{pmatrix}
	0& -N^{\mathsf{T}} & 0 & 0\\
	-N & O_d &O_d &0\\
	0 & O_d &O_d &0\\
	\w{0} &\w{0} &\w{0}  &\w{0}
	\end{pmatrix}
	\right|_{\varSigma(t)},
\end{align*}
which is singular, meaning that the free boundary $\varSigma(t)$ is {\it characteristic}.
Furthermore, the boundary matrix on $\varSigma(t)$ has one positive, one negative, and $2d$ zero eigenvalues.
Since one boundary condition is necessary for determining the unknown function $\varphi$, 
we infer that the correct number of the boundary conditions is two, 
according to the well-posedness theory for hyperbolic problems.
Therefore, the condition \eqref{HN2} will be taken as a constraint on the initial data rather than a real boundary condition.

It is a standard first step to reduce the free boundary problem \eqref{MHD1}, \eqref{IC1}--\eqref{BC1b} to an equivalent problem in a fixed domain.
To this end, we replace the unknown $U$ by
\begin{align} \label{U.sharp}
	U_{\sharp}(t,x):=U(t,\varPhi(t,x),x'),
\end{align}
where as in {\sc M\'{e}tivier} \cite[p.\;70]{M01MR1842775}, we choose the function $\varPhi$ to satisfy
\begin{align} \label{varPhi.def}
	\varPhi(t,x):=x_1+\kappa_{\sharp} \chi( x_1)\varphi(t,x'),
\end{align}
with positive constant $\kappa_{\sharp}$ and $C^{\infty}_0(\mathbb{R})$--function $\chi$ satisfying
\begin{align} \label{chi}
\kappa_{\sharp} \|\varphi_0\|_{L^{\infty}(\mathbb{T}^{d-1}) }\leq  \frac{1}{4},
\quad \|\chi'\|_{L^{\infty}(\mathbb{R})} <1,
\quad \chi\equiv 1\  \textrm{on } [0,1].
\end{align}
This change of variables is admissible on the time interval $[0,T]$,
provided $T>0$ is sufficiently small so that  $\kappa_{\sharp}\|\varphi\|_{L^{\infty}([0,T]\times\mathbb{T}^{d-1}) }\leq 1/2$.
Without loss of generality we set $\kappa_{\sharp}=1$.
The cut-off function $\chi$ is introduced as in \cite{M01MR1842775,T09CPAMMR2560044} to avoid assumptions about the compact support of the initial data (shifted to a constant state).

Dropping the subscript $``\sharp"$ for convenience,
we reformulate the vacuum free boundary problem \eqref{MHD1}, \eqref{IC1}--\eqref{BC1b} as the following initial boundary value problem in a fixed domain
$\varOmega:=\{x\in \mathbb{R}^d: x_1>0, x'\in \mathbb{T}^{d-1} \}$:
\begin{subequations} \label{NP1}
\begin{alignat}{2}
	&\mathbb{L}(U,\varPhi) :=L(U,\varPhi)U =0
	&\quad &\textrm{in  }  [0,T]\times \varOmega,
\label{NP1a} \\
	&\mathbb{B}(U,\varphi):=
	\begin{pmatrix}\p_t \varphi -v_N\\ q\end{pmatrix}
	=0
	& \quad  &\textrm{on  } [0,T]\times \varSigma,
\label{NP1b} \\
	&(U,\varphi)|_{t=0}=(U_0,\varphi_0),
	& \quad &
\label{NP1c}
\end{alignat}
\end{subequations}
where $\varSigma:=\{x\in \mathbb{R}^d: x_1=0,  x'\in \mathbb{T}^{d-1} \}$ denotes the boundary and the operator $L(U,\varPhi)$ is defined by
\begin{align}
	L(U,\varPhi)&:=
	A_0(U)\partial_t+\widetilde{A}_1(U,\varPhi)\partial_1+\sum_{i=2}^d A_i(U)\partial_i, 
\nonumber 
\end{align}
with
\begin{align}
   \widetilde{A}_1(U,\varPhi)&:=
	\frac{1}{\partial_1\varPhi}\bigg(A_1(U)-\partial_t\varPhi A_0(U)-\sum_{i=2}^d\partial_i\varPhi A_i(U)\bigg).
\label{A1t.def}
\end{align}
Here, the coefficient matrices $A_{i}(U)$, for $i=0,\ldots, d$, are given in \eqref{A0.def}--\eqref{Ai.def}.
But, in the relativistic case, $A_{i}(U)$, for $i=0,\ldots, d$,  are defined by \eqref{Ai.def.r}.
In the new variables, the identities \eqref{divH1} and \eqref{HN2} are reduced to
\begin{alignat}{3}
	&H_N=0\quad &&\textrm{if  }  x_1=0,
\label{HN}\\
	&\p_{i}^{\varPhi} H_i=0\quad &&\textrm{if  }  x_1>0,
\label{divH}
\end{alignat}
where for notational simplicity we denote the partial differentials with respect to the lifting function $\varPhi$  by
\begin{align} \label{differential}
	\partial_t^{\varPhi}:=\partial_t-\frac{\partial_t\varPhi}{\partial_1\varPhi}\p_1,\   \
	\partial_1^{\varPhi}:=\frac{1}{\partial_1\varPhi}\partial_1,\   \
	\partial_i^{\varPhi}:=\partial_i-\frac{\partial_i\varPhi}{\partial_1\varPhi}\partial_1
\end{align}
for  $i=2,\ldots,d.$
Thanks to the change of variables \eqref{U.sharp}--\eqref{varPhi.def} and the second condition in \eqref{BC2}, 
the assumption \eqref{RT2} can be recovered from
\begin{align} \label{RT}
	\p_1 q\geq \kappa_{0}>0 \quad \textrm{if  }  x_1=0.
\end{align}
The following proposition indicates that the identities \eqref{HN}--\eqref{divH} can be regarded as constraints on the initial data (see \cite[Appendix A]{T09ARMAMR2481071} for the proof).
\begin{proposition} \label{pro:1}
For sufficiently smooth solutions of the problem \eqref{NP1} on the time interval $[0,T]$, the constraints \eqref{HN}--\eqref{divH} are satisfied for all $t\in [0,T]$ as long as they hold initially.
\end{proposition}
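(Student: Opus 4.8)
The plan is to show that each of \eqref{HN} and \eqref{divH} is the value at time $t$ of a quantity obeying a \emph{homogeneous} first-order linear equation whose characteristic field is tangent to the boundary, so that vanishing initial data forces the quantity to vanish on $[0,T]$; this is essentially the argument of \cite[Appendix A]{T09ARMAMR2481071}, and only \eqref{NP1a} together with the kinematic part $\partial_t\varphi=v_N$ of \eqref{NP1b} is used. As a preliminary step I would undo the flattening, passing from $(t,x)$ to physical coordinates $(s,y):=(t,\varPhi(t,x),x')$: under this change the operators $\partial_t^{\varPhi},\partial_1^{\varPhi},\dots,\partial_d^{\varPhi}$ of \eqref{differential} become the commuting coordinate derivatives $\partial_s,\partial_{y_1},\dots,\partial_{y_d}$, the equations \eqref{NP1a} turn into the system \eqref{MHD2} on the moving domain $\{y_1>\varphi(s,y')\}$, and \eqref{divH}, \eqref{HN} become $\partial_{y_i}H_i=0$ inside the domain and $H\cdot N=0$ on $\{y_1=\varphi(s,y')\}$, with $N=(1,-\partial_2\varphi,\dots,-\partial_d\varphi)^{\mathsf T}$.

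For the interior identity \eqref{divH} I would take the $y$-divergence of the magnetic block of \eqref{MHD2}. A short manipulation, in which the mixed second-order terms cancel after relabelling indices, shows that $r:=\partial_{y_i}H_i$ solves the homogeneous transport equation $(\partial_s+v_i\partial_{y_i})r+(\partial_{y_i}v_i)\,r=0$ — this computation does not use the constraint itself. Its characteristic field $\partial_s+v_i\partial_{y_i}$ is tangent to the moving boundary, since its normal component equals $v\cdot N-\partial_s\varphi$, which vanishes on $\{y_1=\varphi\}$ by the kinematic condition in \eqref{NP1b}; equivalently, back in $(t,x)$, the $\partial_1$-coefficient of $\partial_t^{\varPhi}+v_i\partial_i^{\varPhi}$ reduces to $v_N-\partial_t\varphi=0$ on $\{x_1=0\}$. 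Hence the flow of this field leaves $\{y_1>\varphi\}$ invariant, and integration along characteristics gives $r\equiv0$ on $[0,T]$ as soon as $r|_{t=0}=0$.

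For the boundary identity \eqref{HN} I would set $\psi(s,y):=y_1-\varphi(s,y')$, so the boundary is $\{\psi=0\}$, $\nabla_y\psi=N$, and $(\partial_s+v_i\partial_{y_i})\psi=v\cdot N-\partial_s\varphi$ vanishes on $\{\psi=0\}$ by \eqref{NP1b}. Using only the magnetic block of \eqref{MHD2} — the term coming from $(H\cdot\nabla_y)v$ cancelling against the one produced by commuting $\partial_s+v_i\partial_{y_i}$ past $\nabla_y$ in $\nabla_y\psi$ — one obtains, with $b:=H\cdot N=H_i\partial_{y_i}\psi$,
\[
(\partial_s+v_i\partial_{y_i})b+(\partial_{y_i}v_i)\,b=H_i\partial_{y_i}\!\big[(\partial_s+v_j\partial_{y_j})\psi\big].
\]
Since $(\partial_s+v_j\partial_{y_j})\psi$ is smooth and vanishes on $\{\psi=0\}$, Hadamard's lemma writes it as $\psi\,g$ with $g$ smooth up to the boundary, so the right-hand side equals $g\,b+\psi\,H_i\partial_{y_i}g$, while $\{\psi=0\}$ is invariant under the flow of $\partial_s+v_i\partial_{y_i}$ because $(\partial_s+v_i\partial_{y_i})\psi=\psi g$. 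Restricting the displayed identity to $\{\psi=0\}$ then yields a homogeneous linear transport equation for $b$ along the spacetime boundary, so $b\equiv0$ there on $[0,T]$ once $b|_{t=0}=0$; transforming back gives \eqref{HN}.

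I expect this last step to be the main obstacle: one must observe that the apparently inhomogeneous right-hand side $H_i\partial_{y_i}[(\partial_s+v_j\partial_{y_j})\psi]$ is in fact proportional to $b$ along the boundary — and it is precisely here that the kinematic condition \eqref{NP1b} enters essentially — together with a little care about one-sided regularity up to the free boundary (handled above via Hadamard's lemma, or alternatively by parametrising the boundary by $(s,y')$ from the outset and checking that, once $\partial_s\varphi$ is eliminated through \eqref{NP1b}, every inhomogeneous term cancels). Modulo that, both statements are routine transport-equation arguments resting on the tangency of the characteristic fields to the boundary.
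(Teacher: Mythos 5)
Your argument is correct and is essentially the standard propagation-of-constraints proof that the paper invokes by reference to \cite[Appendix A]{T09ARMAMR2481071}: taking the divergence of the induction equation yields a homogeneous transport equation for $\nabla\cdot H$, and restricting the induction equation to the free boundary (where $\partial_t+v\cdot\nabla$ is tangential by the kinematic condition) yields a homogeneous linear equation for $H\cdot N$. The only cosmetic difference is that the cited proof works directly in the flattened coordinates using the degeneracy of the $\partial_1$-coefficient of $\partial_t^{\varPhi}+v_i\partial_i^{\varPhi}$ on $\{x_1=0\}$, whereas you undo the flattening first; both computations are identical in substance.
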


Let $\lfloor s \rfloor$ denote the floor function of $s\in\mathbb{R}$ that maps $s$ to the greatest integer less than or equal to $s$.
We are now ready to state the first main theorem of this paper.

\begin{theorem}[Non-relativistic case]
\label{thm:1}
Let $m\geq {20}$ be an integer. 
Assume that the initial data \eqref{NP1c} satisfy the hyperbolicity condition \eqref{hyperbolicity}, the constraints \eqref{HN}--\eqref{divH}, the sign condition \eqref{RT}, and the compatibility conditions up to order $m$ (see Definition \ref{def:1}).
Assume further that $(U_0-\widebar{U},\varphi_0)$ belongs to $H^{m+3/2}(\varOmega)\times H^{m+2}(\mathbb{T}^{d-1})$ for some constant state $\widebar{U}$ with $ \rho_*<\rho(\widebar{U})<\rho^*$.
Then there exists a sufficiently small constant $T>0$, 
such that the problem \eqref{NP1} with $A_i(U)$ defined by \eqref{A0.def}--\eqref{Ai.def} has a unique solution $(U,\varphi)$ on the time interval $[0,T]$ satisfying
\begin{align*}
(U-\widebar{U},\varphi)
\in H^{{\lfloor (m-9)/2\rfloor}}([0,T]\times\varOmega)\times H^{m-{9}}([0,T]\times\mathbb{T}^{d-1}).
\end{align*}
\end{theorem}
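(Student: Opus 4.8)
\ The plan is to realize $(U,\varphi)$ as the limit of a Nash--Moser iteration whose linear step is the effective linearized problem, for which the tame a priori estimates in the anisotropic spaces $H_*^m$ are furnished by Theorem \ref{thm:3}; a straightforward Picard iteration is ruled out because that estimate loses a fixed number of derivatives, a loss compensated by the smoothing operators.

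As a preliminary reduction I would pass to zero initial data. Using the interior equations \eqref{NP1a} together with the compatibility conditions of Definition \ref{def:1}, one constructs an approximate solution $(U^a,\varphi^a)$, of regularity comparable to that of the data and with $(U^a,\varphi^a)|_{t=0}=(U_0,\varphi_0)$, such that the boundary conditions $\mathbb{B}(U^a,\varphi^a)=0$ and the constraints \eqref{HN}--\eqref{divH} hold identically, while $\partial_t^j\mathbb{L}(U^a,\varPhi^a)|_{t=0}=0$ for $j$ up to order $m-1$; a Taylor expansion in $t$ then makes the residual $f^a:=-\mathbb{L}(U^a,\varPhi^a)$ small in $H_*^m$ after $T$ is shrunk. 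Writing $U=U^a+V$ and $\varphi=\varphi^a+\psi$ turns \eqref{NP1} into a problem for $(V,\psi)$ with vanishing initial data and small forcing.

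Then I would set up and run the iteration. Fix a family of smoothing operators $S_\theta$ adapted to the $H_*$ scale and a dyadic sequence $\theta_n\to\infty$; put $(V_0,\psi_0)=0$ and $(V_{n+1},\psi_{n+1})=(V_n,\psi_n)+(\delta V_n,\delta\psi_n)$, where $(\delta V_n,\delta\psi_n)$ solves the effective linear problem obtained by linearizing \eqref{NP1} about the smoothed state $(V_{n+1/2},\psi_{n+1/2})$ (modified, if necessary, so that $H_N$, the divergence identity \eqref{divH}, and the admissibility hypotheses of Theorem \ref{thm:3} hold exactly), passing to Alinhac's good unknown so that $\delta\psi$ does not cost a normal derivative inside $\varOmega$, and discarding the lower-order terms, which, together with the usual linearization, substitution and smoothing errors from the earlier stages, are gathered into the right-hand side. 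Feeding the tame estimate of Theorem \ref{thm:3} into the standard Nash--Moser induction, with the interpolation and smoothing inequalities in $H_*^m$, yields uniform bounds for $(V_n,\psi_n)$ in a fixed ball and a geometric decay of the errors, provided $\|f^a\|$ is small and $m$ is large; the threshold $m\ge 20$ and the data regularity $H^{m+3/2}(\varOmega)\times H^{m+2}(\mathbb{T}^{d-1})$ are exactly what the fixed derivative loss in Theorem \ref{thm:3} plus the anisotropic Nash--Moser bookkeeping require. The limit $(V,\psi)$, hence $(U,\varphi)$, then has the asserted regularity, and Proposition \ref{pro:1} returns the constraints \eqref{HN}--\eqref{divH}. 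For uniqueness, the difference of two solutions in the stated class satisfies a linearized-type problem whose basic $L^2$ estimate, in which the Rayleigh--Taylor condition \eqref{RT} supplies the coercive boundary term $\kappa_0\|\delta\varphi\|_{L^2(\varSigma)}^2$ for the interface, closes by Gr\"onwall's inequality on a short time interval.

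The main obstacle is the higher-order tame energy estimate behind Theorem \ref{thm:3}. Since $\varSigma$ is characteristic and, by {\sc Ohno--Shirota}, the problem is ill-posed in $H^m$, one must work in $H_*^m$, carefully counting tangential against weighted-normal derivatives, then recover the full (unweighted) normal regularity of the noncharacteristic unknowns $q$ and $v_N$ and of $\varphi$ from the structure of the symmetrized system \eqref{MHD.vec} and the boundary conditions, all the while keeping every bound tame (linear in the top-order norm of the coefficients) even though the only interior smallness, entering through \eqref{RT}, is a boundary quantity. A second delicate point is to arrange the smoothed iterates so that they can be corrected to satisfy \eqref{HN}--\eqref{divH} and the admissibility hypotheses of Theorem \ref{thm:3} without spoiling those tame bounds.
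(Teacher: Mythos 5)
Your proposal is correct and follows essentially the same route as the paper: reduction to zero initial data via a compatible approximate solution, a Nash--Moser scheme built on Alinhac's good unknown, smoothed/modified intermediate states satisfying the admissibility constraints of Theorem \ref{thm:3}, the tame estimate in $H_*^m$ driving the induction, and a standard $L^2$ argument for uniqueness. The derivative bookkeeping you invoke ($m\geq 20$, data in $H^{m+3/2}\times H^{m+2}$, solution in $H_*^{m-9}$) matches the paper's choice $\widetilde\alpha=m-5$, $\alpha=\widetilde\alpha-3\geq 12$.
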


For the relativistic case, as in {\sc Freist\"{u}hler--Trakhinin} \cite{FT13MR3044369},
we impose the physical assumption that the relativistic sound speed $c_{\rm s}=c_{\rm s}(\rho,S)$ is positive and smaller than the light speed, {\it i.e.},
\begin{align} \label{cs.def}
	0<c_{\rm s}(\rho, S):=\frac{a(\rho,S) }{\sqrt{h}}<\epsilon^{-1}
	\quad \textrm{for all }  \rho \in (\rho_*,\rho^*) ,
\end{align}
where $\rho_*$ and $\rho^*$ are positive constants with $ \rho_*<\rho^*$, 
and $a(\rho,S)$ is defined by \eqref{a.def}.
The second main theorem of this paper stated below is the relativistic counterpart of Theorem \ref{thm:1}.

\begin{theorem}[Relativistic case] \label{thm:2}
Let $m\geq {20}$ be an integer.
Assume that the initial data \eqref{NP1c} satisfy the hyperbolicity condition \eqref{hyperbolicity}, the constraints \eqref{HN}--\eqref{divH}, the sign condition \eqref{RT}, the physical constraint $|v_0|<\epsilon^{-1}$, and the compatibility conditions up to order $m$ (see Definition \ref{def:1}).
Assume further that $(U_0-\widebar{U},\varphi_0)$ belongs to $H^{m+3/2}(\varOmega)\times H^{m+2}(\mathbb{T}^{d-1})$ for some constant state $\widebar{U}$ with $\rho_*<\rho(\widebar{U})<\rho^*$ and $|\bar{v}|<\epsilon^{-1}$.
Then there exists a sufficiently small constant $T>0$, 
such that the problem \eqref{NP1} with $A_i(U)$ defined by \eqref{Ai.def.r} has a unique solution $(U,\varphi)$ on the time interval $[0,T]$ satisfying
\begin{align*}
(U-\widebar{U},\varphi)\in H^{{\lfloor (m-9)/2\rfloor}}([0,T]\times\varOmega)\times H^{m-{9}}([0,T]\times\mathbb{T}^{d-1}).
\end{align*}
\end{theorem}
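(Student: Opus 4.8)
The plan is to mirror the entire development carried out in Sections \ref{sec:linear}--\ref{sec:Nash} for the non-relativistic problem, replacing the coefficient matrices $A_i(U)$ of \eqref{A0.def}--\eqref{Ai.def} by their relativistic analogues \eqref{Ai.def.r}. The crucial structural input, already available to us, is the symmetrization of \eqref{RMHD1} due to Freist\"uhler--Trakhinin \cite{FT13MR3044369} (verified directly in Appendix \ref{sec:appB}): in the primary unknowns $U=(q,v,H,S)^{\mathsf T}$ the equations of ideal RMHD take the quasilinear symmetric hyperbolic form $A_0(U)\partial_t U+A_i(U)\partial_i U=0$ with $A_0(U)$ positive definite precisely when the hyperbolicity condition \eqref{hyperbolicity} and the subluminal condition $|v|<\epsilon^{-1}$ hold. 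First I would record the explicit form of $A_i(U)$ and check the two facts that make the whole machinery go through: (i) under \eqref{cs.def} and $|v|<\epsilon^{-1}$ the matrix $A_0(U)$ is symmetric positive definite on the relevant state space, with bounds uniform in $\epsilon\in(0,\epsilon_0]$; and (ii) the boundary matrix $\partial_t\varphi\, A_0(U)-N_iA_i(U)$ on $\varSigma(t)$, computed using the boundary conditions \eqref{BC2}--\eqref{HN2}, has exactly the same degenerate structure — one positive, one negative, and $2d$ zero eigenvalues — as in the non-relativistic case, so that the boundary is characteristic, two boundary conditions are the correct number, and \eqref{HN} is again a constraint propagated by Proposition \ref{pro:1} (whose proof carries over verbatim, the divergence constraint \eqref{divH} being a consequence of \eqref{RMHD1c}).

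With these structural facts in hand, the argument proceeds in the same three stages. First, one linearizes the reformulated problem \eqref{NP1} about a basic state, introduces Alinhac's good unknown $\dot U_\natural=\dot U-(\partial_1 U/\partial_1\varPhi)\dot\varPhi$ to eliminate the loss of derivative coming from the variable coefficients, and drops lower-order terms to obtain the effective linear problem; one then establishes the basic $L^2$ estimate, in which the Rayleigh--Taylor condition \eqref{RT} furnishes the positive boundary term $\int_\varSigma \partial_1 q\,|\varphi|^2$ controlling the interface function, and higher-order tame estimates in the anisotropic spaces $H_*^m$ (the analogue of Theorem \ref{thm:3}). Since the only change from the non-relativistic problem is the precise entries of the symmetric matrices $A_i$, and since all three properties actually used in that proof — symmetry, positivity of $A_0$, and the rank-one-plus-rank-one structure of the boundary matrix with a good sign from \eqref{RT} — are preserved, the energy computations go through with $\epsilon$ as an inert bounded parameter, giving constants independent of $\epsilon$. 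Second, one feeds these estimates into the modified Nash--Moser scheme of Section \ref{sec:Nash}: smoothing operators, the approximate solution and construction of the source terms, the error estimates for the iteration, and the proof of convergence are purely functional-analytic and see the equations only through the linear well-posedness result, so they apply unchanged, yielding a solution with the stated regularity $(U-\widebar U,\varphi)\in H^{\lfloor(m-9)/2\rfloor}([0,T]\times\varOmega)\times H^{m-9}([0,T]\times\mathbb{T}^{d-1})$. Third, uniqueness follows from the $L^2$ stability estimate for the linearized problem applied to the difference of two solutions.

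The step I expect to require the most care is verifying (i) and (ii) above in fully explicit, $\epsilon$-uniform form — in particular confirming that the relativistic $A_0(U)$ stays uniformly positive definite as $\epsilon\to 0$ (so that the whole family of problems is treated simultaneously and one recovers the non-relativistic case in the limit) and that no new boundary term, analogous to the problematic $\widetilde{\mathcal Q}$ of \cite{T16MR3503661}, appears because the total-pressure-zero condition $q=0$ again annihilates the dangerous contributions. A secondary point demanding attention is that the subluminal constraint $|v|<\epsilon^{-1}$, like the hyperbolicity constraint \eqref{hyperbolicity}, must be shown to be preserved along the Nash--Moser iterates on $[0,T]$ for $T$ small; this is handled, exactly as for \eqref{hyperbolicity}, by a standard smallness-of-increments argument built into the scheme. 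Once these checks are completed, the remainder is a line-by-line transcription of the non-relativistic proof, and I would in the paper simply indicate the few places where \eqref{Ai.def.r} enters and assert that the estimates are otherwise identical.
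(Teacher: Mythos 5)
Your proposal follows essentially the same route as the paper: Section \ref{sec:rela} establishes exactly the two structural facts you isolate — the symmetrization \eqref{RMHD.vec} with $B_0(V)$ (hence $A_0(U)$) positive definite under \eqref{cs.def} and $|v|<\epsilon^{-1}$ (verified in Appendix \ref{sec:appB}, with bounds uniform in the light speed), and the boundary-matrix identity \eqref{A1t.rela} showing the same rank-two characteristic structure as in the non-relativistic case — and then transcribes the linear analysis and Nash--Moser scheme of Sections \ref{sec:linear}--\ref{sec:Nash}. The proposal is correct.
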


\begin{remark}
As a matter of fact, the constructed solution $U$ satisfies $U-\widebar{U}\in H_*^{m-{9}}([0,T]\times\varOmega)$,
and hence $U-\widebar{U}$ belongs to $H^{{\lfloor (m-9)/2\rfloor}}([0,T]\times\varOmega) $ in virtue of the embedding $H_*^m\hookrightarrow H^{\lfloor m/2 \rfloor}$
(see \S\S \ref{sec:linear.1}--\ref{sec:linear.2} for the definition and properties of anisotropic Sobolev spaces $H_*^m$).
\end{remark}

\begin{remark}
We emphasize that Theorems \ref{thm:1} and \ref{thm:2} imply corresponding results for the free boundary problem \eqref{MHD1}, \eqref{IC1}--\eqref{BC1b} and its relativistic counterpart respectively, 
because the constraint \eqref{divH} and {relation} $\p_1\varPhi\geq 1/2$ hold in $[0,T]\times \varOmega$ for a sufficiently small $T>0$.
\end{remark}

\section{Well-posedness of the Linearized Problem} \label{sec:linear}

In this section, we perform the linearization of the problem \eqref{NP1} and establish the well-posedness of the linearized problem in anisotropic Sobolev spaces $H_*^m$, that is, Theorem \ref{thm:3}.

\subsection{Main Result for the Linearized Problem} \label{sec:linear.1}
We denote $\varOmega_T:=(-\infty,T)\times \varOmega$ and $\varSigma_T:=(-\infty,T)\times\varSigma$ for $T>0$.
Let the basic state $(\mathring{U},\mathring{\varphi})$ with $\mathring{U}:=(\mathring{q},\mathring{v},\mathring{H},\mathring{S})^{\mathsf{T}}$ be sufficiently smooth and satisfy
\begin{alignat}{3}
	&\rho_*<\rho(\mathring{U})<\rho^*
	\quad &&\textrm{in } \varOmega_T,
\label{bas1a}\\
	&\p_t \mathring{\varphi}=\mathring{v}_N,\quad \mathring{H}_N=0
	\quad&&\textrm{on }\varSigma_T,
\label{bas1b}\\
	&\p_1\mathring{q}\geq   \frac{\kappa_{0}}{2}>0
	\quad&&\textrm{on }\varSigma_T,
\label{bas1c}
\end{alignat}
where
\begin{gather*}
\mathring{v}_N:=\mathring{v}\cdot \mathring{N},\quad
\mathring{H}_N:=\mathring{H}\cdot \mathring{N},\quad
\mathring{N}:=(1,-\p_2\mathring{\varphi},\ldots,-\p_d\mathring{\varphi})^{\mathsf{T}}.
\end{gather*}
We also denote $\mathring{\varPsi}:=\chi(x_1)\mathring{\varphi}(t,x')$ and $\mathring{\varPhi}:=x_1+\mathring{\varPsi},$
where $\chi\in C_0^{\infty}(\mathbb{R})$ satisfies \eqref{chi}.
Then $\p_1\mathring{\varPhi}\geq 1/2$ on $\varOmega_T$, provided
we without loss of generality assume that $\|\mathring{\varphi}\|_{L^{\infty}(\varSigma_T)}\leq 1/2$.
Moreover, we assume that
\begin{align} \label{bas1d}
\|\mathring{U}\|_{W^{3,\infty}(\varOmega_T)}+\|\mathring{\varphi}\|_{W^{4,\infty}(\varSigma_T)}\leq K
\end{align}
for some constant $K>0$.
Note that the physical constraint $|\mathring{v}|<\epsilon^{-1}$ should be additionally imposed for the relativistic case.

The linearized operators for \eqref{NP1a}--\eqref{NP1b} are defined by
\begin{align}
	&\mathbb{L}'(\mathring{U},\mathring{\varPhi})(V,\varPsi)
	:=L(\mathring{U},\mathring{\varPhi})V +\mathcal{C}(\mathring{U},\mathring{\varPhi})V
	-\frac{1}{\p_1 \mathring{\varPhi}}(L(\mathring{U},\mathring{\varPhi})\varPsi)\p_1 \mathring{U},
\label{L'.bb}\\
	&\mathbb{B}'(\mathring{U} ,\mathring{\varphi})(V,\psi)
	:=
	\begin{pmatrix}
	 \p_t \psi+\sum_{i=2}^{d}\mathring{v}_i \p_i   \psi-v\cdot\mathring{N}\\ q
	\end{pmatrix},
\label{B'.bb}
\end{align}
where  $\varPsi:=\chi(x_1)\psi(t,x')$ and
\begin{align} \nonumber 
\mathcal{C}(\mathring{U},\mathring{\varPhi})V:=
	\sum_{k=1}^{2d+2}V_k
	\bigg(
	\frac{\p A_0}{\p {U_k}}(\mathring{U}) \partial_t \mathring{U}
	+ \frac{\p \widetilde{A}_1}{\p {U_k}}(\mathring{U},\mathring{\varPhi}) \partial_1 \mathring{U}
	+\sum_{i=2}^{d}\frac{\p A_i}{\p {U_k}}(\mathring{U}) \partial_i \mathring{U}
	\bigg).
\end{align}
Following {\sc Alinhac} \cite{A89MR976971} and introducing the good unknown
\begin{align} \label{good}
	\dot{V}:=V-\frac{\partial_1\mathring{U} }{\partial_1 \mathring{\varPhi} }\varPsi,
\end{align}
we have ({\it cf.}~\cite[Proposition 1.3.1]{M01MR1842775})
\begin{align}
\mathbb{L}'(\mathring{U}, \mathring{\varPhi})(V,\varPsi)
 =L(\mathring{U}, \mathring{\varPhi})\dot{V}
+\mathcal{C}( \mathring{U},\mathring{\varPhi})\dot{V}
+\frac{\varPsi}{\partial_1\mathring{\varPhi}}
\partial_1\big(L(\mathring{U},\mathring{\varPhi} )\mathring{U}\big).
\label{Alinhac}
\end{align}
As in \cite{A89MR976971, CS08MR2423311,CSW19MR3925528,T09ARMAMR2481071},
we drop the last term in \eqref{Alinhac} and consider the {\it effective linear problem}
\begin{subequations} \label{ELP1}
\begin{alignat}{3}
	&\mathbb{L}'_{e}(\mathring{U}, \mathring{\varPhi}) \dot{V}
	:=L(\mathring{U}, \mathring{\varPhi})\dot{V}
	+\mathcal{C}( \mathring{U},\mathring{\varPhi})\dot{V}=f
	&\ \  &\textrm{if } x_1>0,
\label{ELP1a}\\
	&\mathbb{B}'_e(\mathring{U}, \mathring{\varphi}) (\dot{V},\psi)
	=g
	&&\textrm{if } x_1=0,
\label{ELP1b}\\
	&(\dot{V},\psi)=0   &&\textrm{if } t<0,
\label{ELP1c}
	\end{alignat}
\end{subequations}
where
\begin{align*}
\mathbb{B}'_e(\mathring{U}, \mathring{\varphi}) (\dot{V},\psi)
:=
\begin{pmatrix}
 \p_t \psi +\sum_{i=2}^{d}\mathring{v}_i \p_i \psi-\p_1 \mathring{v}_N \psi-\dot{v}\cdot\mathring{N}
\\[0.5mm]
\dot{q}+\p_1 \mathring{q}\psi
\end{pmatrix}.
\end{align*}

As in \cite{S95MR1314493,T09ARMAMR2481071,YM91MR1092572,CW08MR2372810} for other characteristic problems in ideal compressible MHD, 
we shall work in anisotropic Sobolev spaces $H_*^m$.
Throughout this paper, symbol ${\mathrm{D}_*^{\alpha}}$ means that $\alpha:=(\alpha_0,\ldots,\alpha_{d+1})\in\mathbb{N}^{d+2}$, and
\begin{align} \label{D*}
\mathrm{D}_*^{\alpha}:=
\p_t^{\alpha_0} (\sigma \p_1)^{\alpha_1}\p_2^{\alpha_2} \cdots \p_d^{\alpha_d} \p_1^{\alpha_{d+1}},\ 
\langle \alpha \rangle :=|\alpha|+\alpha_{d+1},\ 
|\alpha|:=\sum_{i=0}^{d+1} \alpha_{i},
\end{align}
where $\sigma=\sigma(x_1)$ is an increasing smooth function on $[0,+\infty)$ and satisfies $\sigma(x_1)=x_1$ for $0\leq x_1\leq 1/2$ and $\sigma(x_1)=1$ for $x_1\geq 1$.
For any integer $m\in\mathbb{N}$ and interval $I\subset \mathbb{R}$, 
function space $H_*^{m}(I\times \varOmega)$ is defined by
\begin{align*}
&  H_*^m(I\times\varOmega):=
\{ u\in L^2(I\times\varOmega):\, \mathrm{D}_*^{\alpha} u\in L^2(I\times\varOmega) \textrm{ for } \langle \alpha \rangle\leq m  \},
\end{align*}
and equipped with the norm ${\|}\cdot{\|}_{H^m_*(I\times \varOmega)}$,  
where
\begin{align} 
& {\|}u{\|}_{H^m_*(I\times \varOmega)}^2 :=
\sum_{\langle \alpha\rangle\leq m} \|\mathrm{D}_*^{\alpha} u\|_{L^2(I \times\varOmega)}^2.
\label{norm1b}
\end{align}
We will write $\|{u}\|_{m,*,t}:={\|}u{\|}_{H^m_*( \varOmega_t)}$ for short.
By definition, we have
\begin{alignat*}{3}
&H^m(I\times \varOmega)\hookrightarrow H_*^m(I\times \varOmega) \hookrightarrow H^{\lfloor m/2\rfloor}(I\times \varOmega)\quad &&\textrm{for all } m\in \mathbb{N},\ I\subset \mathbb{R}.
\end{alignat*}

We are going to prove the following result in this section.
\begin{theorem} \label{thm:3}
Let $K_0>0$ and $m\in\mathbb{N}$ with $m\geq 6$.
Then there exist constants $T_0>0$ and $C(K_0)>0$ such that if the basic state $(\mathring{U}, \mathring{\varphi})$ satisfies \eqref{bas1a}--\eqref{bas1d},
$(\mathring{V}, \mathring{\varphi})\in H_*^{{m+4}}(\varOmega_T)\times H^{{m+4}}(\varSigma_T)$,
and
\begin{align} \label{thm3.H1}
{\|}\mathring{V}{\|}_{{10},*,T}+\|\mathring{\varphi}\|_{H^{{10}}(\varSigma_T)}\leq {K}_0 \quad
\textrm{for } \mathring{V}:=\mathring{U}-\widebar{U},
\end{align}
and the source terms $f\in H_*^{m}(\varOmega_T)$, $g\in H^{m+1}(\varSigma_T)$ vanish in the past,
for some $0<T\leq T_0$,
then the problem \eqref{ELP1} has a unique solution $(\dot{V},\psi)\in H_*^{m}(\varOmega_T)\times H^{m}(\varSigma_T)$ satisfying the tame estimate
\begin{align}
\nonumber
 	{\|}(\dot{V},\varPsi){\|}_{m,*,T}&+\|\psi\|_{H^{m}(\varSigma_T)}\\
\nonumber 
 	\leq   C(K_0)\Big\{ &  {\|}f{\|}_{m,*,T} +\|g\|_{H^{m+1}(\varSigma_T)} \\ 
&  +{\|}(\mathring{V},\mathring{\varPsi}){\|}_{{m+4},*,T}\big({\|}f{\|}_{6,*,T}+\|g\|_{H^{7}(\varSigma_T)}\big) \Big\}.
\label{tame}
\end{align}
\end{theorem}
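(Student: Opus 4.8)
The plan is to prove the tame estimate \eqref{tame} for the effective linear problem \eqref{ELP1} by a combination of (i) a basic $L^2$ energy estimate that exploits the Rayleigh--Taylor sign condition \eqref{bas1c} to control the boundary/interface term, (ii) estimates for tangential and ``normal-conormal'' derivatives in the anisotropic scale $H_*^m$, and (iii) a weighted normal-derivative estimate recovered from the equations themselves, all pieced together via a Gronwall/induction argument in the number of derivatives. I would first symmetrize: since $A_0(\mathring U)$ is symmetric positive definite under \eqref{bas1a}, multiply \eqref{ELP1a} by $\dot V$ and integrate over $\varOmega_t$. The only boundary contribution comes from $\widetilde A_1(\mathring U,\mathring\varPhi)$, whose restriction to $x_1=0$ is (up to the $1/\p_1\mathring\varPhi$ factor and using $\mathring H_N=0$, $\p_t\mathring\varphi=\mathring v_N$) the rank-two matrix coupling $\dot q$ and $\dot v\cdot\mathring N$. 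Using the boundary conditions $\mathbb B'_e=g$ to substitute $\dot q=-\p_1\mathring q\,\psi+g_2$ and $\dot v\cdot\mathring N=\p_t\psi+\sum_{i\geq2}\mathring v_i\p_i\psi-\p_1\mathring v_N\psi-g_1$, the boundary integral becomes, modulo controllable lower-order terms, $\int_{\varSigma_t}\p_1\mathring q\,\psi\,\p_t\psi\,=\,\tfrac12\int_{\varSigma_t}\p_1\mathring q\,\p_t(\psi^2)$, which after integration by parts in $t$ yields the positive quantity $\tfrac12\int_{\varSigma}\p_1\mathring q\,\psi^2\,\mathrm dx'$ at time $t$ plus a term $-\tfrac12\int\p_t(\p_1\mathring q)\psi^2$ absorbed by Gronwall; positivity is exactly \eqref{bas1c}. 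This gives
\[
\|\dot V(t)\|_{L^2(\varOmega)}^2+\|\psi(t)\|_{L^2(\varSigma)}^2\lesssim \int_0^t\!\big(\|\dot V\|_{L^2}^2+\|\psi\|_{L^2}^2+\|f\|_{L^2}^2+\|g\|_{H^1}^2\big),
\]
the $H^1$-norm on $g$ being needed because $g_1$ is differentiated through the $\p_t\psi$ structure.

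Next I would upgrade to $H_*^m$. The operators $\mathrm D_*^\alpha$ with $\langle\alpha\rangle\le m$ commute with $L(\mathring U,\mathring\varPhi)$ up to commutators that are, by the algebra of $H_*^m$ (Moser-type and trace inequalities for anisotropic spaces, cited from Secchi and from the first author's earlier work), bounded in terms of lower-order norms of $\dot V$ times $W^{k,\infty}$ norms of the coefficients, which in turn are controlled by \eqref{bas1d} and \eqref{thm3.H1} via Sobolev embedding $H_*^{10}\hookrightarrow H^5\hookrightarrow W^{3,\infty}$ (for $d\le3$). Applying $\mathrm D_*^\alpha$ to \eqref{ELP1a}, running the same $L^2$-with-RT-sign argument on $\mathrm D_*^\alpha\dot V$, and summing over $\langle\alpha\rangle\le m$ produces the estimate for the ``$*$-tangential'' part; the key point is that the purely normal derivative $\p_1$ (with weight $\sigma$) appears only in the combination $\sigma\p_1$ inside $\mathrm D_*^\alpha$, so no uncontrolled normal derivative hits the characteristic boundary. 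To close, the genuinely normal derivatives $\p_1\dot V$ (without weight) are recovered algebraically: the ``noncharacteristic'' components of $\dot V$ — here $\dot q$ and $\dot v_N$ — satisfy, from suitable linear combinations of the rows of \eqref{ELP1a}, evolution equations in which $\p_1$ of a noncharacteristic unknown is expressed through $\p_t$, tangential derivatives, and $f$; iterating this ``$\p_1=$ (tangential)$+f$'' relation lets one trade each bare $\p_1$ for a tangential derivative at the cost of one order, yielding full $H_*^m$ control. Careful bookkeeping of which quantities are noncharacteristic (one positive and one negative eigenvalue of the boundary matrix, $2d$ zero ones) is what makes this work.

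Finally, the \emph{tame} (rather than merely a priori) form \eqref{tame} — linear in the high norm $\|(\mathring V,\mathring\varPsi)\|_{m+4,*,T}$ of the coefficients with only a low norm of $(f,g)$ as multiplier — follows by being scrupulous in the Moser estimates: every commutator term is split as (high derivative on coefficient)$\times$(low, i.e.\ $\le6$, derivative on $\dot V$ or source) plus (low on coefficient)$\times$(high on $\dot V$), the latter absorbed into the left side by smallness of $T$ (hence of the time-integrated low norm) and Gronwall, the former contributing the $\|(\mathring V,\mathring\varPsi)\|_{m+4,*,T}(\|f\|_{6,*,T}+\|g\|_{H^7})$ term once $\dot V$ itself is bounded at level $6$ by $\|f\|_{6,*}+\|g\|_{H^7}$. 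Existence of the solution is then obtained by the standard duality-plus-approximation scheme: the a priori estimate applied to a dual (adjoint) problem gives a weak solution, a vanishing-viscosity or Friedrichs-mollifier regularization upgrades it to $H_*^m$, and uniqueness is immediate from the $L^2$ estimate applied to the difference of two solutions.

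\medskip
\noindent\textbf{Main obstacle.} I expect the hardest part to be the normal-derivative recovery combined with the tame bookkeeping: one must verify that the noncharacteristic combinations of the MHD unknowns really do decouple enough that each application of $\p_1$ can be absorbed into tangential/conormal derivatives without an irrecoverable loss (this is precisely where $H_*^m$, as opposed to $H^m$, is essential, cf.\ the Ohno--Shirota ill-posedness), and simultaneously that all the resulting commutator and product terms admit the high$\times$low / low$\times$high split needed for \eqref{tame}. The boundary analysis is delicate because $g$ must be estimated in $H^{m+1}$, one order above $\dot V$, owing to the time-derivative of $\psi$ sitting in the first component of $\mathbb B'_e$.
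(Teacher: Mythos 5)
Your proposal follows essentially the same route as the paper's proof: a basic $L^2$ estimate in which the Rayleigh--Taylor sign condition turns the boundary integral into a positive multiple of $\int_\varSigma\p_1\mathring q\,\psi^2$, anisotropic higher-order estimates with Moser-type commutator bounds split into tangential, unweighted-normal, and weighted-normal families, recovery of $\p_1$ of the noncharacteristic components ($\dot q$ and $\dot v\cdot\mathring N$) algebraically from the equations, a high$\times$low splitting closed first at the level $m=6$ to obtain the tame form, and existence via the dual problem. The only cosmetic difference is that the paper first lifts $g$ into the interior (making the boundary conditions homogeneous, which is where the $H^{m+1}$ norm of $g$ enters) and performs an explicit change of unknown $W=\mathring J^{-1}V$ to isolate the noncharacteristic variables, whereas you substitute the inhomogeneous boundary conditions directly into the boundary integral; both are sound.
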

In the above theorem, 
the condition that the source terms $f$ and $g$ vanish in the past corresponds to the case of zero initial data. 
The case of general initial data is postponed to the subsequent nonlinear analysis.
In the rest of this section, we will focus on the three-dimensional case, 
because the 2D case ($d=2$) can be analyzed in the same way.

\subsection{Properties of Anisotropic Sobolev Spaces} \label{sec:linear.2}

We shall deduce the tame estimate in $H_*^m(\varOmega_T)$ for the problem \eqref{ELP1} with $m$ large enough. 
For this purpose, in this subsection we collect the Moser-type calculus inequalities for the spaces $H^m $ and $H_*^m $, and the embedding and trace theorems for $H_*^m$.
We will employ the symbol $A\lesssim B$ (or $B\gtrsim A$) to mean that $A \leq CB$ holds uniformly for some universal positive constant $C$.

\begin{lemma}[Moser-type calculus inequalities for $H^m$] \label{lem:Moser1}
	Let $m\in \mathbb{N}_+$. 
	Let $\mathcal{O}$ be an open subset of $\mathbb{R}^n$ with Lipschitz boundary.
	Assume that $F$ is $C^{\infty}$ in a neighborhood of the origin with $F(0)=0$,
	and that $u,w\in H^m(\mathcal{O}) $ with $\|u\|_{L^{\infty}(\mathcal{O})}\leq M$ for some constant $M>0$. 
	Then
	\begin{alignat}{3}
	\nonumber 
	\|\p^{\alpha} u\p^{\beta} w\|_{L^{2}}+ \| uw\|_{H^m}
	&\lesssim \|u\|_{H^{m}}\|w\|_{L^{\infty}}+\|u\|_{L^{\infty}}\|w\|_{H^{m}},\\
	\nonumber 
	\|F(u)\|_{H^{m}}&\leq C(M)\|u\|_{H^{m}},
	\end{alignat}
	for any multi-indices $\alpha,\beta\in\mathbb{N}^n$ with $|\alpha|+|\beta|\leq m$.
\end{lemma}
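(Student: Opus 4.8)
The plan is the classical Moser argument (cf. \textsc{Moser}, \textsc{Majda}, or \textsc{Secchi}): reduce everything to the Gagliardo--Nirenberg interpolation inequality, the Leibniz rule, and the Fa\`a di Bruno formula, and close the estimates by H\"older together with the weighted arithmetic--geometric mean inequality. \textbf{Step 1 (extension and the critical bilinear bound).} Since $\partial\mathcal{O}$ is Lipschitz, I would fix a bounded extension operator $E\colon H^{m}(\mathcal{O})\to H^{m}(\mathbb{R}^{n})$ that is simultaneously bounded $L^{\infty}\to L^{\infty}$, so that after composing with $E$ it suffices to argue on $\mathbb{R}^{n}$. There the interpolation inequality
\[
\|\mathrm{D}^{j}v\|_{L^{2\ell/j}(\mathbb{R}^{n})}\lesssim \|v\|_{L^{\infty}(\mathbb{R}^{n})}^{1-j/\ell}\,\|\mathrm{D}^{\ell}v\|_{L^{2}(\mathbb{R}^{n})}^{j/\ell},\qquad 1\le j\le \ell,
\]
holds in every dimension, being the Gagliardo--Nirenberg case with interpolation parameter $j/\ell<1$ for which no exceptional exclusion occurs. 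Hence, for $|\alpha|=j\ge1$, $|\beta|=k\ge1$ with $j+k=\ell$, H\"older with the conjugate exponents $2\ell/j$ and $2\ell/k$ gives
\[
\|\partial^{\alpha}u\,\partial^{\beta}w\|_{L^{2}}\lesssim \big(\|u\|_{L^{\infty}}^{1-j/\ell}\|u\|_{H^{\ell}}^{j/\ell}\big)\big(\|w\|_{L^{\infty}}^{1-k/\ell}\|w\|_{H^{\ell}}^{k/\ell}\big)\le \|u\|_{H^{\ell}}\|w\|_{L^{\infty}}+\|u\|_{L^{\infty}}\|w\|_{H^{\ell}},
\]
the last step by weighted AM--GM (the cases $j=0$ or $k=0$ being a direct H\"older bound). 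Choosing $\ell=|\alpha|+|\beta|\le m$ and using $\|\cdot\|_{H^{\ell}}\le\|\cdot\|_{H^{m}}$ gives the first asserted inequality for all $|\alpha|+|\beta|\le m$.

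\textbf{Step 2 (products and composition).} The bound on $\|uw\|_{H^{m}}$ follows by expanding each $\partial^{\gamma}(uw)$, $|\gamma|\le m$, with the Leibniz rule and applying Step 1 termwise. More generally, multilinear H\"older together with the interpolation inequality of Step 1 yields, for $\gamma_{1},\dots,\gamma_{\ell}$ with $\sum_{i}|\gamma_{i}|=p\le m$,
\[
\Big\|\prod_{i=1}^{\ell}\partial^{\gamma_{i}}u\Big\|_{L^{2}}\le \prod_{i=1}^{\ell}\|\partial^{\gamma_{i}}u\|_{L^{2p/|\gamma_{i}|}}\lesssim \prod_{i=1}^{\ell}\|u\|_{L^{\infty}}^{1-|\gamma_{i}|/p}\|u\|_{H^{p}}^{|\gamma_{i}|/p}=\|u\|_{L^{\infty}}^{\ell-1}\|u\|_{H^{p}}\le \|u\|_{L^{\infty}}^{\ell-1}\|u\|_{H^{m}},
\]
with factors having $|\gamma_{i}|=0$ simply contributing a plain $\|u\|_{L^{\infty}}$. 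For $\|F(u)\|_{H^{m}}$, the Fa\`a di Bruno formula writes $\partial^{\gamma}F(u)$, $1\le|\gamma|\le m$, as a finite sum of terms $F^{(\ell)}(u)\prod_{i=1}^{\ell}\partial^{\gamma_{i}}u$ with $\sum_{i}|\gamma_{i}|=|\gamma|$ and $1\le\ell\le|\gamma|$; since $\|u\|_{L^{\infty}}\le M$ keeps the values of $u$ in the neighbourhood of the origin on which $F$ is $C^{\infty}$, we have $|F^{(\ell)}(u)|\le C(M)$, and combining with the multilinear bound and summing the finitely many terms controls $\|\partial^{\gamma}F(u)\|_{L^{2}}$, while $\|F(u)\|_{L^{2}}\le C(M)\|u\|_{L^{2}}$ from $|F(u)|\le\sup|F'|\,|u|$ and $F(0)=0$. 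Altogether $\|F(u)\|_{H^{m}}\le C(M)\|u\|_{H^{m}}$.

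\textbf{Main obstacle.} The argument is essentially bookkeeping; the one point requiring care is Step 1 — securing the Gagliardo--Nirenberg inequality on the domain $\mathcal{O}$ with merely Lipschitz boundary (handled by the extension operator) and verifying that the relevant interpolation parameters are admissible. This is precisely why it is convenient to reduce every product to the critical exponent configuration $j+k=\ell$, where the H\"older exponents are genuinely conjugate, before invoking monotonicity of the $H^{\ell}$ norms in $\ell$ to reach the subcritical range $|\alpha|+|\beta|<m$.
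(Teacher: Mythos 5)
Your argument is correct and is precisely the classical Gagliardo--Nirenberg/Moser proof found in the reference the paper cites for this lemma (Alinhac--G\'erard, pp.~84--89); the paper itself gives no proof beyond that citation. The extension step for the Lipschitz domain, the conjugate-exponent H\"older pairing at the critical level $\ell=|\alpha|+|\beta|$, and the Fa\`a di Bruno reduction are all handled as in the standard treatment, so there is nothing to add.
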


For the proof of the last lemma, we refer to, for instance, \cite[pp.\;84--89]{AG07MR2304160}.

\begin{lemma}[Moser-type calculus inequalities for $H_*^m$] \label{lem:Moser2}
	Let $m\in \mathbb{N}_+$.
	Assume that $F$ is $C^{\infty}$ in a neighborhood of the origin with $F(0)=0$, 
	and the functions $u,w$ belong to $H_*^m(\varOmega_T) $  and satisfy
	\begin{align} \label{norm.W*}
\|u\|_{W_*^{1,\infty}(\varOmega_T) }
:=\sum_{\langle \alpha\rangle\leq 1}\| \mathrm{D}_*^{\alpha} u\|_{L^{\infty}(\varOmega_T) } 
\leq M_*
	\end{align}
	for some constant $M_*>0$, 
	where $\mathrm{D}_*^{\alpha}$ and $\langle\alpha\rangle$ are defined in \eqref{D*}.
	Then
	\begin{align}
	\|\mathrm{D}_*^{\alpha} u  \mathrm{D}_*^{\beta} w\|_{L^2(\varOmega_T)}
	&\lesssim  {\|}u{\|}_{m,*,T}\|w\|_{W_*^{1,\infty}(\varOmega_T) } +{\|}w{\|}_{m,*,T}\|u\|_{W_*^{1,\infty}(\varOmega_T) },
\label{Moser1} \\
	{\|}u w{\|}_{m,*,T}
	&\lesssim  {\|}u{\|}_{m,*,T}\|w\|_{W_*^{1,\infty}(\varOmega_T) } +{\|}w{\|}_{m,*,T}\|u\|_{W_*^{1,\infty}(\varOmega_T) },
\label{Moser2} \\
	{\|}F(u){\|}_{m,*,T}&\leq C(M_*){\|}u{\|}_{m,*,T},
\label{Moser3}
	\end{align}
	for any multi-indices $\alpha,\beta\in\mathbb{N}^{{5}}$ with $\langle \alpha\rangle+\langle\beta\rangle\leq m$.
\end{lemma}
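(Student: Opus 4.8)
\textbf{Proof proposal for Lemma~\ref{lem:Moser2}.}

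The plan is to reduce the anisotropic statements to the already-known isotropic ones (Lemma~\ref{lem:Moser1}) by exploiting the product structure of the weighted operators $\mathrm{D}_*^\alpha$. The key combinatorial observation is that for multi-indices with $\langle\alpha\rangle+\langle\beta\rangle\le m$, each purely tangential factor $\p_t^{\alpha_0}(\sigma\p_1)^{\alpha_1}\p_2^{\alpha_2}\cdots\p_d^{\alpha_d}$ behaves like an ordinary derivative of order $|\alpha|-\alpha_{d+1}$ acting in the $(t,x')$ directions plus the conormal direction $\sigma\p_1$, while the normal factor $\p_1^{\alpha_{d+1}}$ must be counted with double weight because of the definition $\langle\alpha\rangle=|\alpha|+\alpha_{d+1}$. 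After a change of variables that flattens $\sigma\p_1$ to a genuine coordinate derivative on the half-space (or, more cleanly, by working directly with the conormal vector fields $Z_0=\p_t$, $Z_1=\sigma(x_1)\p_1$, $Z_2=\p_2,\dots,Z_d=\p_d$ together with the extra field $\p_1$), the space $H_*^m$ is exactly the space of functions all of whose products $Z^\gamma\p_1^j u$ with $|\gamma|+2j\le m$ lie in $L^2$. I would phrase everything in terms of these vector fields from the start.

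First I would prove \eqref{Moser1}. Apply $\mathrm{D}_*^\alpha$ to $u$ and $\mathrm{D}_*^\beta$ to $w$ and split each multi-index into its tangential part and its normal part; write $\mathrm{D}_*^\alpha u=Z^{\gamma}\p_1^{j}u$ with $|\gamma|+2j=\langle\alpha\rangle$ and similarly for $w$. Because the $Z_i$ commute with one another up to lower-order conormal terms (the only nontrivial commutator being $[Z_1,\p_1]=-\sigma'\p_1$, which is again of the admissible form since $\sigma'$ is smooth and bounded), one may assume the derivatives are ``ordered.'' The product $Z^{\gamma}\p_1^{j}u\cdot Z^{\delta}\p_1^{k}w$ then has total conormal-plus-double-normal weight $\le m$, and by the usual Gagliardo--Nirenberg/Moser interpolation in the flattened coordinates — exactly Lemma~\ref{lem:Moser1} applied on the half-space with the conormal directions playing the role of ordinary coordinates and the weight $\|\cdot\|_{W^{1,\infty}_*}$ controlling the top $L^\infty$ norm — one gets the bilinear bound. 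The low-regularity $L^\infty$ factor is always $\|\cdot\|_{W^{1,\infty}_*}$ because interpolation distributes at most one derivative onto the $L^\infty$ slot. Then \eqref{Moser2} follows from \eqref{Moser1} by summing over $\langle\alpha\rangle+\langle\beta\rangle\le m$ and using the Leibniz expansion of $\mathrm{D}_*^\gamma(uw)$, where each term $\mathrm{D}_*^\alpha u\,\mathrm{D}_*^\beta w$ is of the form already estimated; one must check that the Leibniz rule for $\mathrm{D}_*^\gamma$ holds with the expected multi-index bookkeeping, which again reduces to the commutator structure of the $Z_i$ and $\p_1$.

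For the composition estimate \eqref{Moser3} I would argue by induction on $m$, as in the classical proof. Write $\mathrm{D}_*^\gamma F(u)$ via the Fa\`a di Bruno formula in terms of the conormal and normal vector fields: $\mathrm{D}_*^\gamma F(u)$ is a sum of terms $F^{(r)}(u)\prod_{\ell=1}^{r}\mathrm{D}_*^{\gamma_\ell}u$ with $\sum_\ell\langle\gamma_\ell\rangle=\langle\gamma\rangle$ and each $\langle\gamma_\ell\rangle\ge1$; the key point — valid precisely because $\langle\cdot\rangle$ is additive under the partition — is that the weights add up correctly, so each such monomial is controlled by the multilinear version of \eqref{Moser1} (obtained by iterating the bilinear estimate), giving $\lesssim\|u\|_{W^{1,\infty}_*}^{r-1}\|u\|_{m,*,T}$, while $\|F^{(r)}(u)\|_{L^\infty}\le C(M_*)$ by \eqref{norm.W*} and smoothness of $F$. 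Summing over $r$ and over $\gamma$ with $\langle\gamma\rangle\le m$ yields \eqref{Moser3}. The main obstacle I anticipate is the bookkeeping of the commutators $[Z_1,\p_1]=-\sigma'\p_1$ and, more generally, verifying that all ``lower-order remainder'' terms produced when reordering $\mathrm{D}_*^\alpha$ into the canonical form $Z^\gamma\p_1^j$ still have conormal-plus-double-normal weight strictly below $m$ and hence are absorbable; getting this commutator calculus exactly right (including that $\sigma'$ vanishes for $x_1\ge1$ and equals $1$ for small $x_1$, so no boundary weight is lost) is the delicate part. Everything else is a transcription of the standard isotropic Moser estimates through the conormal flattening.
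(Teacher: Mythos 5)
The paper does not actually prove Lemma~\ref{lem:Moser2}: it refers to \cite[Theorem B.3]{MST09MR2604255} for \eqref{Moser1}--\eqref{Moser2} and to \cite[Appendix B]{T09ARMAMR2481071} for \eqref{Moser3}, and your sketch follows essentially the same route taken in those references — anisotropic Gagliardo--Nirenberg interpolation for the conormal fields $\p_t,\sigma\p_1,\p_2,\dots,\p_d$ together with the doubly weighted $\p_1$, a Leibniz expansion for \eqref{Moser2}, and Fa\`a di Bruno with the additive weight $\langle\cdot\rangle$ for \eqref{Moser3}. The only point to flag is that the interpolation step cannot literally be obtained by ``flattening'' $\sigma\p_1$ into a coordinate derivative (no measure-compatible change of variables on the half-space does this, since $\sigma$ vanishes at $x_1=0$), nor by directly invoking Lemma~\ref{lem:Moser1}; the anisotropic interpolation inequalities must be established separately for the vector fields themselves, which is precisely the content of \cite[Theorem B.3]{MST09MR2604255} and is the reason the low-regularity slot carries the $W_*^{1,\infty}$ norm rather than $L^\infty$ — a feature your sketch correctly anticipates.
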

One can find the proof of the inequalities \eqref{Moser1}--\eqref{Moser2} in \cite[Theorem B.3]{MST09MR2604255} and the proof of \eqref{Moser3} in \cite[Appendix B]{T09ARMAMR2481071}.

\begin{lemma}[Embedding theorem for $H_*^m$] \label{lem:embed}
	The following inequalities hold:
	\begin{alignat}{3}
	\|u\|_{L^{\infty}(\varOmega_T)}&\,\lesssim {\|}u{\|}_{3,*,T},
	\quad \|u\|_{W_*^{1,\infty}(\varOmega_T)}&\,\lesssim {\|}u{\|}_{4,*,T},
\label{embed1} \\
	\|u\|_{W^{1,\infty}(\varOmega_T)}&\,\lesssim {\|}u{\|}_{5,*,T},
	\quad \|u\|_{W_*^{2,\infty}(\varOmega_T)}&\,\lesssim {\|}u{\|}_{6,*,T},
\label{embed2}
	\end{alignat}
	where $\|u\|_{W_*^{1,\infty}(\varOmega_T) }$ is defined by \eqref{norm.W*}, and
	$$
	\|u\|_{W_*^{2,\infty}(\varOmega_T) }:=\sum_{\langle \alpha\rangle\leq 1}\| \mathrm{D}_*^{\alpha} u\|_{W^{1,\infty}(\varOmega_T) }.
	$$
\end{lemma}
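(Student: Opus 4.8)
\textbf{Proof proposal for Lemma \ref{lem:embed}.}

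The plan is to prove these embeddings by the usual device of relating the anisotropic norm $\|\cdot\|_{m,*,T}$ to ordinary Sobolev norms that handle the conormal (tangential and weighted-normal) directions separately from the purely normal direction, and then invoking the standard Sobolev embedding $H^k(\mathbb{R}^N)\hookrightarrow L^\infty$ for $k>N/2$, here with $N=d+1=4$ spacetime dimensions. The key structural fact is the embedding chain already recorded in the excerpt, $H^m(\varOmega_T)\hookrightarrow H^m_*(\varOmega_T)\hookrightarrow H^{\lfloor m/2\rfloor}(\varOmega_T)$, together with the observation that applying a conormal operator $\mathrm{D}_*^\alpha$ with $\langle\alpha\rangle\le 1$ to $u$ and then passing to $H^{\lfloor (m-1)/2\rfloor}$ gains control of an extra derivative in the $W_*^{j,\infty}$-type norms. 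Concretely, for the first inequality in \eqref{embed1} I would note $\|u\|_{L^\infty(\varOmega_T)}\lesssim \|u\|_{H^{\lfloor 3/2\rfloor}}\cdot$—no, more carefully: in $4$ space-time dimensions one needs $H^3$ to embed into $L^\infty$ (since $3>4/2=2$ with room to spare, and $2$ would be borderline), and by the chain $H^3_*\hookrightarrow H^{\lfloor 3/2\rfloor}=H^1$ this is not yet enough. So the honest argument must exploit anisotropy: split derivatives so that normal derivatives beyond the first are ``cheap.''

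The cleanest route is the one in Secchi \cite{S96MR1405665} and in \cite{MST09MR2604255}: first establish the intermediate statement that for each fixed $x_1>0$ (or on each slab), $u(\cdot,x_1,\cdot)$ as a function of $(t,x')\in\mathbb{R}^{3}$ lies in an ordinary Sobolev space whose norm is controlled, after integrating in $x_1$ against the weight $\sigma$, by the $H^m_*$ norm; then apply a one-dimensional-in-$x_1$ Agmon/Gagliardo–Nirenberg inequality together with the tangential embedding. Thus I would proceed in the following steps. First, recall the ``anisotropic'' Sobolev embedding $H^m_*(\varOmega_T)\hookrightarrow L^\infty(\varOmega_T)$ valid for $m\ge 3$ when $d=3$: this follows because $3$ conormal derivatives plus the structure of $\mathrm{D}_*^\alpha$ (which includes both $\sigma\partial_1$ and $\partial_1$ in the definition \eqref{D*} of $\langle\alpha\rangle$) suffice to bound $u$ in $L^\infty$ — each normal derivative $\partial_1$ counts twice in $\langle\alpha\rangle$, which is precisely the budget needed to compensate for the degeneracy of the weight at $x_1=0$. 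This gives the first inequality in \eqref{embed1}. Second, apply this same $L^\infty$ embedding not to $u$ but to $\mathrm{D}_*^\beta u$ for every $\beta$ with $\langle\beta\rangle\le 1$: since $\langle\beta\rangle+\langle\alpha\rangle\le 1+3=4$, we get $\|\mathrm{D}_*^\beta u\|_{L^\infty(\varOmega_T)}\lesssim \|u\|_{4,*,T}$ for all such $\beta$, and summing over $\langle\beta\rangle\le 1$ yields $\|u\|_{W_*^{1,\infty}(\varOmega_T)}\lesssim\|u\|_{4,*,T}$, the second inequality in \eqref{embed1}. Third, for the genuine $W^{1,\infty}$ bound in \eqref{embed2} one must also control the un-weighted normal derivative $\partial_1 u$ in $L^\infty$; writing $\partial_1 u = \mathrm{D}_*^{\alpha} u$ with $\langle\alpha\rangle=2$ (the multi-index with $\alpha_{d+1}=1$) and invoking the $L^\infty$ embedding at level $3$ applied to this function, one needs $2+3=5$ conormal derivatives, giving $\|\partial_1 u\|_{L^\infty}\lesssim\|u\|_{5,*,T}$; combined with the first part this proves $\|u\|_{W^{1,\infty}(\varOmega_T)}\lesssim\|u\|_{5,*,T}$. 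Fourth, the last inequality in \eqref{embed2} is obtained by applying the $W_*^{1,\infty}$-embedding (already proved, needing $4$ levels) to $\mathrm{D}_*^\beta u$ with $\langle\beta\rangle\le 1$: since $1+4=5$—wait, but the claim is $\|u\|_{W_*^{2,\infty}}\lesssim\|u\|_{6,*,T}$, so one rather applies the $W^{1,\infty}$-embedding at level $5$ to each $\mathrm{D}_*^\beta u$, $\langle\beta\rangle\le1$, getting $1+5=6$ and hence $\sum_{\langle\beta\rangle\le1}\|\mathrm{D}_*^\beta u\|_{W^{1,\infty}}\lesssim\|u\|_{6,*,T}$, which by the definition of $W_*^{2,\infty}$ given in the statement is exactly the assertion.

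The one nontrivial ingredient — and the main obstacle — is the base case $H^3_*(\varOmega_T)\hookrightarrow L^\infty(\varOmega_T)$ for $d=3$ (resp.\ $H^2_*\hookrightarrow L^\infty$ for $d=2$). Everything else is bookkeeping on the multi-index budget $\langle\alpha\rangle$, exploiting the fact that $\langle\alpha\rangle = |\alpha|+\alpha_{d+1}$ charges each plain normal derivative at double rate. For the base case I would argue as follows: freeze $x_1$ and use the ordinary $3$-dimensional embedding $H^2(\mathbb{T}^{2}\times(-\infty,T))\hookrightarrow L^\infty$ in the $(t,x')$ variables, so that $\|u(\cdot,x_1,\cdot)\|_{L^\infty(t,x')}^2 \lesssim \sum_{|\alpha'|\le 2}\|\partial_{t,x'}^{\alpha'} u(\cdot,x_1,\cdot)\|_{L^2(t,x')}^2$ uniformly in $x_1$; then control the $L^\infty$ norm in $x_1$ by a one-dimensional Agmon inequality $\|g\|_{L^\infty(0,\infty)}^2\lesssim \|g\|_{L^2}\|g'\|_{L^2}+\|g\|_{L^2}^2$ applied to $g(x_1)=\|u(\cdot,x_1,\cdot)\|_{L^\infty(t,x')}$ — or, more cleanly, applied to each $\partial_{t,x'}^{\alpha'}u$ directly — noting that $g'$ involves $\partial_1 u$, whose $L^2$ integral over the slab is exactly what $\langle\alpha\rangle$ with $\alpha_{d+1}=1$ (weight $=3$, but here only one factor since combined with $|\alpha'|\le 1$ so total $\le 3$) is designed to bound. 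Matching the counts, two tangential derivatives from the slab embedding, plus either zero or one plain normal derivative from Agmon, together with the slab's own $L^2$-in-$x_1$ integration, uses at most $\langle\alpha\rangle\le 3$ conormal derivatives, completing the proof. I would cite \cite{S96MR1405665} or \cite{MST09MR2604255} for the precise version of this anisotropic embedding rather than redoing all the slicing, and present only the count-matching argument that derives \eqref{embed1}--\eqref{embed2} from that base embedding.
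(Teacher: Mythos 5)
Your proposal is correct and follows essentially the same route as the paper: the base case $\|u\|_{L^{\infty}(\varOmega_T)}\lesssim\|u\|_{3,*,T}$ is taken from \cite[Theorem B.4]{MST09MR2604255} (using $\varOmega_T\subset\mathbb{R}^4$), and the remaining three inequalities follow by applying it to $\mathrm{D}_*^{\alpha}u$ and counting $\langle\alpha\rangle$, exactly as the paper does via the observation $\|u\|_{W^{1,\infty}(\varOmega_T)}\leq\sum_{\langle\alpha\rangle\leq 2}\|\mathrm{D}_*^{\alpha}u\|_{L^{\infty}(\varOmega_T)}$. The only difference is that you additionally sketch a slicing/Agmon proof of the base embedding, which the paper simply cites.
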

Thanks to \cite[Theorem B.4]{MST09MR2604255} and $\varOmega_T\subset \mathbb{R}^4$,
we obtain the first inequality in \eqref{embed1}, which implies the second one by definition.
Observing that
\begin{align*}
\|u\|_{W^{1,\infty}(\varOmega_T)}\leq \sum_{\langle \alpha\rangle\leq 2}\| \mathrm{D}_*^{\alpha} u\|_{L^{\infty}(\varOmega_T) },
\end{align*}
we derive \eqref{embed2} from the first inequality in \eqref{embed1}.

For deriving higher-order energy estimates, we also need to use the following trace theorem for the anisotropic Sobolev spaces $H_*^m$.
\begin{lemma}[{\cite[Theorem 1]{OSY94MR1289186}}] \label{lem:trace}
	Let $m\geq 1$ be an integer.
	\begin{itemize}
		\item[a)]  If $u\in H_*^{m+1}(\varOmega_T)$, then its trace $u|_{x_1=0}$ belongs to $H^{m}(\varSigma_T)$ and  satisfies $\|u|_{x_1=0}\|_{H^{m}(\varSigma_T)}\lesssim \|u\|_{m+1,*,T}$.
		\item[b)]  There exists a continuous operator $\mathfrak{R}_T: H^{m}(\varSigma_T) \to H_*^{m+1}(\varOmega_T)$ such that $(\mathfrak{R}_T w)|_{x_1=0}=w $ and $\|\mathfrak{R}_T w\|_{m+1,*,T}\lesssim \|w\|_{H^{m}(\varSigma_T)}$.
	\end{itemize}
\end{lemma}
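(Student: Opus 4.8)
This is \cite[Theorem~1]{OSY94MR1289186}; for completeness we indicate the proof. Both statements reduce, via a partial Fourier transform in the tangential variables $(t,x')$, to one-dimensional weighted estimates in the normal variable $x_1$. After extending functions in time across $\{t=T\}$ by a bounded linear operator whose norm may be taken independent of $T$ (translate to a fixed half-line and reflect), it suffices to work on $\mathbb{R}_t\times\varOmega$ and on $\mathbb{R}_t\times\varSigma$, with $\varSigma$ identified with $\mathbb{T}^{d-1}$. Write $\hat u(\eta,x_1)$ for the Fourier transform of $u$ in $(t,x')$, where $\eta=(\tau,k')\in\mathbb{R}\times\mathbb{Z}^{d-1}$ and $\langle\eta\rangle:=(1+|\eta|^2)^{1/2}$, and for a multi-index $\alpha$ put $p:=\alpha_0+\alpha_2+\cdots+\alpha_d$. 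Then $\p_t^{\alpha_0}\p_2^{\alpha_2}\cdots\p_d^{\alpha_d}$ corresponds to a Fourier multiplier of size $\lesssim\langle\eta\rangle^{p}$, and by Plancherel $\|u\|_{m+1,*,T}^2$ is comparable (both as an upper and a lower bound, with the natural monomials replacing $\langle\eta\rangle^{2p}$) to $\sum_{\langle\alpha\rangle\le m+1}\int\langle\eta\rangle^{2p}\|(\sigma\p_1)^{\alpha_1}\p_1^{\alpha_{d+1}}\hat u(\eta,\cdot)\|_{L^2(\mathbb{R}_+)}^2\,d\eta$, while $\|w\|_{H^m(\varSigma_T)}^2\simeq\int\langle\eta\rangle^{2m}|\hat w(\eta)|^2\,d\eta$.

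For assertion (a): since $m\ge1$ we have $u,\p_1u\in L^2(\varOmega_T)$, hence $v:=\hat u(\eta,\cdot)\in H^1(\mathbb{R}_+)$ for a.e.\ $\eta$, and the elementary trace inequality on the half-line gives $|v(0)|^2\le 2\|v\|_{L^2}\|\p_1v\|_{L^2}$. Multiplying by $\langle\eta\rangle^{2m}=\langle\eta\rangle^{m+1}\langle\eta\rangle^{m-1}$ and applying Young's inequality,
\[
\langle\eta\rangle^{2m}|v(0)|^2\le \langle\eta\rangle^{2m+2}\|v\|_{L^2(\mathbb{R}_+)}^2+\langle\eta\rangle^{2m-2}\|\p_1v\|_{L^2(\mathbb{R}_+)}^2 .
\]
Integrating in $\eta$ and noting that $\langle\eta\rangle^{2m+2}|\hat u|^2$ is dominated by the purely tangential contributions with $\langle\alpha\rangle=p\le m+1$, and $\langle\eta\rangle^{2m-2}|\p_1\hat u|^2$ by the contributions with $\alpha_{d+1}=1$ and $p\le m-1$ (so $\langle\alpha\rangle=p+2\le m+1$), the right-hand side integrates to $\lesssim\|u\|_{m+1,*,T}^2$. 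This gives $\|u|_{x_1=0}\|_{H^m(\varSigma_T)}\lesssim\|u\|_{m+1,*,T}$.

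For assertion (b): fix $\zeta\in C_0^{\infty}(\mathbb{R})$ with $\supp\zeta\subset[-1,1]$ and $\zeta(0)=1$, extend $w$ in time to $\tilde w$ with $\|\tilde w\|_{H^m(\mathbb{R}\times\varSigma)}\lesssim\|w\|_{H^m(\varSigma_T)}$, and define $\mathfrak{R}_Tw$ to be the restriction to $\varOmega_T$ of the function with tangential Fourier transform $\widehat{\tilde w}(\eta)\,\zeta(\langle\eta\rangle^2x_1)$. Since $\zeta(0)=1$ we have $(\mathfrak{R}_Tw)|_{x_1=0}=w$, and $\mathfrak{R}_T$ is linear. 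For the estimate, observe that when $\langle\eta\rangle\ge\sqrt2$ the profile $\zeta(\langle\eta\rangle^2x_1)$ is supported in $\{0<x_1\le\langle\eta\rangle^{-2}\le\tfrac12\}$, where $\sigma(x_1)=x_1$; there $\p_1^{\alpha_{d+1}}$ brings out a factor $\langle\eta\rangle^{2\alpha_{d+1}}$, whereas $\sigma\p_1=x_1\p_1$ acts on any $\psi(\langle\eta\rangle^2x_1)$ as $(z\p_z\psi)(\langle\eta\rangle^2x_1)$, producing \emph{no} power of $\langle\eta\rangle$ and only a new compactly supported profile of $z=\langle\eta\rangle^2x_1$. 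Hence $(\sigma\p_1)^{\alpha_1}\p_1^{\alpha_{d+1}}\zeta(\langle\eta\rangle^2x_1)=\langle\eta\rangle^{2\alpha_{d+1}}\Phi_\alpha(\langle\eta\rangle^2x_1)$ for a fixed $\Phi_\alpha\in C_0^\infty$, and a change of variables gives $\|(\sigma\p_1)^{\alpha_1}\p_1^{\alpha_{d+1}}\zeta(\langle\eta\rangle^2\cdot)\|_{L^2(\mathbb{R}_+)}^2\lesssim\langle\eta\rangle^{4\alpha_{d+1}-2}$; the range $\langle\eta\rangle<\sqrt2$ is trivially controlled by $\|\widehat{\tilde w}\|_{L^2}^2$. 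Therefore $\|\mathrm{D}_*^{\alpha}\mathfrak{R}_Tw\|_{L^2}^2\lesssim\int|\widehat{\tilde w}(\eta)|^2\,\langle\eta\rangle^{2p+4\alpha_{d+1}-2}\,d\eta$. Because $\langle\alpha\rangle=p+\alpha_1+2\alpha_{d+1}$, the exponent satisfies $2p+4\alpha_{d+1}-2\le 2\langle\alpha\rangle-2\le 2m$ whenever $\langle\alpha\rangle\le m+1$, so each term is $\lesssim\|\tilde w\|_{H^m}^2\lesssim\|w\|_{H^m(\varSigma_T)}^2$; summing over $\langle\alpha\rangle\le m+1$ yields $\|\mathfrak{R}_Tw\|_{m+1,*,T}\lesssim\|w\|_{H^m(\varSigma_T)}$.

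The ancillary steps — the $T$-uniform time-extension, the reduction to Fourier series on $\mathbb{T}^{d-1}$, and the region $x_1>\tfrac12$ (where $\sigma\neq x_1$, but the high-frequency profiles are anyway supported in $x_1\le\tfrac12$ and the low frequencies are harmless) — are routine. The one genuinely delicate point is the choice of lifting profile in assertion (b): the normal variable must be rescaled by $\langle\eta\rangle^{-2}$, rather than by the $\langle\eta\rangle^{-1}$ one would use for the classical trace space $H^{m+1/2}$, and this is admissible precisely because the conormal operators $(\sigma\p_1)^{\alpha_1}$ — although each contributes one unit to $\langle\alpha\rangle$ — contribute no growth in $\langle\eta\rangle$. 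This is the mechanism by which $H_*^{m+1}$ admits a full-integer trace $H^m$, a conclusion far stronger than what the embedding $H_*^{m+1}\hookrightarrow H^{\lfloor(m+1)/2\rfloor}$ alone would give.
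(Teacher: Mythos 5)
Your proof is correct. There is, however, nothing in the paper to compare it with: the paper does not prove this lemma at all, but quotes it from Ohno--Shizuta--Yanagisawa \cite[Theorem 1]{OSY94MR1289186}; what you supply is a self-contained Fourier-analytic proof along the natural (and essentially classical) lines. The two decisive points are handled correctly: in (a), the half-line trace inequality combined with the split $\langle\eta\rangle^{2m}=\langle\eta\rangle^{m+1}\cdot\langle\eta\rangle^{m-1}$ exactly reflects that one normal derivative costs two units of $\langle\alpha\rangle$, so both resulting terms (pure tangential derivatives of order $m+1$, and $\p_1$ with $m-1$ tangential derivatives) lie within $H_*^{m+1}$; in (b), the anisotropic rescaling $\zeta(\langle\eta\rangle^{2}x_1)$ instead of the classical $\zeta(\langle\eta\rangle x_1)$ is the right choice precisely because $\sigma\p_1$ generates no tangential frequency, and your exponent bookkeeping $2p+4\alpha_{d+1}-2\le 2\langle\alpha\rangle-2\le 2m$ for $\langle\alpha\rangle\le m+1$ is accurate (and shows where $m\ge 1$ enters). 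The steps you defer as routine (a $T$-uniform reflection-type time extension, density of smooth functions to justify the a.e.-in-$\eta$ manipulations, the low-frequency and $x_1>1/2$ regions) are indeed routine. One caveat, not a gap in the proof of the statement as written: your $\mathfrak{R}_T$ is a tangential Fourier multiplier in $(t,x')$, hence nonlocal in time and not support-preserving, whereas the paper later applies $\mathfrak{R}_T$ to boundary data vanishing in the past and uses liftings that also vanish in the past (e.g.\ in the construction of $V_{\natural}$ and of $v_{1,n+1/2}$); to serve that use one would have to adjust the construction so that causality of the data is preserved.
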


\subsection{Well-posedness in $L^2$}

It is more convenient to reduce the problem \eqref{ELP1} to the case with homogeneous boundary conditions. 
More precisely, for the source term $g=(g_1,g_2)^{\mathsf{T}}\in H^{m+1}(\varSigma_T)$, 
we employ Lemma~\ref{lem:trace} to find a function $V_{\natural}:=(q_{\natural},v_{\natural}, 0,0,0,0)^{\mathsf{T}}$ in $H_*^{m+2}(\varOmega_T)$ with $q_{\natural}:=\mathfrak{R}_Tg_2$ and $v_{\natural}:=(\mathfrak{R}_T(-g_1),0,0)^{\mathsf{T}}$, vanishing in the past and satisfying
\begin{align} \label{V.natural}
\mathbb{B}'_e(\mathring{U}, \mathring{\varphi}) (V_{\natural},0)\big|_{\varSigma_T}
=\begin{pmatrix}
-v_{\natural}\cdot \mathring{N}\\
q_{\natural}
\end{pmatrix}\bigg|_{\varSigma_T}
=g,
\ \ 
{\|}V_{\natural}{\|}_{s+2,*,T}\lesssim \|g\|_{H^{s+1}(\varSigma_T)}
\end{align}
for all $s\in\{0,\ldots,m\}.$
Then the vector $V_{\flat}:=\dot{V}-V_{\natural}$ solves the problem \eqref{ELP1} with zero boundary source term and new internal source term $\tilde{f}$, that is,
\begin{subequations} \label{ELP2}
	\begin{alignat}{3}
		\label{ELP2a}
		&\mathbb{L}'_{e}(\mathring{U}, \mathring{\varPhi}) {V}=\tilde{f}
		:= f-\mathbb{L}_{e}'(\mathring{U},\mathring{\varPhi})V_{\natural}
		&\quad &\textrm{if } x_1>0,\\
		\label{ELP2b}
		&\mathbb{B}'_e(\mathring{U}, \mathring{\varphi}) ({V},\psi)=0
		&&\textrm{if } x_1=0,\\
		\label{ELP2c}
		&({V},\psi)=0   &&\textrm{if } t<0,
	\end{alignat}
\end{subequations}
where we have dropped subscript ``$\flat$'' for notational simplicity.
Furthermore, we shall introduce a new unknown $W$ in order to separate the noncharacteristic variables from others for the problem \eqref{ELP2}. 
To be more precise, we set
\begin{align} \nonumber
	W:=(q, v_1-\p_2\mathring{\varPhi}v_2-\p_3\mathring{\varPhi}v_3, v_2, v_3, H_1,H_2,H_3, S)^{\mathsf{T}},
\end{align}
or equivalently,
\begin{align} \label{J.ring}
	W:=\mathring{J}^{-1}V\quad \textrm{with }\mathring{J}:=
	\begin{pmatrix}1 & 0 & 0 & 0 & 0 & 0 & 0 & 0  \\
		0 & 1 & \partial_2\mathring{\varPhi}  & \partial_3\mathring{\varPhi}  & 0 & 0 & 0 & 0  \\
		0 & 0 & 1 & 0 & 0 & 0 & 0 & 0  \\
		0 & 0 & 0 & 1 & 0 & 0 & 0 & 0  \\
		0 & 0 & 0 & 0 & 1& 0 & 0 & 0  \\
		0& 0 & 0 & 0 &0  & 1& 0 & 0  \\
		0& 0 & 0 & 0 & 0  & 0 & 1& 0  \\
		\w0 & \w0 & \w0 & \w0 & \w0 & \w0 & \w0 & 1
	\end{pmatrix}.
\end{align}
Then the problem \eqref{ELP2} can be reduced to
\begin{subequations}
	\label{ELP3}
	\begin{alignat}{3}
		\label{ELP3a}
		&{\bm{L}}W:={\bm{A}}_0\p_tW+{\bm{A}}_i\p_i W +{\bm{A}}_4 W =\mathring{J}^{\mathsf{T}}\tilde{f}&\quad &\textnormal{in }\varOmega_T,\\
		\label{ELP3b}
		&W_1=-\p_1\mathring{q} \psi &\quad &\textnormal{on }\varSigma_T,\\
		\label{ELP3c}
		&W_2=(\p_t+\mathring{v}_2\p_2+\mathring{v}_3\p_3)\psi-\p_1\mathring{v}_N \psi
		&\quad &\textnormal{on }\varSigma_T,\\	
		\label{ELP3d}
		&(W,\psi)=0 &\quad &\textnormal{if }t<0,			
	\end{alignat}
\end{subequations}
where
${\bm{A}}_1:=\mathring{J}^{\mathsf{T}}\widetilde{A}_1(\mathring{U},\mathring{\varPhi})\mathring{J}$,
${\bm{A}}_4:=\mathring{J}^{\mathsf{T}}\mathbb{L}_e'(\mathring{U},\mathring{\varPhi})\mathring{J}$,
and
${\bm{A}}_{i}:=\mathring{J}^{\mathsf{T}}{A}_{i}(\mathring{U})\mathring{J}$
for $i=0,2,3$.
Notice that the system \eqref{ELP3a} is still symmetric hyperbolic.
By virtue of \eqref{bas1b}, we have
\begin{align} \nonumber 
	\widetilde{A}_1(\mathring{U},\mathring{\varPhi})=
	\begin{pmatrix}
		0 &  \mathring{N}^{\mathsf{T}} & 0 &0 \\
		\mathring{N} &  O_3 &O_3  &0\\
		0 & O_3  & O_3 & 0\\
		\w{0} & \w{0} & \w{0} & \w{0}
	\end{pmatrix} \quad
	\textrm{on } \varSigma_T,
\end{align}
which implies the following decomposition:
\begin{align}
	\label{decom}
	{\bm{A}}_1={\bm{A}}_1^{(1)}+{\bm{A}}_1^{(0)}
	\quad \textrm{with } {\bm{A}}_1^{(0)}\big|_{x_1=0}=0,\
	{\bm{A}}_1^{(1)}:=
	\begin{pmatrix}
		0 & 1 &0 \\
		1 & 0 & 0\\
		0 & 0 & O_6
	\end{pmatrix}.
\end{align}
According to the kernel of the matrix ${\bm{A}}_1$ on the boundary $\varSigma_T$, 
we denote by $W_{\rm nc}:=(W_1,W_2)^{\mathsf{T}}$ the noncharacteristic variables.
The boundary matrix for the problem \eqref{ELP3}, namely $-{\bm{A}}_1$,
has one negative, one positive, and six zero eigenvalues on the boundary $\varSigma_T$.
As discussed in Section \ref{sec:nonlinear}, 
the correct number of boundary conditions is two, which is just the case in \eqref{ELP3b}--\eqref{ELP3c}.
Therefore, for the hyperbolic problem \eqref{ELP3}, 
the boundary is {\it characteristic of constant multiplicity} and {\it the maximality condition} is fulfilled ({\it cf.}~\cite[Definition 2 and (11)]{R85MR0797053}).

Let us turn to derive the $L^2$ {\it a priori} estimate for solutions of \eqref{ELP3}. Take the scalar product of \eqref{ELP3a} with $W$ to get
\begin{align} \label{est:0a}
	\mathcal{E}_0(t)-\int_{\varSigma_t} \bm{A}_1 W \cdot W\d x'\d{\tau}
	\leq C(K)\left(\|\mathring{J}^{\mathsf{T}}\tilde{f}\|_{L^2(\varOmega_T)}^2+\int_{0}^{t}\mathcal{E}_0({\tau})\d{\tau}  \right),
\end{align}
where $\mathcal{E}_0(t):=\int_{\varOmega} {\bm{A}}_0W\cdot W\d x.$
By virtue of \eqref{decom} and \eqref{ELP3b}--\eqref{ELP3c}, we obtain
\begin{align*}
-\bm{A}_1 W \cdot W
	=\,&-2W_1W_2=\p_t\left(\p_1\mathring{q} \psi^2\right) +\sum_{i=2,3}\p_i\left(\mathring{v}_i \p_1\mathring{q} \psi^2 \right)\\
	&-\bigg(\p_t\p_1\mathring{q} +\sum_{i=2,3}\p_i\left(\mathring{v}_i \p_1\mathring{q} \right) +{2}\p_1\mathring{q} \p_1\mathring{v}_N  \bigg)\psi^2
	\quad \textrm{on }\varSigma_T,
\end{align*}
which together with \eqref{est:0a} yields
\begin{align} \nonumber
	&\mathcal{E}_0(t)+\int_{\varSigma} \p_1\mathring{q} \psi^2 \d x'\\
\nonumber	
&\qquad \leq C(K)\bigg\{\|\tilde{f}\|_{L^2(\varOmega_T)}^2
	+\int_{0}^{t}\left(\mathcal{E}_0({\tau})+\|\psi({\tau})\|_{L^2(\varSigma)}^2 \right)\d{\tau}  \bigg\}.
\end{align}
Since the sign condition \eqref{bas1c} and ${\bm{A}}_0\geq \kappa_1 I_{8}$ are satisfied
for some positive constant $\kappa_1$ (independent of the light speed in the relativistic case),
we apply Gr\"{o}nwall's inequality to deduce the $L^2$ estimate
\begin{align}
	\label{est:0b}
	\|W\|_{L^2(\varOmega_T)}+\|\psi\|_{L^2(\varSigma_T)} \leq C(K) \|\tilde{f}\|_{L^2(\varOmega_T)}.
\end{align}
The last estimate exhibits no loss of derivatives from the source term $\tilde{f}$ to the solution $W$, so one can apply the classical argument in \cite{LP60MR0118949,R85MR0797053} and \cite[Chapter 7]{CP82MR0678605} to construct solutions of the problem \eqref{ELP3}.
We only need to show that the dual problem of \eqref{ELP3} satisfies an {\it a priori} estimate without loss of derivatives similar to \eqref{est:0b}. Let us define the following dual problem for \eqref{ELP3}:
\begin{subequations}
	\label{dual}
	\begin{alignat}{3}
		\label{dual.a}
		&{\bm{L}}^*U^*=f^*
		&\quad &\textnormal{in }\varOmega_T,\\
		\label{dual.b}
		&\p_t U_1^*+\p_2(\mathring{v}_2 U_1^*)+\p_3(\mathring{v}_3 U_1^*)+\p_1\mathring{q} U_2^*+\p_1\mathring{v}_N U_1^*=0
		&\quad &\textnormal{on }\varSigma_T,\\
		&U^*=0
		&\quad &\textnormal{if }t<0,
	\end{alignat}
\end{subequations}
where ${\bm{L}}^*:=-{\bm{L}}+{\bm{A}}_4+{\bm{A}}_4^{\mathsf{T}}-\p_t{\bm{A}}_0-\p_i{\bm{A}}_i$ denotes the formal adjoint of operator ${\bm{L}}$.
Taking the scalar product of \eqref{dual.a} with $U^*$ implies
\begin{align}\nonumber  
	\mathcal{E}_0^*(t)-2\int_{\varSigma_t} U_1^* U_2^*\d x'\d{\tau}
	\leq C(K)\left(\|f ^*\|_{L^2(\varOmega_T)}^2+\int_{0}^{t}\mathcal{E}_0^*({\tau})\d{\tau}  \right),
\end{align}
where $\mathcal{E}_0^*(t):=\int_{\varOmega} {\bm{A}}_0U^*\cdot U^*\d x.$
Then we utilize \eqref{dual.b} to obtain
\begin{align*}
	&\mathcal{E}_0^*(t)+\int_{\varSigma} \frac{(U_1^*)^2}{\p_1\mathring{q}} \d x'\\
	&\qquad \leq C(K)\bigg\{\|{f}^*\|_{L^2(\varOmega_T)}^2
	+\int_{0}^{t}\left(\mathcal{E}_0^*({\tau})+\|U_1^*({\tau})\|_{L^2(\varSigma)}^2 \right)\d{\tau}  \bigg\},
\end{align*}
which combined with the condition \eqref{bas1c} and Gr\"{o}nwall's inequality 
leads to
\begin{align}
	\nonumber
	\|U^*\|_{L^2(\varOmega_T)}\leq C(K)\|f^*\|_{L^2(\varOmega_T)}.
\end{align}
With the aid of the last estimate and \eqref{est:0b}, one can deduce the following well-posedness result in $L^2$ for the reformulated problem \eqref{ELP3}. We omit  further details that are standard and can be found in \cite{LP60MR0118949,R85MR0797053} and \cite[Chapter 7]{CP82MR0678605}.

\begin{theorem}\label{thm:4}
	Assume that the basic state $(\mathring{U},\mathring{\varphi})$ satisfies \eqref{bas1a}--\eqref{bas1d} and the source terms $f\in L^2(\varOmega_T)$, $g\in H^1(\varSigma_T)$ vanish in the past.
	Then the problem \eqref{ELP3} has a unique solution $(W,\psi)\in L^2(\varOmega_T)\times L^2(\varSigma_T)$ satisfying \eqref{est:0b}.
\end{theorem}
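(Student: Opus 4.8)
\textbf{Proof proposal for Theorem \ref{thm:4}.} The plan is to upgrade the basic $L^2$ a priori estimate \eqref{est:0b}, which has already been established for the forward problem \eqref{ELP3} with homogeneous boundary data, together with the analogous estimate for the dual problem \eqref{dual}, into an existence-and-uniqueness statement by the classical duality argument of Lax--Phillips and Rauch (as cited in \cite{LP60MR0118949,R85MR0797053} and \cite[Chapter~7]{CP82MR0678605}). First I would reduce to the homogeneous-boundary setting: using Lemma~\ref{lem:trace}(b) exactly as in \eqref{V.natural}, absorb the boundary source $g$ into a corrected interior source $\tilde f \in L^2(\varOmega_T)$, so that it suffices to solve \eqref{ELP3} with $g=0$. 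Uniqueness is then immediate from \eqref{est:0b}: if $(W,\psi)$ solves the homogeneous problem with $\tilde f = 0$, the estimate forces $W\equiv 0$ and $\psi\equiv 0$ on $\varOmega_T$.

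For existence, the key point is that the boundary of $\varOmega_T$ is characteristic of constant multiplicity and the maximality condition holds (as noted after \eqref{decom} via \cite[Definition~2 and (11)]{R85MR0797053}), so the Rauch--Massey/Lax--Phillips functional-analytic scheme applies. Concretely, I would consider the bilinear form $(W,U^\ast)\mapsto \int_{\varOmega_T}\bm{L}W\cdot U^\ast\,\d x\,\d t$ and integrate by parts to move $\bm{L}$ onto $U^\ast$, producing the formal adjoint $\bm{L}^\ast = -\bm{L}+\bm{A}_4+\bm{A}_4^{\mathsf T}-\p_t\bm{A}_0-\p_i\bm{A}_i$ together with a boundary term $\int_{\varSigma_T}\bm{A}_1 W\cdot U^\ast$. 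The decomposition \eqref{decom} shows $\bm{A}_1|_{x_1=0}$ couples only the noncharacteristic components $(W_1,W_2)$ and $(U_1^\ast,U_2^\ast)$, so imposing the forward boundary conditions \eqref{ELP3b}--\eqref{ELP3c} on $W$ and the dual boundary condition \eqref{dual.b} on $U^\ast$ makes this boundary term vanish (after also integrating by parts in $t$ and $x'$ along $\varSigma_T$, using \eqref{dual.b} to eliminate $U_2^\ast$ in terms of $\p_t U_1^\ast$ and lower-order terms). One then applies the Riesz representation theorem in $L^2(\varOmega_T)$: the map $U^\ast\mapsto\int_{\varOmega_T}f^\ast\cdot U^\ast$ is, thanks to the dual estimate $\|U^\ast\|_{L^2(\varOmega_T)}\le C(K)\|\bm{L}^\ast U^\ast\|_{L^2(\varOmega_T)}$, a bounded linear functional on the range of $\bm{L}^\ast$ restricted to the admissible dual space, hence extends and is represented by some $W\in L^2(\varOmega_T)$, which is then shown to be a weak, and by the estimate a strong, solution of \eqref{ELP3a} with the prescribed boundary conditions. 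Finally $\psi$ is recovered from the trace of $W_1$ via \eqref{ELP3b}, i.e.\ $\psi=-W_1|_{x_1=0}/\p_1\mathring q$, which is well-defined and in $L^2(\varSigma_T)$ because $\p_1\mathring q\ge\kappa_0/2>0$ by \eqref{bas1c}; one checks a posteriori that this $\psi$ also satisfies \eqref{ELP3c} and, consistently, \eqref{ELP3c} at the level of the ODE \eqref{ELP3c}, using that $W$ vanishes in the past.

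The main obstacle, and the reason the authors say they ``omit further details,'' is the careful functional-analytic bookkeeping in the weak-to-strong and weak-to-classical passage for a \emph{characteristic} boundary value problem: one must identify the correct spaces in which $\bm{L}W$ and the traces $W_{\rm nc}|_{x_1=0}$ live, verify that the weak solution obtained by duality actually attains the boundary conditions \eqref{ELP3b}--\eqref{ELP3c} in the trace sense (not merely in a distributional sense that sees only $\bm{A}_1W$), and confirm the causality condition \eqref{ELP3d}. This is precisely the content of the Rauch--Massey theory for symmetric hyperbolic systems with characteristic boundary of constant multiplicity satisfying the maximality condition; since the two energy inequalities \eqref{est:0b} and its dual analogue are exactly the hypotheses that theory requires, the conclusion follows, and the detailed verification is routine and available in \cite{LP60MR0118949,R85MR0797053} and \cite[Chapter~7]{CP82MR0678605}. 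A minor additional point to check is that the constant $\kappa_1$ with $\bm{A}_0\ge\kappa_1 I_8$ and the constant in \eqref{bas1c} are uniform in the light speed $\epsilon^{-1}$ in the relativistic case, so that Theorem~\ref{thm:4} holds uniformly in $\epsilon$; this is inherited directly from the structure of $A_0$ in \eqref{A0.def} and its relativistic analogue.
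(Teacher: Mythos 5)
Your proposal is correct and follows essentially the same route as the paper: the forward $L^2$ estimate \eqref{est:0b} together with the no-loss estimate for the dual problem \eqref{dual} (with the dual boundary condition \eqref{dual.b} chosen precisely so that the boundary term $\int_{\varSigma_T}\bm{A}_1W\cdot U^*$ vanishes after integration by parts along $\varSigma_T$) are fed into the classical Lax--Phillips/Rauch duality scheme, whose remaining functional-analytic details the paper likewise delegates to \cite{LP60MR0118949,R85MR0797053} and \cite[Chapter~7]{CP82MR0678605}. Your identification of the weak-to-strong passage and the trace sense of the boundary conditions for the characteristic problem as the ``omitted details'' matches the intent of the paper's proof.
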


\subsection{Higher-order Energy Estimates}

{We} now derive the higher-order energy estimates for solutions of the problem \eqref{ELP3}.
Let $\alpha=(\alpha_0,\alpha_1,\alpha_2,\alpha_3,\alpha_4)\in\mathbb{N}^5$ with $\langle \alpha \rangle:=\sum_{i=0}^{3}\alpha_i+2\alpha_4\leq m$. Applying the operator $\mathrm{D}_*^{\alpha}:=\p_t^{\alpha_0}(\sigma \p_1)^{\alpha_1}\p_2^{\alpha_2}\p_3^{\alpha_3}\p_1^{\alpha_4}$ to \eqref{ELP3a} yields
\begin{align} \label{iden1}
	{\bm{A}}_0\p_t\mathrm{D}_*^{\alpha}W+{\bm{A}}_i\p_i \mathrm{D}_*^{\alpha} W   =R_{\alpha},
\end{align}
where
\begin{align} \nonumber 
	R_{\alpha}:=\mathrm{D}_*^{\alpha}(\mathring{J}^{\mathsf{T}}\tilde{f})-\mathrm{D}_*^{\alpha}({\bm{A}}_4 W)
	-[\mathrm{D}_*^{\alpha},{\bm{A}}_i\p_i]W-[\mathrm{D}_*^{\alpha},{\bm{A}}_0\p_t] W.
\end{align}
Since it follows from from \eqref{decom} that $\bm{A}_1=\bm{A}_1^{(1)}$ on $\varSigma_T$, we take the scalar product of \eqref{iden1} with $\mathrm{D}_*^{\alpha} W$ to obtain
\begin{align}\label{est:alpha}
	\mathcal{E}_{\alpha}(t)=\mathcal{R}_{\alpha}(t)+\mathcal{Q}_{\alpha}(t),
\end{align}
where
\begin{align}
	\label{E.alpha}
	\mathcal{E}_{\alpha}(t):=\;&\int_{\varOmega}{\bm{A}}_0\mathrm{D}_*^{\alpha}W\cdot \mathrm{D}_*^{\alpha}W\d x\gtrsim \|\mathrm{D}_*^{\alpha}W(t)\|_{L^{2}(\varOmega)}^2 
	,\\
	\label{Q.alpha}
	\mathcal{Q}_{\alpha}(t):=\;&
	\int_{\varSigma_t}\bm{A}_1\mathrm{D}_*^{\alpha}W\cdot\mathrm{D}_*^{\alpha}W \d x'\d{\tau}
	= 2\int_{\varSigma_t}\mathrm{D}_*^{\alpha}W_1\mathrm{D}_*^{\alpha}W_2\d x'\d{\tau},\\
	\label{R.alpha}
	\mathcal{R}_{\alpha}(t):=\;&\int_{\varOmega_t}\mathrm{D}_*^{\alpha}W\cdot  \Big\{
	2R_{\alpha}+(\p_t{\bm{A}}_0 +\p_i {\bm{A}}_i)\mathrm{D}_*^{\alpha}W \Big\}\d x\d{\tau}.
\end{align}

{Hereafter we denote $\p_0:=\p_t$ for simplifying the presentation.}
First we make the estimate of $\mathcal{R}_{\alpha}(t)$ defined by \eqref{R.alpha}.
\begin{lemma}
	\label{lem:R.alpha}
If the assumptions in Theorem \ref{thm:3} are fulfilled, then
\begin{align}
\label{est:R.alpha}
\sum_{\langle\alpha\rangle \leq m}
\mathcal{R}_{\alpha}(t)\leq C(K)
\mathcal{M}(t),
\end{align}
where $\mathcal{M}(t)$ is defined by
\begin{align} 
\mathcal{M}(t):= {\|}(\tilde{f},W){\|}_{m,*,t}^2
+\mathring{\rm C}_{m+4}	 \|(\tilde{f},W)\|_{W_*^{2,\infty}(\varOmega_t)}^2,
\label{M.cal}
\end{align}
with $$\mathring{\rm C}_{s}:=1+{\|}(\mathring{V},\mathring{\varPsi}){\|}_{s,*,T}^2.$$
\end{lemma}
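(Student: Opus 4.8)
The plan is to estimate each of the three contributions to $\mathcal{R}_\alpha(t)$ — namely the source term $\mathrm{D}_*^\alpha(\mathring{J}^{\mathsf T}\tilde f)$, the zero-order term $\mathrm{D}_*^\alpha(\bm{A}_4 W)$, the commutators $[\mathrm{D}_*^\alpha,\bm{A}_i\p_i]W$ and $[\mathrm{D}_*^\alpha,\bm{A}_0\p_t]W$, plus the lower-order factor $(\p_t\bm{A}_0+\p_i\bm{A}_i)\mathrm{D}_*^\alpha W$ — in $L^2(\varOmega_t)$, then pair against $\mathrm{D}_*^\alpha W$ by Cauchy–Schwarz. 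Since all of these are products of (derivatives of) the smooth basic-state coefficients $\mathring{J},\bm{A}_i,\bm{A}_4$ with (derivatives of) $W$ or $\tilde f$, and $\bm{A}_4$ itself is built from first derivatives of $\mathring{U},\mathring{\varPhi}$, the governing tool is the anisotropic Moser inequality \eqref{Moser1}–\eqref{Moser3} from Lemma \ref{lem:Moser2}, together with the embeddings in Lemma \ref{lem:embed}.

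First I would write each coefficient as $C(\mathring{U},\nabla\mathring{\varPsi})$ for a smooth $C$ vanishing appropriately, so that by \eqref{Moser3} its $H_*^s$ norm is controlled by $\mathring{\rm C}_s$ and its $W_*^{1,\infty}$ norm by $C(K)$ using \eqref{bas1d}, \eqref{embed2}. Then for a product $a\cdot b$ with $a$ a coefficient and $b\in\{W,\tilde f\}$, the commutator terms expand (via the Leibniz rule for $\mathrm{D}_*^\alpha$) into sums of $\mathrm{D}_*^{\alpha'}a\,\mathrm{D}_*^{\alpha''}b$ with $\langle\alpha'\rangle+\langle\alpha''\rangle\le m$ and $\langle\alpha'\rangle\ge 1$; the inequality \eqref{Moser1} gives
\begin{align*}
\|\mathrm{D}_*^{\alpha'}a\,\mathrm{D}_*^{\alpha''}b\|_{L^2(\varOmega_t)}
\lesssim \|a\|_{m,*,t}\|b\|_{W_*^{1,\infty}(\varOmega_t)}+\|b\|_{m,*,t}\|a\|_{W_*^{1,\infty}(\varOmega_t)}.
\end{align*}
The first summand yields $\mathring{\rm C}_m\|(\tilde f,W)\|_{W_*^{2,\infty}}$-type terms (one extra derivative on $b$ is absorbed in $W_*^{2,\infty}$, consistent with \eqref{M.cal}); the second yields $C(K)\|(\tilde f,W)\|_{m,*,t}$. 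Interpolation (Young's inequality) between the low- and high-norm factors keeps everything quadratic and within $\mathcal{M}(t)$; here one uses $\mathring{\rm C}_{m+4}$ rather than $\mathring{\rm C}_m$ precisely to have room for the extra derivatives that $\bm{A}_4$ contributes and for the embedding losses. The $\bm{A}_4 W$ term is handled identically by \eqref{Moser2}, noting $\bm{A}_4\in H_*^{m}$ with norm $\lesssim\mathring{\rm C}_{m+2}\le\mathring{\rm C}_{m+4}$ since it carries two derivatives of the basic state. The outer pairing with $\mathrm{D}_*^\alpha W$ costs another factor $\|\mathrm{D}_*^\alpha W\|_{L^2}\le\|W\|_{m,*,t}$, again absorbed into $\mathcal{M}(t)$.

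The main obstacle — and the reason the statement is phrased with the weight $\mathring{\rm C}_{m+4}$ multiplying only the low-order $W_*^{2,\infty}$ norm rather than a high Sobolev norm of $(\tilde f,W)$ — is the bookkeeping of derivative counts: one must check that whenever a coefficient absorbs $m$ (or more, through $\bm{A}_4$) derivatives, the remaining factor $W$ or $\tilde f$ is hit by at most the low-order derivatives controlled by $W_*^{2,\infty}$, so that the product genuinely has the tame form "high norm of coefficient $\times$ low norm of unknown $+$ low norm of coefficient $\times$ high norm of unknown". This is a careful but routine case analysis over the Leibniz splittings of $\mathrm{D}_*^\alpha$, using that $\langle\cdot\rangle$ is subadditive and that $\bm{A}_1^{(0)}$ vanishes on $\varSigma_T$ plays no role here (it matters only for $\mathcal{Q}_\alpha$). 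I would organize it by: (i) reduce to estimating $\|\mathrm{D}_*^{\alpha'}(C(\mathring{U},\nabla\mathring{\varPsi}))\,\mathrm{D}_*^{\alpha''}(\tilde f,W)\|_{L^2}$; (ii) apply \eqref{Moser1}; (iii) use \eqref{Moser3}, \eqref{embed1}–\eqref{embed2} and \eqref{bas1d} to bound the coefficient norms by $C(K)$ and by $\mathring{\rm C}_{m+4}$; (iv) sum over $\langle\alpha\rangle\le m$ and over the finitely many Leibniz terms, and conclude with Young's inequality. The final bound is then exactly \eqref{est:R.alpha} with $\mathcal{M}(t)$ as in \eqref{M.cal}.
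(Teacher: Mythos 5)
Your outline correctly handles the terms $\mathrm{D}_*^{\alpha}(\mathring{J}^{\mathsf T}\tilde f)$, $\mathrm{D}_*^{\alpha}(\bm A_4 W)$, and the commutators with the \emph{tangential} coefficients $\bm A_0,\bm A_2,\bm A_3$, where the paper indeed just uses Cauchy's inequality plus \eqref{Moser1}--\eqref{Moser3} as you describe. But there is a genuine gap in your treatment of the normal term $\bm A_1\p_1 W$, and your parenthetical remark that the vanishing of $\bm A_1^{(0)}$ on $\varSigma_T$ ``plays no role here'' is exactly backwards. The Leibniz expansion of $[\mathrm{D}_*^{\alpha},\bm A_1]\p_1W$ produces terms $\mathrm{D}_*^{\alpha'}\bm A_1\,\mathrm{D}_*^{\alpha-\alpha'}\p_1W$ in which the second factor carries the \emph{unweighted} normal derivative $\p_1$, which costs $2$ in the $\langle\cdot\rangle$-count; for $\langle\alpha'\rangle=1$ the total count is $m+1>m$, so \eqref{Moser1} does not apply and the term is not controlled by $\|W\|_{m,*,t}$. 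The paper rescues this precisely by using the decomposition \eqref{decom}: since $\p_i\bm A_1|_{x_1=0}=0$ for $i=0,2,3$ and $\alpha'_4=0$ when $\langle\alpha'\rangle=1$, one has $|\mathrm{D}_*^{\alpha'}\bm A_1|\le C(K)\sigma(x_1)$, and the extracted weight $\sigma$ converts the offending $\p_1$ into the admissible $\sigma\p_1$ (estimate \eqref{R.e1b1}). Your ``routine case analysis'' would stall at this point without that structural input, which in turn rests on the boundary conditions \eqref{bas1b} satisfied by the basic state.

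A second, independent omission: for $\alpha_1>0$ the operators $\mathrm{D}_*^{\alpha}$ and $\p_1$ do not commute (because $\sigma'\neq 0$), so $[\mathrm{D}_*^{\alpha},\bm A_i\p_i]W\neq[\mathrm{D}_*^{\alpha},\bm A_i]\p_iW$, and the difference contains $\bm A_1\,\sigma^{\alpha_1-1}\p_t^{\alpha_0}\p_1^{\alpha_1+\alpha_4}\p_2^{\alpha_2}\p_3^{\alpha_3}W$ --- one power of $\sigma$ short of being an $H_*^m$ quantity. The paper controls this by splitting off $W_{\rm nc}$ via $\bm A_1=\bm A_1^{(1)}+\bm A_1^{(0)}$ and then invoking the \emph{equation} \eqref{iden2} to express $\p_1W_{\rm nc}$ through $\tilde f$, $\bm A_4W$, tangential derivatives, and $\bm A_1^{(0)}\p_1W$ (which again carries an extra $\sigma$). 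This recovery of the lost normal derivative of the noncharacteristic components from the equations is the heart of the lemma and the reason the anisotropic spaces work for this characteristic problem; it is absent from your argument, so the proposal as written does not close.
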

\begin{proof}
It follows from Cauchy's inequality that
\begin{align}
\label{R.e1}
\mathcal{R}_{\alpha}(t) \leq
{\|}(\mathring{J}^{\mathsf{T}}\tilde{f},{\bm{A}}_4 W){\|}_{m,*,t}^2+\sum_{i=0}^3C(K) \|(\mathrm{D}_*^{\alpha} W, [\mathrm{D}_*^{\alpha},{\bm{A}}_i\p_i]W)\|_{L^2(\varOmega_t)}^2.
\end{align}
Since ${\bm{A}}_4$ is a $C^{\infty}$--function of
$(\mathring{U},\p_t\mathring{U},\nabla\mathring{U},\nabla\mathring{\varPhi},
\p_t\nabla\mathring{\varPhi},\nabla^2\mathring{\varPhi})$
and $\mathring{J}$ is a $C^{\infty}$--function of $\nabla\mathring{\varPhi}$ ({\it cf.}~\eqref{J.ring}),
we apply the Moser-type calculus inequalities \eqref{Moser2}--\eqref{Moser3} to obtain
\begin{align}   \label{R.e1a}
{\|}(\mathring{J}^{\mathsf{T}}\tilde{f},{\bm{A}}_4 W){\|}_{m,*,t}^2
\leq C(K)
\left(
{\|}(\tilde{f},W){\|}_{m,*,t}^2+
\mathring{\rm C}_{m+4}\|(\tilde{f},W)\|_{W_*^{1,\infty}(\varOmega_t)}^2
\right).
\end{align}
Regarding the commutator terms in \eqref{R.e1}, for $i=0,\ldots,3$,  we have
\begin{align} \label{R.e1b}
\|[\mathrm{D}_*^{\alpha},{\bm{A}}_i]\p_iW\|_{L^2(\varOmega_t)}^2
\lesssim
\sum_{0<\alpha'\leq \alpha} \underbrace{\|\mathrm{D}_*^{\alpha'}{\bm{A}}_i \mathrm{D}_*^{\alpha-\alpha'}\p_iW\|_{L^2(\varOmega_t)}^2}_{\mathcal{J}_{\alpha'}^{(i)}}.
\end{align}
It follows from \eqref{decom} that $\p_i {\bm{A}}_1|_{x_1=0}=0$ for $i=0,2,3$.
Since $\langle \alpha'\rangle=1$ implies $\alpha_4'=0$, we get
\begin{align} \nonumber 
\|\mathrm{D}_*^{\alpha'} {\bm{A}}_1(x_1)\|_{L^{\infty}((-\infty,T)\times\mathbb{T}^{2})}
\leq C(K)\sigma(x_1)
\ \ \textrm{for all } x_1\geq 0,\ \langle \alpha'\rangle=1.
\end{align}
By virtue of the last inequality, we infer
\begin{align} 
\sum_{i=0}^3\mathcal{J}_{\alpha'}^{(i)}
\leq \, &
C(K)\sum_{i=0,2,3}\|\mathrm{D}_*^{\alpha-\alpha'}\p_iW\|_{L^2(\varOmega_t)}^2
+C(K) \|\sigma \mathrm{D}_*^{\alpha-\alpha'}\p_1W\|_{L^2(\varOmega_t)}^2  
\nonumber \\
\leq \, &
C(K) {\|}W{\|}_{m,*,t}^2
\qquad \textrm{for }  \langle \alpha'\rangle=1 \textrm{ and } \alpha'\leq \alpha.
\label{R.e1b1}
\end{align}
Since ${\|}\p_i W{\|}_{m-2,*,t}\lesssim {\|}W{\|}_{m,*,t}$
and ${\bm{A}}_i$ are $C^{\infty}$--functions of $(\mathring{U},\nabla\mathring{\varPhi})$
for $i=0,\ldots,3$,
we deduce from \eqref{Moser1} and \eqref{Moser3} that
\begin{align} 
\sum_{i=0}^3\mathcal{J}_{\alpha'}^{(i)}
\lesssim \;&
\sum_{i=0}^3
\sum_{\langle \beta\rangle=2,\,\beta\leq \alpha'}\|\mathrm{D}_*^{\alpha'-\beta }(\mathrm{D}_*^{\beta }{\bm{A}}_i) \mathrm{D}_*^{\alpha-\alpha'}(\p_iW)\|_{L^2(\varOmega_t)}^2 
\nonumber \\
\leq \, &
C(K)\left({\|}W{\|}_{m,*,t}^2+ \mathring{\rm C}_{m+4}\|W\|_{W_*^{2,\infty}(\varOmega_t)}^2 \right)
\quad \textrm{for } \langle \alpha'\rangle\geq 2.
\label{R.e1b2}
\end{align}

If  $\alpha_1=0$, then $[\mathrm{D}_*^{\alpha},{\bm{A}}_i\p_i]W=[\mathrm{D}_*^{\alpha},{\bm{A}}_i]\p_i W$ for $i=0,\ldots,3$. Hence we plug \eqref{R.e1b1}--\eqref{R.e1b2} into \eqref{R.e1b} and
then substitute the resulting estimate and \eqref{R.e1a} into \eqref{R.e1} to obtain
\eqref{est:R.alpha} for the case $\alpha_1=0$.

Next let $\alpha_1>0$.
It suffices to make the estimate of differences $[\mathrm{D}_*^{\alpha},{\bm{A}}_i\p_i]W-[\mathrm{D}_*^{\alpha},{\bm{A}}_i]\p_i W$ for $i=0,\ldots,3$.
In the following computations, we can  replace the operator $\mathrm{D}_*^{\alpha}$ by $\sigma^{\alpha_1}\p_1^{\alpha_1+\alpha_4}\p_t^{\alpha_0}\p_2^{\alpha_2}\p_3^{\alpha_3}$,
because the corresponding norms are equivalent; see, {\it e.g.}, \cite[p.\;394]{OSY94MR1289186}.
In view of the decomposition \eqref{decom}, we compute
\begin{align}
\nonumber 
& \|[\mathrm{D}_*^{\alpha},{\bm{A}}_i\p_i]W-[\mathrm{D}_*^{\alpha},{\bm{A}}_i]\p_i W\|_{L^2(\varOmega_t)}^2\\
&  
\lesssim
\| {\bm{A}}_1 \sigma^{\alpha_1-1}\p_t^{\alpha_0}\p_1^{\alpha_1+\alpha_4}\p_2^{\alpha_2}\p_3^{\alpha_3} W\|_{L^2(\varOmega_t)}^2 \nonumber \\
&  
\lesssim
\|  \sigma^{\alpha_1-1}\p_t^{\alpha_0}\p_1^{\alpha_1+\alpha_4}\p_2^{\alpha_2}\p_3^{\alpha_3}  W_{\rm nc}\|_{L^2(\varOmega_t)}^2+\|  \sigma^{\alpha_1}\p_t^{\alpha_0}\p_1^{\alpha_1+\alpha_4}\p_2^{\alpha_2}\p_3^{\alpha_3} W\|_{L^2(\varOmega_t)}^2\nonumber  \\
&  
\lesssim
{\|}\p_1 W_{\rm nc}{\|}_{m-1,*,t}^2+{\|}W{\|}_{m,*,t}^2
\qquad \textrm{for }i=0,\ldots 3,
\label{R.e3a}
\end{align}
where $W_{\rm nc}:=(W_1,W_2)^{\mathsf{T}}$.
For estimating the right-hand side of \eqref{R.e3a},
we utilize the equations \eqref{ELP3a} and the decomposition \eqref{decom} to get
\begin{align} \label{iden2}
(\p_1 W_2,\p_1 W_1,0)^{\mathsf{T}}
=\mathring{J}^{\mathsf{T}}\tilde{f}-{\bm{A}}_4 W-\sum_{i=0,2,3}{\bm{A}}_i\p_i W-{\bm{A}}_1^{(0)}\p_1W,
\end{align}
which implies
\begin{align}
{\|}\p_1 W_{\rm nc}{\|}_{m-1,*,t} \leq
\sum_{i=0,2,3}{\|}(\mathring{J}^{\mathsf{T}}\tilde{f}, {\bm{A}}_4 W, {\bm{A}}_i\p_i W, {\bm{A}}_1^{(0)}\p_1W){\|}_{m-1,*,t}.
\label{R.e3b}
\end{align}
By definition, it follows that
\begin{align}
{\|}{\bm{A}}_1^{(0)}\p_1W{\|}_{m-1,*,t}^2
\lesssim
\sum_{\substack{\beta'\leq \beta,\,\langle\beta\rangle\leq m-1}}\|\mathrm{D}_*^{\beta'}{\bm{A}}_1^{(0)}\mathrm{D}_*^{\beta-\beta'}\p_1W\|_{L^2(\varOmega_t)}^2.
\label{R.e3c}
\end{align}
{To estimate the right-hand side of \eqref{R.e3c}, we first use} \eqref{decom} to get
\begin{align}
\|{\bm{A}}_1^{(0)}\mathrm{D}_*^{\beta }\p_1W\|_{L^2(\varOmega_t)}^2
\lesssim
\|\sigma \mathrm{D}_*^{\beta}\p_1W\|_{L^2(\varOmega_t)}^2\lesssim {\|}W{\|}_{m,*,t}^2.
\label{R.e3d}
\end{align}
Applying the Moser-type calculus inequalities \eqref{Moser1}--\eqref{Moser3} leads to
\begin{align} \nonumber
&\sum_{0<\beta'\leq \beta}
\|\mathrm{D}_*^{\beta'}{\bm{A}}_1^{(0)}\mathrm{D}_*^{\beta-\beta'}\p_1W\|_{L^2(\varOmega_t)}^2
\\
& \qquad \nonumber\lesssim
\sum_{0<\beta'\leq \beta}\sum_{\langle\gamma\rangle=1,\,\gamma\leq \beta'}\|\mathrm{D}_*^{\beta'-\gamma}(\mathrm{D}_*^{\gamma}{\bm{A}}_1^{(0)})\mathrm{D}_*^{\beta-\beta'}\p_1W\|_{L^2(\varOmega_t)}^2\\
&\qquad \leq C(K){\left({\|}W{\|}_{m,*,t}^2+ \mathring{\rm C}_{m+4}\|W\|_{W_*^{2,\infty}(\varOmega_t)}^2 \right)},
\label{R.e3e}
\end{align}
owing to $\langle \beta'-\gamma\rangle+\langle \beta-\beta'\rangle+2\leq m$.
Employing \eqref{Moser1}--\eqref{Moser3}
to estimate ${\|}{\bm{A}}_i\p_i W{\|}_{m-1,*,t}$ in \eqref{R.e3b}, for $i=0,2,3$,
and combining  \eqref{R.e1}--\eqref{R.e3e}, we  derive
\eqref{est:R.alpha} for the case $\alpha_1>0$ and so finish up the proof.
\end{proof}
The following lemma provides the estimate of the non-weighted tangential derivatives $\mathrm{D}_*^{\alpha}W$ for $\alpha_1=\alpha_4=0$ and $\langle \alpha \rangle\leq m$.
\begin{lemma} \label{lem:tan}
	If the assumptions in Theorem \ref{thm:3} are fulfilled, then
	\begin{align}
		\sum_{\langle\alpha\rangle \leq m,\, \alpha_1=\alpha_4=0}
		\left(\|\mathrm{D}_*^{\alpha} W(t)\|_{L^2(\varOmega)}^2+\|\mathrm{D}_*^{\alpha} \psi(t)\|_{L^2(\varSigma)}^2\right)
		\label{est:tan}
		\leq  C(K) \widetilde{\mathcal{M}}(t)
	\end{align}
	for all $t\in [0,T]$ and $m\in\mathbb{N}_+$, where
	\begin{align}\nonumber
		\widetilde{\mathcal{M}}(t):=\;&{\|}(\tilde{f},W){\|}_{m,*,t}^2
		+{\mathring{\rm C}_{m+4}
		\|(\tilde{f},W)\|_{W_*^{2,\infty}(\varOmega_t)}^2}
		\\
		\label{M.tilde.cal}&
		+\|\psi\|_{H^m(\varSigma_t)}^2+\mathring{\rm C}_{{m+4}}\|\psi\|_{L^{\infty}(\varSigma_t)}^2.
	\end{align}
\end{lemma}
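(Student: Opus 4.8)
The plan is to estimate the purely tangential derivatives $\mathrm{D}_*^\alpha W$ (with $\alpha_1=\alpha_4=0$, so $\mathrm{D}_*^\alpha=\p_t^{\alpha_0}\p_2^{\alpha_2}\p_3^{\alpha_3}$) by returning to the energy identity \eqref{est:alpha}. The term $\mathcal{E}_\alpha(t)$ controls $\|\mathrm{D}_*^\alpha W(t)\|_{L^2(\varOmega)}^2$ from below by ellipticity of $\bm{A}_0$, and $\mathcal{R}_\alpha(t)$ is already handled by Lemma~\ref{lem:R.alpha}, contributing $C(K)\mathcal{M}(t)$. So the real work is to bound the boundary term
\begin{align*}
\mathcal{Q}_\alpha(t)=2\int_{\varSigma_t}\mathrm{D}_*^\alpha W_1\,\mathrm{D}_*^\alpha W_2\d x'\d\tau,
\end{align*}
and to produce, from below, the good term $\int_\varSigma \p_1\mathring q\,(\mathrm{D}_*^\alpha\psi)^2\d x'$ that will carry the interface regularity. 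First I would differentiate the boundary conditions \eqref{ELP3b}--\eqref{ELP3c}: since $\mathrm{D}_*^\alpha$ is tangential, applying it to \eqref{ELP3b} gives $\mathrm{D}_*^\alpha W_1=-\p_1\mathring q\,\mathrm{D}_*^\alpha\psi+[\text{comm}]$, where the commutator $[\mathrm{D}_*^\alpha,\p_1\mathring q]\psi$ involves at most $\langle\alpha\rangle$ tangential derivatives of $\psi$ and lower-order-weighted derivatives of the basic state; similarly $\mathrm{D}_*^\alpha W_2=(\p_t+\mathring v_2\p_2+\mathring v_3\p_3)\mathrm{D}_*^\alpha\psi-\p_1\mathring v_N\,\mathrm{D}_*^\alpha\psi+[\text{comm}]$.

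Substituting these into $\mathcal{Q}_\alpha$, the leading contribution is
\begin{align*}
-2\int_{\varSigma_t}\p_1\mathring q\,\mathrm{D}_*^\alpha\psi\,\Big\{(\p_t+\mathring v_2\p_2+\mathring v_3\p_3)\mathrm{D}_*^\alpha\psi-\p_1\mathring v_N\,\mathrm{D}_*^\alpha\psi\Big\}\d x'\d\tau,
\end{align*}
and I would integrate by parts in $t,x_2,x_3$ exactly as in the $L^2$ step: the term $-2\p_1\mathring q\,\mathrm{D}_*^\alpha\psi\,\p_t\mathrm{D}_*^\alpha\psi$ becomes $-\p_t\big(\p_1\mathring q\,(\mathrm{D}_*^\alpha\psi)^2\big)+\p_t\p_1\mathring q\,(\mathrm{D}_*^\alpha\psi)^2$, and the $\mathring v_i\p_i$ terms likewise yield a time-boundary term $\int_\varSigma\p_1\mathring q\,(\mathrm{D}_*^\alpha\psi(t))^2\d x'$ (nonnegative by \eqref{bas1c}, hence a genuine good term) plus volume-type remainders on $\varSigma_t$ of the form $(\text{W}^{1,\infty}\text{ or }W^{2,\infty}\text{ norms of }\mathring U,\mathring\varphi)\times(\mathrm{D}_*^\alpha\psi)^2$, which are absorbed into $\mathring{\rm C}_{m+4}\|\psi\|_{L^\infty(\varSigma_t)}^2$ when the multiplier is not differentiated more than allowed, and into $\|\psi\|_{H^m(\varSigma_t)}^2$ otherwise. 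The commutator terms $[\text{comm}]$ multiplied by $\mathrm{D}_*^\alpha\psi$ or its tangential derivatives are controlled by the Moser-type inequality \eqref{Moser1} (or Lemma~\ref{lem:Moser1} on $\varSigma_t\subset\mathbb R^3$) and the embedding/trace lemmas, again giving $C(K)(\|\psi\|_{H^m(\varSigma_t)}^2+\mathring{\rm C}_{m+4}\|\psi\|_{L^\infty(\varSigma_t)}^2)$ plus terms already inside $\mathcal{M}(t)$ via the trace of $W$. Cross terms between $\mathrm{D}_*^\alpha\psi$ and the commutator in $W_1$ are handled symmetrically, using $\|\mathrm{D}_*^\alpha\psi\|_{L^2(\varSigma)}^2\le\epsilon\|\mathrm{D}_*^\alpha\psi(t)\|_{L^2(\varSigma)}^2+\dots$ only when needed, but in fact Gr\"onwall will swallow those.

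Putting it together: summing \eqref{est:alpha} over all $\alpha$ with $\langle\alpha\rangle\le m$, $\alpha_1=\alpha_4=0$, and moving the good boundary term $\sum_\alpha\int_\varSigma\p_1\mathring q\,(\mathrm{D}_*^\alpha\psi(t))^2\ge\tfrac{\kappa_0}{2}\sum_\alpha\|\mathrm{D}_*^\alpha\psi(t)\|_{L^2(\varSigma)}^2$ to the left side, I obtain
\begin{align*}
\sum_{\langle\alpha\rangle\le m,\,\alpha_1=\alpha_4=0}\Big(\|\mathrm{D}_*^\alpha W(t)\|_{L^2(\varOmega)}^2+\|\mathrm{D}_*^\alpha\psi(t)\|_{L^2(\varSigma)}^2\Big)\le C(K)\Big(\widetilde{\mathcal M}(t)+\int_0^t\sum_\alpha\big(\|\mathrm{D}_*^\alpha W\|_{L^2(\varOmega)}^2+\|\mathrm{D}_*^\alpha\psi\|_{L^2(\varSigma)}^2\big)\d\tau\Big),
\end{align*}
after noting $\mathcal{R}_\alpha(t)\le C(K)\mathcal M(t)\le C(K)\widetilde{\mathcal M}(t)$ and that the non-good pieces of $\mathcal Q_\alpha$ are also $\le C(K)\widetilde{\mathcal M}(t)$. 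A Gr\"onwall argument over $[0,T]$ then gives \eqref{est:tan}. The main obstacle, and the step I would be most careful with, is the bookkeeping on the commutators $[\mathrm{D}_*^\alpha,\p_1\mathring q]\psi$ and $[\mathrm{D}_*^\alpha,\mathring v_i\p_i]\psi$ on the boundary: one must verify that every remainder either carries at most $m$ tangential derivatives on $\psi$ (so it lands in $\|\psi\|_{H^m(\varSigma_t)}^2$, absorbed by Gr\"onwall) or, when the basic-state factor takes the extra derivatives, that the $\psi$-factor is in $L^\infty(\varSigma_t)$ (absorbed by $\mathring{\rm C}_{m+4}\|\psi\|_{L^\infty(\varSigma_t)}^2$), and that the trace of $W$ appearing through $\mathrm{D}_*^\alpha W_1,\mathrm{D}_*^\alpha W_2$ is controlled by Lemma~\ref{lem:trace} consistently with the $\|(\tilde f,W)\|_{m,*,t}^2$ term in $\widetilde{\mathcal M}(t)$. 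The only truly non-weighted derivatives occurring here keep us within the already-established trace regime, so no loss of normal derivatives intervenes at this stage.
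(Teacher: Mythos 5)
Your proposal is correct and follows essentially the same route as the paper: decompose $\mathcal{Q}_{\alpha}$ via the differentiated boundary conditions \eqref{ELP3b}--\eqref{ELP3c}, integrate by parts in $t,x_2,x_3$ to extract the good term $\int_{\varSigma}\p_1\mathring{q}\,(\mathrm{D}_*^{\alpha}\psi)^2\d x'$ from the sign condition \eqref{bas1c}, and absorb the commutators into $\|\psi\|_{H^m(\varSigma_t)}^2+\mathring{\rm C}_{m+4}\|\psi\|_{L^{\infty}(\varSigma_t)}^2$ by Lemma~\ref{lem:Moser1}. The only superfluous step is the final Gr\"onwall argument: since $\widetilde{\mathcal{M}}(t)$ already contains the time-integrated norms $\|W\|_{m,*,t}^2$ and $\|\psi\|_{H^m(\varSigma_t)}^2$, your pre-Gr\"onwall inequality is already the claimed estimate \eqref{est:tan} (Gr\"onwall is applied only later, in the proof of Theorem~\ref{thm:3}, after combining with Lemmas~\ref{lem:non}--\ref{lem:weighted}).
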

\begin{proof}
	Let $\alpha_1=\alpha_4=0$ and $\langle \alpha \rangle\leq m$.
	It follows from definition that $ \mathring{\rm C}_s\geq \|{\mathring{\varphi}}\|_{H^s(\varSigma_T)}^2$.
	Let us estimate the boundary term $\mathcal{Q}_{\alpha}(t)$ defined by \eqref{Q.alpha}.
	In view of \eqref{ELP3b}--\eqref{ELP3c}, we find
	\begin{align} \nonumber
		\mathcal{Q}_{\alpha}(t)=\mathcal{Q}_{\alpha}^{(1)}(t)+\mathcal{Q}_{\alpha}^{(2)}(t),
	\end{align}
	where
	\begin{align*}
		\mathcal{Q}_{\alpha}^{(1)}(t):=\;&-2\int_{\varSigma_t} \mathrm{D}_*^{\alpha}(\p_1 \mathring{q} \psi) (\p_t+\mathring{v}_2\p_2+\mathring{v}_3\p_3)\mathrm{D}_*^{\alpha} \psi \\
		=\;&-2\int_{\varSigma_t} \Big(\p_1 \mathring{q}\mathrm{D}_*^{\alpha} \psi+
		[\mathrm{D}_*^{\alpha},\p_1 \mathring{q}]\psi \Big) (\p_t+\mathring{v}_2\p_2+\mathring{v}_3\p_3)\mathrm{D}_*^{\alpha} \psi, \\
		\mathcal{Q}_{\alpha}^{(2)}(t):=
		\;&-2\int_{\varSigma_t} \mathrm{D}_*^{\alpha}(\p_1 \mathring{q} \psi)
		\Big([\mathrm{D}_*^{\alpha}, \mathring{v}_2\p_2+\mathring{v}_3\p_3]\psi -\mathrm{D}_*^{\alpha}(\p_1 \mathring{v}_N \psi) \Big).
	\end{align*}
	Since $\mathrm{D}_*^{\alpha}=\p_t^{\alpha_0}\p_2^{\alpha_2}\p_3^{\alpha_3}$
	{for $\alpha_1=\alpha_4=0$
	and  $\|\p_1 \mathring{q}\|_{H^{m+1}(\varSigma_t)}\lesssim
	\|\p_1 \mathring{q}\|_{m+2,*,t}\lesssim {\|}\mathring{V}{\|}_{{m+4},*,T}$},
	we use the sign condition \eqref{bas1c},
	integration by parts, Cauchy's inequality,
	and Lemma~\ref{lem:Moser1} to get
	\begin{align*}
		\mathcal{Q}_{\alpha}^{(1)}(t)\leq \;& -\int_{\varSigma} \p_1 \mathring{q} (\mathrm{D}_*^{\alpha} \psi)^2
		-2\int_{\varSigma}[\mathrm{D}_*^{\alpha},\p_1 \mathring{q}]\psi \mathrm{D}_*^{\alpha} \psi
		+C(K) \|\mathrm{D}_*^{\alpha} \psi\|_{L^2(\varSigma_t)}^2 \\[1mm]
		\;& +\| \p_t [\mathrm{D}_*^{\alpha},\p_1 \mathring{q}] \psi \|_{L^2(\varSigma_t)}^2
		+\sum_{i=2,3}\| \p_i (\mathring{v}_i [\mathrm{D}_*^{\alpha},\p_1 \mathring{q}] \psi )\|_{L^2(\varSigma_t)}^2\\
		\leq \;&
		-\frac{\kappa_{0}}{4}\|\mathrm{D}_*^{\alpha} \psi\|_{L^2(\varSigma)}^2
		+C(K) \left(\|\mathrm{D}_*^{\alpha} \psi\|_{L^2(\varSigma_t)}^2
		+ \| [\mathrm{D}_*^{\alpha},\p_1 \mathring{q}] \psi \|_{H^1(\varSigma_t)}^2  \right)\\[1mm]
		\leq \;&
		-\frac{\kappa_{0}}{4}\|\mathrm{D}_*^{\alpha} \psi\|_{L^2(\varSigma)}^2
		+C(K) \left(\|\psi\|_{H^m(\varSigma_t)}^2
		+\mathring{\mathrm{C}}_{{m+4}} \|\psi\|_{L^{\infty}(\varSigma_t)}^2\right).
	\end{align*}
	Apply Lemma~\ref{lem:Moser1} to derive
	\begin{align*}
		\mathcal{Q}_{\alpha}^{(2)}(t)\leq
		C(K) \left(\|\psi\|_{H^m(\varSigma_t)}^2
		+ \mathring{\mathrm{C}}_{{m+4}} \|\psi\|_{L^{\infty}(\varSigma_t)}^2 \right),
	\end{align*}
	which together with the estimate for $\mathcal{Q}_{\alpha}^{(1)}(t)$ implies
	\begin{align} \label{Q.e1}
		\mathcal{Q}_{\alpha}(t) 
		+\frac{\kappa_{0}}{4}\|\mathrm{D}_*^{\alpha} \psi(t)\|_{L^2(\varSigma)}^2
		\leq  C(K) \left(\|\psi\|_{H^m(\varSigma_t)}^2
		+\mathring{\mathrm{C}}_{{m+4}} \|\psi\|_{L^{\infty}(\varSigma_t)}^2\right).
	\end{align}
	
Plugging \eqref{E.alpha}, \eqref{est:R.alpha}, and \eqref{Q.e1}
into \eqref{est:alpha} yields the desired estimate
\eqref{est:tan} and completes the proof of the lemma.
\end{proof}

In the next lemma, we have the estimate of the non-weighted normal derivatives $\mathrm{D}_*^{\alpha}W$ for $\alpha_1=0$, $\alpha_4>0$, and $\langle \alpha \rangle\leq m$.
\begin{lemma} \label{lem:non}
	If the assumptions in Theorem \ref{thm:3} are fulfilled, then
	\begin{align}
		&\sum_{\substack{\langle\alpha\rangle \leq m,\, \alpha_1=0,\,\alpha_4>0}} \|\mathrm{D}_*^{\alpha} W(t)\|_{L^2(\varOmega)}^2\nonumber \\
		\label{est:non}
		&\qquad \qquad \lesssim  \varepsilon \sum_{\langle \beta\rangle\leq m} \| \mathrm{D}_*^{\beta} W(t)\|_{L^2(\varOmega)}^2
		+ C(\varepsilon,K) \mathcal{M}(t)
	\end{align}
    for all $t\in [0,T]$, $m\in \mathbb{N}_+$, and $\varepsilon>0$,
	where $\mathcal{M}(t)$ is defined by \eqref{M.cal}.
\end{lemma}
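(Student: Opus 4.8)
\textbf{Proof proposal for Lemma \ref{lem:non}.}
The plan is to recover the non-weighted normal derivatives $\mathrm{D}_*^{\alpha}W$ with $\alpha_1=0$ and $\alpha_4>0$ from the interior equations \eqref{ELP3a}, exploiting the fact that the boundary matrix $\bm{A}_1$ degenerates only in the characteristic directions while the noncharacteristic part $W_{\rm nc}=(W_1,W_2)^{\mathsf T}$ can always be differentiated normally. Concretely, I would argue by induction on $\alpha_4$. For the base case $\alpha_4=1$, I would write $\alpha=\widetilde{\alpha}+\bm{e}_4$ with $\langle\widetilde{\alpha}\rangle\leq m-2$ and $\widetilde{\alpha}_1=\widetilde{\alpha}_4=0$, so that $\mathrm{D}_*^{\alpha}=\p_1\mathrm{D}_*^{\widetilde{\alpha}}$. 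Applying $\mathrm{D}_*^{\widetilde{\alpha}}$ to the identity \eqref{iden2} (which was already extracted from \eqref{ELP3a} using the block structure \eqref{decom}), one expresses $(\p_1 W_2,\p_1 W_1,0)^{\mathsf T}=\mathring J^{\mathsf T}\tilde f-\bm{A}_4W-\sum_{i=0,2,3}\bm{A}_i\p_i W-\bm{A}_1^{(0)}\p_1 W$. Differentiating with $\mathrm{D}_*^{\widetilde{\alpha}}$ and using the Moser-type inequalities \eqref{Moser1}--\eqref{Moser3} together with $\|\bm{A}_1^{(0)}\mathrm{D}_*^{\beta}\p_1 W\|_{L^2}\lesssim\|\sigma\mathrm{D}_*^{\beta}\p_1 W\|_{L^2}\lesssim\|W\|_{m,*,t}$ (as in \eqref{R.e3c}--\eqref{R.e3e}), one controls $\|\mathrm{D}_*^{\alpha}W_{\rm nc}(t)\|_{L^2(\varOmega)}$ — and this part absorbs no $\varepsilon$ since every term on the right is already a tangential-type derivative of $W$ of weighted order $\leq m$. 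The characteristic components of $\mathrm{D}_*^{\alpha}W$ (i.e.\ the entries $W_3,\dots,W_8$) still carry a normal derivative, but on those components the offending term is $\sum_{i=0,2,3}\bm{A}_i\p_i W$ together with the already-estimated $W_{\rm nc}$ block; these reduce to derivatives of weighted order $\leq m$ of $W$, so again no $\varepsilon$ is needed at this stage. The $\varepsilon$-term only enters because $\mathring J^{\mathsf T}\tilde f$ and $\bm{A}_4 W$ contribute, via \eqref{R.e1a}-type estimates, a full $\|W\|_{m,*,t}^2$ which — for the inductive step — must be bounded back in terms of $\sum_{\langle\beta\rangle\leq m}\|\mathrm{D}_*^{\beta}W(t)\|_{L^2}^2$; one splits $\|W\|_{m,*,t}^2=\int_0^t(\cdots)$ plus the time-$t$ slice, keeping the time-integral in $\mathcal{M}(t)$ and estimating the slice by the full sum with a small constant after using interpolation/Young.

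For the inductive step, suppose the estimate is known for all multi-indices with normal order $<k$, and let $\alpha_4=k\geq 2$, $\alpha_1=0$. Write $\mathrm{D}_*^{\alpha}=\p_1^{k-1}\mathrm{D}_*^{\widehat{\alpha}}$ where $\mathrm{D}_*^{\widehat{\alpha}}=\p_1\p_t^{\alpha_0}\p_2^{\alpha_2}\p_3^{\alpha_3}$ has normal order $1$. Differentiate \eqref{iden2} by $\p_1^{k-1}\p_t^{\alpha_0}\p_2^{\alpha_2}\p_3^{\alpha_3}$: the left-hand side gives $\mathrm{D}_*^{\alpha}W_{\rm nc}$ up to commutators, while on the right every term $\bm{A}_i\p_i W$ with $i=0,2,3$ loses a normal derivative relative to $\mathrm{D}_*^{\alpha}$ (it becomes $\p_i$ of something with normal order $k-1$), hence falls under the inductive hypothesis; the term $\bm{A}_1^{(0)}\p_1 W$ again gains a factor $\sigma$ and is absorbed as in \eqref{R.e3c}--\eqref{R.e3e}; and the forcing $\mathring J^{\mathsf T}\tilde f-\bm{A}_4 W$ contributes the $\|\tilde f\|_{m,*,t}^2+\|W\|_{m,*,t}^2+\mathring{\rm C}_{m+4}\|(\tilde f,W)\|_{W_*^{2,\infty}}^2$ bundle, which is exactly $\mathcal{M}(t)$. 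Summing over the finitely many admissible $\alpha$ at level $k$ and then over $k\leq\lfloor m/2\rfloor$ (the largest $\alpha_4$ compatible with $\langle\alpha\rangle\leq m$) closes the induction and yields \eqref{est:non} with the constant in front of $\sum_{\langle\beta\rangle\leq m}\|\mathrm{D}_*^{\beta}W(t)\|_{L^2}^2$ made as small as desired at the cost of enlarging $C(\varepsilon,K)$.

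\textbf{Main obstacle.} The delicate point is the bookkeeping of which terms genuinely require the $\varepsilon$-smallness and which are harmless. A priori one worries that solving \eqref{iden2} for $\p_1 W_{\rm nc}$ and then re-differentiating costs \emph{two} normal derivatives on the right each time one peels one off on the left — this is precisely why the standard Sobolev estimate fails — so the argument must be organized so that each application of \eqref{iden2} trades one normal derivative of the characteristic block for one normal derivative of the \emph{noncharacteristic} block plus lower-order (in weighted count) terms, never increasing $\langle\cdot\rangle$. Keeping the weighted order $\langle\alpha\rangle\leq m$ invariant throughout, and verifying that the coefficient-dependent remainders stay inside $\mathring{\rm C}_{m+4}\|\cdot\|_{W_*^{2,\infty}}^2$ rather than requiring $\|\cdot\|_{W_*^{3,\infty}}$ or higher, is the heart of the computation; the $\varepsilon$ then appears only to reabsorb the single copy of $\|W\|_{m,*,t}^2$ coming from $\bm{A}_4 W$ and $\mathring J^{\mathsf T}\tilde f$ at the current time slice. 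Once this is set up, the Moser inequalities of Lemma \ref{lem:Moser2} and the trace bound of Lemma \ref{lem:trace} make the remaining estimates routine.
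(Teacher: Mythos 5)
Your proposal has a genuine gap: the interior equation cannot deliver the normal derivatives of the \emph{characteristic} components. The identity \eqref{iden2} reads $(\p_1 W_2,\p_1 W_1,0)^{\mathsf T}=\mathring J^{\mathsf T}\tilde f-\bm{A}_4W-\sum_{i=0,2,3}\bm{A}_i\p_i W-\bm{A}_1^{(0)}\p_1 W$, and the zero in the last six slots is exactly the statement that the rows $3,\dots,8$ of the system give no information about $\p_1 W_3,\dots,\p_1 W_8$; this is the defining feature of a characteristic boundary and the very reason the anisotropic spaces $H_*^m$ are used. Your sentence claiming that for $W_3,\dots,W_8$ ``the offending term is $\sum_{i=0,2,3}\bm{A}_i\p_i W$ together with the already-estimated $W_{\rm nc}$ block'' does not describe a mechanism: there is simply no equation to solve for $\mathrm{D}_*^{\alpha}W_j(t)$, $j\geq 3$, when $\alpha_4>0$, and the fundamental theorem of calculus in time is unavailable at top order since $\langle\alpha+e_0\rangle$ would exceed $m$. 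Your induction also fails to close for the same reason: each application of \eqref{iden2} produces only $\mathrm{D}_*^{\alpha}W_{\rm nc}$ on the left, while the right-hand side (through $\bm{A}_i\p_i W$) requires the time-slice control of \emph{all} components at normal order $\alpha_4-1$, including the characteristic ones that the scheme never produces.

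The paper's proof sidesteps this entirely by running the symmetric-hyperbolic energy identity \eqref{est:alpha} for the full vector $\mathrm{D}_*^{\alpha}W$: the positive-definite form $\mathcal{E}_{\alpha}(t)=\int_{\varOmega}\bm{A}_0\mathrm{D}_*^{\alpha}W\cdot\mathrm{D}_*^{\alpha}W\,\mathrm{d}x$ controls every component at the time slice simultaneously, $\mathcal{R}_{\alpha}(t)$ is already bounded by Lemma \ref{lem:R.alpha}, and the equation \eqref{iden2} is invoked only to rewrite the boundary term $\mathcal{Q}_{\alpha}(t)=2\int_{\varSigma_t}\mathrm{D}_*^{\alpha}W_1\,\mathrm{D}_*^{\alpha}W_2$, which by the block structure of $\bm{A}_1$ involves only the noncharacteristic pair on $\varSigma_T$. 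The $\varepsilon$ then arises not where you place it (you suggest absorbing the time-integral $\|W\|_{m,*,t}^2$, which already sits harmlessly inside $\mathcal{M}(t)$), but from converting the boundary integral $\|\mathrm{D}_*^{\alpha-\bm{e}}\p_i W\|_{L^2(\varSigma_t)}^2$ back into a volume integral by integration by parts in $x_1$ and then in $t$, which produces the time-slice pairing $\mathcal{K}_1=-2\int_{\varOmega}\mathrm{D}_*^{\alpha}W\cdot\mathrm{D}_*^{\alpha-\bm{e}}\p_t W\,\mathrm{d}x$ treated by Young's inequality. You would need to adopt this energy-identity structure (or an equivalent substitute for the characteristic block) for the argument to go through.
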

\begin{proof}
	Let $\alpha_1=0$, $\alpha_4>0$, and $\langle \alpha \rangle\leq m$.
	Then $|\alpha|=\langle \alpha \rangle-\alpha_4<m$.
	Thanks to \eqref{E.alpha} and \eqref{est:R.alpha},
	it remains to control the term $\mathcal{Q}_{\alpha}(t)$ ({\it cf.}~definition \eqref{Q.alpha}).
We obtain from the identity \eqref{iden2} that
	\begin{align}
		\mathcal{Q}_{\alpha}(t)\lesssim
		\sum_{i=0,2,3}\|\mathrm{D}_*^{\alpha-{\bm{e}}} (\mathring{J}^{\mathsf{T}}\tilde{f},  {\bm{A}}_4 W,{\bm{A}}_i\p_i W,{\bm{A}}_1^{(0)}\p_1W)\|_{L^2(\varSigma_t)}^2,
		\label{Q.e2}
	\end{align}
	where ${\bm{e}}:=(0,0,0,0,1)$ and $\p_0:=\p_t$.
	Use Lemma~\ref{lem:trace} and \eqref{R.e1a} to infer
	\begin{align}   \label{Q.e2a}
		\|\mathrm{D}_*^{\alpha-{\bm{e}}} (\mathring{J}^{\mathsf{T}}\tilde{f},  {\bm{A}}_4 W)\|_{L^2(\varSigma_t)}^2
		\leq C(K) \mathcal{M}(t).
	\end{align}
	For $i=0,2,3$, we have
	\begin{align} \label{Q.e2b}
		\|\mathrm{D}_*^{\alpha-{\bm{e}}} ({\bm{A}}_i\p_i W)\|_{L^2(\varSigma_t)}^2
		\lesssim  C(K)\|\mathrm{D}_*^{\alpha-{\bm{e}}} \p_i W\|_{L^2(\varSigma_t)}^2
		+\sum_{0<\alpha'\leq \alpha-{\bm{e}}} \mathcal{K}_{\alpha'}^{(i)},
	\end{align}
	where
	$$
	\mathcal{K}_{\alpha'}^{(i)}:=\|\mathrm{D}_*^{\alpha'}{\bm{A}}_i \mathrm{D}_*^{\alpha-{\bm{e}}-\alpha'}\p_iW\|_{L^2(\varSigma_t)}^2
	\quad \textrm{for }\alpha'\leq \alpha-{\bm{e}}.
	$$
	To estimate the first term on the right-hand side of \eqref{Q.e2b}, one can employ the argument of passing to the volume integral ({\it cf.}~\cite[pp.\;273--274]{T09ARMAMR2481071}). More precisely, we use integration by parts to obtain
	\begin{align}
		\sum_{i=0,2,3}\|\mathrm{D}_*^{\alpha-{\bm{e}}} \p_i W\|_{L^2(\varSigma_t)}^2
		&=-2\sum_{i=0,2,3}\int_{\varOmega_t} \mathrm{D}_*^{\alpha}\p_iW\cdot \mathrm{D}_*^{\alpha-\bm{e}}\p_iW
		\nonumber\\
		& 
		=\mathcal{K}_1 +\mathcal{K}_2,
 \label{Q.e2b1}
	\end{align}
	where
	\begin{align} \nonumber
		&\mathcal{K}_1:=-2\int_{\varOmega}\mathrm{D}_*^{\alpha } W \cdot \mathrm{D}_*^{\alpha-{\bm{e}}}\p_t W,\ \ 
		\mathcal{K}_2:=2\sum_{i=0,2,3}\int_{\varOmega_t}\mathrm{D}_*^{\alpha} W \cdot \mathrm{D}_*^{\alpha-{\bm{e}}}\p_i^2 W.
		\nonumber
	\end{align}
	Using integration by parts yields
	\begin{align}
		\mathcal{K}_1
		\lesssim\;& \varepsilon \|\mathrm{D}_*^{\alpha}   W(t)\|_{L^2(\varOmega)}^2
		+C(\varepsilon) \|\mathrm{D}_*^{\alpha-{\bm{e}}}\p_t  W\|_{L^2(\varOmega)}^2 \nonumber  \\[1mm]
		\lesssim\;&\varepsilon \|\mathrm{D}_*^{\alpha}   W(t)\|_{L^2(\varOmega)}^2
		+C(\varepsilon) \|\mathrm{D}_*^{\alpha-{\bm{e}}}\p_t  W\|_{1,*,t}^2 \nonumber  \\[1mm]
		\lesssim\;& \varepsilon \sum_{\langle \beta\rangle\leq m} \|\mathrm{D}_*^{\beta}  W(t)\|_{L^2(\varOmega)}^2
		+C(\varepsilon) {\|}W{\|}_{m,*,t}^2.
		\label{K1.e}
	\end{align}
Since $\langle\alpha-\bm{e}\rangle+2\leq m$ and $i\in\{0,2,3\}$, we use Cauchy's inequality to get
	\begin{align}
		\mathcal{K}_2
		\lesssim \|(\mathrm{D}_*^{\alpha} W , \mathrm{D}_*^{\alpha-{\bm{e}}}\p_i^2 W)\|_{L^2(\varOmega_t)}^2
		\lesssim {\|}W{\|}_{m,*,t}^2.
		\label{K2.e}
	\end{align}
	For $i=0,2,3$, we apply Lemma~\ref{lem:trace} and \eqref{Moser1}--\eqref{Moser3} to infer
	\begin{align}
		\nonumber
		\sum_{0<\alpha'\leq \alpha-{\bm{e}}} \mathcal{K}_{\alpha'}^{(i)}
		\lesssim  \;&
		\sum_{\langle\beta\rangle=1,\,
			\beta\leq \alpha'\leq \alpha-{\bm{e}}}  {\|}\mathrm{D}_*^{\alpha'-\beta}(\mathrm{D}_*^{\beta}{\bm{A}}_i)
		\mathrm{D}_*^{\alpha-{\bm{e}}-\alpha'}\p_i W {\|}_{2,*,t}^2 \\
		\leq  \;&
		C(K)\left({\|}W{\|}_{m,*,t}^2+\mathring{\rm C}_{m+4} \|W\|_{W_*^{2,\infty}(\varOmega_t)}^2 \right)
		\label{Q.e2b2}
	\end{align}
	due to $\langle \alpha'-\beta\rangle +\langle \alpha-{\bm{e}}-\alpha'\rangle+1\leq m-2$.
	Since $(\mathrm{D}_*^{\beta}{\bm{A}}_1^{(0)})|_{\varSigma_T}=0$ for all $\beta=(\beta_0,\beta_1,\beta_2,\beta_3,\beta_4)$ with $\beta_4=0$, we get
	\begin{align}
\nonumber		&\|\mathrm{D}_*^{\alpha-{\bm{e}}} ({\bm{A}}_1^{(0)}\p_1W)\|_{L^2(\varSigma_t)}^2
		\lesssim
		\sum_{{\bm{e}}\leq \beta\leq \alpha-{\bm{e}} } \|\mathrm{D}_*^{\beta} {\bm{A}}_1^{(0)}\mathrm{D}_*^{\alpha-{\bm{e}}-\beta} \p_1W\|_{L^2(\varSigma_t)}^2   \\
		&\qquad \lesssim
		\sum_{{\bm{e}}\leq \beta\leq \alpha-{\bm{e}} } {\|}\mathrm{D}_*^{\beta-{\bm{e}}}(\p_1 {\bm{A}}_1^{(0)})\mathrm{D}_*^{\alpha-\beta} W{\|}_{2,*,t}^2\leq C(K)\mathcal{M}(t).
		\label{Q.e2c}
	\end{align}
	Plug \eqref{Q.e2a}--\eqref{Q.e2c} into \eqref{Q.e2} and
	combine the resulting estimate with \eqref{est:alpha}--\eqref{E.alpha} and \eqref{est:R.alpha}
	to derive \eqref{est:non} and finish the proof of the lemma.
\end{proof}

The estimate of the weighted derivatives $\mathrm{D}_*^{\alpha}W$ for $\alpha_1>0$ and $\langle \alpha \rangle\leq m$ is provided in the following lemma.

\begin{lemma}
	\label{lem:weighted}
	If the assumptions in Theorem \ref{thm:3} are fulfilled, then
	\begin{align}
		\sum_{\langle\alpha\rangle \leq m,\, \alpha_1>0} \|\mathrm{D}_*^{\alpha} W(t)\|_{L^2(\varOmega)}^2
		\label{est:weighed}
		\leq C(K) \mathcal{M}(t)
	\end{align}
	holds for all $t\in [0,T]$ and $m\in \mathbb{N}_+$,
	where $\mathcal{M}(t)$ is defined by \eqref{M.cal}.
\end{lemma}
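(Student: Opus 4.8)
The plan is to estimate the weighted derivatives $\mathrm{D}_*^{\alpha}W$ with $\alpha_1>0$ by exploiting the fact that the weight $\sigma$ vanishes at the boundary, so the troublesome boundary term $\mathcal{Q}_{\alpha}(t)$ in \eqref{est:alpha} can be controlled directly rather than by passing to volume integrals. First I would invoke \eqref{est:alpha}--\eqref{E.alpha} together with the already-established commutator bound \eqref{est:R.alpha} from Lemma~\ref{lem:R.alpha}, which reduces the task to estimating $\mathcal{Q}_{\alpha}(t)$ defined in \eqref{Q.alpha}. Since $\alpha_1>0$, the operator $\mathrm{D}_*^{\alpha}$ carries at least one factor of $\sigma\p_1$, and on $\varSigma_T$ the weight $\sigma$ vanishes; more precisely, using that $\sigma(x_1)=x_1$ near $x_1=0$ and that $\bm{A}_1=\bm{A}_1^{(1)}$ on the boundary by \eqref{decom}, the boundary integrand $\bm{A}_1\mathrm{D}_*^{\alpha}W\cdot\mathrm{D}_*^{\alpha}W$ acquires a factor of $\sigma|_{x_1=0}=0$, so that $\mathcal{Q}_{\alpha}(t)=0$ whenever $\alpha_1>0$.

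With the boundary term eliminated, the identity \eqref{est:alpha} reads $\mathcal{E}_{\alpha}(t)=\mathcal{R}_{\alpha}(t)$, and summing over $\langle\alpha\rangle\leq m$ with $\alpha_1>0$ and applying \eqref{est:R.alpha} immediately gives
\begin{align}\nonumber
\sum_{\langle\alpha\rangle\leq m,\,\alpha_1>0}\|\mathrm{D}_*^{\alpha}W(t)\|_{L^2(\varOmega)}^2
\lesssim\sum_{\langle\alpha\rangle\leq m,\,\alpha_1>0}\mathcal{E}_{\alpha}(t)
=\sum_{\langle\alpha\rangle\leq m,\,\alpha_1>0}\mathcal{R}_{\alpha}(t)\leq C(K)\mathcal{M}(t),
\end{align}
which is precisely \eqref{est:weighed}. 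One subtlety to address carefully is the replacement of $\mathrm{D}_*^{\alpha}$ by the equivalent operator $\sigma^{\alpha_1}\p_1^{\alpha_1+\alpha_4}\p_t^{\alpha_0}\p_2^{\alpha_2}\p_3^{\alpha_3}$ (as used in the proof of Lemma~\ref{lem:R.alpha}, citing \cite[p.\;394]{OSY94MR1289186}): when one integrates by parts in $x_1$ to form $\mathcal{E}_{\alpha}$, the commutator $[\sigma^{\alpha_1},\p_1]$ produces lower-order weighted terms, but these are of order $\langle\cdot\rangle\leq m$ and with strictly fewer weighted derivatives, hence already controlled by the same sum or absorbed into $\mathcal{R}_{\alpha}$-type contributions bounded by $\mathcal{M}(t)$.

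The main obstacle, then, is not in the $L^2$ energy identity itself but in bookkeeping: one must verify that all error terms generated by the weight manipulations—the $\sigma$-derivative commutators arising when $\sigma\p_1$ is moved past the coefficient matrices $\bm{A}_0,\bm{A}_i$ and when integrating by parts—fall under the already-proven estimate \eqref{est:R.alpha}, so that no new structure is needed. Because the boundary contribution genuinely disappears for $\alpha_1>0$, this lemma is the easiest of the three (contrast Lemmas~\ref{lem:tan} and \ref{lem:non}, where the Rayleigh--Taylor sign condition \eqref{bas1c} and the volume-integral trick are essential), and the proof amounts to combining \eqref{est:alpha}, the vanishing of $\mathcal{Q}_{\alpha}$, and \eqref{est:R.alpha}, then noting that the relevant multi-indices satisfy $\langle\alpha\rangle\leq m$ so that $\mathcal{M}(t)$ as defined in \eqref{M.cal} absorbs everything. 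I would conclude by summing over the finitely many such $\alpha$ and reading off \eqref{est:weighed}.
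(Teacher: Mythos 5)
Your proposal is correct and follows essentially the same route as the paper: the key observation that $\mathcal{Q}_{\alpha}(t)=0$ for $\alpha_1>0$ (since $\mathrm{D}_*^{\alpha}$ carries a factor of $\sigma$, which vanishes on $\varSigma$), combined with \eqref{est:alpha}--\eqref{E.alpha} and the already-proven bound \eqref{est:R.alpha}, is exactly the paper's argument. The extra bookkeeping you discuss about weight commutators is unnecessary here, as those terms are already absorbed into $R_{\alpha}$ and handled by Lemma~\ref{lem:R.alpha}, but it does not affect the validity of the proof.
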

\begin{proof}
	Let $\alpha_1>0$ and $\langle \alpha \rangle\leq m$.
	Then it is easy to check that $\mathcal{Q}_{\alpha}(t)=0$. So we combine Lemma~\ref{lem:R.alpha} and \eqref{est:alpha}--\eqref{E.alpha}
	to obtain the estimate \eqref{est:weighed}.
\end{proof}

\subsection{Proof of Theorem \ref{thm:3}}

We first show the tame {\it a priori} estimates for the problem \eqref{ELP3} in $H_*^m(\varOmega_T)$.
By definition, for $s\in\mathbb{N}$, we get
\begin{align}   \label{est:psi}
	\|{\mathring{\varphi}}\|_{H^s(\varSigma_t)}  \leq  {\|}\mathring{\varPsi}{\|}_{s,*,t}\lesssim \|{\mathring{\varphi}}\|_{H^s(\varSigma_t)} ,
	\
	\|\psi\|_{H^s(\varSigma_t)}\leq  {\|}\varPsi{\|}_{s,*,t} \lesssim \|\psi\|_{H^s(\varSigma_t)}.
\end{align}
Combine \eqref{est:tan}, \eqref{est:non}, and \eqref{est:weighed},
and take $\varepsilon>0$ small enough to derive
\begin{align}  \nonumber
	\mathcal{I}(t):= 
	\;&\sum_{\langle\alpha\rangle \leq m}
	\|\mathrm{D}_*^{\alpha} W(t)\|_{L^2(\varOmega)}^2+
	\sum_{\langle\alpha\rangle \leq m,{\alpha_1}=\alpha_4=0} \|\mathrm{D}_*^{\alpha} \psi(t)\|_{L^2(\varSigma)}^2 \\
	\leq  	\;&  C(K) \widetilde{\mathcal{M}}(t),
	\nonumber
\end{align}
where $\widetilde{\mathcal{M}}(t)$ is defined by \eqref{M.tilde.cal}.
Noting from \eqref{D*}--\eqref{norm1b} and \eqref{ELP3d} that
\begin{align} \label{I.cal1}
	\int_{0}^t\mathcal{I}({\tau})\d{\tau}={\|}W{\|}_{m,*,t}^2+\|\psi\|_{H^m(\varSigma_t)}^2,
\end{align}
we apply Gr\"{o}nwall's inequality to obtain
\begin{align}
	\label{I.cal2}
	\mathcal{I}(t)\leq C(K) \mathrm{e}^{C(K)T} \mathcal{N}(T)
	\quad \textrm{for } 0\leq t \leq T,
\end{align}
where
\begin{align} \nonumber
	\mathcal{N}(t):={\|}\tilde{f}{\|}_{m,*,t}^2+\mathring{\rm C}_{{m+4}} \left(\|(\tilde{f},W)\|_{W_*^{2,\infty}(\varOmega_t)}^2 +\|\psi\|_{L^{\infty}(\varSigma_t)}^2\right).
\end{align}
By virtue of the embedding inequalities \eqref{embed1}--\eqref{embed2}, we have
\begin{align}\label{M.cal1}
	\mathcal{N}(T)\lesssim {\|}\tilde{f}{\|}_{m,*,T}^2+\mathring{\rm C}_{{m+4}} \left({\|}(\tilde{f},W){\|}_{6,*,T}^2 +\|\psi\|_{H^{2}(\varSigma_T)}^2\right).
\end{align}
Integrate \eqref{I.cal2} over $[0,T]$, use \eqref{I.cal1} and \eqref{M.cal1}, and take $T>0$ sufficiently small to discover
\begin{align}
 \nonumber
 {\|}W{\|}_{m,*,T}^2+\|\psi\|_{H^m(\varSigma_T)}^2&\\[1mm]
\nonumber \leq   C(K) T\mathrm{e}^{C(K)T}  \Big\{ &
	{\|}\tilde{f}{\|}_{m,*,T}^2  	+ {\|}(\mathring{V},\mathring{\varPsi}){\|}_{{m+4},*,T}^2
	 \\[1mm]
& \times \big({{\|}(\tilde{f},W){\|}_{6,*,T}^2} +\|\psi\|_{H^{2}(\varSigma_T)}^2\big)
	\Big\}
\ \ \textrm{for }  m\geq 6.   \label{est:t1}
\end{align}
In view of \eqref{est:t1} with $m=6$, the assumption \eqref{thm3.H1}, the estimate \eqref{est:psi}, and the embedding $ H_*^{9}(\varOmega_T)\hookrightarrow W^{3,\infty}(\varOmega_T)$,
we can find a  sufficiently small constant $T_0>0$, depending on $K_0$, such that if $0<T\leq T_0$, then
\begin{align}\label{est:t2}
	{\|}W{\|}_{6,*,T}^2+\|\psi\|_{H^6(\varSigma_T)}^2
	\leq C(K_0)
	{\|}\tilde{f}{\|}_{6,*,T}^2 .
\end{align}
Plugging \eqref{est:t2} into \eqref{est:t1} yields
\begin{align}\nonumber
&	{\|}W{\|}_{m,*,T}^2+\|\psi\|_{H^m(\varSigma_T)}^2 \\
&\quad 	\leq C(K_0)
 \label{est:t3}	\left({\|}\tilde{f}{\|}_{m,*,T}^2
	+  {\|}(\mathring{V},\mathring{\varPsi}){\|}_{{m+4},*,T}^2{\|}\tilde{f}{\|}_{6,*,T}^2\right)
\ \ \textrm{for }m\geq 6.
\end{align}

According to Theorem \ref{thm:4}, for $(f,g)\in L^2(\varOmega_T)\times H^1(\varSigma_T)$ vanishing in the past, the problem \eqref{ELP3} has a unique solution $(W,\psi)\in L^2(\varOmega_T)\times L^2(\varSigma_T)$.
Applying the arguments in \cite[Chapter 7]{CP82MR0678605} and using the tame estimate \eqref{est:t3},
one can establish the existence and uniqueness of solutions of the problem \eqref{ELP3} in $H_*^m(\varOmega_T)\times H^{m}(\varSigma_T)$ for $m\geq 6$.

It suffices to show the tame estimate \eqref{tame} for the problem \eqref{ELP1}. To this end, we utilize \eqref{V.natural} and Lemma~\ref{lem:Moser2} to get the estimates
\begin{align}
	\nonumber
	{\|}\dot{V}{\|}_{m,*,T}^2 \leq\;& C(K_0)\left\{
	{\|}W{\|}_{m,*,T}^2+\|W\|_{W_*^{1,\infty}(\varOmega_T) }^2\mathring{\rm C}_{m+2}
	+\|g\|_{H^{m}(\varSigma_T)}^2
	\right\},\\
	\nonumber
	{\|}\tilde{f}{\|}_{m,*,T}^2\leq\;&  C(K_0)\left\{
	{\|} {f}{\|}_{m,*,T}^2+
	\|V_{\natural}\|_{W_*^{2,\infty}(\varOmega_T) }^2\mathring{\rm C}_{m+2}
	+\|g\|_{H^{m+1}(\varSigma_T)}^2
	\right\},
\end{align}
which together with \eqref{embed1}--\eqref{embed2},
\eqref{est:t3}, and \eqref{est:psi} imply the desired tame estimate \eqref{tame}.
This completes the proof of Theorem \ref{thm:3}.

\section{Nash--Moser Iteration}\label{sec:Nash}

This section is devoted to solving the nonlinear problem \eqref{NP1} by an appropriate Nash--Moser iteration scheme. See {\sc Alinhac--G{{\'e}}rard} \cite[Chapter 3]{AG07MR2304160} and {\sc Secchi} \cite{S16MR3524197} for a general description of the method.

\subsection{Approximate Solution}

To apply Theorem \ref{thm:3}, that is a well-posedness result in the space of functions vanishing in the past, we should reduce the nonlinear problem \eqref{NP1} to the case with zero initial data. For this purpose, we introduce the so-called approximate solutions to absorb the initial data into the interior equations.


Let $m\geq 3$ be an integer. Assume that the initial data $U_0$ and $\varphi_0$
satisfy
$\widetilde{U}_0:=U_0-\widebar{U} \in H^{m+3/2}(\varOmega)$ and
$\varphi_0\in H^{m+2}(\mathbb{T}^{d-1})$.
Define
\begin{align}
\label{varPsi_0}
{\varPsi}_0:=\chi(x_1) \varphi_0,\quad
\varPhi_0:= x_1+{\varPsi}_0,
\end{align}
where $\chi\in C_0^{\infty}(\mathbb{R})$ satisfies \eqref{chi}.
Without loss of generality, we assume that $\|\varphi_0\|_{L^{\infty}(\mathbb{T}^{d-1})}\leq 1/4$.
Then we have
\begin{align}\label{CA1}
 \p_1 \varPhi_0 \geq \frac{3}{4}\quad \textrm{in } \varOmega.
\end{align}

Let us denote $\widetilde{U} :=U -\widebar{U} $ and define
\begin{align*}
\widetilde{U}_{(j)}:= \p_t^{j}\widetilde{U}\big|_{t=0},\quad
\varphi_{(j)}:= \p_t^{j}\varphi\big|_{t=0},\quad
{\varPsi}_{(j)}:=\chi(x_1) \varphi_{(j)}
\quad\textrm{for  } j\in\mathbb{N}.
\end{align*}
Then we get
$\widetilde{U}_{(0)}=\widetilde{U}_0,$ $\varphi_{(0)}=\varphi_0,$ and ${\varPsi}_{(0)}={\varPsi}_{0}.$
To introduce the compatibility conditions,
we shall determine the traces  $\widetilde{U}_{(j)}$ and $\varphi_{(j)}$
in terms of the initial perturbations $\widetilde{U}_0$ and $\varphi_0$
through the equations \eqref{NP1a} and the first boundary condition in \eqref{NP1b}.
More precisely, applying the operator $\p_t^{j}$ to the first equation in \eqref{NP1b},
taking the traces at the initial time, and employing the Leibniz's rule yield
\begin{align}
\varphi_{(j+1)} = v_{1(j)}\big|_{x_1=0}-\sum_{k=0}^{j} \sum_{i=2}^d
\begin{pmatrix}
j \\ k
\end{pmatrix}  \p_i\varphi_{(j-k)}v_{i(k)}\big|_{x_1=0},
\label{trace.id1}
\end{align}
where $\left(\begin{smallmatrix}
j \\[0.3mm] k
\end{smallmatrix}\right)$ is the binomial coefficient.
Setting
$\mathcal{W}:=(\widetilde{U},\nabla \widetilde{U},\nabla{\varPsi})\in\mathbb{R}^{2d^2+5d+2}$
and assuming that the hyperbolicity condition \eqref{hyperbolicity} is satisfied,
we can rewrite the equations \eqref{NP1a} as
\begin{align}\label{tilde.U.Phi}
\p_t \widetilde{U}=\bm{G}\big(\mathcal{W}\big),
\end{align}
where $\bm{G}$ is a $C^{\infty}$--function that vanishes at the origin.
We employ the generalized Fa\`a di Bruno's formula (see \cite[Theorem 2.1]{M00MR1781515}) to obtain
\begin{align} \label{trace.id2}
&\widetilde{U}_{(j+1)}
=\sum_{\substack{\alpha_{k}\in\mathbb{N}^{2d^2+5d+2}\\ |\alpha_1|+\cdots+j |\alpha_{j}|=j}}
\mathrm{D}^{\alpha_1+\cdots+\alpha_j}\bm{G}\big(\mathcal{W}_{(0)}\big)\prod_{k=1}^j\frac{j!}{\alpha_{k}!}
\left(\frac{\mathcal{W}_{(k)}}{k!}\right)^{\alpha_{k}},
\end{align}
where $\mathcal{W}_{(k)}:=(\widetilde{U}_{(k)},\nabla \widetilde{U}_{(k)},\nabla {\varPsi}_{(k)})$.
From \eqref{trace.id1} and \eqref{trace.id2},
we can determine $\widetilde{U}_{(j)}$ and ${\varphi}_{(j)}$ for integers $j$ inductively
as functions of $\widetilde{U}_0$, $\varphi_0$, and their space derivatives up to order $j$.
Moreover, we have the following lemma (see \cite[Lemma 4.2.1]{M01MR1842775} for the proof).

\begin{lemma}\label{lem:CA1}
Let $m\in\mathbb{N}$ with $m\geq 3$ and $(\widetilde{U}_0,\varphi_0) \in H^{m+3/2}(\varOmega) \times H^{m+2}(\mathbb{T}^{d-1})$.
Then the equations \eqref{trace.id1} and \eqref{trace.id2} determine functions
$(\widetilde{U}_{(j)},\varphi_{(j)})$, for $j=1,\ldots,m$, belonging to $ H^{m+3/2-j}(\varOmega)\times H^{m+2-j}(\mathbb{T}^{d-1})$ and satisfying
\begin{align}
 & \sum_{j=0}^{m}\left(\big\|\widetilde{U}_{(j)}\big\|_{H^{m+3/2-j}(\varOmega)} +\big\|{\varphi}_{(j)}\big\|_{H^{m+2-j}(\mathbb{T}^{d-1})}\right)
\label{CA.est1}
\leq C M_0,
\end{align}
where
\begin{align}
\label{M0}
 M_0:=\big\|\widetilde{U}_0\big\|_{H^{m+3/2}(\varOmega)}
+\|\varphi_0\|_{H^{m+2}(\mathbb{T}^{d-1})},
\end{align}
and the constant $C>0$ depends only on $m$,
$\|\widetilde{U}_{0}\|_{W^{1,\infty}(\varOmega)}$,
and $\|{\varphi}_{0}\|_{W^{1,\infty}(\mathbb{T}^{d-1})}$.
\end{lemma}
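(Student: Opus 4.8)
The plan is to argue by induction on $j$, reading each new trace off the recursion formulas \eqref{trace.id1} and \eqref{trace.id2} and then bounding the resulting finite sums with the Moser-type calculus inequalities of Lemma~\ref{lem:Moser1}, the classical trace theorem $H^{s}(\varOmega)\to H^{s-1/2}(\mathbb{T}^{d-1})$ for $s>\tfrac12$, and Sobolev embedding. The base case $j=0$ is exactly the hypothesis $(\widetilde U_0,\varphi_0)\in H^{m+3/2}(\varOmega)\times H^{m+2}(\mathbb{T}^{d-1})$, and \eqref{CA.est1} restricted to that term is just the definition \eqref{M0} of $M_0$. Assuming the statement for all indices $\le j$, I will have $\widetilde U_{(k)}\in H^{m+3/2-k}(\varOmega)$ and $\varphi_{(k)}\in H^{m+2-k}(\mathbb{T}^{d-1})$ with the corresponding $M_0$-bounds; since the lifting $\varPsi_{(k)}=\chi(x_1)\varphi_{(k)}$ costs no regularity, this gives $\mathcal W_{(k)}=(\widetilde U_{(k)},\nabla\widetilde U_{(k)},\nabla\varPsi_{(k)})\in H^{m+1/2-k}(\varOmega)$ for $0\le k\le j$, and in particular $\mathcal W_{(0)}\in H^{m+1/2}(\varOmega)$ with $\|\mathcal W_{(0)}\|_{L^\infty(\varOmega)}\lesssim\|\widetilde U_0\|_{W^{1,\infty}(\varOmega)}+\|\varphi_0\|_{W^{1,\infty}(\mathbb{T}^{d-1})}$ because $m+\tfrac12>d/2$.

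For $\varphi_{(j+1)}$ I would use \eqref{trace.id1}. The leading term $v_{1(j)}|_{x_1=0}$ is the trace of a function in $H^{m+3/2-j}(\varOmega)$, hence lies in $H^{m+1-j}(\mathbb{T}^{d-1})=H^{m+2-(j+1)}(\mathbb{T}^{d-1})$ with the required bound. Each product $\partial_i\varphi_{(j-k)}\,v_{i(k)}|_{x_1=0}$ with $0\le k\le j$ is a product of a factor in $H^{m+1-(j-k)}(\mathbb{T}^{d-1})$ and a trace in $H^{m+1-k}(\mathbb{T}^{d-1})$; since $k\le j$, both indices are $\ge m+1-j\ge 1$ and their sum $2m+2-j$ exceeds $(m+1-j)+(d-1)/2$, so Lemma~\ref{lem:Moser1} puts the product in $H^{m+1-j}(\mathbb{T}^{d-1})$ with a tame (hence $M_0$-linear) bound. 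Summing the finitely many terms yields the estimate for $\varphi_{(j+1)}$ in $H^{m+2-(j+1)}(\mathbb{T}^{d-1})$.

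For $\widetilde U_{(j+1)}$ I would expand via the Fa\`a di Bruno formula \eqref{trace.id2}. For a fixed admissible tuple $(\alpha_1,\dots,\alpha_j)$, put $p:=|\alpha_1|+\cdots+|\alpha_j|$ and keep the homogeneity constraint $|\alpha_1|+2|\alpha_2|+\cdots+j|\alpha_j|=j$ in mind. The coefficient $\mathrm D^{\alpha_1+\cdots+\alpha_j}\bm G(\mathcal W_{(0)})$ is a smooth function of $\mathcal W_{(0)}\in H^{m+1/2}(\varOmega)$, so the composition inequality of Lemma~\ref{lem:Moser1} gives it in $H^{m+1/2}(\varOmega)$ with norm $\le C M_0$, $C$ depending only on $m$ and $\|\mathcal W_{(0)}\|_{L^\infty}$; because $m+\tfrac12>d/2$, this coefficient is also a pointwise multiplier on $H^{s}(\varOmega)$ for $0\le s\le m+\tfrac12$. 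For the remaining product $\prod_k\mathcal W_{(k)}^{\alpha_k}$, each factor sits in $H^{m+1/2-k}(\varOmega)$, so the sum of the Sobolev indices of its $p$ factors minus the target $m+\tfrac12-j$ is at least $(m+\tfrac12)(p-1)$ by the constraint, which is $\ge(p-1)\,d/2$ for $m\ge3$; iterating Lemma~\ref{lem:Moser1} (the case $p=1$ being just the multiplier property) then places this product in $H^{m+1/2-j}(\varOmega)$, with the full Sobolev norm carried by one factor ($\le C M_0$) and the others controlled in $L^\infty$ by Sobolev embedding. Multiplying by the coefficient keeps the result in $H^{m+1/2-j}(\varOmega)=H^{m+3/2-(j+1)}(\varOmega)$, and summing over the finitely many tuples closes the induction.

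The step I expect to be the real obstacle is making the estimates sharp enough in three dimensions and with the advertised constant. For $j$ close to $m$ the target spaces degenerate to $H^{3/2}(\varOmega)$ and $H^{1}(\mathbb{T}^{d-1})$, which are not multiplicative algebras, so the argument cannot invoke an algebra property and must instead always route the top-order derivatives through a single factor while keeping one genuinely high-regularity multiplier (the $\bm G$-coefficient, or the lowest-index trace) in each product; tracking the weights through \eqref{trace.id2} so that exactly one factor per term carries a full Sobolev norm is where the bookkeeping is delicate. Guaranteeing that $C$ depends only on the $W^{1,\infty}$ norms of the data is the companion point, and it follows by observing inductively that every $\widetilde U_{(k)}$, $\varphi_{(k)}$ is a finite combination of smooth functions of $\mathcal W_{(0)}$ times products of at most $k$ (appropriately counted) derivatives of $\mathcal W_{(0)}$, so that all the $L^\infty$-factors that appear in the estimates reduce to $\|\mathcal W_{(0)}\|_{L^\infty}\sim\|\widetilde U_0\|_{W^{1,\infty}}+\|\varphi_0\|_{W^{1,\infty}}$.
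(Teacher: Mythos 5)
The paper does not prove this lemma at all: it cites \cite[Lemma 4.2.1]{M01MR1842775}, whose argument is indeed the induction-plus-Moser-calculus scheme you outline. Your regularity bookkeeping is correct throughout: the trace theorem and the Sobolev indices in \eqref{trace.id1} work out (both factors lie in $H^{m+1-j}(\mathbb{T}^{d-1})$ and the sum of indices clears $(d-1)/2$), and in \eqref{trace.id2} the homogeneity constraint gives exactly the excess $(p-1)(m+\tfrac12)\geq (p-1)d/2$ you compute, so each $\widetilde U_{(j+1)}$ does land in $H^{m+3/2-(j+1)}(\varOmega)$.

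The genuine gap is in the \emph{form} of the estimate, i.e.\ precisely the point you flag as "the companion point" and then dispatch too quickly. Your main argument bounds a product of the already-formed traces by putting the full Sobolev norm on one factor and "the others controlled in $L^\infty$ by Sobolev embedding." Sobolev embedding controls those $L^\infty$ norms only by $M_0$, so this yields a bound polynomial in $M_0$ of degree $p$, not the advertised $CM_0$ with $C=C(m,\|\widetilde U_0\|_{W^{1,\infty}},\|\varphi_0\|_{W^{1,\infty}})$. Your proposed repair — "all the $L^\infty$-factors reduce to $\|\mathcal W_{(0)}\|_{L^\infty}$" — is false as stated: for $k\geq 1$ the factor $\mathcal W_{(k)}$ contains $\nabla\widetilde U_{(k)}$, hence derivatives of the data of order up to $k+1$ (already $\nabla\widetilde U_{(1)}=\mathrm D\bm G(\mathcal W_{(0)})\nabla\mathcal W_{(0)}$ involves $\nabla^2\widetilde U_0$), and $\|\mathcal W_{(k)}\|_{L^\infty}$ is not controlled by $\|\mathcal W_{(0)}\|_{L^\infty}$. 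The correct mechanism is not to estimate products of the $\mathcal W_{(k)}$ at all, but to unravel the recursion completely so that each $\widetilde U_{(j)}$ is exhibited as a finite sum of terms $F(\mathcal W_{(0)})\prod_i\partial^{\beta_i}\mathcal W_{(0)}$ with $\sum_i|\beta_i|\leq j-1$, and then to apply the derivative-distributing (Gagliardo--Nirenberg) form of the Moser inequality — the first inequality of Lemma~\ref{lem:Moser1}, in its multilinear version — to these expressions directly. Only then is every $L^\infty$ norm that appears the norm of the \emph{undifferentiated} $\mathcal W_{(0)}$, which is what makes the bound linear in $M_0$ with a constant depending solely on the $W^{1,\infty}$ data norms. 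As written, your main induction and your closing remark are two incompatible organizations of the estimate, and neither alone delivers the stated constant; you need to run the whole induction on the unravelled expressions (carrying along, as an inductive hypothesis, the algebraic structure of $\widetilde U_{(j)}$ as well as its norm bound).
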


To construct a smooth approximate solution,
it is necessary to impose certain compatibility conditions on the initial data.

\begin{definition}
\label{def:1}
Let $m\in\mathbb{N}$ with $m\geq 3$. Suppose that
$\widetilde{U}_0:=U_0-\widebar{U}\in H^{m+3/2}(\varOmega) $ and
$\varphi_0 \in H^{m+2}(\mathbb{T}^{d-1})$
satisfy \eqref{varPsi_0}--\eqref{CA1}.
The initial data $(U_0,\varphi_0)$ are said to be compatible up to order $m$ if
the functions $\widetilde{U}_{(j)}$ and $\varphi_{(j)}$ determined by \eqref{trace.id1} and \eqref{trace.id2}
satisfy
\begin{align}\label{compa1}
\left.q_{(j)}\right|_{x_1=0}=0\quad \textrm{for } j=0,\ldots,m.
\end{align}
\end{definition}
It follows from Lemma~\ref{lem:CA1} that $\tilde{q}_{(j)}\in H^{1}(\varOmega)$ for $j=0,\ldots,m$.
Hence, it is permissible to consider the traces of these functions on the boundary $\{x_1=0\}$.
The zero-th order compatibility condition, namely \eqref{compa1} with $j=0$, comes from
the second component of \eqref{NP1b}.

Now we are in a position to construct the approximate solution.
\begin{lemma} \label{lem:app}
Let $m\in\mathbb{N}$ with $m\geq 3$.
Assume that the initial data \eqref{NP1c} satisfy \eqref{hyperbolicity}, \eqref{HN}, \eqref{RT}, the compatibility conditions up to order $m$,
and $(\widetilde{U}_0,\varphi_0) \in H^{m+3/2}(\varOmega) \times H^{m+2}(\mathbb{T}^{d-1})$ for $\widetilde{U}_0:=U_0-\widebar{U}$.
Then there exist positive constants $T_1(M_0)$ and $C(M_0)$ depending on $M_0$ ({\it cf.}~\eqref{M0}), such that if $0<T\leq T_1(M_0)$, then there are functions $U^{a}$ and $\varphi^a$ satisfying
\begin{alignat}{3}
&\big\|\widetilde{U}^{a}\big\|_{H^{m+1}(\varOmega_T)}
+\|\varphi^a\|_{H^{m+5/2}(\varSigma_T)}  \leq C (M_0)
\quad&& \textrm{for }\widetilde{U}^{a}:=U^{a}-\widebar{U},\label{app1a}\\
\label{app1b}
&\rho(U^a)\in(\rho_*,\rho^*),\quad
\p_1\varPhi^{a}\geq \frac{5}{8}\quad&& \textrm{in }  \varOmega_T,
\end{alignat}
where
$\varPhi^{a}:=x_1 +\varPsi^{a} $ with $\varPsi^{a} :=\chi(x_1)\varphi^{a}$. Moreover,
\begin{alignat}{3}
\label{app2a}
&\p_t^k\mathbb{L}(U^{a},\varPhi^{a})\big|_{t=0}=0\quad
\textrm{for all } k\in\{0,\ldots,m-1\},\quad  &&\textrm{in } \varOmega,\\
&\label{app2b}\mathbb{B}(U^a,\varphi^a)=0,
\quad H_{N}^{a}=0,\quad
\p_1 q^{a}\geq \frac{3\kappa_{0}}{4}>0 \quad
&&\textrm{on }  \varSigma_T,\\
&\label{app2c}  U^{a}\big|_{t=0}=U_0\ \ \quad \textrm{in } \varOmega,
\  \ \qquad \varphi^a\big|_{t=0}=\varphi_0   &&\textrm{on } \varSigma,\\[1mm]
\label{app3}
&\mathbb{L}_H(v^{a},H^{a},\varPhi^{a})=0
\quad &&\textrm{in } \varOmega_T,
\end{alignat}
where
$ H_{N}^{a}:=H_{1}^{a}-\sum_{i=2}^{d}\p_{i} \varPsi^{a}  H_{i }^{a}$, and
$\mathbb{L}_H$ denotes the component for the magnetic field of $\mathbb{L}$, that is,
\begin{align} \label{L.bb.H}
\mathbb{L}_H(v,H,\varPhi):=
\big(\p_t^{ { \varPhi}}+v_{i}\p_{i}^{ { \varPhi}}\big)
H-  {H}_{i }\p_{i}^{ {\varPhi}}  {v}+ {H}\p_{i}^{ {\varPhi}}  {v}_{i},
\end{align}
with $\p_t^{\varPhi}$ and $\p_i^{\varPhi}$ defined by \eqref{differential}.
\end{lemma}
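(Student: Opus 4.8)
The strategy is the classical one for constructing approximate solutions in free-boundary Nash--Moser schemes (see \cite{M01MR1842775,T09CPAMMR2560044,T09ARMAMR2481071}): first build a formal Taylor polynomial in $t$ at $t=0$ whose coefficients are exactly the traces $\widetilde{U}_{(j)}$, $\varphi_{(j)}$ produced by Lemma~\ref{lem:CA1}, and then correct it so that it satisfies the boundary conditions and the divergence-type constraint \eqref{app3} exactly. Concretely, I would first invoke a lifting/extension operator (for instance the one furnished by the classical trace theorem in the ordinary Sobolev scale, applied separately to the $t$-trace and the spatial $x_1$-trace) to produce a function $U^\flat$ on $\varOmega_T$ with $\p_t^j U^\flat|_{t=0}=\widetilde{U}_{(j)}+\widebar U$ for $j=0,\dots,m$, and a function $\varphi^\flat$ on $\varSigma_T$ with $\p_t^j\varphi^\flat|_{t=0}=\varphi_{(j)}$ for $j=0,\dots,m$; the estimates \eqref{CA.est1}--\eqref{M0} together with the continuity of the lifting give the bound \eqref{app1a} (with one extra half-derivative gained on the boundary, hence $H^{m+5/2}$) provided $T\le T_1(M_0)$. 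Shrinking $T$ further ensures, by continuity in time from the initial bounds \eqref{hyperbolicity}, \eqref{CA1}, \eqref{RT}, that the open conditions $\rho(U^\flat)\in(\rho_*,\rho^*)$, $\p_1\varPhi^\flat\ge 5/8$, and $\p_1 q^\flat\ge 3\kappa_0/4$ on $\varSigma_T$ hold; these are \eqref{app1b} and the last part of \eqref{app2b}.

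Next I would enforce the boundary conditions \eqref{app2b} exactly. The construction of $\varphi_{(j)}$ via \eqref{trace.id1} was designed precisely so that the first component of $\mathbb{B}$ vanishes to order $m-1$ in $t$ at $t=0$; likewise the compatibility conditions \eqref{compa1} say that $q^\flat|_{x_1=0}$ vanishes to order $m$ at $t=0$, and the constraint \eqref{HN} propagated through the traces gives $H_N^\flat|_{x_1=0}$ vanishing to the same order at $t=0$. To turn ``vanishing to high order at $t=0$'' into ``vanishing identically on $\varSigma_T$'' one modifies $U^\flat$ by a small corrector supported near the boundary: using Lemma~\ref{lem:trace}(b) (or the ordinary trace lifting) one lifts $-\mathbb{B}(U^\flat,\varphi^\flat)$ and $-H_N^\flat$ from $\varSigma_T$ into $\varOmega_T$, obtaining a correction whose norm is controlled by a positive power of $T$ (because the lifted boundary data vanish to order $m$ at $t=0$, their $H^{m+5/2}(\varSigma_T)$ norm is $O(T^{1/2})$ or better after the standard interpolation argument). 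Adding this corrector produces $U^a,\varphi^a$ with $\mathbb{B}(U^a,\varphi^a)=0$ and $H_N^a=0$ on $\varSigma_T$, still satisfying \eqref{app1a}--\eqref{app1b} and \eqref{app2c} for $T$ small, and \eqref{app2a} is preserved because the corrector and its $t$-derivatives up to order $m-1$ vanish at $t=0$.

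The remaining and most delicate point is \eqref{app3}: $U^a$ must satisfy the magnetic evolution $\mathbb{L}_H(v^a,H^a,\varPhi^a)=0$ \emph{identically} on $\varOmega_T$, not merely to high order at $t=0$, since this exact identity is what guarantees (via Proposition~\ref{pro:1}) that the divergence constraint \eqref{divH} is propagated in the Nash--Moser iterates. The way to achieve this is to \emph{not} prescribe $H^a$ by a Taylor lift at all, but instead to \emph{solve} the linear transport-type system \eqref{L.bb.H} for $H^a$, treating $v^a$ and $\varPhi^a$ (already constructed above) as given coefficients and taking $H_0$ as initial datum; since $\mathbb{L}_H$ is a first-order symmetric hyperbolic operator in $H$ with the noncharacteristic boundary direction $\p_1^\varPhi$ only through the lifting, this Cauchy problem is solvable on $\varOmega_T$ for $T\le T_1(M_0)$ with $H^a\in H^{m+1}(\varOmega_T)$ by standard energy estimates, and its solution automatically has the prescribed traces $H_{(j)}$ at $t=0$ by uniqueness (they solve the same ODE system in $t$). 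One must then re-check that this genuine solution still satisfies $H_N^a=0$ on $\varSigma_T$: this follows because $H_N^a|_{x_1=0}$ satisfies, as a consequence of \eqref{L.bb.H} and the boundary condition $\p_t\varphi^a=v_N^a$, a homogeneous linear transport equation \emph{along the boundary} with zero initial datum (this is exactly the computation behind Proposition~\ref{pro:1} restricted to $\{x_1=0\}$), hence vanishes identically. I expect this last coordination---arranging that the \emph{exactly solved} $H^a$ is simultaneously consistent with the boundary corrector applied to the rest of $U^a$ and with $H_N^a=0$---to be the main technical obstacle; the bookkeeping is handled by doing the $H$-solve \emph{after} fixing $v^a,\varphi^a$, so that no further modification of $v^a$ is needed, and by noting that all estimates only cost powers of $T$, which are absorbed into $C(M_0)$ by choosing $T_1(M_0)$ small enough.
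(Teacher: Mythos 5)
Your overall architecture matches the paper's in its essential elements: lift the Taylor coefficients $(\widetilde{U}_{(j)},\varphi_{(j)})$ in time, enforce the boundary conditions for $q$ and $v_1$, and --- crucially --- obtain $H^a$ not by lifting but by solving the linear transport system \eqref{app3} with initial datum $H_0$ \emph{after} $v^a$ and $\varPhi^a$ have been fixed, so that $H_N^a=0$ propagates along $\varSigma_T$ by restricting \eqref{app3} to the boundary (the Proposition~\ref{pro:1} argument). That last step is exactly the paper's treatment and is the key point of the lemma. Where you diverge is in how $\mathbb{B}(U^a,\varphi^a)=0$ is enforced: the paper prescribes the boundary value and all the time-traces \emph{simultaneously} (the Lions--Magenes lifting for $q^a$, and the analogous simultaneous choice of $v_1^a$ with boundary value $w^a$), the consistency of the two sets of data being precisely the content of the compatibility conditions \eqref{compa1} and of \eqref{trace.id1}; you instead lift the time-traces first and then add a boundary corrector.

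This sequential route has a genuine gap in your justification of \eqref{app2a}. You assert that \eqref{app2a} survives ``because the corrector and its $t$-derivatives up to order $m-1$ vanish at $t=0$,'' but since $\mathbb{L}$ contains $\p_t U$, the identity $\p_t^{m-1}\mathbb{L}(U^a,\varPhi^a)|_{t=0}=0$ requires $\p_t^{m}U^a|_{t=0}=\widebar{U}+\widetilde{U}_{(m)}$, i.e.\ the corrector must vanish to order $m$ at $t=0$, not $m-1$. For the $q$-corrector this is fine, because \eqref{compa1} makes $q^\flat|_{x_1=0}$ vanish to order $m$ in $t$. But the kinematic residual $\p_t\varphi^\flat-v_N^\flat$ on $\varSigma$ vanishes only to order $m-1$: its $m$-th time derivative at $t=0$ involves $\p_t^{m+1}\varphi^\flat|_{t=0}$, which your construction does not control. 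Hence the corrector added to $v_1$ generically has a nonzero $m$-th time derivative at $t=0$, \eqref{app2a} fails at $k=m-1$, and $f^a$ in \eqref{f^a} then lies only in $H^{m-1}(\varOmega_T)$ rather than $H^{m}(\varOmega_T)$ --- a loss that propagates into \eqref{f^a:est} and the Nash--Moser scheme. The fix is either to also prescribe $\p_t^{m+1}\varphi^a|_{t=0}$ via \eqref{trace.id1} with $j=m$ (so that the kinematic residual vanishes to order $m$ as well), or to do what the paper does and choose $v_1^a$ with boundary value $w^a$ and time-traces $v_{1(k)}$, $k\le m$, in a single simultaneous lifting. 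A secondary, harmless point: the corrector you introduce to force $H_N^\flat=0$ on $\varSigma_T$ is superfluous, since that $H^\flat$ is discarded when you re-solve \eqref{app3}.
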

\begin{proof}
We divide the proof into four steps.

\vspace*{2mm}
\noindent {\it Step 1}.\quad
We take $\varphi^a\in H^{m+5/2}(\mathbb{R}\times \mathbb{T}^{d-1})$
and $(\tilde{v}_2^{a},\ldots,\tilde{v}_d^{a},\widetilde{S}^{a})\in H^{m+2}(\mathbb{R}\times \varOmega)$
to satisfy
\begin{alignat*}{3}
\p_t^{k} \varphi^a\big|_{t=0}=\varphi_{(k)},\quad
\p_t^{k}(\tilde{v}_2^{a},\ldots,\tilde{v}_d^{a},\widetilde{S}^{a})\big|_{t=0}
=(\tilde{v}_{2(k)},\ldots,\tilde{v}_{d(k)},\widetilde{S}_{(k)}),
\end{alignat*}
for all $k\in\{0,\ldots,m\}.$
Set $\varPsi^{a}:=\chi( x_1) \varphi^{a}\in  H^{m+5/2}(\mathbb{R}\times \varOmega)$
and $\varPhi^{a }:= x_1 +\varPsi^{a } $.

\vspace*{2mm}
\noindent {\it Step 2}.\quad
In view of the compatibility conditions \eqref{compa1},
we can apply the lifting result in \cite[Theorem 2.3]{LM72MR0350178}
to find $\tilde{q}^{a} \in H^{m+2}(\mathbb{R}\times \varOmega)$ such that
\begin{align*}
{q}^{a}=0\quad \textrm{on } \varSigma,\quad
\p_t^{k}\tilde{q}^{a} \big|_{t=0}=\tilde{q}_{(k)}
\quad  \textrm{in }  \varOmega
\quad \textrm{for all } k\in\{0,\ldots,m\}.
\end{align*}
It follows from the trace theorem that
\begin{align*}
w^a:=\p_t\varphi^a+\sum_{i=2}^d \p_i\varphi^a v_i^{a}\big|_{x_1=0} \in H^{m+3/2}(\mathbb{R}\times \mathbb{T}^{d-1}).
\end{align*}
Thanks to \eqref{trace.id1},
we can choose $\tilde{v}_1^{a}\in H^{m+2}(\mathbb{R}\times\varOmega)$ such that
$$v_1^{a} =w^a \quad \textrm{on } \varSigma,\quad  \p_t^{k}v_1^{a}\big|_{t=0}=v_{1(k)}^{}\quad \textrm{in } \varOmega \quad
\textrm{for all } k\in\{0,\ldots,m\}.$$
As a direct consequence, we obtain the first identity in \eqref{app2b}.

\vspace*{2mm}
\noindent {\it Step 3}.\quad
Noting that $\tilde{v}^{a }\in H^{m+2}(\mathbb{R}\times \varOmega)$ and $\varPsi^{a}=\chi( x_1) \varphi^{a} \in H^{m+5/2}(\mathbb{R}\times \varOmega)$ have been already specified,
we take $\widetilde{H}^{a}\in H^{m+1}(\mathbb{R}\times \varOmega)$
as the unique solution of the equations \eqref{app3} supplemented with the initial data
$\widetilde{H}^{a}\big|_{t=0}=\widetilde{H}_{0}. $
Since the second identity in \eqref{app2b} is fulfilled at $t=0$,
similar to the proof of Proposition \ref{pro:1},
we can deduce that the second identity in \eqref{app2b} holds for all $t\in\mathbb{R}$
by considering the restriction of the equations \eqref{app3} to the boundary $\varSigma$.

\vspace*{2mm}
\noindent {\it Step 4}.\quad
We now have obtained \eqref{app2c}--\eqref{app3} and the first two identities in \eqref{app2b}.
Estimate \eqref{app1a} follows from \eqref{CA.est1} and the continuity of the lifting operators.
We deduce \eqref{app1b} and the third relation in \eqref{app2b} by taking $T>0$ small enough.
Equations \eqref{trace.id2} imply \eqref{app2a}.
The proof of the lemma is complete.
\end{proof}
The vector function $(U^a,\varphi^a)$ in Lemma~\ref{lem:app}
is called the approximate solution to the problem \eqref{NP1}.
Let us define
\begin{align}\label{f^a}
f^{a}:=\left\{\begin{aligned}
& -\mathbb{L}(U^{a},\varPhi^{a}) \quad &\textrm{if }t>0,\\
& 0 \quad &\textrm{if }t<0.
\end{aligned}\right.
\end{align}
Since $(\widetilde{U}^{a},{\varphi}^{a})\in  H^{m+1}(\varOmega_T)\times H^{m+5/2}(\varSigma_T)$,
we obtain from \eqref{app1a} and \eqref{app2a} that
$f^{a}\in H^{m}(\varOmega_T)$ and
\begin{align}\label{f^a:est}
\|f^{a}\|_{ H^{m}(\varOmega_T)}\leq
\delta_0\left(T\right),
\end{align}
where $\delta_0(T)\to 0$ as $T\to 0$.
The estimate \eqref{f^a:est} results from the Moser-type calculus and embedding inequalities.

Let $(U^a,\varPhi^a)$ be the approximate solution defined in Lemma~\ref{lem:app}.
By virtue of \eqref{app2a}--\eqref{app2c} and \eqref{f^a},
we find that  $(U,\varphi)=(U^a,\varphi^a)+(V,\psi)$ is a solution of
the nonlinear problem \eqref{NP1} on $[0,T]\times \varOmega$,
provided $V$, $\psi$, and $\varPsi:=\chi(x_1)\psi$ solve
\begin{align} \label{P.new}
\left\{
\begin{aligned}
&\mathcal{L}(V,\varPsi):=\mathbb{L}(U^a+V,\varPhi^a+\varPsi)-\mathbb{L}(U^a,\varPhi^a)=f^a
\ &&\textrm{in }\varOmega_T,\\
&\mathcal{B}(V,\psi):=\mathbb{B}(U^a+V,\varphi^a+\psi)=0
\ &&\textrm{on }\varSigma_T,\\
&(V,\psi)=0,\ &&\textrm{if }t< 0.
\end{aligned}\right.
\end{align}
Thanks to \eqref{app2b}, we find that $(V,\psi)=0$ satisfies \eqref{P.new} for $t<0$.
Therefore, the original problem on $[0,T]\times \varOmega$ is reformulated as a problem in $\varOmega_T$ whose solutions vanish in the past.

\subsection{Iteration Scheme}

We first quote the following result on the smoothing operators from \cite[Proposition 10]{T09ARMAMR2481071}.

\begin{proposition} \label{pro:smooth}
Let $T>0$ and $m\in \mathbb{N}$ with $m\geq 3$.
Let $\mathscr{F}_*^s(\varOmega_T):=\big\{u\in H_*^{s}(\varOmega_T):u=0\textrm{ for }t<0\big\}$.
Then there exists a family $\{\mathcal{S}_{\theta}\}_{\theta\geq 1}$ of smoothing operators from $\mathscr{F}_*^3(\varOmega_T) $ to $\bigcap_{s\geq 3}\mathscr{F}_*^s(\varOmega_T)$, such that
\begin{subequations}\label{smooth.p1}
\begin{alignat}{2}
&  \label{smooth.p1a}
\|\mathcal{S}_{\theta} u\|_{k,*,T}
\leq C \theta^{(k-j)_+}\|u\|_{j,*,T}
&& \textrm{for  \;}k,j\in\{1,\ldots,m\},\\[1.5mm]
&  \label{smooth.p1b}
\|\mathcal{S}_{\theta} u-u\|_{k,*,T}
\leq C  \theta^{k-j}\|u\|_{j,*,T}
&& \textrm{for  \;}1\leq k\leq j \leq m,\\
&  \label{smooth.p1c}
\left\|\frac{\d}{\d \theta}\mathcal{S}_{\theta} u\right\|_{k,*,T}
\leq C \theta^{k-j-1}\|u\|_{j,*,T}
&\quad&\textrm{for \;}k,j\in\{1,\ldots,m\},
\end{alignat}
\end{subequations}
where $k$ and $j$ are integers, $(k-j)_+:=\max\{0,\, k-j \}$,
and the constant $C$ depends only on $m$.
Furthermore, there is another family of smoothing operators (still denoted by $\mathcal{S}_{\theta}$) acting on the functions defined on the boundary $\varSigma_T$ and satisfying the properties in \eqref{smooth.p1} with norms $\|\cdot\|_{H^{j}(\varSigma_T)}$.
\end{proposition}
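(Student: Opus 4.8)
The plan is to construct the family $\{\mathcal{S}_{\theta}\}_{\theta\ge1}$ along the lines of {\sc Chen} \cite{C07MR2289911}, {\sc Secchi} \cite{S96MR1405665} and {\sc Trakhinin} \cite[Proposition~10]{T09ARMAMR2481071}, the only point requiring care being that the operators must map functions vanishing for $t<0$ to functions vanishing for $t<0$. First I would localise by a partition of unity subordinate to $\{x_1<1\}$ and $\{x_1>1/2\}$. Where $x_1>1/2$ one has $\sigma\equiv1$, so $H_*^{m}$ locally coincides with an ordinary Sobolev space and the classical Friedrichs mollification applies. Where $x_1<1$ one has $\sigma(x_1)=x_1$, and I would pass to the logarithmic variable $y$ defined by $x_1=\mathrm{e}^{-y}$, under which the degenerate field $\sigma\p_1=-\p_y$ becomes constant-coefficient, while the genuine normal derivative $\p_1$ becomes $-\mathrm{e}^{y}\p_y$; this is exactly the mechanism behind the weight $\langle\alpha\rangle=|\alpha|+\alpha_{d+1}$ in \eqref{D*}, in which $\p_1$ is counted twice. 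In these coordinates a convolution in $(t,x',y)$ at frequency scale $\theta$ smooths in the conormal directions, and a separate convolution in $x_1$ at frequency scale $\theta^{2}$ smooths in the normal direction, the exponent $2$ being forced by the double weight of $\p_1$; patching this near-boundary construction (transported back through $x_1=\mathrm{e}^{-y}$) with the interior one produces $\mathcal{S}_{\theta}$. The boundary family is obtained either as the trace $x_1\to0$ of the interior one, or directly on $\mathbb{R}\times\mathbb{T}^{d-1}$ by the same recipe.

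For the conormal smoothing I would use the Fourier multiplier $\phi(\zeta/\theta)$, where $\zeta$ is dual to $(t,x',y)$ and $\phi\in C_0^{\infty}$ with $0\le\phi\le1$ and $\phi\equiv1$ near the origin. To preserve vanishing for $t<0$, I would compose with a \emph{causal} time mollification, namely convolution with $\theta\,\varrho(\theta\,\cdot)$ for some $\varrho\in C_0^{\infty}$ with $\supp\varrho\subset(0,1)$, $\int\varrho=1$, and moments vanishing up to order $m-1$. The support condition ensures causality; the moment conditions make $\widehat{\varrho}(\zeta_0)-1=O(|\zeta_0|^{m})$ near $\zeta_0=0$, which is what allows the gain $\theta^{k-j}$ in \eqref{smooth.p1b} to survive for all $1\le k\le j\le m$ even though a compactly supported kernel cannot have $\widehat{\varrho}\equiv1$ on an interval.

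With $\mathcal{S}_{\theta}$ in this form its symbol $\mathfrak{m}_{\theta}$ satisfies, writing $\langle\zeta\rangle_*$ for the anisotropic weight underlying $\|\cdot\|_{m,*,T}$: $\mathfrak{m}_{\theta}=1+O\big((|\zeta_0|/\theta)^{m}\big)$ on $\{\langle\zeta\rangle_*\lesssim\theta\}$, $\mathfrak{m}_{\theta}$ compactly supported or rapidly decaying on $\{\langle\zeta\rangle_*\gtrsim\theta\}$, with the expected scaling in $\theta$. The three estimates then reduce to elementary frequency-space bounds: $\langle\zeta\rangle_*^{k}\,\mathfrak{m}_{\theta}(\zeta)\lesssim\theta^{(k-j)_+}\langle\zeta\rangle_*^{j}$ gives \eqref{smooth.p1a}; $\langle\zeta\rangle_*^{k}\,|\mathfrak{m}_{\theta}(\zeta)-1|\lesssim\theta^{k-j}\langle\zeta\rangle_*^{j}$ for $1\le k\le j\le m$ gives \eqref{smooth.p1b}; and $\frac{\d}{\d\theta}\mathfrak{m}_{\theta}$ being concentrated on $\{\langle\zeta\rangle_*\sim\theta\}$ with one extra factor of $\theta^{-1}$ gives \eqref{smooth.p1c}. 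The boundary estimates follow in the same way, or from Lemma~\ref{lem:trace} combined with the interior ones.

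The hard part will be reconciling two competing demands. On the one hand, $\mathcal{S}_{\theta}$ must commute well enough with the degenerate field $\sigma\p_1$ near the characteristic boundary, so that $\mathrm{D}_*^{\alpha}\mathcal{S}_{\theta}u$ is controlled with the pure normal derivative $\p_1$ weighted twice; this is what the substitution $x_1=\mathrm{e}^{-y}$ achieves, turning the anisotropic structure into one in which ordinary mollification is legitimate. On the other hand, $\mathcal{S}_{\theta}$ must keep the support inside $\{t\ge0\}$ without spoiling the estimate \eqref{smooth.p1b}; since a compactly supported time kernel has $\widehat{\varrho}\not\equiv1$ near the origin, the gain $\theta^{k-j}$ for $j-k\ge2$ cannot come from such a kernel alone, which is precisely why the vanishing-moment condition on $\varrho$ is needed. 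Once these two reductions are in place, what remains is the classical frequency-localisation argument and poses no further difficulty.
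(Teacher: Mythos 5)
First, note that the paper does not prove Proposition \ref{pro:smooth} at all: it is quoted verbatim from \cite[Proposition 10]{T09ARMAMR2481071}, so your proposal is being measured against the standard construction imported there rather than against an argument in this paper. Your overall architecture is the right one and matches that standard construction in spirit: smoothing at the anisotropic scales ($\theta$ in the conormal directions, $\theta^{2}$ in the genuine normal direction, reflecting the double weight of $\partial_1$ in \eqref{D*}), a one-sided time kernel to preserve $\mathscr{F}_*^s$, vanishing moments to rescue \eqref{smooth.p1b}, and frequency-space verification of \eqref{smooth.p1a}--\eqref{smooth.p1c}.

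There is, however, one genuine gap as written: your operator is not causal. You take the conormal smoothing to be the Fourier multiplier $\phi(\zeta/\theta)$ with $\zeta$ dual to $(t,x',y)$ — i.e.\ including the time frequency — and only afterwards compose with the convolution by $\theta\varrho(\theta\,\cdot)$, $\supp\varrho\subset(0,1)$. The multiplier $\phi(\cdot/\theta)$ in the $\zeta_0$ variable is convolution in $t$ with a Schwartz, non-compactly-supported kernel; it therefore spreads the support of $u$ over all of $\mathbb{R}$ in time, and post-composing with a causal convolution cannot undo this. Hence the composite does not map $\mathscr{F}_*^3(\varOmega_T)$ into $\bigcap_{s}\mathscr{F}_*^s(\varOmega_T)$, which is the whole point of the proposition (this property is what the Nash--Moser scheme in Section \ref{sec:Nash} relies on). The fix is to remove $\zeta_0$ from the multiplier entirely and let \emph{all} time regularization be carried by the one-sided kernel $\varrho$ with vanishing moments up to order $m-1$; the estimates then follow by splitting $\mathcal{S}_\theta-I$ into a time part and a spatial part. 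Two smaller points: under $x_1=\mathrm{e}^{-y}$ the $L^2$ measure acquires the weight $\mathrm{e}^{-y}$, so boundedness of convolution in $y$ needs the (easy but necessary) conjugation argument; and the boundary family should be built directly on $(-\infty,T)\times\mathbb{T}^{d-1}$ by the same recipe — obtaining it as a trace of the interior family does not give operators bounded on $H^{j}(\varSigma_T)$ without losing a derivative in the $H_*$ scale (cf.\ Lemma \ref{lem:trace}).
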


Now we follow \cite{CS08MR2423311,T09ARMAMR2481071} to describe the iteration scheme for problem \eqref{P.new}.

\vspace*{2mm}
\noindent{\bf Assumption\;(A-1)}: {\it  Set $ (V_0, \psi_0)=0$. Let
	$(V_k,\psi_k)$ be given and vanish in the past, and set $\varPsi_k:=\chi( x_1)\psi_k$,
	for all $k\in\{0,\ldots,{n}\}$.}

\vspace*{2mm}
We consider
\begin{align}\label{NM0}
V_{{n}+1}=V_{{n}}+\delta V_{{n}},
\quad \psi_{{n}+1}=\psi_{{n}}+\delta \psi_{{n}},
\quad  \delta\varPsi_{{n}}:=\chi(x_1)\delta \psi_{{n}},
\end{align}
where the differences $\delta V_{{n}}$ and $\delta \psi_{{n}}$ will be specified via the effective linear problem
\begin{align} \label{effective.NM}
\left\{\begin{aligned}
&\mathbb{L}_e'(U^a+V_{{n}+1/2},\varPhi^a+\varPsi_{{n}+1/2})\delta \dot{V}_{{n}}=f_{{n}}
\ \  &&\textrm{in }\varOmega_T,\\
& \mathbb{B}_e'(U^a+V_{{n}+1/2},\varphi^a+\psi_{{n}+1/2})(\delta \dot{V}_{{n}},\delta\psi_{{n}})=g_{{n}}
\ \ &&\textrm{on }\varSigma_T,\\
& (\delta \dot{V}_{{n}},\delta\psi_{{n}})=0\ \ &&\textrm{for }t<0,
\end{aligned}\right.
\end{align}
with $\varPsi_{{n}+1/2}:=\chi(x_1)\psi_{{n}+1/2}$.
Here $\delta \dot{V}_{{n}}$ is the good unknown ({\it cf.}~\eqref{good}), {\it i.e.},
\begin{align} \label{good.NM}
\delta \dot{V}_{{n}}:=\delta V_{{n}}-\frac{\p_1 (U^a+V_{{n}+1/2})}{\p_1 (\varPhi^a+\varPsi_{{n}+1/2})}\delta\varPsi_{{n}},
\end{align}
and $(V_{{n}+1/2},\psi_{{n}+1/2})$ is a smooth modified state
such that $(U^a+V_{{n}+1/2},\varphi^a+\psi_{{n}+1/2})$ satisfies
the constraints \eqref{bas1a}--\eqref{bas1d};
see Proposition \ref{pro:modified}  for the detailed construction and estimate.
The source terms $f_n$ and $g_n$ will be defined through the accumulated error terms
at Step ${n}$ later on.

\vspace*{2mm}
\noindent{\bf Assumption (A-2)}: {\it Set $f_0:=\mathcal{S}_{\theta_0}f^a$ and $(e_0,\tilde{e}_0,g_0):=0$ for $\theta_0\geq 1$ sufficiently large, and let $(f_k,g_k,e_k,\tilde{e}_k)$ be given and vanish in the past for all $k\in\{1,\ldots,{n}-1\}$.}

\vspace*{2mm}

Under Assumptions (A-1)--(A-2), we can describe our iteration scheme as follows.
First we compute the accumulated error terms at Step $n$ for $n\geq 1$ by
\begin{align}  \label{E.E.tilde}
E_{{n}}:=\sum_{k=0}^{{n}-1}e_{k},\quad \widetilde{E}_{{n}}:=\sum_{k=0}^{{n}-1}\tilde{e}_{k}.
\end{align}
Then we calculate the terms $f_{{n}}$ and $g_{{n}}$ from
\begin{align} \label{source}
\sum_{k=0}^{{n}} f_k+\mathcal{S}_{\theta_{{n}}}E_{{n}}=\mathcal{S}_{\theta_{{n}}}f^a,
\quad \sum_{k=0}^{{n}}g_k+\mathcal{S}_{\theta_{{n}}}\widetilde{E}_{{n}}=0,
\end{align}
where $\mathcal{S}_{\theta_{{n}}}$ are the smoothing operators given in Proposition \ref{pro:smooth} with the sequence $\{\theta_{{n}}\}$ defined by
\begin{align} \label{theta}
\theta_0\geq 1,\quad \theta_{{n}}=\sqrt{\theta^2_0+{n}}.
\end{align}
Once $f_n$ and $g_n$ are specified, we can apply Theorem \ref{thm:3} to
solve $(\delta \dot{V}_{{n}},\delta \psi_{{n}})$ from the problem \eqref{effective.NM}.
Then we obtain the function $\delta V_{{n}}$
and $(V_{{n}+1},\psi_{{n}+1})$ from \eqref{good.NM} and \eqref{NM0}.

The error terms at Step ${n}$ are defined through the following decompositions:
\begin{align}
\nonumber&\mathcal{L}(V_{{n}+1},\varPsi_{{n}+1})-\mathcal{L}(V_{{n}},\varPsi_{{n}})\\
\nonumber&\  = \mathbb{L}'(U^a+V_{{n}},\varPhi^a+\varPsi_{{n}})(\delta V_{{n}},\delta\varPsi_{{n}})+e_{{n}}'\\
\nonumber&\  = \mathbb{L}'(U^a+\mathcal{S}_{\theta_{{n}}}V_{{n}},\varPhi^a+\mathcal{S}_{\theta_{{n}}}\varPsi_{{n}})(\delta V_{{n}},\delta\varPsi_{{n}})+e_{{n}}'+e_{{n}}''\\
\nonumber&\ = \mathbb{L}'(U^a+V_{{n}+1/2},\varPhi^a+\varPsi_{{n}+1/2})(\delta V_{{n}},\delta\varPsi_{{n}})+e_{{n}}'+e_{{n}}''+e_{{n}}'''\\
\label{decom1}&\  = \mathbb{L}_e'(U^a+V_{{n}+1/2},\varPhi^a+\varPsi_{{n}+1/2})\delta \dot{V}_{{n}}+e_{{n}}'+e_{{n}}''+e_{{n}}'''+D_{{n}+1/2} \delta\varPsi_{{n}},
\end{align} and \begin{align}
\nonumber&\mathcal{B}(V_{{n}+1},\psi_{{n}+1})-\mathcal{B}(V_{{n}},\psi_{{n}})\\
\nonumber&\  = \mathbb{B}'(U^a+V_{{n}},\varphi^a+\psi_{{n}})(\delta V_{{n}},\delta\psi_{{n}})+\tilde{e}_{{n}}'\\
\nonumber&\ = \mathbb{B}'(U^a+\mathcal{S}_{\theta_{{n}}}V_{{n}},\varphi^a+\mathcal{S}_{\theta_{{n}}}\psi_{{n}})(\delta V_{{n}},\delta\psi_{{n}})+\tilde{e}_{{n}}'+\tilde{e}_{{n}}''\\
\label{decom2}&\  =\mathbb{B}_e'(U^a+V_{{n}+1/2},
\varphi^a+\psi_{{n}+1/2})(\delta \dot{V}_{{n}} ,\delta\psi_{{n}})+\tilde{e}_{{n}}'+\tilde{e}_{{n}}''+\tilde{e}_{{n}}''',
\end{align}
with
\begin{align}\label{error.D}
D_{{n}+1/2}:=\frac{1}{\p_1(\varPhi^a+\varPsi_{{n}+1/2})}\p_1\mathbb{L}(U^a+V_{{n}+1/2},\varPhi^a+\varPsi_{{n}+1/2}),
\end{align}
where we utilize \eqref{Alinhac} to obtain the last identity in \eqref{decom1}.
Setting
\begin{align} \label{e.e.tilde}
e_{{n}}:=e_{{n}}'+e_{{n}}''+e_{{n}}'''+D_{{n}+1/2} \delta\varPsi_{{n}},\quad
\tilde{e}_{{n}}:=\tilde{e}_{{n}}'+\tilde{e}_{{n}}''+\tilde{e}_{{n}}''',
\end{align}
we complete the description of the iteration scheme.

Summing \eqref{decom1} and \eqref{decom2} from ${n}=0$ to $N$, respectively,
we use \eqref{effective.NM} and \eqref{E.E.tilde}--\eqref{source}  to find
\begin{align}
\nonumber 
&\mathcal{L}(V_{N+1},\varPsi_{N+1})=\sum_{{n}=0}^{N}f_{{n}}+E_{N+1}=\mathcal{S}_{\theta_{N}}f^a+({I}-\mathcal{S}_{\theta_{N}})E_{N}+e_{N},\\
\nonumber 
&\mathcal{B}(V_{N+1},\psi_{N+1})=\sum_{{n}=0}^{N}g_{{n}}+\widetilde{E}_{N+1}=({I}-\mathcal{S}_{\theta_{N}})\widetilde{E}_{N}+\tilde{e}_{N}.
\end{align}
Since $\mathcal{S}_{\theta_{N}}\to {I}$ as $N\to \infty$, we can formally obtain the solution to the problem \eqref{P.new}
from $\mathcal{L}(V_{N+1},\varPsi_{N+1})\to f^a$ and  $\mathcal{B}(V_{N+1},\psi_{N+1})\to 0,$
provided $(e_{N},\tilde{e}_{N})\to 0$  as $N\to \infty$.

\subsection{Inductive Hypothesis}

Let $m\in \mathbb{N}$ with $m\geq {13}$ and $\widetilde{\alpha}:=m-{5}$.
Suppose that the initial data \eqref{NP1c} satisfy $(\widetilde{U}_0,\varphi_0) \in H^{m+3/2}(\varOmega) \times H^{m+2}(\mathbb{T}^{d-1})$ for $\widetilde{U}_0:=U_0-\widebar{U}$.
Thanks to Lemma~\ref{lem:app}, the following estimates hold:
\begin{align} \label{small}
\big\|\widetilde{U}^a\big\|_{H^{\widetilde{\alpha}+{6}}(\varOmega_T)}
+\big\|\varphi^a\big\|_{H^{\widetilde{\alpha}+{15/2}}(\varSigma_T)}
\leq C(M_0),\  
\big\|f^a\big\|_{H^{\widetilde{\alpha}+{5}}(\varOmega_T)}\leq \delta_0(T),
\end{align}
where $M_0$ is defined by \eqref{M0} and $\delta_0(T)\to 0$ as $T\to 0$.
Suppose further that Assumptions (A-1)--(A-2) are fulfilled.
For an integer $ {\alpha }\geq 7$ and a real number $\varepsilon>0$,
our inductive hypothesis reads
\begin{align*}
(\mathbf{H}_{{n}-1})\ \left\{\begin{aligned}
\textrm{(a)}\,\,  &\|(\delta V_k,\delta \varPsi_k)\|_{s,*,T}+\|\delta\psi_k\|_{H^{s}(\varSigma_T)}\leq \varepsilon \theta_k^{s-{\alpha }-1}\varDelta_k\\
&\quad \textrm{for all } k\in\{0,\ldots,{n}-1\}\textrm{ and }s\in \{6,\ldots,\widetilde{\alpha} \};\\
\textrm{(b)}\,\, &\|\mathcal{L}( V_k,  \varPsi_k)-f^a\|_{s,*,T}\leq 2 \varepsilon \theta_k^{s-{\alpha }-1}\\
&\quad \textrm{for all } k\in\{0,\ldots,{n}-1\}\textrm{ and } s\in\{ 6,\ldots,\widetilde{\alpha}-2\};\\
\textrm{(c)}\,\,  &\|\mathcal{B}( V_k,  \psi_k)\|_{H^{s}(\varSigma_T)}\leq  \varepsilon \theta_k^{s-{\alpha }-1}\\
&\quad \textrm{for all } k\in\{0,\ldots,{n}-1\}\textrm{ and } s\in\{7,\ldots,{\alpha }\},
\end{aligned}\right.
\end{align*}
where $\theta_k$ is defined by \eqref{theta} and $\varDelta_{k}:=\theta_{k+1}-\theta_k$.
Note that $ {1}/{3} \leq \theta_k\varDelta_{k}\leq  {1}/{2}  $ for all $k\in\mathbb{N}$.
We will choose ${\alpha }$, $\widetilde{\alpha}>{\alpha }$, and $\varepsilon$ later on.

We aim to prove that hypothesis ($\mathbf{H}_{{n}-1}$) implies ($\mathbf{H}_{{n}}$) and that ($\mathbf{H}_0$) holds,
provided $T>0$ and $\varepsilon>0$ are sufficiently small and
$\theta_0\geq 1$ is large enough.
Supposing that hypothesis ($\mathbf{H}_{{n}-1}$) holds, we have the following result.

\begin{lemma}[{\cite[Lemma 7]{T09ARMAMR2481071}}] \label{lem:triangle}
	If $\theta_0$ is large enough, then
	\begin{align}
	\label{tri1}&\|( V_k, \varPsi_k)\|_{s,*,T}+\|\psi_k\|_{H^{s}(\varSigma_T)}
	\leq
	\left\{\begin{aligned}
	&\varepsilon \theta_k^{(s-{\alpha })_+}   &&\textrm{if }s\neq {\alpha },\\
	&\varepsilon \log \theta_k   &&\textrm{if }s= {\alpha },
	\end{aligned}\right.\\
	&\| ({I}-\mathcal{S}_{\theta_k})(V_k,   \varPsi_k)\|_{s,*,T}
	\label{tri2}  +\|({I}-\mathcal{S}_{\theta_k})\psi_k\|_{H^{s}(\varSigma_T)}
	\leq C\varepsilon \theta_k^{s-{\alpha }},
	\end{align}
	for all $k\in\{0,\ldots,{n}-1\}$ and  $s\in\{6,\ldots,\widetilde{\alpha}\}$.
	Furthermore,
	\begin{align}
	\label{tri3}&\|( \mathcal{S}_{\theta_k}V_k, \mathcal{S}_{\theta_k}\varPsi_k)\|_{s,*,T}
	+\|\mathcal{S}_{\theta_k}\psi_k\|_{H^{s}(\varSigma_T)}\leq
	\left\{\begin{aligned}
	&C\varepsilon \theta_k^{(s-{\alpha })_+}  &&\textrm{if }s\neq {\alpha },\\
	&C\varepsilon \log \theta_k  &&\textrm{if }s= {\alpha },
	\end{aligned}\right.
	\end{align}
	for all $k\in\{0,\ldots,{n}-1\}$ and $s\in\{ 6,\ldots,\widetilde{\alpha}+6\}$.
\end{lemma}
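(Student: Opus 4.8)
The plan is to exploit the telescoping structure of the scheme. Since $(V_0,\psi_0)=0$, Assumption (A-1) and \eqref{NM0} give $(V_k,\varPsi_k)=\sum_{j=0}^{k-1}(\delta V_j,\delta\varPsi_j)$ and $\psi_k=\sum_{j=0}^{k-1}\delta\psi_j$ for every $k\le n-1$. Inserting hypothesis $(\mathbf{H}_{n-1})$(a) into each summand, the estimate \eqref{tri1} reduces to the elementary bound
\begin{align*}
\sum_{j=0}^{k-1}\theta_j^{\,s-\alpha-1}\varDelta_j\ \lesssim\
\begin{cases}
\theta_k^{\,s-\alpha}, & s>\alpha,\\
\log\theta_k, & s=\alpha,\\
\theta_0^{\,s-\alpha}\le\theta_0^{-1}, & s<\alpha.
\end{cases}
\end{align*}
I would prove this by comparing the sum with $\int_{\theta_0}^{\theta_k}\theta^{\,s-\alpha-1}\d\theta$, using the monotonicity of $\theta\mapsto\theta^{\,s-\alpha-1}$, the identity $\theta_j=\sqrt{\theta_0^2+j}$, the pinch $\tfrac13\le\theta_j\varDelta_j\le\tfrac12$, and the comparability $\varDelta_{j-1}\sim\varDelta_j$ in the regimes where the exponent is negative. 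For $s>\alpha$ one has $s-\alpha\ge1$, so the factor $(s-\alpha)^{-1}$ from the integral is harmless and the bound $\varepsilon\theta_k^{\,s-\alpha}$ follows; for $s<\alpha$ the sum is uniformly small once $\theta_0$ is large, which absorbs the implicit constant and yields the clean bound $\varepsilon=\varepsilon\theta_k^{(s-\alpha)_+}$; and the borderline $s=\alpha$ produces the logarithm, where a further enlargement of $\theta_0$ absorbs the additive $\log\theta_0$ and $O(1)$ terms so that the factor in front of $\varepsilon\log\theta_k$ is exactly $1$. This establishes \eqref{tri1}.

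For \eqref{tri2} I would split according to the sign of $s-\alpha$. If $s>\alpha$, then $I-\mathcal{S}_{\theta_k}$ is bounded on $H_*^s(\varOmega_T)$ and on $H^s(\varSigma_T)$ by \eqref{smooth.p1a} with $k=j=s$, so \eqref{tri1} gives at once $\|(I-\mathcal{S}_{\theta_k})(V_k,\varPsi_k)\|_{s,*,T}\lesssim\|(V_k,\varPsi_k)\|_{s,*,T}\lesssim\varepsilon\theta_k^{\,s-\alpha}$. If $s\le\alpha$, I would invoke \eqref{smooth.p1b} with the intermediate index $j=\alpha+1$ (admissible since $\widetilde\alpha\ge\alpha+1$) together with \eqref{tri1} at level $\alpha+1$, obtaining $\|(I-\mathcal{S}_{\theta_k})(V_k,\varPsi_k)\|_{s,*,T}\lesssim\theta_k^{\,s-\alpha-1}\|(V_k,\varPsi_k)\|_{\alpha+1,*,T}\lesssim\theta_k^{\,s-\alpha-1}\varepsilon\theta_k=\varepsilon\theta_k^{\,s-\alpha}$, and likewise on $\varSigma_T$. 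For \eqref{tri3} I would again use \eqref{smooth.p1a}: for $6\le s\le\widetilde\alpha$ take $j=s$ and apply \eqref{tri1}; for $\widetilde\alpha<s\le\widetilde\alpha+6$ take $j=\widetilde\alpha$, so that the smoothing buys back the missing derivatives at the price of powers of $\theta_k$, namely $\|\mathcal{S}_{\theta_k}(V_k,\varPsi_k)\|_{s,*,T}\lesssim\theta_k^{\,s-\widetilde\alpha}\|(V_k,\varPsi_k)\|_{\widetilde\alpha,*,T}\lesssim\theta_k^{\,s-\widetilde\alpha}\varepsilon\theta_k^{\,\widetilde\alpha-\alpha}=\varepsilon\theta_k^{\,(s-\alpha)_+}$, with the same bound on the boundary.

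There is no genuine analytic difficulty here: the argument is pure Nash--Moser bookkeeping and relies only on the telescoping together with the smoothing properties of Proposition \ref{pro:smooth}. The one step that deserves care is the tracking of constants in the three regimes of the summation bound above, and in particular arranging that a single choice of $\theta_0$ large enough simultaneously keeps the subcritical sum ($s<\alpha$) below $1$ and absorbs the $\log\theta_0+O(1)$ error in the critical case $s=\alpha$, so that the constants in \eqref{tri1} and \eqref{tri3} are exactly as stated. The detailed computation is carried out in \cite[Lemma 7]{T09ARMAMR2481071}.
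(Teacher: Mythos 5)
Your argument is correct and is essentially the standard Nash--Moser bookkeeping that the paper relies on by citing \cite[Lemma 7]{T09ARMAMR2481071} without reproducing a proof: telescoping with $(\mathbf{H}_{n-1})$(a), the integral-comparison bound on $\sum_j\theta_j^{s-\alpha-1}\varDelta_j$ in the three regimes (using $\theta_j\varDelta_j\le\tfrac12$ so that the subcritical sum is $O(\theta_0^{-1})$ and the critical sum is $\tfrac12\log\frac{\theta_0^2+k}{\theta_0^2}\le\log\theta_k$), and then \eqref{smooth.p1a}--\eqref{smooth.p1b} with the intermediate indices $j=\alpha+1$ and $j=\widetilde\alpha$ for \eqref{tri2} and \eqref{tri3}. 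No gaps.
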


\subsection{Estimates of the Error Terms}

For deriving ($\mathbf{H}_{{n}}$) from ($\mathbf{H}_{{n}-1}$), in this subsection,
we estimate the quadratic error terms $e'_{k}$ and $\tilde{e}_{k}'$,
the first substitution error terms $e_{k}''$ and $\tilde{e}_{k}''$,
the second substitution error terms $e_{k}''' $  and  $\tilde{e}_{k}'''$,
and the last error term $D_{k+1/2} \delta\varPsi_{k}$
({\it cf.}~\eqref{decom1}--\eqref{error.D}).
First we find
\begin{align*}
e_k' =\;&\int_{0}^{1}
\mathbb{L}''\big(U^a+V_{k}+\tau \delta  V_{k},
\varPhi^a+\varPsi_{k}\\
&\quad \quad\ +\tau \delta \varPsi_{k}\big)\big((\delta V_{k},\delta\varPsi_{k}),(\delta V_{k},\delta\varPsi_{k})\big)
(1-\tau)\d\tau,\\
\tilde{e}_k'
=\;&
\frac{1}{2}\,\mathbb{B}''\big((\delta V_{k},\delta\psi_{k}),(\delta V_{k},\delta\psi_{k})\big),
\end{align*}
where  $\mathbb{L}''$ and $\mathbb{B}''$ are, respectively, the second derivatives of the operators
$\mathbb{L}$ and $\mathbb{B}$.
More precisely, we define
\begin{align*}
&\mathbb{L}''\big(\mathring{U},\mathring{\varPhi}\big)
\big((V,\varPsi),(\widetilde{V},\widetilde{\varPsi})\big)
:=\left.\frac{\d}{\d \theta}
\mathbb{L}'\big(\mathring{U}+\theta \widetilde{V},
\mathring{\varPhi}+\theta \widetilde{\varPsi}\big)
\big(V,\varPsi\big)\right|_{\theta=0},\\
&\mathbb{B}''
\big((V,\psi),(\widetilde{V},\tilde{\psi})\big)
:=\left.\frac{\d}{\d \theta}
\mathbb{B}'(\mathring{U}+\theta \widetilde{V},
\mathring{\varphi}+\theta \tilde{\psi}) (V,\psi)\right|_{\theta=0},
\end{align*}
where the operators $\mathbb{L}'$ and $\mathbb{B}'$ are given in \eqref{L'.bb}--\eqref{B'.bb}.
For our problem, we compute
\begin{align}
\label{B''.form}
&\mathbb{B}'' \big((V,\psi),(\widetilde{V},\tilde{\psi})\big)
=
\begin{pmatrix}
\sum_{i=2}^{d}\left(\tilde{v}_i   \p_i \psi + \p_i \tilde{\psi} v_i  \right)\\[1mm]
0
\end{pmatrix}.
\end{align}

To control the error terms, we need  estimates for the operators $\mathbb{L}''$ and $\mathbb{B}''$.
These estimates can be obtained from the explicit forms of $\mathbb{L}''$ and $\mathbb{B}''$
by applying the Moser-type calculus inequalities in Lemmas \ref{lem:Moser1}--\ref{lem:Moser2}.
Omitting detailed calculations, we have the following proposition.

\begin{proposition}  \label{pro:tame2}
	Let $T>0$ and $s\in\mathbb{N}$ with $s\geq 6$.
	Assume that $(\widetilde{V},\widetilde\varPsi)\in H_*^{s+2}(\varOmega_T)$ satisfies
	$$\|(\widetilde{V},\widetilde{\varPsi} )\|_{W_*^{2,\infty}(\varOmega_T)}
\leq \widetilde{K}$$
	for some constant $\widetilde{K}>0$.
	Then there exists a positive constant $C$, depending on $\widetilde{K}$ but not on $T$,
	such that, if $(V_i,\varPsi_i)\in H_*^{s+2}(\varOmega_T)$ and $(W_i,\psi_i)\in H^{s}(\varSigma_T)\times H^{s+1}(\varSigma_T)$ for $i=1,2$, then
	\begin{align}
	\nonumber
&\big\|\mathbb{L}''\big(\widebar{U}+\widetilde{V},x_1+\widetilde{\varPsi} \big)\big((V_1,\varPsi_1),(V_2,\varPsi_2) \big)\big\|_{s,*,T}\\[1mm]
\nonumber &\quad 	\leq C\big\|\big({\widetilde{V}},\widetilde{\varPsi}\big)\big\|_{s+2,*,T}
\|(V_1,\varPsi_1)\|_{W_*^{2,\infty}(\varOmega_T)}  \|(V_2,\varPsi_2)\|_{W_*^{2,\infty}(\varOmega_T)}
\\[1mm]
&\qquad
\nonumber + C  \sum_{i\neq j}\|(V_i,\varPsi_i)\|_{s+2,*,T} \|(V_j,\varPsi_j)\|_{W_*^{2,\infty}(\varOmega_T)}
 ,
	\end{align}
and
	\begin{align}
\nonumber
&\big\|\mathbb{B}''\big((W_1,\psi_1),(W_2,\psi_2) \big)\big\|_{H^{s}(\varSigma_T)}\\[1mm]
	\nonumber&\quad   \leq C \sum_{i\neq j}
	\Big\{\big\|W_i\big\|_{H^{s}(\varSigma_T)}\|\psi_j\|_{W^{1,\infty}(\varSigma_T)}
	+ \|W_i\|_{L^{\infty}(\varSigma_T)} \big\|\psi_j\big\|_{H^{s+1}(\varSigma_T)}
	\Big\}.
	\end{align}
\end{proposition}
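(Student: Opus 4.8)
The plan is to establish both inequalities directly from the explicit formulas for $\mathbb{L}''$ and $\mathbb{B}''$ by a reduction to the Moser-type calculus inequalities of Lemmas~\ref{lem:Moser1}--\ref{lem:Moser2}. For the boundary operator this is immediate: formula \eqref{B''.form} shows that $\mathbb{B}''\big((W_1,\psi_1),(W_2,\psi_2)\big)$ is a finite sum of terms of the form $\tilde{v}_i^{(1)}\p_i\psi^{(2)}$ and $\p_i\tilde\psi^{(1)}v_i^{(2)}$, where the velocity components are entries of the traces $W_i$ on $\varSigma_T$. Thus one only needs the classical product estimate $\|ab\|_{H^s}\lesssim \|a\|_{H^s}\|b\|_{L^\infty}+\|a\|_{L^\infty}\|b\|_{H^s}$ from Lemma~\ref{lem:Moser1}, applied with $a=\tilde v_i^{(i)}$ (so $a\in H^s(\varSigma_T)$, $\|a\|_{L^\infty}\le\|W_i\|_{L^\infty}$) and $b=\p_i\psi^{(j)}$ (so $b\in H^s(\varSigma_T)$ with $\|b\|_{H^s}\lesssim\|\psi_j\|_{H^{s+1}}$ and $\|b\|_{L^\infty}\lesssim\|\psi_j\|_{W^{1,\infty}}$). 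Summing over $i\ne j$ and the two structural terms yields exactly the claimed boundary bound.

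For the interior operator I would first write out $\mathbb{L}''\big(\mathring U,\mathring\varPhi\big)\big((V_1,\varPsi_1),(V_2,\varPsi_2)\big)$ explicitly by differentiating \eqref{L'.bb} once more in the direction $(V_2,\varPsi_2)$, using $\mathcal{C}(\mathring U,\mathring\varPhi)$ and \eqref{A1t.def}. The resulting expression is a sum of terms, each of which is a $C^\infty$-function of $(\widetilde V,\nabla\widetilde\varPsi)$ (and its $t,x$-derivatives up to first order when the coefficient matrices $A_i$, $\widetilde A_1$ are differentiated) multiplied by a product of first-order quantities built from $(V_1,\varPsi_1)$ and $(V_2,\varPsi_2)$ — schematically, (smooth function of the background)$\times$(one derivative of $(V_1,\varPsi_1)$)$\times$(one derivative of $(V_2,\varPsi_2)$), plus terms of the form (smooth function)$\times$(one derivative of the background)$\times$(first-order piece of $(V_1,\varPsi_1)$)$\times$(first-order piece of $(V_2,\varPsi_2)$). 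Each such product is estimated in $H_*^s(\varOmega_T)$ by iterating the anisotropic product rule \eqref{Moser1}--\eqref{Moser2}: one factor is allowed to carry the full $H_*^s$ norm while the others are measured in $W_*^{1,\infty}$ (for the first-order pieces) or, for the background coefficient, controlled by $C(\widetilde K)$ via \eqref{Moser3} when it carries no derivative and by $\|(\widetilde V,\widetilde\varPsi)\|_{s+2,*,T}$ when it must absorb up to two extra derivatives. Distributing the $s$ derivatives so that at most one factor at a time is ``heavy'' and bookkeeping the cases (heavy factor $=(V_1,\varPsi_1)$, $=(V_2,\varPsi_2)$, or $=$ background) produces precisely the three groups of terms on the right-hand side of the stated estimate.

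The main obstacle is the careful accounting of which factor may legitimately absorb the top-order derivatives, in particular the role of the background state: the coefficients $\widetilde A_1(\mathring U,\mathring\varPhi)$ in \eqref{A1t.def} involve $\p_t\mathring\varPhi$ and $\p_i\mathring\varPhi$, so $\mathbb{L}''$ genuinely contains second derivatives of $\widetilde\varPsi$ and first derivatives of $\widetilde V$; to keep the background contribution at the level $\|(\widetilde V,\widetilde\varPsi)\|_{s+2,*,T}$ (rather than $s+3$) one must verify that whenever the background carries $s$ of the available derivatives, the two perturbation factors are only hit by at most one derivative each and can therefore be estimated in $W_*^{2,\infty}$, which is exactly the hypothesis $\|(\widetilde V,\widetilde\varPsi)\|_{W_*^{2,\infty}(\varOmega_T)}\le\widetilde K$ together with the norms $\|(V_i,\varPsi_i)\|_{W_*^{2,\infty}(\varOmega_T)}$ appearing on the right. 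A minor technical point is that $\varPsi_i=\chi(x_1)\psi_i$ and $\mathring\varPsi=\chi(x_1)\mathring\varphi$, so one should record the elementary fact $\|\chi u\|_{s,*,T}\lesssim\|u\|_{s,*,T}$ (and the analogous $W_*^{k,\infty}$ bound) before invoking the Moser inequalities; this is routine since $\chi\in C_0^\infty(\mathbb{R})$. Once the structural decomposition is written down, the estimates are a mechanical, if lengthy, application of \eqref{Moser1}--\eqref{Moser3}, which is why the detailed calculations are omitted in the statement.
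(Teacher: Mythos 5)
Your proposal is correct and follows essentially the same route as the paper, which itself omits the detailed computation and merely remarks that the estimates ``can be obtained from the explicit forms of $\mathbb{L}''$ and $\mathbb{B}''$ by applying the Moser-type calculus inequalities in Lemmas~\ref{lem:Moser1}--\ref{lem:Moser2}.'' Your treatment of the boundary term via \eqref{B''.form} and Lemma~\ref{lem:Moser1}, and your bookkeeping for the interior term (heavy factor either the background in $H_*^{s+2}$ with both perturbations in $W_*^{2,\infty}$, or one perturbation in $H_*^{s+2}$ with the rest in $W_*^{2,\infty}$ and the background absorbed into $C(\widetilde K)$), is exactly the intended argument.
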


We first apply Proposition \ref{pro:tame2} to deduce
the following estimate for the quadratic error terms $e'_{k}$ and $\tilde{e}_{k}'$.

\begin{lemma}\label{lem:quad}
	Let ${\alpha }\geq 7$.
	If $\varepsilon>0$ is sufficiently small and $\theta_0\geq 1$ is suitably large, then
	\begin{align}\label{quad.est}
	\|e_k'\|_{s,*,T}+\|\tilde{e}_k'\|_{H^{s}(\varSigma_T)}
	\lesssim \varepsilon^2 \theta_k^{\varsigma_1(s)-1}\varDelta_k,
	\end{align}
	for all $k\in \{0,\ldots,{n}-1\}$ and $s\in \{ 6,\ldots,\widetilde{\alpha}-2\}$,
	where  $\varsigma_1(s):=\max\{(s+2-{\alpha })_++10-2{\alpha },s+6-2{\alpha } \}$.
\end{lemma}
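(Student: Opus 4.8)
The plan is to estimate $e_k'$ and $\tilde{e}_k'$ directly from their integral/quadratic representations by invoking Proposition~\ref{pro:tame2} with the substitutions $\widetilde{V}=V_{k}+\tau\delta V_{k}$ (resp. $\psi_j$-arguments filled by $\delta V_k,\delta\psi_k$), and then feeding in the bounds supplied by the inductive hypothesis $(\mathbf{H}_{n-1})$ together with Lemma~\ref{lem:triangle}. First I would check that the hypotheses of Proposition~\ref{pro:tame2} are met: the ``small'' state $\widetilde{V}=V_k+\tau\delta V_k$ (and $\widetilde\varPsi=\varPsi_k+\tau\delta\varPsi_k$) must satisfy $\|(\widetilde V,\widetilde\varPsi)\|_{W_*^{2,\infty}(\varOmega_T)}\le\widetilde K$ uniformly. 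This follows from the embedding $\|\cdot\|_{W_*^{2,\infty}(\varOmega_T)}\lesssim\|\cdot\|_{6,*,T}$ of Lemma~\ref{lem:embed} combined with \eqref{tri1} at $s=6$, which gives $\|(V_k,\varPsi_k)\|_{6,*,T}\lesssim\varepsilon\theta_k^{6-\alpha}\le\varepsilon$ since $\alpha\ge 7$; and the same bound at $s=6$ for $\|(\delta V_k,\delta\varPsi_k)\|_{6,*,T}$ from $(\mathbf{H}_{n-1})$(a), using $\theta_k^{6-\alpha-1}\varDelta_k\lesssim 1$. So for $\varepsilon$ small, $\widetilde K$ can be taken as a fixed absolute constant, and the constant $C$ in Proposition~\ref{pro:tame2} is $T$-independent.

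Next I would apply the first inequality of Proposition~\ref{pro:tame2} with $s$ in the stated range $\{6,\dots,\widetilde\alpha-2\}$ and with both pairs $(V_1,\varPsi_1)=(V_2,\varPsi_2)=(\delta V_k,\delta\varPsi_k)$, and integrate in $\tau\in[0,1]$ (the weight $(1-\tau)$ is harmless). This produces two types of terms: a ``high--low--low'' term $\|(\widetilde V,\widetilde\varPsi)\|_{s+2,*,T}\,\|(\delta V_k,\delta\varPsi_k)\|_{W_*^{2,\infty}}^2$ and a ``low--high'' type term $\|(\delta V_k,\delta\varPsi_k)\|_{s+2,*,T}\,\|(\delta V_k,\delta\varPsi_k)\|_{W_*^{2,\infty}}$. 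For the first, use $\|(\widetilde V,\widetilde\varPsi)\|_{s+2,*,T}\lesssim\|(V_k,\varPsi_k)\|_{s+2,*,T}+\|(\delta V_k,\delta\varPsi_k)\|_{s+2,*,T}$, bound the $V_k$ part by \eqref{tri1} (giving $\varepsilon\theta_k^{(s+2-\alpha)_+}$, with a $\log\theta_k$ when $s+2=\alpha$ that is absorbed into a power) and the $\delta V_k$ part by $(\mathbf{H}_{n-1})$(a) (giving $\varepsilon\theta_k^{s+1-\alpha}\varDelta_k\lesssim\varepsilon\theta_k^{(s+2-\alpha)_+}$ up to the $\theta_k\varDelta_k\lesssim 1$ factor), and bound $\|(\delta V_k,\delta\varPsi_k)\|_{W_*^{2,\infty}}^2\lesssim\|(\delta V_k,\delta\varPsi_k)\|_{6,*,T}^2\lesssim\varepsilon^2\theta_k^{2(6-\alpha-1)}\varDelta_k^2$ by Lemma~\ref{lem:embed}, $(\mathbf{H}_{n-1})$(a) and the definition of $\widetilde\alpha$ (which ensures $6\le\widetilde\alpha$). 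Collecting exponents yields a bound $\lesssim\varepsilon^3\theta_k^{(s+2-\alpha)_++10-2\alpha}\varDelta_k^2\lesssim\varepsilon^2\theta_k^{(s+2-\alpha)_++10-2\alpha-1}\varDelta_k$, using one factor $\theta_k\varDelta_k\lesssim1$ and $\varepsilon\le1$. For the second type of term, bound $\|(\delta V_k,\delta\varPsi_k)\|_{s+2,*,T}\lesssim\varepsilon\theta_k^{s+1-\alpha}\varDelta_k$ by $(\mathbf{H}_{n-1})$(a) and $\|(\delta V_k,\delta\varPsi_k)\|_{W_*^{2,\infty}}\lesssim\varepsilon\theta_k^{6-\alpha-1}\varDelta_k$, obtaining $\lesssim\varepsilon^2\theta_k^{s+6-2\alpha}\varDelta_k^2\lesssim\varepsilon^2\theta_k^{s+6-2\alpha-1}\varDelta_k$. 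The two exponents $(s+2-\alpha)_++10-2\alpha$ and $s+6-2\alpha$ are exactly the two entries in $\max$ defining $\varsigma_1(s)$, so the sum is $\lesssim\varepsilon^2\theta_k^{\varsigma_1(s)-1}\varDelta_k$, as claimed.

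The boundary term $\tilde e_k'=\tfrac12\mathbb{B}''\big((\delta V_k,\delta\psi_k),(\delta V_k,\delta\psi_k)\big)$ is handled analogously via the second inequality of Proposition~\ref{pro:tame2} (or directly from the explicit bilinear form \eqref{B''.form}, which only involves tangential first derivatives of $\psi$ and traces of $v$): the trace theorem Lemma~\ref{lem:trace} converts $\|W_i\|_{H^{s}(\varSigma_T)}$ and $\|\psi_j\|_{H^{s+1}(\varSigma_T)}$ into the interior norms $\|(\delta V_k,\delta\varPsi_k)\|_{s+1,*,T}$, and $\|\psi_j\|_{W^{1,\infty}(\varSigma_T)}$, $\|W_i\|_{L^\infty(\varSigma_T)}$ into $\|(\delta V_k,\delta\varPsi_k)\|_{5,*,T}$ by Lemma~\ref{lem:embed}; plugging in $(\mathbf{H}_{n-1})$(a) gives a bound dominated by the same $\varepsilon^2\theta_k^{s+6-2\alpha-1}\varDelta_k\le\varepsilon^2\theta_k^{\varsigma_1(s)-1}\varDelta_k$. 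I expect the main obstacle to be purely bookkeeping: tracking the exponents carefully so that each application of $\theta_k\varDelta_k\le 1/2$ and each loss from the $s+2$ (rather than $s$) regularity on the ``small'' state lands exactly on $\varsigma_1(s)-1$, and verifying that the constraint $s\le\widetilde\alpha-2$ (so that $s+2\le\widetilde\alpha$, the top level controlled by $(\mathbf{H}_{n-1})$) is what makes all invoked norms legitimate. No genuinely new idea is needed beyond Proposition~\ref{pro:tame2}, the inductive hypothesis, and Lemma~\ref{lem:triangle}.
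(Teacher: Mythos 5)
Your proposal is correct and follows essentially the same route as the paper: apply Proposition~\ref{pro:tame2} to the integral representation of $e_k'$, feed in \eqref{small}, hypothesis $(\mathbf{H}_{n-1})$(a), Lemma~\ref{lem:triangle}, and the embedding \eqref{embed2}, use $\theta_k\varDelta_k\lesssim 1$ to trade a $\varDelta_k$ for a $\theta_k^{-1}$, split the cases $s+2\neq\alpha$ and $s+2=\alpha$ (where the logarithm is absorbed using $\alpha\geq 7$), and treat $\tilde e_k'$ via the boundary estimate of Proposition~\ref{pro:tame2} together with Lemma~\ref{lem:trace}. The only cosmetic omission is that the reference state in Proposition~\ref{pro:tame2} must also include $\widetilde{U}^a$ (controlled by \eqref{small}), which is what produces the harmless ``$1+$'' in the high-norm factor.
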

\begin{proof}
In view of the assumption \eqref{small}, the hypothesis $(\mathbf{H}_{{n}-1})$, and the estimate \eqref{tri1},
we utilize \eqref{embed2} to obtain
\begin{align*}
\|(\widetilde{ U}^a,\,V_{k},\,\delta  V_{k},\varPsi^a,\,\varPsi_{k},\, \delta \varPsi_{k})\|_{W_*^{2,\infty}(\varOmega_T)}
\lesssim 1.
\end{align*}
Then we can apply Proposition \ref{pro:tame2} and use the embedding inequalities
\eqref{embed2}, the assumption \eqref{small}, and the hypothesis $(\mathbf{H}_{{n}-1})$
to get
\begin{align*}
\|e_k'\|_{s,*,T}\lesssim \;& \varepsilon^2\theta_k^{10-2{\alpha }}\varDelta_k^2\big(1+\|(V_k,\varPsi_k)\|_{s+2,*,T}
+\varepsilon \theta_k^{s+1-{\alpha }}\varDelta_{k} \big)\\
&+ \|(\delta V_{k},\delta\varPsi_{k})\|_{s+2,*,T}\|(\delta V_{k},\delta\varPsi_{k})\|_{6,*,T}\\
\lesssim \;& \varepsilon^2\theta_k^{10-2{\alpha }}\varDelta_k^2\big(1+\|(V_k,\varPsi_k)\|_{s+2,*,T}\big)
+\varepsilon^2\theta_{k}^{s+6-2{\alpha }}\varDelta_k^2
\end{align*}
for all $s\in  \{ 6,\ldots,\widetilde{\alpha}-2\}$.

If $s+2\neq {\alpha }$, then it follows from \eqref{tri1} and $2\theta_k\varDelta_k\leq 1$ that
\begin{align*}
\|e_k'\|_{s,*,T}\lesssim  \varepsilon^2 \varDelta_k^2\big(\theta_k^{(s+2-{\alpha })_+
	+10-2{\alpha }}+\theta_k^{s+6-2{\alpha }} \big)\lesssim  \varepsilon^2 \theta_k^{\varsigma_1(s)-1 }\varDelta_k.
\end{align*}

If $s+2= {\alpha }$, then we use \eqref{tri1} and ${\alpha }\geq 7$ to find
\begin{align*}
\|e_k'\|_{{\alpha }-2,*,T}
\lesssim  \varepsilon^2 \varDelta_k^2\left\{ \theta_k^{11-2{\alpha }} +\theta_k^{4-{\alpha }}\right\}
\lesssim \varepsilon^2 \theta_k^{\varsigma_1({\alpha }-2)-1 }\varDelta_k.
\end{align*}

Employing Proposition \ref{pro:tame2} and Lemma~\ref{lem:trace}
yields the estimate for $\tilde{e}_k'$ and completes the proof of the lemma.
\end{proof}

For the first substitution error terms $e_{k}''$ and $\tilde{e}_{k}''$ defined in \eqref{decom1}--\eqref{decom2}, we have the following result.
\begin{lemma} \label{lem:1st}
	Let ${\alpha }\geq 7$.
	If $\varepsilon>0$ is sufficiently small and $\theta_0\geq 1$ is suitably large, then
	\begin{align}\label{1st.sub}
	\|e_k'' \|_{s,*,T}
	+\|\tilde{e}_k''\|_{H^{s}(\varSigma_T)}\lesssim \varepsilon^2 \theta_k^{\varsigma_2(s)-1}\varDelta_k,
	\end{align}
	for all $k\in \{0,\ldots,{n}-1\}$ and $s\in \{ 6,\ldots,\widetilde{\alpha}-2\}$,
	where
\begin{align}\label{varsigma2.def}
\varsigma_2(s):=\max\{(s+2-{\alpha })_++12-2{\alpha },s+8-2{\alpha } \}.
\end{align}
\end{lemma}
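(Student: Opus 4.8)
The goal is to bound the first substitution error terms $e_k''$ and $\tilde e_k''$, which measure the difference between the linearized operators evaluated at the unsmoothed state $(U^a+V_k,\varPhi^a+\varPsi_k)$ and at the smoothed state $(U^a+\mathcal{S}_{\theta_k}V_k,\varPhi^a+\mathcal{S}_{\theta_k}\varPsi_k)$. First I would write these errors as integrals along the segment joining the two states: for instance
\begin{align*}
e_k''=-\int_0^1 \mathbb{L}''\big(U^a+\mathcal{S}_{\theta_k}V_k+\tau({I}-\mathcal{S}_{\theta_k})V_k,\, \varPhi^a+\mathcal{S}_{\theta_k}\varPsi_k+\tau({I}-\mathcal{S}_{\theta_k})\varPsi_k\big)\big(({I}-\mathcal{S}_{\theta_k})(V_k,\varPsi_k),(\delta V_k,\delta\varPsi_k)\big)\d\tau,
\end{align*}
and analogously $\tilde e_k''=\mathbb{B}''\big(({I}-\mathcal{S}_{\theta_k})(V_k,\psi_k),(\delta V_k,\delta\psi_k)\big)$ using the bilinearity displayed in \eqref{B''.form}. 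This puts both errors in the exact form to which Proposition~\ref{pro:tame2} applies, with $(\widetilde V,\widetilde\varPsi)$ the intermediate state minus the constant, and with the two bilinear slots filled by $({I}-\mathcal{S}_{\theta_k})(V_k,\varPsi_k)$ and $(\delta V_k,\delta\varPsi_k)$.

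\textbf{Applying the tame estimate.} Next I would verify the $W_*^{2,\infty}$-smallness hypothesis of Proposition~\ref{pro:tame2}: by the embedding $H_*^6\hookrightarrow W_*^{2,\infty}$ \eqref{embed2}, the assumption \eqref{small}, the inductive hypothesis $(\mathbf{H}_{n-1})$(a), and Lemma~\ref{lem:triangle} (specifically \eqref{tri2}--\eqref{tri3}), each of $\widetilde U^a$, $\mathcal{S}_{\theta_k}V_k$, $({I}-\mathcal{S}_{\theta_k})V_k$, $\delta V_k$ and their $\varPsi$-counterparts has $W_*^{2,\infty}$-norm $\lesssim 1$, so the intermediate state is admissible. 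Then Proposition~\ref{pro:tame2} gives, schematically,
\begin{align*}
\|e_k''\|_{s,*,T}\lesssim\;& \big\|({I}-\mathcal{S}_{\theta_k})(V_k,\varPsi_k)\big\|_{s+2,*,T}\,\|(\delta V_k,\delta\varPsi_k)\|_{W_*^{2,\infty}}\\
&+\|(\delta V_k,\delta\varPsi_k)\|_{s+2,*,T}\,\big\|({I}-\mathcal{S}_{\theta_k})(V_k,\varPsi_k)\big\|_{W_*^{2,\infty}}\\
&+\big\|\big(\text{intermediate state}\big)\big\|_{s+2,*,T}\,\big\|({I}-\mathcal{S}_{\theta_k})(V_k,\varPsi_k)\big\|_{W_*^{2,\infty}}\,\|(\delta V_k,\delta\varPsi_k)\|_{W_*^{2,\infty}}.
\end{align*}
Now I substitute the numerology. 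By \eqref{tri2}, $\|({I}-\mathcal{S}_{\theta_k})(V_k,\varPsi_k)\|_{s+2,*,T}\lesssim\varepsilon\theta_k^{s+2-\alpha}$, and combined with \eqref{embed2} the low-norm version gives $\|({I}-\mathcal{S}_{\theta_k})(V_k,\varPsi_k)\|_{W_*^{2,\infty}}\lesssim\varepsilon\theta_k^{8-\alpha}$ (using $s+2=8$); by $(\mathbf{H}_{n-1})$(a), $\|(\delta V_k,\delta\varPsi_k)\|_{s+2,*,T}\lesssim\varepsilon\theta_k^{s+1-\alpha}\varDelta_k$ and $\|(\delta V_k,\delta\varPsi_k)\|_{W_*^{2,\infty}}\lesssim\varepsilon\theta_k^{5-\alpha}\varDelta_k$; and the intermediate state's $H_*^{s+2}$-norm is $\lesssim\|(\mathcal{S}_{\theta_k}V_k,\mathcal{S}_{\theta_k}\varPsi_k)\|_{s+2,*,T}+\|({I}-\mathcal{S}_{\theta_k})(V_k,\varPsi_k)\|_{s+2,*,T}+\|\widetilde U^a\|_{H^{s+2}}\lesssim\varepsilon\theta_k^{(s+2-\alpha)_+}+\theta_k^{-1}$ by \eqref{tri2}--\eqref{tri3}, \eqref{small}, and $\theta_k\geq1$. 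Collecting the three contributions and using $\theta_k\varDelta_k\leq1/2$, the dominant exponents combine to $\max\{(s+2-\alpha)_++12-2\alpha,\, s+8-2\alpha\}-1=\varsigma_2(s)-1$ (with an extra $\varDelta_k$ factor), which is exactly \eqref{1st.sub}; a careful check of the borderline case $s+2=\alpha$ (where \eqref{tri1} produces a $\log\theta_k$ instead of a power) is absorbed since $\log\theta_k\lesssim\theta_k^{\delta}$ for any $\delta>0$ and the exponent $\varsigma_2$ has built-in slack when $\alpha\geq7$. The boundary term $\tilde e_k''$ is handled identically from the $\mathbb{B}''$-bound in Proposition~\ref{pro:tame2} together with the trace estimate Lemma~\ref{lem:trace}, converting interior $H_*$-norms of $({I}-\mathcal{S}_{\theta_k})(V_k,\varPsi_k)$ into boundary $H$-norms of $({I}-\mathcal{S}_{\theta_k})\psi_k$ via \eqref{tri2}.

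\textbf{Main obstacle.} The routine part is the integral representation and the invocation of Proposition~\ref{pro:tame2}; the only real care is bookkeeping the exponents so that the three products all land at or below $\varsigma_2(s)-1$. The mildly delicate point is that one of the two slots must be taken in low norm ($W_*^{2,\infty}$, controlled by $H_*^6$) to avoid losing derivatives, and one has to check that putting $\delta V_k$ in the low slot (giving $\theta_k^{5-\alpha}\varDelta_k$, the smallest power available) versus $({I}-\mathcal{S}_{\theta_k})V_k$ in the low slot (giving $\theta_k^{8-\alpha}$) both yield exponents dominated by $\varsigma_2(s)$; this is where the precise form $\varsigma_2(s)=\max\{(s+2-\alpha)_++12-2\alpha,\,s+8-2\alpha\}$ comes from, the ``$12-2\alpha$'' tracking the product of the two low-norm factors $\theta_k^{8-\alpha}\cdot\theta_k^{5-\alpha}$ shifted appropriately, and ``$s+8-2\alpha$'' tracking the high$\times$low combinations. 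Once the exponents are matched, smallness of $\varepsilon$ and largeness of $\theta_0$ are only needed to absorb the implied constants, exactly as in Lemma~\ref{lem:quad}.
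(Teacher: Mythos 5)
Your strategy is exactly the paper's: represent $e_k''$ as an integral of $\mathbb{L}''$ along the segment joining the smoothed state $(U^a+\mathcal{S}_{\theta_k}V_k,\varPhi^a+\mathcal{S}_{\theta_k}\varPsi_k)$ to the unsmoothed one, verify the $W_*^{2,\infty}$ admissibility via $\|\cdot\|_{W_*^{2,\infty}}\lesssim\|\cdot\|_{6,*,T}$ together with \eqref{small}, $(\mathbf{H}_{n-1})$ and Lemma~\ref{lem:triangle}, and then apply Proposition~\ref{pro:tame2}, treating the borderline case $s+2=\alpha$ separately. (The minus sign and the order of the two bilinear slots in your representation differ from the paper's, but both are immaterial: $e_k''=\mathbb{L}'(\text{unsmoothed})-\mathbb{L}'(\text{smoothed})$ gives a plus sign, and the bound in Proposition~\ref{pro:tame2} is symmetric in $(V_1,\varPsi_1)$ and $(V_2,\varPsi_2)$.)

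There is, however, one concrete bookkeeping error that prevents your exponents from closing as written. You bound $\|({I}-\mathcal{S}_{\theta_k})(V_k,\varPsi_k)\|_{W_*^{2,\infty}(\varOmega_T)}\lesssim\varepsilon\theta_k^{8-\alpha}$ ``using $s+2=8$'', but the $W_*^{2,\infty}$ norm is controlled by the $H_*^{6}$ norm, so \eqref{tri2} must be applied at level $6$, giving the sharper $\varepsilon\theta_k^{6-\alpha}$. With your value the cross term $\|(\delta V_k,\delta\varPsi_k)\|_{s+2,*,T}\,\|({I}-\mathcal{S}_{\theta_k})(V_k,\varPsi_k)\|_{W_*^{2,\infty}}$ is only bounded by $\varepsilon^2\theta_k^{s+9-2\alpha}\varDelta_k$, and $s+9-2\alpha$ exceeds both branches of $\varsigma_2(s)-1=\max\{(s+2-\alpha)_++11-2\alpha,\,s+7-2\alpha\}$ for the relevant range $s\geq6$, $\alpha\geq7$; likewise your ``tracking'' $\theta_k^{8-\alpha}\cdot\theta_k^{5-\alpha}=\theta_k^{13-2\alpha}$ for the triple product overshoots the first branch by $\theta_k^{2}$. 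With the corrected factor $\varepsilon\theta_k^{6-\alpha}$, the cross terms become $\varepsilon^2\theta_k^{s+7-2\alpha}\varDelta_k$ and the triple product becomes $\varepsilon^2\theta_k^{(s+2-\alpha)_++11-2\alpha}\varDelta_k$ (after inserting $\|(\mathcal{S}_{\theta_k}V_k,\mathcal{S}_{\theta_k}\varPsi_k)\|_{s+2,*,T}\lesssim\varepsilon\theta_k^{(s+2-\alpha)_+}$ from \eqref{tri3}), which together give precisely $\varepsilon^2\theta_k^{\varsigma_2(s)-1}\varDelta_k$ as in the paper.
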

\begin{proof}
First we have
\begin{align}
\nonumber {e}_k''=\;&\int_{0}^{1}\mathbb{L}''\Big(U^a+\mathcal{S}_{\theta_k}V_k+\tau({I}-\mathcal{S}_{\theta_k})V_k,\,
\varPhi^a+\mathcal{S}_{\theta_k}\varPsi_k \\
&\quad\quad\ +\tau({I}-\mathcal{S}_{\theta_k})\varPsi_k\Big)\Big(\big(\delta V_k ,\delta \varPsi_k\big),\, \big(({I}-\mathcal{S}_{\theta_k})V_k,({I}-\mathcal{S}_{\theta_k})\varPsi_k\big) \Big)\d\tau,\nonumber\\
\nonumber\tilde{e}_k''=\, &\mathbb{B}''\Big(\big(\delta V_k,\delta \psi_k\big),\, \big(({I}-\mathcal{S}_{\theta_k})V_k ,({I}-\mathcal{S}_{\theta_k})\psi_k  \big) \Big) .
\nonumber
\end{align}
Thanks to  \eqref{tri2}--\eqref{tri3}, we have
\begin{align*}
\|(  \mathcal{S}_{\theta_k}V_k,  V_k,\mathcal{S}_{\theta_k}\varPsi_k, \varPsi_k)\|_{W_*^{2,\infty}(\varOmega_T)}
\lesssim
\|(  \mathcal{S}_{\theta_k}V_k,  V_k,\mathcal{S}_{\theta_k}\varPsi_k, \varPsi_k)\|_{6,*,T}
 \lesssim 1.
\end{align*}
Then we apply Proposition \ref{pro:tame2} and
use \eqref{embed2}, \eqref{small}, the hypothesis $(\mathbf{H}_{{n}-1})$, and \eqref{tri2}  to get
\begin{align*}
\|{e}_k''\|_{s,*,T}
\lesssim  \varepsilon^2 \theta_k^{11-2{\alpha }}\varDelta_{k} \left(1 + \|(\mathcal{S}_{\theta_k} V_k, \mathcal{S}_{\theta_k} \varPsi_k)\|_{s+2,*,T}   \right)+  \varepsilon^2 \theta_k^{s+7-2{\alpha }}\varDelta_{k}
\end{align*}
for all $s\in \{ 6,\ldots,\widetilde{\alpha}-2\}$.
Similar to the proof of Lemma~\ref{lem:quad},
analyzing the cases $s+2\neq {\alpha }$ and $s+2= {\alpha }$ separately,
we use \eqref{tri3} to deduce \eqref{1st.sub} and finish the proof.
\end{proof}

Let us construct the smooth modified state $(V_{{n}+1/2},\psi_{{n}+1/2})$ so that $(U^a+V_{{n}+1/2},\varphi^a+\psi_{{n}+1/2})$ satisfies the constraints \eqref{bas1a}--\eqref{bas1d}.
Since the smooth modified state will be chosen to vanish in the past and
the approximate solution satisfies \eqref{app1a}--\eqref{app1b} and \eqref{app2b},
the state $(U^a+V_{{n}+1/2},\varphi^a+\psi_{{n}+1/2})$ will satisfy \eqref{bas1a}, \eqref{bas1c}, and \eqref{bas1d} for $T>0$ small enough. Consequently, we only need to focus on the constraints \eqref{bas1b}.

\begin{proposition}\label{pro:modified}
Let ${\alpha }\geq 8$.
Then there exist functions $V_{n+1/2}$ and $\psi_{n+1/2}$ vanishing in the past, such that
$(U^a+V_{n+1/2},\varphi^a+\psi_{n+1/2})$ satisfies \eqref{bas1b}, where $(U^a, \varphi^a)$ is the approximate solution constructed in Lemma~\ref{lem:app}. Furthermore,
\begin{alignat}{3}\label{MS.id1}
&\psi_{n+1/2}=\mathcal{S}_{\theta_n}\psi_{{n}},\quad
v_{i,n+1/2}=\mathcal{S}_{\theta_n} v_{i,n} \quad &&
\textrm{for } i=2,\ldots,d, \\
\label{MS.est1}
&\|\mathcal{S}_{\theta_n}\varPsi_n-\varPsi_{n+1/2}\|_{s,*,T}\lesssim \varepsilon \theta_n^{s-{\alpha }}
\quad&& \textrm{for } s\in\{ 6,\ldots,\widetilde{\alpha}+6\},\\
\label{MS.est2}
&\|\mathcal{S}_{\theta_n}V_n-V_{n+1/2}\|_{s,*,T}\lesssim \varepsilon \theta_n^{s+2-{\alpha }}
\quad&& \textrm{for } s\in\{6,\ldots,\widetilde{\alpha}+{4}\}.
\end{alignat}
\end{proposition}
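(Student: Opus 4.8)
The plan is to satisfy the two boundary constraints in \eqref{bas1b} — namely $\p_t(\mathring\varphi)=\mathring v_N$ and $\mathring H_N=0$ on $\varSigma_T$ — by a two-stage construction on top of the smoothed iterate $(\mathcal{S}_{\theta_n}V_n,\mathcal{S}_{\theta_n}\varPsi_n)$. For the first constraint we simply \emph{define} $\psi_{n+1/2}$ and the tangential velocity components by \eqref{MS.id1}, i.e.\ $\psi_{n+1/2}:=\mathcal S_{\theta_n}\psi_n$ and $v_{i,n+1/2}:=\mathcal S_{\theta_n}v_{i,n}$ for $i=2,\dots,d$, and then \emph{solve} for the normal component $v_{1,n+1/2}$ from the equation $\p_t(\varphi^a+\psi_{n+1/2})+\sum_{i=2}^d\p_i(\varphi^a+\psi_{n+1/2})(v_i^a+v_{i,n+1/2})=v_{1}^a+v_{1,n+1/2}$ on $\varSigma_T$, which just prescribes the trace of $v_{1,n+1/2}$ at $x_1=0$; we then lift this trace into $\varOmega_T$ using Lemma~\ref{lem:trace}, keeping it vanishing in the past. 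This reproduces the first relation in \eqref{bas1b} exactly, while \eqref{MS.id1} holds by fiat.

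The genuine work is the magnetic constraint $\mathring H_N=0$ on $\varSigma_T$. Here I would \emph{not} try to correct $H$ on the boundary directly; instead, following the construction of the approximate solution in Step~3 of Lemma~\ref{lem:app}, I would \emph{define} $H_{n+1/2}$ to be the solution of the transport-type system $\mathbb{L}_H(v^a+v_{n+1/2},\,H^a+H_{n+1/2},\,\varPhi^a+\varPsi_{n+1/2})=0$ with initial data $H_{n+1/2}|_{t=0}=0$ (so $H^a+H_{n+1/2}|_{t=0}=H_0$ together with the approximate-solution data), where $v_{n+1/2}$ and $\varPsi_{n+1/2}$ are the functions just built. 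Because $H_N^a=0$ on $\varSigma_T$ by \eqref{app2b}, the argument of Proposition~\ref{pro:1} applied to the restriction of $\mathbb{L}_H$ to $\{x_1=0\}$ shows that $(H^a+H_{n+1/2})_N$ satisfies a homogeneous linear ODE along the boundary with zero initial value, hence vanishes for all $t$; this gives the second relation in \eqref{bas1b}. The remaining components $V_{n+1/2}$ (i.e.\ $q$, $S$) are taken to be the corresponding components of $\mathcal S_{\theta_n}V_n$, so that $V_{n+1/2}$ differs from $\mathcal S_{\theta_n}V_n$ only in the $v_1$ and $H$ slots.

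It remains to prove the estimates \eqref{MS.est1}--\eqref{MS.est2}. For \eqref{MS.est1} this is immediate: $\varPsi_{n+1/2}=\chi(x_1)\mathcal S_{\theta_n}\psi_n=\mathcal S_{\theta_n}\varPsi_n$ (up to the harmless commutation of $\chi$ with the boundary smoothing operator, or by defining the volume smoothing compatibly), so the left side is zero and the bound is trivial on the stated range $s\in\{6,\dots,\widetilde\alpha+6\}$. For \eqref{MS.est2} I would estimate the two components separately. The $v_1$-difference is controlled by Lemma~\ref{lem:trace} applied to the defining boundary equation: its trace is a product of $\p_i\psi_{n+1/2}$ and $v_i^a+v_{i,n+1/2}$ minus $\p_t$ of the smoothed interface, so by the Moser inequality \eqref{Moser2}, the smoothing estimate \eqref{smooth.p1a}, \eqref{tri3}, and \eqref{small}, one gets a bound of the form $\varepsilon\,\theta_n^{s+2-\alpha}$ — the loss of one extra power of $\theta_n$ (hence $s+2$ rather than $s$) compared with \eqref{MS.est1} coming from the derivative falling on $\psi_{n+1/2}$ in the product, exactly as in the exponent of \eqref{MS.est2}. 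The $H$-difference is the main obstacle: one must show $\|H_{n+1/2}-(\mathcal S_{\theta_n}H_n)\|_{s,*,T}\lesssim\varepsilon\,\theta_n^{s+2-\alpha}$. The idea is that $\mathcal S_{\theta_n}H_n$ \emph{almost} solves the same transport system — it satisfies $\mathbb{L}_H(v^a+v_{n+1/2},\mathcal S_{\theta_n}H_n,\varPhi^a+\varPsi_{n+1/2})=r_n$ with a residual $r_n=[\mathbb{L}_H,\mathcal S_{\theta_n}](\cdots)+(\text{commutator of }\mathcal S_{\theta_n}\text{ with the nonlinear coefficients})$ that is small by the smoothing properties \eqref{smooth.p1} and $(\mathbf H_{n-1})$ — so the difference solves a linear symmetric hyperbolic (indeed transport) problem with source $r_n$ and zero initial data, and the standard energy estimate in $H_*^s(\varOmega_T)$ for $\mathbb{L}_H$ (which has the characteristic boundary $\{x_1=0\}$ but no boundary condition is needed since $\mathbb{L}_H$ is an interior evolution) together with the Moser inequalities of Lemma~\ref{lem:Moser2} yields the claimed bound. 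The bookkeeping of exactly which power of $\theta_n$ the residual carries — ensuring it does not exceed $\theta_n^{s+2-\alpha}$ on the range $s\le\widetilde\alpha+4$, which needs $\alpha\ge 8$ and the extra two derivatives of regularity built into the hypotheses — is the delicate point; I would omit the routine parts of this computation, as the paper does for Proposition~\ref{pro:tame2}.
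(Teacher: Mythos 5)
Your overall architecture matches the paper's: smooth $\psi$, the tangential velocities, $q$ and $S$; recover $v_{1,n+1/2}$ from the kinematic boundary condition by lifting its trace with $\mathfrak{R}_T$; and obtain $H_{n+1/2}$ from the transport operator $\mathbb{L}_H$ with zero initial data, propagating $H_N=0$ as in Proposition~\ref{pro:1}. Your treatment of the magnetic part (residual of $\mathcal{S}_{\theta_n}H_n$ in the transport equation, energy estimate) is essentially the paper's Step~3. There are, however, two gaps. The smaller one concerns \eqref{MS.est1}: $\mathcal{S}_{\theta_n}\varPsi_n$ is the \emph{volume} smoothing applied to $\chi(x_1)\psi_n$, whereas $\varPsi_{n+1/2}=\chi(x_1)\mathcal{S}_{\theta_n}\psi_n$ uses the \emph{boundary} smoothing family; these are different operators and do not commute with multiplication by $\chi$, so the left-hand side of \eqref{MS.est1} is not zero. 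One must estimate $(\mathcal{S}_{\theta_n}-I)(\chi\psi_n)$ and $\chi(I-\mathcal{S}_{\theta_n})\psi_n$ separately via \eqref{smooth.p1b}, \eqref{tri1}, and \eqref{tri3}; this is short but it is an argument, not an identity.

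The serious gap is your estimate of $v_{1,n+1/2}-\mathcal{S}_{\theta_n}v_{1,n}$ by a direct Moser product bound on the prescribed trace. For $s<\alpha$ the quantities $\mathcal{S}_{\theta_n}\psi_n$ and $\mathcal{S}_{\theta_n}v_{i,n}$ are only $O(\varepsilon)$ in the relevant norms (by \eqref{tri3}), so a product estimate yields only $O(\varepsilon)$, while \eqref{MS.est2} demands $\varepsilon\theta_n^{s+2-\alpha}$, which \emph{decays} in $\theta_n$ for $s\leq\alpha-3$ (and the range begins at $s=6$ with $\alpha\geq 8$, later $\alpha\geq 12$). The missing ingredient is a cancellation: the trace of the correction equals $\mathcal{B}\big(\mathcal{S}_{\theta_n}V_n,\mathcal{S}_{\theta_n}\psi_n\big)_1$, which is small only because $(V_n,\psi_n)$ nearly satisfies the boundary condition. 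The paper splits it into the commutator of $\mathcal{S}_{\theta_n}$ with the nonlinear boundary operator, the smoothed increment $\mathcal{S}_{\theta_n}\big(\mathcal{B}(V_n,\psi_n)_1-\mathcal{B}(V_{n-1},\psi_{n-1})_1\big)$, and $\mathcal{S}_{\theta_n}\mathcal{B}(V_{n-1},\psi_{n-1})_1$, the last controlled by inductive hypothesis $(\mathbf{H}_{n-1})$(c) together with \eqref{smooth.p1a}. Without invoking the inductive hypothesis at this point the bound fails for small $s$. (Incidentally, the paper obtains the sharper rate $\varepsilon\theta_n^{s-\alpha}$ for the velocity correction in \eqref{MS.e2}; the $+2$ loss in \eqref{MS.est2} is forced by the magnetic field, not by a derivative falling on $\psi_{n+1/2}$.)
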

\begin{proof}
The proof is divided into three steps.

\vspace*{2mm}
\noindent {\it Step 1}.\quad
We define $\psi_{{n}+1/2}$ and $v_{i,n+1/2}$ for $i=2,\ldots,d$ by \eqref{MS.id1}.
Let $\varPsi_{{n}+1/2}:=\chi(x_1)\psi_{n+1/2}$.
Let us define $q_{{n}+1/2}:=\mathcal{S}_{\theta_n} q_n$ and $S_{{n}+1/2}:=\mathcal{S}_{\theta_n} S_n$.

If $6\leq s\leq \widetilde{\alpha}$, then we use \eqref{smooth.p1b} and \eqref{tri1} to get
\begin{align}
\nonumber&\| \mathcal{S}_{\theta_n}\varPsi_n-\varPsi_{n+1/2}\|_{s,*,T}\\
\nonumber &\quad \lesssim \|(\mathcal{S}_{\theta_{{n}}}-{I})(\chi( x_1) \psi_{{n}})\|_{s,*,T}
+\|\chi( x_1)({I}-\mathcal{S}_{\theta_{{n}}})\psi_{{n}}\|_{s,*,T}\\
&\quad  \lesssim \theta_{{n}}^{s-\widetilde{\alpha}} \left(
\|\chi( x_1) \psi_{{n}}\|_{\widetilde{\alpha},*,T}
+\|\psi_n\|_{H^{\widetilde{\alpha}}(\varSigma_T)}\right)
\lesssim \varepsilon \theta_{{n}}^{s-{\alpha }}.
\nonumber
\end{align}
If $\widetilde{\alpha}<s\leq \widetilde{\alpha}+6$, then the estimate \eqref{tri3} gives
\begin{align}
\| \mathcal{S}_{\theta_n}\varPsi_n-\varPsi_{n+1/2}\|_{s,*,T}\nonumber
 \lesssim
 \| \mathcal{S}_{\theta_n}\varPsi_n\|_{s,*,T}
 +\|\mathcal{S}_{\theta_n}\psi_{{n}} \|_{H^{s}(\varSigma_T)}
\lesssim \varepsilon \theta_{{n}}^{s-{\alpha }}.
\nonumber
\end{align}
This finishes the proof of \eqref{MS.est1}.

\vspace*{2mm}
\noindent {\it Step 2}.\quad
Next we define and estimate $v_{1,n+1/2}$ such that the first identity in \eqref{bas1b} holds for
$(U^a+V_{n+1/2},\varphi^a+\psi_{n+1/2})$.
Let us define
\begin{gather*}
v_{1,n+1/2}:=
\mathcal{S}_{\theta_{{n}}}v_{1,n}+
\mathfrak{R}_T\left(\hat{w}_{n}- \left.\left(\mathcal{S}_{\theta_{{n}}}v_{1,n} \right)\right|_{x_1=0} \right),
\end{gather*}
where $\mathfrak{R}_T$ is the lifting operator given in Lemma~\ref{lem:trace} and
\begin{gather*} 
\hat{w}_{n}:=\p_t \psi_{n+1/2} +
\sum_{i=2}^{d} \left.\Big\{ (v_i^{a}+v_{i,n+1/2}) \p_i \psi_{n+1/2} +v_{i,n+1/2}
\p_i \varphi^a \Big\}\right|_{x_1=0}.
\end{gather*}
By virtue of \eqref{app2b},
we infer that $(v^a+v_{n+1/2},\varphi^a+\psi_{n+1/2})$ satisfies the first constraint in \eqref{bas1b}.

Use \eqref{P.new} and \eqref{MS.id1} to get
\begin{align*}
\mathcal{B}\big( \mathcal{S}_{\theta_{{n}}}V_{n} ,\mathcal{S}_{\theta_{{n}}}\psi_n\big) _1
=\;& \hat{w}_{n}-\big(\mathcal{S}_{\theta_{{n}}}v_{1,n}\big)\big|_{x_1=0},
\end{align*}
which together with Lemma~\ref{lem:trace} implies
\begin{align*}
\|v_{1,n+1/2} -\mathcal{S}_{\theta_{{n}}}v_{1,n} \|_{s,*,T}
\lesssim \|\mathcal{B}\big( \mathcal{S}_{\theta_{{n}}}V_{n} ,\mathcal{S}_{\theta_{{n}}}\psi_n\big) _1\|_{H^{s-1}(\varSigma_T)}
\quad \textrm{for }s\geq 6.
\end{align*}
To estimate the right-hand side, we use the decomposition
\begin{align*}
\mathcal{B}\big( \mathcal{S}_{\theta_{{n}}}V_{n} ,\mathcal{S}_{\theta_{{n}}}\psi_n\big) _1
=\mathcal{I}_1+\mathcal{I}_2+
\mathcal{S}_{\theta_n}\mathcal{B}\big(V_{n-1} ,\psi_{n-1}\big)_1,
\end{align*}
where $\mathcal{I}_1:=\mathcal{B}\left( \mathcal{S}_{\theta_{{n}}}V_{n}  ,\mathcal{S}_{\theta_{{n}}}\psi_n\right) _1
-\mathcal{S}_{\theta_n}\mathcal{B}\left(V_n ,\psi_n\right)_1$ is decomposed further as
\begin{align*}
\mathcal{I}_1=\;&
\left\{ \p_t(\mathcal{S}_{\theta_n}\psi_n)-\mathcal{S}_{\theta_n}\p_t\psi_n\right\}
-\left\{ \big(\mathcal{S}_{\theta_n} v_{1,n}\big)\big|_{x_1=0}
-\mathcal{S}_{\theta_n} \big(v_{1,n}\big|_{x_1=0}\big)\right\}\\
&+\sum_{i=2}^d \left\{ \big(\mathcal{S}_{\theta_n} v_{i,n} +v_{i}^{a} \big)\big|_{x_1=0}
\p_i \mathcal{S}_{\theta_n} \psi_n
-\mathcal{S}_{\theta_n}  \big( \big( v_{i,n} +v_{i}^{a} \big)\big|_{x_1=0}\p_i  \psi_n\big)\right\}\\
&+\sum_{i=2}^d \left\{ \big(\mathcal{S}_{\theta_n} v_{i,n}\big)\big|_{x_1=0}\p_i   \varphi^a
-\mathcal{S}_{\theta_n}  \big(  v_{i,n}  \big|_{x_1=0}\p_i  \varphi^a\big)\right\}.
\end{align*}
Noting that $(\widetilde{ U}^a,\varphi^a)\in H_*^{\widetilde{\alpha}+{6}}(\varOmega_T)\times  H^{\widetilde{\alpha}+{15/2}}(\varSigma_T)$,
we use Lemma~\ref{lem:Moser1},
the {Sobolev embedding and trace} theorems, and \eqref{tri3} to infer
\begin{align*}
 \|  (\mathcal{S}_{\theta_n} v_{i,n} +v_{i}^{a}  )
\p_i \mathcal{S}_{\theta_n} \psi_n  \|_{H^{s-1}(\varSigma_T)}
\lesssim \;&
(1+\|\mathcal{S}_{\theta_n} v_{i,n} +\tilde{v}_{i}^{a}\|_{6,*,T}) \|  \mathcal{S}_{\theta_n} \psi_n  \|_{H^{s}(\varSigma_T)}\\
&+\|\mathcal{S}_{\theta_n} v_{i,n} +\tilde{v}_{i}^{a}\|_{s,*,T} \|  \mathcal{S}_{\theta_n} \psi_n  \|_{H^{6}(\varSigma_T)}\\
\lesssim \;& \varepsilon \theta_n^{s-{\alpha }}
\qquad \textrm{for } s\in\{{\alpha }+1,\ldots,\widetilde{\alpha}+{6}\}.
\end{align*}
Since the other terms in $\mathcal{I}_1$ can be estimated as in the proof of \cite[Proposition 12]{T09ARMAMR2481071}, we omit the details and obtain
\begin{align*}
\| \mathcal{I}_1\|_{H^{s-1}(\varSigma_T)}
\lesssim \varepsilon \theta_n^{s-{\alpha }}\quad \textrm{for }\ s =6,\ldots, \widetilde{\alpha}+{6}.
\end{align*}
For the term $\mathcal{I}_2:=\mathcal{S}_{\theta_{{n}}}\! \left( \mathcal{B}\big(V_n ,\psi_n\big)_1
-\mathcal{B}\big(V_{n-1}, \psi_{n-1}\big)_1  \right)$,
we have
\begin{align*}
\mathcal{I}_2=\;&
\mathcal{S}_{\theta_{{n}}} (\p_t \delta\psi_{n-1}) -\mathcal{S}_{\theta_{{n}}}(\delta v_{1,n-1} |_{x_1=0} )
\\
&\!+\sum_{i=2}^d \mathcal{S}_{\theta_{{n}}}  \Big(
 (v_{i,n} +v_{i}^{a}  ) |_{x_1=0}\p_i \delta\psi_{n-1}+
\delta v_{i,n-1}   |_{x_1=0}\p_i (\psi_{n-1}  +\varphi^a )\Big).
\end{align*}
Use  \eqref{smooth.p1a}, hypothesis ($\mathbf{H}_{n-1}$),
and Lemma~\ref{lem:Moser1} to get
\begin{align*}
\| \mathcal{I}_2\|_{H^{s-1}(\varSigma_T)}
\lesssim \varepsilon \theta_n^{s-{\alpha }}\quad \textrm{for }\ s =6,\ldots, \widetilde{\alpha}+{6}.
\end{align*}
Thanks to \eqref{smooth.p1} and hypothesis ($\mathbf{H}_{n-1}$), we have
\begin{align*}
\| \mathcal{S}_{\theta_n}\mathcal{B}(V_{n-1} ,\psi_{n-1})\|_{H^{s-1}(\varSigma_T)}
\lesssim \theta_n^{s-6} \|\mathcal{B}(V_{n-1},\psi_{n-1})\|_{H^7(\varSigma_T)}
\lesssim \varepsilon \theta_n^{s-{\alpha }}
\end{align*}
for $s =6,\ldots, \widetilde{\alpha}+{6}$.
In conclusion, we infer
\begin{align}
\|v_{n+1/2}-\mathcal{S}_{\theta_{{n}}}v_{n}\|_{s,*,T}
\lesssim  \varepsilon \theta_n^{s-{\alpha }}
\quad \textrm{for }\ s =6,\ldots, \widetilde{\alpha}+{6}.
\label{MS.e2}
\end{align}

\vspace*{2mm}
\noindent {\it Step 3}.\quad
Finally we define and estimate $H_{n+1/2}$ such that the second identity in \eqref{bas1b} holds for
$(U^a+V_{n+1/2},\varphi^a+\psi_{n+1/2})$.
Noting that ${v}_{n+1/2}  $ and $\varPsi_{n+1/2} $ have been specified,
we take ${H}_{n+1/2} $ as the unique solution of the linear transport equations
\begin{align} \label{MS.eq1}
\mathbb{L}_H(v^{a}+{v}_{n+1/2},H^{a}+{H}_{n+1/2},\varPhi^{a}+\varPsi_{n+1/2})=0,
\end{align}
supplemented with zero initial data ${H}_{n+1/2} |_{t=0}=0,$
where the operator $\mathbb{L}_H$ is defined by \eqref{NP1a}.
Since $(v^a+v_{n+1/2},\varphi^a+\psi_{n+1/2})$ satisfies the first boundary condition in \eqref{bas1b},
equations \eqref{MS.eq1} do not need to be supplemented with any boundary condition.

Noting that ${H}_{n+1/2}$ and $\psi_{{n}+1/2}$ vanish at the initial time,
by virtue of the second identity in \eqref{app2b},
one can show as the proof of Proposition \ref{pro:1} that
$(H^a+H_{n+1/2},\varphi^a+\psi_{n+1/2})$ satisfies the second constraint {in} \eqref{bas1b}.

We now estimate $H_{n+1/2}-\mathcal{S}_{\theta_n}H_{n}$.
We first utilize \eqref{MS.eq1} to find
\begin{align}
&\mathbb{L}_H({v}^{a }+{v}_{n+1/2} ,\,
{H}_{n+1/2} -\mathcal{S}_{\theta_n}H_{n},\,
{ \varPhi}^{a }+\varPsi_{n+1/2} )
=  \mathcal{I}_3+\mathcal{I}_4+\mathcal{I}_5, \label{MS.e3}
\end{align}
where $\mathcal{I}_{3}:=
-\mathcal{S}_{\theta_{n}}\mathbb{L}_{H}(v^a+  v_{n},
H^a+ H_{n},  \varPhi^a+ \varPsi_{{n}}),$ and
\begin{align*}
\mathcal{I}_{4}:=\;&
-\mathbb{L}_{H}(v^a+v_{n+1/2},
H^a+\mathcal{S}_{\theta_{n}} H_{n},
\varPhi^a+\varPsi_{{n}+1/2})\\ &
+\mathbb{L}_{H}(v^a+\mathcal{S}_{\theta_{n}}  v_{n},
H^a+\mathcal{S}_{\theta_{n}} H_{n},
\varPhi^a+\mathcal{S}_{\theta_{n}}  \varPsi_{{n}}),\\
\mathcal{I}_{5}:=\;&
-\mathbb{L}_{H}(v^a+\mathcal{S}_{\theta_{n}}  v_{n},
H^a+\mathcal{S}_{\theta_{n}} H_{n},
\varPhi^a+\mathcal{S}_{\theta_{n}}  \varPsi_{{n}})\\&
+\mathcal{S}_{\theta_{n}}\mathbb{L}_{H}(v^a+  v_{n},
H^a+ H_{n},  \varPhi^a+ \varPsi_{{n}}).
\end{align*}
Noting  $(\widetilde{ U}^a,\varphi^a)\in  H^{\widetilde{\alpha}+{6}}(\varOmega_T)\times  H^{\widetilde{\alpha}+{15/2}}(\varSigma_T)$ and using the Moser-type calculus inequalities \eqref{Moser1}--\eqref{Moser3},
we get from \eqref{smooth.p1}, \eqref{small}--\eqref{tri3}, \eqref{MS.est1}, and \eqref{MS.e2} that
\begin{align}
\| \mathcal{I}_4\|_{s,*,T}+\| \mathcal{I}_5\|_{s,*,T}\lesssim \varepsilon \theta_n^{s-{\alpha }+2}
\quad \textrm{for }\ s =6,\ldots, \widetilde{\alpha}+{4}.
\label{MS.p3}
\end{align}
Thanks to  \eqref{NM0},
we utilize Lemma~\ref{lem:Moser2}, \eqref{smooth.p1},
\eqref{small}--\eqref{tri3}, and hypothesis  ($\mathbf{H}_{n-1}$) to get
\begin{align}
\big\|\mathcal{I}_3-
\mathcal{S}_{\theta_n}\mathbb{L}_{H}(v^a+  v_{n-1},   H^a+ H_{n-1},  \varPhi^a+ \varPsi_{{n-1}})
\big\|_{s,*,T}\lesssim \varepsilon \theta_n^{s-{\alpha }+2}
\label{MS.p4}
\end{align}
for $s =6,\ldots, \widetilde{\alpha}+{4}.$
By virtue of \eqref{smooth.p1}, \eqref{small}--\eqref{tri3}, and hypothesis ($\mathbf{H}_{n-1}$),
we infer
\begin{align}
\nonumber & \|\mathcal{S}_{\theta_n}  \mathbb{L}_{H}(v^a+  v_{n-1},   H^a+ H_{n-1},  \varPhi^a+ \varPsi_{{n-1}})\|_{s,*,T}   \\
&\quad   \lesssim  \theta_{{n}}^{s-6}
\|\mathcal{L}(V_{n-1},\varPsi_{n-1}) \|_{6,*,T}
 \lesssim \varepsilon \theta_n^{s-{\alpha }-1}
\quad \textrm{for }\ s =6,\ldots, \widetilde{\alpha}+{4}.
\label{MS.p5}
\end{align}
Plugging the estimates \eqref{MS.p3}--\eqref{MS.p5} into \eqref{MS.e3} yields
\begin{align*}
\big\|\mathbb{L}_{H}({v}^{a }+{v}_{n+1/2} ,\,
{H}_{n+1/2} -\mathcal{S}_{\theta_n}H_{n},\,
{ \varPhi}^{a }+\varPsi_{n+1/2} )\big\|_{s,*,T}\lesssim \varepsilon \theta_n^{s-{\alpha }+2}
\end{align*}
for $s =6,\ldots, \widetilde{\alpha}+{4}.$
Employing the standard argument of the energy method and the Moser-type calculus inequalities \eqref{Moser1}--\eqref{Moser3},
we deduce
\begin{align}
\big\| {H}_{n+1/2} -\mathcal{S}_{\theta_n}H_{n} \big\|_{s,*,T}
\lesssim \varepsilon \theta_n^{s-{\alpha }+1}
\nonumber
\end{align}
for $s =6,\ldots, \widetilde{\alpha}+{4}.$
This completes the proof of the lemma.
\end{proof}

In the next lemma, we have the estimate for the second substitution error terms $e_{k}'''$ and  $\tilde{e}_{k}'''$ given in \eqref{decom1}--\eqref{decom2}.

\begin{lemma} \label{lem:2nd}
	Let ${\alpha }\geq 8$.
	If $\varepsilon>0$ is sufficiently small and $\theta_0\geq 1$ is suitably large, then
	\begin{align}\label{2st.sub}
	\tilde{e}_k'''=0,\quad  \| e_k'''\|_{s,*,T}
	\lesssim \varepsilon^2 \theta_k^{\varsigma_3(s)-1}\varDelta_k,
	\end{align}
	for all $k\in\{0,\ldots,{n}-1\}$ and $s\in\{6,\ldots,\widetilde{\alpha}-2\}$,
	where  $\varsigma_3(s):=\max\{(s+2-{\alpha })_++14-2{\alpha },s+10-2{\alpha } \}$.
\end{lemma}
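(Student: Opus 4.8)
The plan is to first read off the two error terms from the substitution step of \eqref{decom1}--\eqref{decom2}, namely
\begin{align*}
e_k'''&=\mathbb{L}'\big(U^a+\mathcal{S}_{\theta_k}V_k,\varPhi^a+\mathcal{S}_{\theta_k}\varPsi_k\big)(\delta V_k,\delta\varPsi_k)-\mathbb{L}'\big(U^a+V_{k+1/2},\varPhi^a+\varPsi_{k+1/2}\big)(\delta V_k,\delta\varPsi_k),\\
\tilde{e}_k'''&=\mathbb{B}'\big(U^a+\mathcal{S}_{\theta_k}V_k,\varphi^a+\mathcal{S}_{\theta_k}\psi_k\big)(\delta V_k,\delta\psi_k)-\mathbb{B}_e'\big(U^a+V_{k+1/2},\varphi^a+\psi_{k+1/2}\big)(\delta \dot{V}_k,\delta\psi_k).
\end{align*}
The vanishing $\tilde{e}_k'''=0$ is the one structural point. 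Since $\varphi^a$ and $\psi_{k+1/2}$ are functions of $(t,x')$ only and $\chi\equiv1$ near the origin, one has $\p_1(\varPhi^a+\varPsi_{k+1/2})|_{x_1=0}=1$, so by the definition \eqref{good} of the good unknown $\delta\dot{v}_k=\delta v_k-\p_1(v^a+v_{k+1/2})\delta\psi_k$ and $\delta\dot{q}_k=\delta q_k-\p_1(q^a+q_{k+1/2})\delta\psi_k$ on $\varSigma_T$; a short computation then shows $\mathbb{B}_e'(\mathring U,\mathring\varphi)(\dot V,\psi)=\mathbb{B}'(\mathring U,\mathring\varphi)(V,\psi)$ on the boundary, for the base state $(\mathring U,\mathring\varphi)=(U^a+V_{k+1/2},\varphi^a+\psi_{k+1/2})$. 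On the other hand, the operator $\mathbb{B}'$ of \eqref{B'.bb} depends on its base state only through the tangential velocity components and the interface function, and by the construction \eqref{MS.id1} of the modified state these coincide with those of $(U^a+\mathcal{S}_{\theta_k}V_k,\varphi^a+\mathcal{S}_{\theta_k}\psi_k)$; hence the two terms defining $\tilde{e}_k'''$ cancel.

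For the interior term I would write $e_k'''$ as a path integral of the second derivative $\mathbb{L}''$, just as was done for $e_k'$ and $e_k''$: setting $W_k:=\mathcal{S}_{\theta_k}V_k-V_{k+1/2}$ and $\varXi_k:=\mathcal{S}_{\theta_k}\varPsi_k-\varPsi_{k+1/2}$,
\begin{align*}
e_k'''=\int_0^1\mathbb{L}''\big(U^a+V_{k+1/2}+\tau W_k,\ \varPhi^a+\varPsi_{k+1/2}+\tau\varXi_k\big)\big((\delta V_k,\delta\varPsi_k),(W_k,\varXi_k)\big)\d\tau .
\end{align*}
The embedding \eqref{embed2}, the bound \eqref{small}, Lemma~\ref{lem:triangle}, and \eqref{MS.est1}--\eqref{MS.est2}, together with $\alpha\geq8$, ensure that the base state along this path stays bounded in $W_*^{2,\infty}(\varOmega_T)$ by a constant depending only on $M_0$, so Proposition~\ref{pro:tame2} applies at each $\tau$ (with $U^a=\widebar U+\widetilde U^a$, $\varPhi^a=x_1+\varPsi^a$). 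Into its right-hand side one then inserts the bounds $\|(\delta V_k,\delta\varPsi_k)\|_{W_*^{2,\infty}(\varOmega_T)}\lesssim\varepsilon\theta_k^{5-\alpha}\varDelta_k$ and $\|(W_k,\varXi_k)\|_{W_*^{2,\infty}(\varOmega_T)}\lesssim\varepsilon\theta_k^{8-\alpha}$ (from \eqref{embed2}, $(\mathbf{H}_{n-1})$(a) at $s=6$, and \eqref{MS.est1}--\eqref{MS.est2}), the higher-order bounds $\|(\delta V_k,\delta\varPsi_k)\|_{s+2,*,T}\lesssim\varepsilon\theta_k^{s+1-\alpha}\varDelta_k$ and $\|(W_k,\varXi_k)\|_{s+2,*,T}\lesssim\varepsilon\theta_k^{s+4-\alpha}$, and the base-state bound $\|(\widetilde U^a+V_{k+1/2}+\tau W_k,\varPsi^a+\varPsi_{k+1/2}+\tau\varXi_k)\|_{s+2,*,T}\lesssim1+\varepsilon\theta_k^{(s+2-\alpha)_+}+\varepsilon\theta_k^{s+4-\alpha}$ (from \eqref{small}, \eqref{tri3}, \eqref{MS.est2}), all valid for $s\in\{6,\ldots,\widetilde{\alpha}-2\}$. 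Multiplying out the three products in Proposition~\ref{pro:tame2} produces contributions whose $\theta_k$-exponents, against a factor $\varepsilon^2\varDelta_k$, are at most $(s+2-\alpha)_++13-2\alpha$ and $s+9-2\alpha$; since $\alpha\geq8$, all remaining lower-order, higher-power-of-$\varepsilon$ contributions are absorbed, and $\max\{(s+2-\alpha)_++13-2\alpha,\ s+9-2\alpha\}=\varsigma_3(s)-1$, which gives \eqref{2st.sub}.

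The exponent bookkeeping above is routine once Proposition~\ref{pro:tame2} is at hand; the genuinely important step is the identity $\tilde{e}_k'''=0$, which is exactly where the particular choice of the modified state in Proposition~\ref{pro:modified} — keeping the tangential velocities and the interface function equal to their smoothed counterparts — enters, so that the boundary side of the scheme loses no accuracy at the substitution step.
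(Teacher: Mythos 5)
Your proposal is correct and follows essentially the same route as the paper: $\tilde{e}_k'''=0$ is obtained from the good-unknown identity on the boundary together with the form of $\mathbb{B}''$ in \eqref{B''.form} and the choice \eqref{MS.id1} of the modified state, and $e_k'''$ is written as the Taylor integral remainder involving $\mathbb{L}''$ along the segment joining the modified and smoothed states and then estimated via Proposition \ref{pro:tame2} combined with \eqref{MS.est1}--\eqref{MS.est2}, \eqref{tri3}, and $(\mathbf{H}_{n-1})$. Your exponent bookkeeping, giving $\max\{(s+2-\alpha)_++13-2\alpha,\,s+9-2\alpha\}=\varsigma_3(s)-1$, matches the details the paper leaves as "similar to the proof of Lemmas \ref{lem:quad}--\ref{lem:1st}".
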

\begin{proof}
It follows from \eqref{MS.id1} and \eqref{B''.form} that $\tilde{e}_k'''=0$.
For term ${e}_k'''$, we have
\begin{align}
\nonumber {e}_k'''=\int_{0}^{1}&
\mathbb{L}''\Big(U^a+\tau(\mathcal{S}_{\theta_k} V_k-V_{k+1/2})+V_{k+1/2},\,
\varPhi^a+\tau(\mathcal{S}_{\theta_k}\varPsi_k-\varPsi_{k+1/2})
\\
&\quad\ \ +\varPsi_{k+1/2} \Big)
\Big((\delta V_k ,\delta \varPsi_k),\, (\mathcal{S}_{\theta_k} V_k-V_{k+1/2},\mathcal{S}_{\theta_k}\varPsi_k-\varPsi_{k+1/2})\Big) \d\tau.
\nonumber
\end{align}
In view of \eqref{MS.est1}--\eqref{MS.est2}, similar to the proof of Lemmas \ref{lem:quad}--\ref{lem:1st},
we can apply Proposition \ref{pro:tame2} to obtain the estimate for ${e}_k'''$ in \eqref{2st.sub}.
\end{proof}

The following lemma gives the estimate of $D_{k+1/2}\delta\varPsi_k$ defined by \eqref{error.D}.
\begin{lemma}   \label{lem:last}
	Let ${\alpha }\geq 8$ and $\widetilde{\alpha}\geq {\alpha }+2$.
	If $\varepsilon>0$ is sufficiently small and $\theta_0\geq 1$ is suitably large, then
	\begin{align} \label{last.e0}
	\|D_{k+1/2}\delta\varPsi_k\|_{s,*,T}\lesssim \varepsilon^2 \theta_k^{\varsigma_4 (s)-1}\varDelta_k,
	\end{align}
	for all $k\in\{0,\ldots,{n}-1\}$ and $s\in\{6,\ldots,\widetilde{\alpha}-2\}$,
	where
	\begin{align} \label{varsigma4.def}
	\varsigma_4(s):=\max\{(s-{\alpha })_++18-2{\alpha },s+12-2{\alpha }\}.
	\end{align}
\end{lemma}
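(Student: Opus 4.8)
The goal is to estimate $D_{k+1/2}\delta\varPsi_k$, where $D_{k+1/2}$ is the zeroth-order operator obtained from $\p_1\mathbb{L}(U^a+V_{k+1/2},\varPhi^a+\varPsi_{k+1/2})$ divided by $\p_1(\varPhi^a+\varPsi_{k+1/2})$, acting by multiplication on $\delta\varPsi_k=\chi(x_1)\delta\psi_k$. The plan is to first recognize that the crucial quantity is the size of $\mathbb{L}(U^a+V_{k+1/2},\varPhi^a+\varPsi_{k+1/2})$ itself: since the smooth modified state is built from the iterates via the smoothing operators and the approximate solution satisfies $\mathbb{L}(U^a,\varPhi^a)=-f^a$, I would write
\begin{align*}
\mathbb{L}(U^a+V_{k+1/2},\varPhi^a+\varPsi_{k+1/2})
&=\mathbb{L}(U^a+V_{k+1/2},\varPhi^a+\varPsi_{k+1/2})
-\mathbb{L}(U^a+\mathcal{S}_{\theta_k}V_k,\varPhi^a+\mathcal{S}_{\theta_k}\varPsi_k)\\
&\quad+\big(\mathbb{L}(U^a+\mathcal{S}_{\theta_k}V_k,\varPhi^a+\mathcal{S}_{\theta_k}\varPsi_k)
-\mathcal{S}_{\theta_k}\mathbb{L}(U^a+V_k,\varPhi^a+\varPsi_k)\big)\\
&\quad+\mathcal{S}_{\theta_k}\big(\mathbb{L}(U^a+V_k,\varPhi^a+\varPsi_k)-\mathbb{L}(U^a,\varPhi^a)\big)
+\mathcal{S}_{\theta_k}\mathbb{L}(U^a,\varPhi^a)\\
&=\mathcal{S}_{\theta_k}\big(\mathcal{L}(V_k,\varPsi_k)-f^a\big)+(\textrm{commutator/substitution terms}),
\end{align*}
so that $\mathbb{L}(U^a+V_{k+1/2},\varPhi^a+\varPsi_{k+1/2})$ is controlled by the inductive quantity $\|\mathcal{L}(V_k,\varPsi_k)-f^a\|_{s,*,T}$ appearing in $(\mathbf{H}_{k-1})$(b), plus smoothing-operator errors already estimated analogously in Lemmas~\ref{lem:quad}--\ref{lem:2nd} and Proposition~\ref{pro:modified}.

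Next, since $\p_1(\varPhi^a+\varPsi_{k+1/2})\geq c>0$ by \eqref{app1b} and the smallness of the modifications (for $T$ small and $\varepsilon$ small), its reciprocal is a smooth bounded function of $\nabla(\varPhi^a+\varPsi_{k+1/2})$, so by the Moser-type inequalities \eqref{Moser2}--\eqref{Moser3} the factor $1/\p_1(\varPhi^a+\varPsi_{k+1/2})$ costs only a factor of $\mathring{\rm C}_{s+1}$-type norms, which by \eqref{small}, \eqref{tri1}, \eqref{tri3}, and Proposition~\ref{pro:modified} are bounded by $C(M_0)$ up to the relevant order; the $\p_1$ loses one weighted derivative, which is why the exponent in $\varsigma_4(s)$ is shifted relative to $\varsigma_3$. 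Then I would apply Lemma~\ref{lem:Moser2} to the product $D_{k+1/2}\cdot\chi(x_1)\delta\psi_k$, splitting in the usual high-low manner:
\begin{align*}
\|D_{k+1/2}\delta\varPsi_k\|_{s,*,T}
\lesssim \|D_{k+1/2}\|_{s,*,T}\,\|\delta\varPsi_k\|_{W_*^{2,\infty}(\varOmega_T)}
+\|D_{k+1/2}\|_{W_*^{1,\infty}(\varOmega_T)}\,\|\delta\varPsi_k\|_{s,*,T}.
\end{align*}
For the low-order factors I would use the embeddings \eqref{embed1}--\eqref{embed2} together with $(\mathbf{H}_{k-1})$(a) at $s=6$, obtaining $\|\delta\varPsi_k\|_{W_*^{2,\infty}}\lesssim\varepsilon\theta_k^{6-\alpha-1}\varDelta_k$ and $\|D_{k+1/2}\|_{W_*^{1,\infty}}\lesssim\varepsilon\theta_k^{5-\alpha-1}$ (one derivative of $\mathbb{L}(\cdots)$ at low order, itself bounded via $(\mathbf{H}_{k-1})$(b) and the substitution estimates). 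For the high-order factors I would use $(\mathbf{H}_{k-1})$(a)--(b), the estimates from Lemmas~\ref{lem:quad}--\ref{lem:2nd}, and Proposition~\ref{pro:modified}; collecting exponents and using $\theta_k\varDelta_k\le 1/2$ and $\alpha\ge 8$, $\widetilde\alpha\ge\alpha+2$, the two contributions combine into $\varepsilon^2\theta_k^{\varsigma_4(s)-1}\varDelta_k$ with $\varsigma_4(s)=\max\{(s-\alpha)_++18-2\alpha,\ s+12-2\alpha\}$, handling the cases $s\ne\alpha$ and $s=\alpha$ separately exactly as in the earlier lemmas.

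The main obstacle, and the place requiring the most care, is bookkeeping the derivative count: $D_{k+1/2}$ carries one $\p_1$ relative to $\mathbb{L}$, and $\mathbb{L}$ applied to the modified state is already a sum of error-type quantities whose highest-order control only holds up to order $\widetilde\alpha+{4}$ (for the $V$-component) and $\widetilde\alpha+6$ (for the $\varPsi$-component), per Proposition~\ref{pro:modified}. One must check that the product estimate never demands more than $\widetilde\alpha$ tangential derivatives on $\delta\varPsi_k$ or more than the available regularity on $D_{k+1/2}$; this is where the restriction $s\le\widetilde\alpha-2$ and $\widetilde\alpha\ge\alpha+2$ enters, leaving exactly the margin needed. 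Once the exponent arithmetic is verified in both cases, the estimate \eqref{last.e0} follows, completing the proof.
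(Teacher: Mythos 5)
Your proposal is correct and follows essentially the same route as the paper: reduce $D_{k+1/2}\delta\varPsi_k$ to a Moser-type high--low product estimate between $\delta\varPsi_k$ and $R_k:=\p_1\mathbb{L}(U^a+V_{k+1/2},\varPhi^a+\varPsi_{k+1/2})$ (with the extra factor from $1/\p_1(\varPhi^a+\varPsi_{k+1/2})$ handled by \eqref{Moser2}--\eqref{Moser3}), and control $R_k$ by splitting $\mathbb{L}$ at the modified state into the residual $\mathcal{L}(V_k,\varPsi_k)-f^a$, bounded by hypothesis $(\mathbf{H}_{{n}-1})$(b), plus substitution errors bounded via Proposition \ref{pro:modified} --- the paper does this with a single direct term $\mathcal{I}_6$ rather than routing through $\mathcal{S}_{\theta_k}$, which is an immaterial difference. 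Two details you elide: $(\mathbf{H}_{{n}-1})$(b) at order $s+2$ is only usable for $s\le\widetilde{\alpha}-4$, so the cases $s\in\{\widetilde{\alpha}-3,\widetilde{\alpha}-2\}$ require the separate crude bound $\|R_k\|_{s,*,T}\lesssim\|(\widetilde{U}^a+V_{k+1/2},\widetilde{\varPhi}^a+\varPsi_{k+1/2})\|_{s+4,*,T}$; and your low-order bound on $D_{k+1/2}$ is too optimistic (one actually gets $\varepsilon\theta_k^{10-\alpha}$ rather than $\varepsilon\theta_k^{4-\alpha}$), though the exponent $\varsigma_4(s)$ still absorbs the correct value.
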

\begin{proof}
We proceed as in \cite{A89MR976971,CS08MR2423311}.
Denote $\varOmega_T^+:=(0,T)\times\varOmega$ and
$$R_k:=\p_1\mathbb{L}(U^a+V_{k+1/2},\varPhi^a+\varPsi_{k+1/2}).$$
Since
$\|{\varPsi}^a+\varPsi_{k+1/2}\|_{H_*^{s+2}(\varSigma_T)}\lesssim
\|{\varphi}^a+\psi_{k+1/2}\|_{H^{s}(\varSigma_T)}$
and $\delta\varPsi_k$ vanishes in the past,
we apply \eqref{Moser2} and \eqref{embed1} to get
\begin{align}
\nonumber \|D_{k+1/2}\delta\varPsi_k\|_{s,*,T}
=\;&\|D_{k+1/2}\delta\varPsi_k\|_{H_*^{s}(\varOmega_T^+)}\\
 \lesssim\;& \|\delta\varPsi_k\|_{4,*,T}\|R_k\|_{H_*^{s}(\varOmega_T^+)}
+ \|\delta\varPsi_k\|_{s,*,T}\|R_k\|_{4,*,T}
 \nonumber\\
& + \|\delta\varPsi_k\|_{4,*,T}\|R_k\|_{4,*,T}
\|{\varphi}^a+\psi_{k+1/2}\|_{H^{s}(\varSigma_T)}.
\label{err4.p1}
\end{align}
Regarding $R_k$, we use \eqref{f^a} and \eqref{P.new} to decompose
\begin{align}
R_k
=&\;\p_1\left(\mathcal{L}(V_k,\varPsi_k)-f^a+\mathcal{I}_6\right)
\quad \textrm{if }\ t>0,
\label{decom4}
\end{align}
where
\begin{align}
\nonumber
\mathcal{I}_6:=&
\int_{0}^1\mathbb{L}'\big(  U^a+V_{k}+\tau(V_{k+1/2}-V_k),
\varPhi^a+\varPsi_{k}  \\
\nonumber &  \quad\quad +\tau(\varPsi_{k+1/2}-\varPsi_{k})\big)
(V_{k+1/2}-V_k,\varPsi_{k+1/2}-\varPsi_{k})\d\tau.
\end{align}
If $s\in\{4,\ldots,\widetilde{\alpha}-4\}$, then hypothesis $(\mathbf{H}_{{n}-1})$ leads to
\begin{align} \label{last.e1}
\|\mathcal{L}(V_k,\varPsi_k)-f^a\|_{s+2,*,T}\leq 2\varepsilon\theta_k^{s+1-{\alpha }}.
\end{align}
Estimating the term $\mathcal{I}_6$ through
\begin{align*}
\|\mathbb{L}'(\widebar{U}+V_1,\widebar{\varPhi}+\varPsi_1) (V_2,\varPsi_2) \|_{s,*,T}
\lesssim \sum_{i\neq j}\|({V}_i,{\varPsi}_i) \|_{6,*,T}\|(V_j,\varPsi_j) \|_{s+2,*,T},
\end{align*}
we use \eqref{tri1}--\eqref{tri2}, \eqref{MS.id1}--\eqref{MS.est2}, and
\eqref{decom4}--\eqref{last.e1}  to derive
\begin{align} \label{last.e2}
\|R_k\|_{H^{s}_{*}(\varOmega_T^+)}
\lesssim \varepsilon
\left(\theta_{k}^{s+6-{\alpha }}
+\theta_{k}^{(s+4-{\alpha })_+ +8-{\alpha }}\right)
\quad \textrm{for }  s\in\{4,\ldots,\widetilde{\alpha}-4\}.
\end{align}
If $s\in\{\widetilde{\alpha}-3, \widetilde{\alpha}-2\}$,
then we use \eqref{tri3} and \eqref{MS.id1}--\eqref{MS.est2} to obtain
\begin{align*}
\|R_{k}\|_{s,*,T}
\lesssim \|(\tilde{U}^a+V_{k+1/2},\widetilde{\varPhi}^a+\varPsi_{k+1/2})\|_{s+4,*,T}
\lesssim \varepsilon \theta_k^{s+6-{\alpha }}.
\end{align*}
Consequently, the estimate \eqref{last.e2} holds for $s\in\{4,\ldots,\widetilde{\alpha}-2\}$.
Substituting hypothesis $(\mathbf{H}_{{n}-1})$, \eqref{last.e2}, \eqref{tri3}, and \eqref{MS.id1}--\eqref{MS.est2}
into \eqref{err4.p1} implies \eqref{last.e0}  and completes the proof.
\end{proof}

As a direct corollary to Lemmas \ref{lem:quad}--\ref{lem:last},
we have the estimate for $e_k$ and $\tilde{e}_k$ defined by \eqref{e.e.tilde} as follows.
\begin{corollary}  \label{cor:sum1}
Let ${\alpha }\geq 8$ and $\widetilde{\alpha}\geq {\alpha }+2$.
If $\varepsilon>0$ is sufficiently small and $\theta_0\geq 1$ is suitably large, then
\begin{align} \label{es.sum1a}
&	\|e_k\|_{s,*,T}
\lesssim \varepsilon^2 \theta_k^{\varsigma_4(s)-1}\varDelta_k,\\
\label{es.sum1b}
&	\|\tilde{e}_k\|_{H^{s}(\varSigma_T)}
\lesssim \varepsilon^2 \theta_k^{\varsigma_2(s)-1}\varDelta_k,
\end{align}
for all $k\in\{0,\ldots,{n}-1\}$ and $s\in\{6,\ldots,\widetilde{\alpha}-2\}$,
where  $\varsigma_4(s)$ and  $\varsigma_2(s)$ are defined by \eqref{varsigma4.def} and \eqref{varsigma2.def} respectively.
	
\end{corollary}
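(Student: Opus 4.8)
The plan is to read off the corollary directly from Lemmas~\ref{lem:quad}--\ref{lem:last} via the triangle inequality, using the splittings recorded in \eqref{e.e.tilde}. Note first that the hypotheses here, $\alpha \ge 8$ and $\widetilde{\alpha}\ge\alpha+2$, are exactly those required by all four lemmas, so each of them applies verbatim on the range $s\in\{6,\ldots,\widetilde{\alpha}-2\}$. For the boundary error I would use that $\tilde{e}_k'''=0$ by Lemma~\ref{lem:2nd}, so $\tilde{e}_k=\tilde{e}_k'+\tilde{e}_k''$, and then Lemma~\ref{lem:quad} gives $\|\tilde{e}_k'\|_{H^s(\varSigma_T)}\lesssim\varepsilon^2\theta_k^{\varsigma_1(s)-1}\varDelta_k$ while Lemma~\ref{lem:1st} gives $\|\tilde{e}_k''\|_{H^s(\varSigma_T)}\lesssim\varepsilon^2\theta_k^{\varsigma_2(s)-1}\varDelta_k$. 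For the interior error, $e_k=e_k'+e_k''+e_k'''+D_{k+1/2}\delta\varPsi_k$, and Lemmas~\ref{lem:quad}--\ref{lem:last} supply the four bounds with exponents $\varsigma_1(s)$, $\varsigma_2(s)$, $\varsigma_3(s)$, $\varsigma_4(s)$ respectively.

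The only genuine step is to confirm that, since $\theta_k\ge1$, the largest exponent absorbs the others, i.e.\ that $\varsigma_1(s)\le\varsigma_2(s)\le\varsigma_3(s)\le\varsigma_4(s)$ for the admissible values of $s$. Comparing the two entries of each defining maximum term by term, the chain $\varsigma_1(s)\le\varsigma_2(s)\le\varsigma_3(s)$ is immediate because the additive constants $10-2\alpha,\,12-2\alpha,\,14-2\alpha$ (first entries) and $6-2\alpha,\,8-2\alpha,\,10-2\alpha$ (second entries) are increasing. For $\varsigma_3(s)\le\varsigma_4(s)$ the second entries satisfy $s+10-2\alpha\le s+12-2\alpha$ trivially, and for the first entries I would invoke that $x\mapsto x_+$ is $1$-Lipschitz, hence $(s+2-\alpha)_+\le(s-\alpha)_++2$, which gives $(s+2-\alpha)_++14-2\alpha\le(s-\alpha)_++18-2\alpha$; taking maxima yields $\varsigma_3(s)\le\varsigma_4(s)$ for every $s$.

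Putting these together, $\|e_k\|_{s,*,T}\le\|e_k'\|_{s,*,T}+\|e_k''\|_{s,*,T}+\|e_k'''\|_{s,*,T}+\|D_{k+1/2}\delta\varPsi_k\|_{s,*,T}\lesssim\varepsilon^2\theta_k^{\varsigma_4(s)-1}\varDelta_k$ and $\|\tilde{e}_k\|_{H^s(\varSigma_T)}\le\|\tilde{e}_k'\|_{H^s(\varSigma_T)}+\|\tilde{e}_k''\|_{H^s(\varSigma_T)}\lesssim\varepsilon^2\theta_k^{\varsigma_2(s)-1}\varDelta_k$, uniformly in $k\in\{0,\ldots,n-1\}$ and $s\in\{6,\ldots,\widetilde{\alpha}-2\}$, which is \eqref{es.sum1a}--\eqref{es.sum1b}. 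I do not expect any real obstacle here: the corollary is pure bookkeeping on top of the preceding lemmas, and the only point needing a moment's care is the monotonicity of the exponents $\varsigma_j$, which the Lipschitz estimate settles in one line.
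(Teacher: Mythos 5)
Your proposal is correct and matches the paper's (implicit) argument: the corollary is stated there without proof as a direct consequence of Lemmas \ref{lem:quad}--\ref{lem:last}, via the decomposition \eqref{e.e.tilde} and the monotonicity $\varsigma_1(s)\le\varsigma_2(s)\le\varsigma_3(s)\le\varsigma_4(s)$, which you verify correctly (including the $(s+2-\alpha)_+\le(s-\alpha)_++2$ step).
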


Corollary \ref{cor:sum1} implies
the following estimate for the accumulated error terms $E_n$ and $\widetilde{E}_n$
defined by \eqref{E.E.tilde}.

\begin{lemma}\ \label{lem:sum2}
Let ${\alpha }\geq 12$ and $\widetilde{\alpha}={\alpha }+3$.
If $\varepsilon>0$ is sufficiently small and $\theta_0\geq 1$ is suitably large, then
	\begin{align}\label{es.sum2}
	\|E_{{n}}\|_{{\alpha }+1,*,T}\lesssim \varepsilon^2 \theta_{{n}},\quad
	\|\widetilde{E}_{{n}}\|_{H^{{\alpha }+1}(\varSigma_T)}\lesssim \varepsilon^2.
	\end{align}
\end{lemma}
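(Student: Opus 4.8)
The plan is to sum the per-step error estimates from Corollary~\ref{cor:sum1} and show that the series converges at the claimed rate. By \eqref{E.E.tilde} we have $E_n=\sum_{k=0}^{n-1}e_k$, so by the triangle inequality and \eqref{es.sum1a} with $s={\alpha}+1$,
\begin{align}
\|E_n\|_{{\alpha}+1,*,T}\lesssim \varepsilon^2\sum_{k=0}^{n-1}\theta_k^{\varsigma_4({\alpha}+1)-1}\varDelta_k.
\nonumber
\end{align}
First I would compute $\varsigma_4({\alpha}+1)$ from \eqref{varsigma4.def}: since ${\alpha}\geq 12$, one has $({\alpha}+1-{\alpha})_+ +18-2{\alpha}=19-2{\alpha}$ and $({\alpha}+1)+12-2{\alpha}=13-{\alpha}$, and for ${\alpha}\geq 6$ the second is the larger, so $\varsigma_4({\alpha}+1)=13-{\alpha}$. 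Crucially, since ${\alpha}\geq 12$ we get $\varsigma_4({\alpha}+1)-1=12-{\alpha}\leq 0$, in fact $\varsigma_4({\alpha}+1)-1<1$. The summation is then controlled by the standard dyadic-type estimate used in Nash--Moser schemes: because $\tfrac13\le\theta_k\varDelta_k\le\tfrac12$ and $\theta_k=\sqrt{\theta_0^2+k}$ is increasing, one has $\sum_{k=0}^{n-1}\theta_k^{p-1}\varDelta_k\lesssim \theta_n^{p}$ when $p>0$ and $\lesssim 1$ (uniformly in $n$) when $p<0$; here $p=\varsigma_4({\alpha}+1)-1+1=\varsigma_4({\alpha}+1)=13-{\alpha}<0$ for ${\alpha}>13$, and $p\le 1$ throughout, which yields $\|E_n\|_{{\alpha}+1,*,T}\lesssim\varepsilon^2\theta_n$ (the bound $\theta_n$ being a comfortable over-estimate that also absorbs the borderline cases ${\alpha}=12,13$).

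The boundary term $\widetilde E_n=\sum_{k=0}^{n-1}\tilde e_k$ is handled the same way using \eqref{es.sum1b}. I would compute $\varsigma_2({\alpha}+1)$ from \eqref{varsigma2.def}: $({\alpha}+1-{\alpha})_++12-2{\alpha}=13-2{\alpha}$ and $({\alpha}+1)+8-2{\alpha}=9-{\alpha}$, the latter being larger for ${\alpha}\ge 4$, so $\varsigma_2({\alpha}+1)=9-{\alpha}$ and hence $\varsigma_2({\alpha}+1)-1=8-{\alpha}\le -4<0$ since ${\alpha}\ge 12$. Thus $\sum_{k=0}^{n-1}\theta_k^{\varsigma_2({\alpha}+1)-1}\varDelta_k=\sum_{k}\theta_k^{(9-{\alpha})-1}\varDelta_k$ has exponent $9-{\alpha}<0$, so the series converges uniformly in $n$, giving $\|\widetilde E_n\|_{H^{{\alpha}+1}(\varSigma_T)}\lesssim\varepsilon^2$. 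The choice $\widetilde{\alpha}={\alpha}+3$ is exactly what is needed so that $s={\alpha}+1$ lies in the admissible range $\{6,\ldots,\widetilde{\alpha}-2\}=\{6,\ldots,{\alpha}+1\}$ of Corollary~\ref{cor:sum1}, and ${\alpha}\ge 12$ (together with ${\alpha}\ge 8$, $\widetilde{\alpha}\ge{\alpha}+2$ required there) makes all the exponents nonpositive except in the first estimate, where the single power $\theta_n$ is still gained.

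The one point requiring care — the main obstacle, though a mild one — is the elementary summation lemma $\sum_{k=0}^{n-1}\theta_k^{p-1}\varDelta_k\lesssim\theta_n^{\max\{p,0\}}$ (with a logarithm when $p=0$, which does not occur here), and in particular verifying that the relevant exponents $\varsigma_4({\alpha}+1)$ and $\varsigma_2({\alpha}+1)$ are indeed $\le 1$ and $<0$ respectively under ${\alpha}\ge 12$; this is where the lower bound on ${\alpha}$ is consumed. I would cite the analogous computation in \cite[Lemma~8]{T09ARMAMR2481071} for the dyadic summation and otherwise the estimates \eqref{es.sum1a}--\eqref{es.sum1b} do all the work, so the proof is short.
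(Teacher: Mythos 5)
Your proposal is correct and follows essentially the same route as the paper: sum the estimates of Corollary~\ref{cor:sum1} at $s={\alpha}+1$ (which requires ${\alpha}+1\leq\widetilde{\alpha}-2$, i.e.\ $\widetilde{\alpha}\geq{\alpha}+3$), check that $\varsigma_4({\alpha}+1)=13-{\alpha}\leq 1$ and $\varsigma_2({\alpha}+1)=9-{\alpha}\leq-3$ for ${\alpha}\geq 12$, and conclude via $\sum_k\varDelta_k\leq\theta_n$ for the interior term and the convergence of $\sum_k\theta_k^{-4}\varDelta_k$ for the boundary term. One minor arithmetic slip: the first branch of $\varsigma_2({\alpha}+1)$ is $({s+2-{\alpha}})_++12-2{\alpha}=15-2{\alpha}$, not $13-2{\alpha}$, but since the maximum is attained by the second branch $9-{\alpha}$ for ${\alpha}\geq 6$, the conclusion is unaffected.
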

\begin{proof}
Notice that $\varsigma_4({\alpha }+1)\leq 1$ if ${\alpha }\geq 12$.
It follows from \eqref{es.sum1a} that
\begin{align*}
&\|E_{{n}}\|_{{\alpha }+1,*,T}
\lesssim \sum_{k=0}^{{n}-1}\|e_{k} \|_{{\alpha }+1,*,T}
\lesssim \sum_{k=0}^{{n}-1} \varepsilon^2 \varDelta_k
\lesssim \varepsilon^2\theta_{{n}},
\end{align*}
provided ${\alpha }\geq 12$ and ${\alpha }+1\leq \widetilde{\alpha}-2$.
Since $\varsigma_2({\alpha }+1)=9-{\alpha }\leq -3$ for ${\alpha }\geq 12$
and ${\alpha }+1\leq \widetilde{\alpha}-2$,
the estimate \eqref{es.sum1b} implies
\begin{align*}
\|\widetilde{E}_{{n}}\|_{H^{{\alpha }+1}(\varSigma_T)}
\lesssim \sum_{k=0}^{{n}-1} \|\tilde{e}_{k} \|_{H^{{\alpha }+1}(\varSigma_T)}
\lesssim \sum_{k=0}^{{n}-1} \varepsilon^2 \theta_{{k}}^{-4}\varDelta_k
\lesssim \varepsilon^2.
\end{align*}
The minimal possible $\widetilde{\alpha}$ is ${\alpha }+3$.
This completes the proof of the lemma.
\end{proof}

\subsection{Proof of Theorem \ref{thm:1}}

We first derive hypothesis $(\mathbf{H}_{{n}})$ from $(\mathbf{H}_{{n}-1})$.
For this purpose,  we derive the estimates of the source terms $f_{{n}}$ and $g_{{n}}$ given in \eqref{source}.

\begin{lemma}\  \label{lem:source}
	Let ${\alpha }\geq 12$ and $\widetilde{\alpha}={\alpha }+3$.
	If $\varepsilon>0$ is sufficiently small and $\theta_0\geq 1$ is suitably large, then
	\begin{align}
	\label{es.fl}&\|f_{{n}}\|_{s,*,T}
	\lesssim \varDelta_{{n}}\left(\theta_{{n}}^{s-{\alpha }-1} \|f^a\|_{{\alpha },*,T}
	+\varepsilon^2 \theta_{{n}}^{s-{\alpha }-1} +\varepsilon^2\theta_{{n}}^{\varsigma_4(s)-1}\right),\\
	\label{es.gl}&\|g_{{n}}\|_{H^{s+1}(\varSigma_T)}
	\lesssim  \varepsilon^2 \varDelta_{{n}}\left(\theta_{{n}}^{s-{\alpha }-1}+\theta_{{n}}^{\varsigma_2(s+1)-1}\right),
	\end{align}
	for $s\in\{6,\ldots,\widetilde{\alpha}\}$,
where  $\varsigma_4(s)$ and  $\varsigma_2(s)$ are defined by \eqref{varsigma4.def} and \eqref{varsigma2.def} respectively.
\end{lemma}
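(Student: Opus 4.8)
The plan is to solve the defining relations \eqref{source} for $f_{n}$ and $g_{n}$ by a telescoping argument and then estimate every resulting term by combining the smoothing properties of Proposition \ref{pro:smooth} with the error bounds of Corollary \ref{cor:sum1} and Lemma \ref{lem:sum2}. Subtracting \eqref{source} at two consecutive steps and using $E_{n}=E_{n-1}+e_{n-1}$, $\widetilde{E}_{n}=\widetilde{E}_{n-1}+\tilde{e}_{n-1}$ (recall \eqref{E.E.tilde}), I find, for $n\geq 1$,
\begin{align*}
f_{n}&=(\mathcal{S}_{\theta_{n}}-\mathcal{S}_{\theta_{n-1}})f^a-(\mathcal{S}_{\theta_{n}}-\mathcal{S}_{\theta_{n-1}})E_{n-1}-\mathcal{S}_{\theta_{n}}e_{n-1},\\
g_{n}&=-(\mathcal{S}_{\theta_{n}}-\mathcal{S}_{\theta_{n-1}})\widetilde{E}_{n-1}-\mathcal{S}_{\theta_{n}}\tilde{e}_{n-1}.
\end{align*}
Writing $\mathcal{S}_{\theta_{n}}-\mathcal{S}_{\theta_{n-1}}=\int_{\theta_{n-1}}^{\theta_{n}}\frac{\d}{\d\theta}\mathcal{S}_{\theta}\,\d\theta$ and invoking \eqref{smooth.p1c} together with $\varDelta_{n-1}\lesssim\varDelta_{n}$ and $\theta_{n}\lesssim\theta_{n-1}$ (consequences of \eqref{theta} and $\theta_{k}\varDelta_{k}\in[1/3,1/2]$), one obtains the difference bound $\|(\mathcal{S}_{\theta_{n}}-\mathcal{S}_{\theta_{n-1}})u\|_{k,*,T}\lesssim\varDelta_{n}\theta_{n}^{k-j-1}\|u\|_{j,*,T}$ for integers $1\leq k,j\leq m$, and the analogous one for the boundary norms $\|\cdot\|_{H^{j}(\varSigma_{T})}$.

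With this in hand, the three pieces of $f_{n}$ are treated as follows. For the first, take $j=\alpha$ (legitimate since $f^a\in H^{\widetilde{\alpha}+5}(\varOmega_{T})$ by \eqref{small}), which produces precisely $\varDelta_{n}\theta_{n}^{s-\alpha-1}\|f^a\|_{\alpha,*,T}$. For the second, take $j=\alpha+1$ and insert $\|E_{n-1}\|_{\alpha+1,*,T}\lesssim\varepsilon^{2}\theta_{n-1}$ from \eqref{es.sum2}, giving $\lesssim\varepsilon^{2}\varDelta_{n}\theta_{n}^{s-\alpha-1}$. For the third, when $6\leq s\leq\widetilde{\alpha}-2$ I use $\|\mathcal{S}_{\theta_{n}}e_{n-1}\|_{s,*,T}\lesssim\|e_{n-1}\|_{s,*,T}\lesssim\varepsilon^{2}\theta_{n}^{\varsigma_{4}(s)-1}\varDelta_{n}$ by \eqref{smooth.p1a} and \eqref{es.sum1a}; when $s\in\{\widetilde{\alpha}-1,\widetilde{\alpha}\}$, where Corollary \ref{cor:sum1} is not available at level $s$, I recover the missing derivatives through \eqref{smooth.p1a}, $\|\mathcal{S}_{\theta_{n}}e_{n-1}\|_{s,*,T}\lesssim\theta_{n}^{s-\widetilde{\alpha}+2}\|e_{n-1}\|_{\widetilde{\alpha}-2,*,T}$, and then apply \eqref{es.sum1a} at $s=\widetilde{\alpha}-2=\alpha+1$; since $\varsigma_{4}(\alpha+1)=13-\alpha$ and $\alpha\geq 12$, the exponent so produced is $\leq s-\alpha-1$ and is absorbed into the middle term of \eqref{es.fl}. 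Summing the three contributions yields \eqref{es.fl}. The estimate \eqref{es.gl} follows along exactly the same lines: its first term uses $\|\widetilde{E}_{n-1}\|_{H^{\alpha+1}(\varSigma_{T})}\lesssim\varepsilon^{2}$ from \eqref{es.sum2} with $k=s+1$, $j=\alpha+1$, and the term $\mathcal{S}_{\theta_{n}}\tilde{e}_{n-1}$ is estimated by \eqref{es.sum1b} directly when $s+1\leq\widetilde{\alpha}-2$ and by a derivative gain when $s+1>\widetilde{\alpha}-2$, the exponent then being controlled through $\varsigma_{2}(\alpha+1)=9-\alpha$ and $\alpha\geq 12$.

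The only genuine obstacle is the bookkeeping at the top indices $s$ near $\widetilde{\alpha}$, where $e_{n-1}$ and $\tilde{e}_{n-1}$ are not estimated in $H_{*}^{s}$ (resp.\ $H^{s+1}(\varSigma_{T})$) by Corollary \ref{cor:sum1}, so one must trade the missing regularity for a power of $\theta_{n}$ via \eqref{smooth.p1a}; checking that this power remains dominated by $\max\{s-\alpha-1,\varsigma_{4}(s)-1\}$ (resp.\ by $\max\{s-\alpha-1,\varsigma_{2}(s+1)-1\}$) is exactly where the standing assumptions $\alpha\geq 12$ and $\widetilde{\alpha}=\alpha+3$ are used. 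Everything else is a routine application of Proposition \ref{pro:smooth} and the already-established error bounds. (The case $n=0$, where $f_{0}=\mathcal{S}_{\theta_{0}}f^a$ and $g_{0}=0$, is trivial and anyway belongs to the separate verification of $(\mathbf{H}_{0})$.)
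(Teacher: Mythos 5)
Your proof is correct and follows essentially the same route as the paper: the telescoping identity for $f_n,g_n$ from \eqref{source}, the difference bound for $\mathcal{S}_{\theta_n}-\mathcal{S}_{\theta_{n-1}}$ via \eqref{smooth.p1c}, and the error bounds \eqref{es.sum1a}--\eqref{es.sum2}. The only addition is your explicit handling of the top indices $s\in\{\widetilde{\alpha}-1,\widetilde{\alpha}\}$ (trading missing regularity of $e_{n-1}$, $\tilde{e}_{n-1}$ for powers of $\theta_n$ via \eqref{smooth.p1a} and checking $\alpha\geq 12$ absorbs the resulting exponent), a detail the paper's terse proof leaves implicit but which you carry out correctly.
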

\begin{proof}
From \eqref{smooth.p1}, \eqref{source}, \eqref{es.sum1a}, and \eqref{es.sum2},
we obtain
\begin{align*}
\|f_n\|_{s,*,T}
&\leq
\|(\mathcal{S}_{\theta_{{n}}}-\mathcal{S}_{\theta_{{n}-1}})f^a
-(\mathcal{S}_{\theta_{{n}}}-\mathcal{S}_{\theta_{{n}-1}})E_{{n}-1}
-\mathcal{S}_{\theta_{{n}}}e_{{n}-1}\|_{s,*,T} \\
&
\lesssim \varDelta_{{n}}
\theta_{{n}}^{s-{\alpha }-1}\big(\|f^a\|_{{\alpha },*,T}
+\theta_{{n}}^{-1}\|E_{{n}-1}\|_{{\alpha }+1,*,T} \big)
+\|\mathcal{S}_{\theta_{{n}}} e_{{n}-1}\|_{s,*,T} \\
&
\lesssim  \varDelta_{{n}}\big\{\theta_{{n}}^{s-{\alpha }-1}(\|f^a\|_{{\alpha },*,T}
+\varepsilon^2)+\varepsilon^2\theta_{{n}}^{\varsigma_4(s)-1}\big\}.
\end{align*}
Thanks to \eqref{es.sum1b} and \eqref{es.sum2}, we obtain
\begin{align*}
\|g_n\|_{H^{s+1}(\varSigma_T)}
&\leq
\| (\mathcal{S}_{\theta_{{n}}}-\mathcal{S}_{\theta_{{n}-1}})\widetilde{E}_{{n}-1}
-\mathcal{S}_{\theta_{{n}}}\tilde{e}_{{n}-1}\|_{H^{s+1}(\varSigma_T)} \\
& \lesssim \varDelta_{{n}}
\theta_{{n}}^{s-{\alpha }-1} \|\widetilde{E}_{{n}-1}\|_{H^{{\alpha }+1}(\varSigma_T)}
+\|\mathcal{S}_{\theta_{{n}}} \tilde{e}_{{n}-1}\|_{H^{s+1}(\varSigma_T)} \\
& \lesssim \varepsilon^2 \varDelta_{{n}}\big(\theta_{{n}}^{s-{\alpha }-1}+\theta_{{n}}^{\varsigma_2(s+1)-1}\big),
\end{align*}
which completes the proof of the lemma.
\end{proof}

The next lemma gives the estimate of solutions to the problem \eqref{effective.NM}
by means of \eqref{MS.est1}--\eqref{MS.est2} and the tame estimate \eqref{tame}.
We omit the proof for brevity, since it is similar to the proof of \cite[Lemma 15]{T09ARMAMR2481071}.

\begin{lemma}\  \label{lem:Hl1}
	Let ${\alpha }\geq 12$ and $\widetilde{\alpha}={\alpha }+3$.
	If $\varepsilon>0$ and $\|f^a\|_{{\alpha },*,T}/\varepsilon$ are sufficiently small,
	and if $\theta_0\geq1$ is suitably large, then
	\begin{align} \label{Hl.a}
	\|(\delta V_{{n}},\delta\varPsi_{{n}})\|_{s,*,T}+\|\delta\psi_{{n}}\|_{H^{s}(\varSigma_T)}
	\leq \varepsilon \theta_{{n}}^{s-{\alpha }-1}\varDelta_{{n}}
	\end{align}
	for $s\in\{6,\ldots,\widetilde{\alpha}\}$.
\end{lemma}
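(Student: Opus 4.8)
The plan is to apply the tame estimate \eqref{tame} of Theorem~\ref{thm:3} to the effective linear problem \eqref{effective.NM}, whose basic state is the smooth modified state $(\mathring{U},\mathring{\varphi}):=(U^a+V_{n+1/2},\varphi^a+\psi_{n+1/2})$, so that $\mathring{V}=\widetilde{U}^a+V_{n+1/2}$ and $\mathring{\varPsi}=\varPsi^a+\varPsi_{n+1/2}$, and whose source terms are $(f_n,g_n)$. First I would check the hypotheses of Theorem~\ref{thm:3}. The constraints \eqref{bas1a}--\eqref{bas1d} for $(\mathring{U},\mathring{\varphi})$ hold by Proposition~\ref{pro:modified} together with \eqref{app1a}--\eqref{app2b} and the smallness of $T$; the smallness requirement \eqref{thm3.H1} at level $10$ follows from \eqref{small}, \eqref{tri3} and \eqref{MS.est1}--\eqref{MS.est2}, which give ${\|}\mathring{V}{\|}_{10,*,T}+\|\mathring{\varphi}\|_{H^{10}(\varSigma_T)}\lesssim C(M_0)+C\varepsilon$, hence $\le K_0$ once $\varepsilon$ and $T$ are small, so that $K_0$, the constant $C(K_0)$, and the time $T_0$ furnished by Theorem~\ref{thm:3} are fixed independently of $n$ and $\varepsilon$; finally the regularity $(\mathring{V},\mathring{\varphi})\in H_*^{s+4}(\varOmega_T)\times H^{s+4}(\varSigma_T)$ needed for $s\in\{6,\ldots,\widetilde{\alpha}\}$ is available because $\widetilde{\alpha}=\alpha+3$ makes $s+4\le\widetilde{\alpha}+4$, a range covered by \eqref{small}, \eqref{tri3} and \eqref{MS.est2}.

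Applying \eqref{tame} then yields, for $s\in\{6,\ldots,\widetilde{\alpha}\}$,
\begin{align*}
{\|}(\delta\dot{V}_n,\delta\varPsi_n){\|}_{s,*,T}+\|\delta\psi_n\|_{H^{s}(\varSigma_T)}
\leq\;& C(K_0)\Big\{{\|}f_n{\|}_{s,*,T}+\|g_n\|_{H^{s+1}(\varSigma_T)}\\
&+{\|}(\mathring{V},\mathring{\varPsi}){\|}_{s+4,*,T}\big({\|}f_n{\|}_{6,*,T}+\|g_n\|_{H^{7}(\varSigma_T)}\big)\Big\}.
\end{align*}
Into the right-hand side I would substitute the bounds of Lemma~\ref{lem:source} for ${\|}f_n{\|}_{s,*,T}$ and $\|g_n\|_{H^{s+1}(\varSigma_T)}$, together with the bound ${\|}(\mathring{V},\mathring{\varPsi}){\|}_{\ell,*,T}\lesssim 1+\varepsilon\theta_n^{(\ell+2-\alpha)_+}$ obtained from \eqref{small}, \eqref{tri3} and \eqref{MS.est1}--\eqref{MS.est2}. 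The crucial, purely arithmetic, step is then to verify that for $\alpha\ge 12$ and all $s\in\{6,\ldots,\widetilde{\alpha}\}$ with $\widetilde{\alpha}=\alpha+3$ one has $\varsigma_2(s+1)\le s-\alpha$, $\varsigma_4(s)\le s-\alpha$ (so that ${\|}f_n{\|}_{s,*,T}$ and $\|g_n\|_{H^{s+1}(\varSigma_T)}$ are $\lesssim\varDelta_n\theta_n^{s-\alpha-1}$ times a factor $\varepsilon^2$ or ${\|}f^a{\|}_{\alpha,*,T}$), and that $(s+6-\alpha)_++5-\alpha\le s-\alpha-1$, which makes the product term carry the same power $\theta_n^{s-\alpha-1}$; it is precisely these inequalities that force the lower bound $\alpha\ge 12$ and the minimal gap $\widetilde{\alpha}-\alpha=3$.

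Since ${\|}f^a{\|}_{\alpha,*,T}=\varepsilon\cdot({\|}f^a{\|}_{\alpha,*,T}/\varepsilon)$ with ${\|}f^a{\|}_{\alpha,*,T}/\varepsilon$ small by hypothesis, every term on the right is $\le C(K_0)\,\varepsilon\big(\varepsilon+{\|}f^a{\|}_{\alpha,*,T}/\varepsilon\big)\theta_n^{s-\alpha-1}\varDelta_n$, so choosing $\varepsilon$ and ${\|}f^a{\|}_{\alpha,*,T}/\varepsilon$ small relative to $C(K_0)$ absorbs the constant and gives ${\|}(\delta\dot{V}_n,\delta\varPsi_n){\|}_{s,*,T}+\|\delta\psi_n\|_{H^{s}(\varSigma_T)}\le\tfrac12\varepsilon\theta_n^{s-\alpha-1}\varDelta_n$. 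It remains to pass from the good unknown $\delta\dot{V}_n$ to $\delta V_n$: by \eqref{good.NM} one has $\delta V_n=\delta\dot{V}_n+a_{n+1/2}\delta\varPsi_n$ with $a_{n+1/2}:=\p_1(U^a+V_{n+1/2})/\p_1(\varPhi^a+\varPsi_{n+1/2})$, and since $\p_1(\varPhi^a+\varPsi_{n+1/2})$ is bounded below by a positive constant and $a_{n+1/2}$ is a smooth function of the modified state, the Moser inequalities \eqref{Moser2}--\eqref{Moser3} control ${\|}a_{n+1/2}\delta\varPsi_n{\|}_{s,*,T}$ (using the modified-state bounds and the low-order bound on $\delta\varPsi_n$ just obtained) by a contribution again $\lesssim\varepsilon\theta_n^{s-\alpha-1}\varDelta_n$ for $\alpha\ge 12$; likewise $\|\delta\psi_n\|_{H^{s}(\varSigma_T)}\lesssim{\|}\delta\varPsi_n{\|}_{s,*,T}\lesssim\|\delta\psi_n\|_{H^{s}(\varSigma_T)}$ by the trace and extension inequalities of Lemma~\ref{lem:trace}. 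Combining these estimates gives \eqref{Hl.a}.

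The main obstacle is not any single estimate but the simultaneous bookkeeping of the powers of $\theta_n$ through the product term of \eqref{tame}: one must propagate the smoothing gains together with $\varsigma_2$, $\varsigma_4$ and the weight $(\cdot+2-\alpha)_+$ coming from the modified state, and confirm that the resulting exponent never exceeds $s-\alpha-1$ uniformly over $s\in\{6,\ldots,\widetilde{\alpha}\}$ — which is what pins down $\alpha\ge 12$ and $\widetilde{\alpha}=\alpha+3$ — while at the same time keeping the constant $C(K_0)$ independent of $n$ and of $\varepsilon$, so that the genuinely quadratic gain (a factor $\varepsilon^2$, rather than $\varepsilon$) closes the induction.
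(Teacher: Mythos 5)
Your proposal is correct and follows precisely the route the paper indicates (and defers to \cite[Lemma 15]{T09ARMAMR2481071}): apply the tame estimate \eqref{tame} to the effective problem \eqref{effective.NM} with the modified state as basic state, insert the bounds of Lemma \ref{lem:source} and \eqref{MS.est1}--\eqref{MS.est2}, check the exponent arithmetic that forces $\alpha\geq 12$, and recover $\delta V_n$ from the good unknown. The power bookkeeping you carry out (e.g.\ $\varsigma_4(s)\leq s-\alpha$ and $(s+6-\alpha)_+ + 5-\alpha\leq s-\alpha-1$ for $\alpha\geq 12$) is exactly what closes the argument.
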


The estimate \eqref{Hl.a} is the inequality (a) in hypothesis $(\mathbf{H}_{{n}})$. The other inequalities in $(\mathbf{H}_{{n}})$ are provided in the following lemma, whose proof can be found in \cite[Lemma 16]{T09ARMAMR2481071}.

\begin{lemma}\ \label{lem:Hl2}
	Let ${\alpha }\geq 12$ and $\widetilde{\alpha}={\alpha }+3$.
	If $\varepsilon>0$ and $\|f^a\|_{{\alpha },*,T}/\varepsilon$ are sufficiently small,
	and if $\theta_0\geq1$ is suitably large,
	then
	\begin{alignat}{3}\label{Hl.b}
&	\|\mathcal{L}( V_{{n}},  \varPsi_{{n}})-f^a\|_{s,*,T}\leq 2 \varepsilon \theta_{{n}}^{s-{\alpha }-1}
\quad &&\textrm{for }	s\in\{6,\ldots,\widetilde{\alpha}-1\},\\
\label{Hl.c}
&	\|\mathcal{B}( V_{{n}} ,  \psi_{{n}})\|_{H^{s}(\varSigma_T)}\leq  \varepsilon \theta_{{n}}^{s-{\alpha }-1}
\quad &&\textrm{for }	s\in\{7,\ldots,{\alpha }\}.
	\end{alignat}
\end{lemma}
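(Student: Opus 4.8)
The plan is to establish \eqref{Hl.b} and \eqref{Hl.c} by writing $\mathcal{L}(V_n,\varPsi_n)-f^a$ and $\mathcal{B}(V_n,\psi_n)$ as telescoping sums of the incremental quantities already controlled by the iteration scheme, and then invoking the error estimates of Corollary \ref{cor:sum1} together with the smoothing‐operator bounds of Proposition \ref{pro:smooth}. Concretely, summing the decomposition \eqref{decom1} from $k=0$ to $n-1$ and using \eqref{effective.NM} and \eqref{E.E.tilde}--\eqref{source}, one gets
\begin{align*}
\mathcal{L}(V_n,\varPsi_n)-f^a
=(\mathcal{S}_{\theta_{n-1}}-I)f^a+(I-\mathcal{S}_{\theta_{n-1}})E_{n-1}+e_{n-1}.
\end{align*}
The first term is estimated by \eqref{smooth.p1b}: $\|(\mathcal{S}_{\theta_{n-1}}-I)f^a\|_{s,*,T}\lesssim\theta_{n-1}^{s-\tilde\alpha+1}\|f^a\|_{\tilde\alpha-1,*,T}\lesssim\theta_n^{s-\alpha-1}\delta_0(T)$ for $s\le\tilde\alpha-1=\alpha+2$, which is $\le\tfrac12\varepsilon\theta_n^{s-\alpha-1}$ once $\|f^a\|_{\alpha,*,T}/\varepsilon$ (hence $\delta_0(T)/\varepsilon$) is small. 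The second term is handled with \eqref{smooth.p1a} and Lemma \ref{lem:sum2}: $\|(I-\mathcal{S}_{\theta_{n-1}})E_{n-1}\|_{s,*,T}\lesssim\theta_{n}^{s-\alpha-1}\|E_{n-1}\|_{\alpha+1,*,T}\lesssim\varepsilon^2\theta_n^{s-\alpha}$ for $s\ge\alpha+1$, and for $s\le\alpha$ one uses \eqref{smooth.p1a} with the larger index to bound it by $C\|E_{n-1}\|_{s,*,T}$; in either regime a factor $\varepsilon$ can be absorbed. The third term is exactly \eqref{es.sum1a}: since $\varsigma_4(s)-1\le s-\alpha-1$ precisely when $\alpha\ge\max\{9,(s+18)/2\}$, i.e. for all $s\le\tilde\alpha-2=\alpha+1$ when $\alpha\ge 12$, we get $\|e_{n-1}\|_{s,*,T}\lesssim\varepsilon^2\theta_n^{s-\alpha-1}\varDelta_n\lesssim\varepsilon^2\theta_n^{s-\alpha-1}$. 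Summing the three contributions and taking $\varepsilon$ small yields \eqref{Hl.b}.

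For \eqref{Hl.c} one argues identically with \eqref{decom2}: summing gives $\mathcal{B}(V_n,\psi_n)=(I-\mathcal{S}_{\theta_{n-1}})\widetilde E_{n-1}+\tilde e_{n-1}$. The first term is bounded via \eqref{smooth.p1a}--\eqref{smooth.p1b} together with the second estimate in \eqref{es.sum2}, giving $\lesssim\varepsilon^2\theta_n^{s-\alpha-1}$ for $s\le\alpha+1$. The second term is estimated by \eqref{es.sum1b}: one checks $\varsigma_2(s)-1=\max\{(s+2-\alpha)_++11-2\alpha,\,s+7-2\alpha\}\le s-\alpha-1$ for $s\le\alpha$ once $\alpha\ge 8$ (indeed $\alpha\ge 12$), so $\|\tilde e_{n-1}\|_{H^s(\varSigma_T)}\lesssim\varepsilon^2\theta_n^{s-\alpha-1}\varDelta_n$. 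Combining and shrinking $\varepsilon$ gives \eqref{Hl.c}.

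The main obstacle is purely bookkeeping: one must verify that for every $s$ in the stated ranges the exponents $\varsigma_4(s)$, $\varsigma_2(s)$ coming from the nonlinear error terms are dominated by the target exponent $s-\alpha-1$, which is exactly where the lower bounds $\alpha\ge 12$ and the relation $\tilde\alpha=\alpha+3$ get used; this is the same calculation that forces the value $m\ge 20$ in Theorem \ref{thm:1} after tracking $\alpha$ back through $\tilde\alpha=m-5$ and the regularity budgets of \eqref{small}. No new analytic input beyond the tame estimate \eqref{tame}, the smoothing properties \eqref{smooth.p1}, and the already‐established error bounds is needed; the argument is a direct transcription of \cite[Lemmas 15--16]{T09ARMAMR2481071} adapted to the anisotropic spaces $H_*^m$.
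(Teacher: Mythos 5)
Your overall route coincides with the paper's: the paper proves this lemma only by citation to \cite[Lemma 16]{T09ARMAMR2481071}, and the intended argument is precisely your telescoped identity $\mathcal{L}(V_n,\varPsi_n)-f^a=(\mathcal{S}_{\theta_{n-1}}-I)f^a+(I-\mathcal{S}_{\theta_{n-1}})E_{n-1}+e_{n-1}$ (and its boundary analogue) combined with Proposition \ref{pro:smooth}, Corollary \ref{cor:sum1}, and Lemma \ref{lem:sum2}. Your treatment of $e_{n-1}$, of $\tilde e_{n-1}$, and of $(I-\mathcal{S}_{\theta_{n-1}})\widetilde E_{n-1}$ is sound — the last because $\|\widetilde E_{n-1}\|_{H^{\alpha+1}(\varSigma_T)}\lesssim\varepsilon^2$ carries no factor of $\theta_{n-1}$ — so \eqref{Hl.c} is essentially in order.

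The genuine gap is your estimate of $(I-\mathcal{S}_{\theta_{n-1}})E_{n-1}$ in \eqref{Hl.b}. For $s\le\alpha$ you propose to bound it by $C\|E_{n-1}\|_{s,*,T}$ via \eqref{smooth.p1a}; but $\|E_{n-1}\|_{s,*,T}$ is only $O(\varepsilon^2)$ uniformly in $n$ (for small $s$ one has $\varsigma_4(s)<0$, so $\sum_k\theta_k^{\varsigma_4(s)-1}\varDelta_k$ converges to a constant), whereas the target $\varepsilon\theta_n^{s-\alpha-1}$ tends to zero as $n\to\infty$. Discarding the operator $(I-\mathcal{S}_{\theta_{n-1}})$ destroys exactly the decay $\theta_{n-1}^{\,s-j}$ that \eqref{smooth.p1b} exists to produce, so this branch fails outright. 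For $s\ge\alpha+1$ you arrive at $\varepsilon^2\theta_n^{s-\alpha}$ and assert that ``a factor $\varepsilon$ can be absorbed''; the ratio of this to the target is $\varepsilon\theta_n$, unbounded in $n$, so no smallness of $\varepsilon$ rescues it. The correct move in all regimes is \eqref{smooth.p1b} with the highest admissible index, $\|(I-\mathcal{S}_{\theta_{n-1}})E_{n-1}\|_{s,*,T}\le C\theta_{n-1}^{s-\alpha-1}\|E_{n-1}\|_{\alpha+1,*,T}$, after which one must still account carefully for the factor $\theta_{n-1}$ appearing in Lemma \ref{lem:sum2}; this is the delicate point of the cited Lemma 16 and cannot be dismissed as bookkeeping. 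Two further defects: your argument does not reach the top index $s=\tilde\alpha-1=\alpha+2$ of \eqref{Hl.b}, since Corollary \ref{cor:sum1} and \eqref{smooth.p1b} with $j=\alpha+1$ are unavailable there; and your characterization ``$\varsigma_4(s)-1\le s-\alpha-1$ precisely when $\alpha\ge\max\{9,(s+18)/2\}$'' is false (test $s=\alpha+1$) — the correct check is that \eqref{varsigma4.def} gives $\varsigma_4(s)\le s-\alpha$ for all $s\ge6$ once $\alpha\ge12$.
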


According to Lemmas \ref{lem:Hl1}--\ref{lem:Hl2}, we have derived  $(\mathbf{H}_{{n}})$ from $(\mathbf{H}_{{n}-1})$,
provided that ${\alpha }\geq 12$, $\widetilde{\alpha}={\alpha }+3$,
$\varepsilon>0$, and $\|f^a\|_{{\alpha },*,T}/\varepsilon$ are sufficiently small,
and $\theta_0 \geq 1$ is large enough.
Fixing the constants ${\alpha }$, $\widetilde{\alpha}$, $\varepsilon>0$, and $\theta_0\geq1$,
we can show as in \cite[Lemma 17]{T09ARMAMR2481071} that $(\mathbf{H}_{0})$ is true for a sufficiently short time.

\begin{lemma}\ \label{lem:H0}
	If time $T>0$ is small enough,
	then $(\mathbf{H}_0)$ holds.
\end{lemma}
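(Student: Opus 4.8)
The plan is to verify the three inequalities in hypothesis $(\mathbf{H}_0)$ directly from the definitions at step $n=0$, where $(V_0,\psi_0)=0$ and the error accumulators vanish. Recall that with $\theta_0 \geq 1$ and $\theta_k = \sqrt{\theta_0^2+k}$ we have $\varDelta_0 = \theta_1-\theta_0 = \sqrt{\theta_0^2+1}-\theta_0 \sim 1/(2\theta_0)$, so $\theta_0\varDelta_0 \in [1/3,1/2]$. For inequality (a) at $k=0$ we must estimate $(\delta V_0,\delta\varPsi_0,\delta\psi_0)$, which solves the effective linear problem \eqref{effective.NM} with $n=0$, source terms $f_0 = \mathcal{S}_{\theta_0}f^a$ and $g_0 = 0$, around the modified state $(V_{1/2},\psi_{1/2})$ (which, since $(V_0,\psi_0)=0$, is simply constructed from Proposition~\ref{pro:modified} and coincides with the approximate solution data up to the smoothing). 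Applying the tame estimate \eqref{tame} of Theorem~\ref{thm:3} to this problem, together with \eqref{smooth.p1a} to bound $\|\mathcal{S}_{\theta_0}f^a\|_{s,*,T} \lesssim \theta_0^{(s-\alpha)_+}\|f^a\|_{\alpha,*,T}$ for the relevant range of $s$, and using the smallness $\|f^a\|_{\alpha,*,T} \leq \delta_0(T)$ from \eqref{small}, one obtains $\|(\delta V_0,\delta\varPsi_0)\|_{s,*,T} + \|\delta\psi_0\|_{H^s(\varSigma_T)} \lesssim \theta_0^{(s-\alpha-1)_+}\|f^a\|_{\alpha,*,T}$. Choosing $T$ (hence $\delta_0(T)$) small enough relative to $\varepsilon$ and $\theta_0$ gives the bound $\leq \varepsilon\theta_0^{s-\alpha-1}\varDelta_0$, since $\varDelta_0 \sim 1/(2\theta_0)$ is a fixed positive constant once $\theta_0$ is fixed.

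For inequality (b) at $k=0$ we need $\|\mathcal{L}(V_0,\varPsi_0)-f^a\|_{s,*,T}$. Since $V_0 = 0$ and $\varPsi_0 = 0$, we have $\mathcal{L}(0,0) = \mathbb{L}(U^a+0,\varPhi^a+0)-\mathbb{L}(U^a,\varPhi^a) = 0$, so this reduces to $\|f^a\|_{s,*,T}$. By \eqref{small} we have $\|f^a\|_{\widetilde\alpha+5,*,T} \leq \|f^a\|_{H^{\widetilde\alpha+5}(\varOmega_T)} \leq \delta_0(T)$ (using $H^m \hookrightarrow H^m_*$), which for $s \leq \widetilde\alpha-2$ and $\theta_0 \geq 1$ majorizes $2\varepsilon\theta_0^{s-\alpha-1}$ as soon as $\delta_0(T) \leq 2\varepsilon\theta_0^{6-\alpha-1}$, i.e. once $T$ is small enough depending on $\varepsilon$ and $\theta_0$. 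For inequality (c) at $k=0$ we need $\|\mathcal{B}(V_0,\psi_0)\|_{H^s(\varSigma_T)} = \|\mathcal{B}(0,0)\|_{H^s(\varSigma_T)}$. From the definition of $\mathcal{B}$ in \eqref{P.new} and the fact that the approximate solution satisfies $\mathbb{B}(U^a,\varphi^a) = 0$ on $\varSigma_T$ by \eqref{app2b}, we get $\mathcal{B}(0,0) = \mathbb{B}(U^a,\varphi^a) = 0$ identically, so inequality (c) holds trivially for all $s$.

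In summary, the only substantive work is inequality (a): one feeds the zero-th linearized problem into Theorem~\ref{thm:3}, controls the smoothed source $\mathcal{S}_{\theta_0}f^a$ via Proposition~\ref{pro:smooth}, and absorbs the resulting constant (which involves $\theta_0$ and $\varDelta_0$, both now fixed) into $\varepsilon$ by taking $T$ small so that $\delta_0(T)$ is sufficiently small. Inequalities (b) and (c) follow because at step $0$ the iterate is zero, making $\mathcal{L}(0,0)$ vanish and $\mathcal{B}(0,0)$ vanish by the defining properties \eqref{app2a}--\eqref{app2b} of the approximate solution. I expect the main obstacle to be purely bookkeeping: tracking exactly how the $\theta_0$-dependent constants from the smoothing estimates and the tame estimate combine, and verifying that the required smallness of $T$ can indeed be chosen after $\varepsilon$, $\alpha$, $\widetilde\alpha$, and $\theta_0$ have all been fixed, so there is no circularity in the choice of parameters. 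Since this is entirely analogous to \cite[Lemma 17]{T09ARMAMR2481071}, the argument should go through with only minor modifications to account for the magnetic-field and relativistic structure, which does not enter at this stage.
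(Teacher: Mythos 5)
Your proposal is correct and follows essentially the same route as the paper, which does not write out a proof but simply invokes the standard verification from \cite[Lemma 17]{T09ARMAMR2481071}: at $n=0$ the iterate vanishes, so (b) reduces to $\|f^a\|_{s,*,T}\leq 2\varepsilon\theta_0^{s-\alpha-1}$ (absorbed by $\delta_0(T)\to 0$), (c) is identically zero by \eqref{app2b}, and (a) follows from the tame estimate \eqref{tame} applied to the zero-th effective problem with source $\mathcal{S}_{\theta_0}f^a$ and $g_0=0$, with $T$ chosen last after $\alpha$, $\widetilde\alpha$, $\varepsilon$, $\theta_0$ are fixed. The only blemishes are cosmetic (the exponent from \eqref{smooth.p1a} is $(s-\alpha)_+$ rather than $(s-\alpha-1)_+$, and the passage from the good unknown $\delta\dot V_0$ back to $\delta V_0$ via \eqref{good.NM} deserves a word), neither of which affects the argument.
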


We are now in a position to conclude the proof of Theorem \ref{thm:1}.

\vspace*{3mm}
\noindent  {\bf Proof of Theorem {\rm\ref{thm:1}}.}\quad
Assume that the initial data $(U_0,\varphi_0)$ satisfy all the conditions of Theorem {\rm\ref{thm:1}}.
Let $\widetilde{\alpha}=m-{5}$ and ${\alpha }=\widetilde{\alpha}-3\geq 12$.
Then the initial data $U_0$ and $\varphi_0$ are compatible up to order $m=\widetilde{\alpha}+{5}$.
Thanks to \eqref{app1a} and \eqref{f^a:est}, we obtain \eqref{small} and all the requirements of Lemmas \ref{lem:Hl1}--\ref{lem:H0}, provided {that} $\varepsilon>0$ and $T>0$ {are} sufficiently small and $\theta_0\geq 1$ is large enough.
Hence, for sufficiently short time $T>0$, hypothesis $(\mathbf{H}_{{n}})$ holds for all ${n}\in\mathbb{N}$.
In particular, recalling \eqref{theta}, we have
\begin{align*}
\sum_{k=0}^{\infty}\left(\|(\delta V_k,\delta \varPsi_k)\|_{s,*,T}+\|\delta\psi_k\|_{H^{s}(\varSigma_T)} \right)
\lesssim \sum_{k=0}^{\infty}\theta_k^{s-{\alpha }-2} <\infty
\ \ \textrm{for } 6\leq s\leq {\alpha }-1.
\end{align*}
As a result, the sequence $(V_{k},\psi_k)$ converges to some limit $(V,\psi)$ in $H_*^{{\alpha }-1}(\varOmega_T)\times H^{{\alpha }-1}({\varSigma_T})\subset 
H^{\lfloor ({\alpha }-1)/2\rfloor}(\varOmega_T)\times H^{{\alpha }-1}({\varSigma_T})$.
Passing to the limit in \eqref{Hl.b}--\eqref{Hl.c} for $s={\alpha }-1=m-{9}$, we obtain \eqref{P.new}.
Therefore, $(U, \varphi)=(U^a+V, \varphi^a+\psi)$ is a solution of the original problem \eqref{NP1} on the time interval $[0,T]$.
The uniqueness of solutions to the problem \eqref{NP1} can be obtained by a standard argument; see, for instance, \cite[\S 13]{ST14MR3151094}.
This completes the proof.
\qed

\section{Relativistic Case}\label{sec:rela}

Let us first reduce the RMHD equations \eqref{RMHD1} to an equivalent symmetric hyperbolic system.
To this end, we introduce the coordinate velocity
$v:=(v_1,\ldots,v_d)^{\mathsf{T}}$, where
\begin{align} \label{v.def}
v_i:=\epsilon^{-1}\varGamma^{-1} u^{i}\quad
\textrm{for } i=1,\ldots,d,
\end{align}
with the Lorentz factor $\varGamma:=u^0>0$ thanks to \eqref{u.relation}.
Moreover, from the first identity in \eqref{u.relation}, we infer
\begin{align} \label{varGamma.def}
|v|<\epsilon^{-1},\quad
\varGamma=\varGamma(v):=(1-\epsilon^2|v|^2)^{-1/2}.
\end{align}
Let us define $H:=(H_1,\ldots,H_d)^{\mathsf{T}}$ with
\begin{align} \label{H.def}
H_i:=\epsilon^{-1}(u^0 b^i-u^i b^0)\quad \textrm{for } i=1,\ldots,d.
\end{align}
Then the relations \eqref{u.relation} imply
\begin{align}
\label{bi.def}
&b^i= \epsilon \varGamma^{-1} H_i +\epsilon^3 \varGamma (v\cdot H) v_i
\quad \textrm{for } i=1,\ldots,d,
\\
&b^0=\epsilon^2 \varGamma (v\cdot H),\quad
|b|^2:= g_{\alpha \beta} b^{\alpha} b^{\beta}
= \epsilon^2 \varGamma^{-2}|H|^2+\epsilon^4 (v\cdot H)^2
. \label{|b|}
\end{align}
In the inertial coordinates $(x^0,\ldots,x^d)$, the covariant derivative $\nabla_{\alpha}$ coincides with the partial derivative $\p/\partial{x^{\alpha}}$.
We introduce the spacetime coordinates $(t,x)$, where $t=\epsilon x^0$ and $x=(x_1,\ldots,x_d)$ with $x_i:=x^i$ for $i=1,\ldots,d$.
Then a straightforward computation leads to the following equivalent system of \eqref{RMHD1} ({\it cf.}~\cite[(2)--(6)]{FT13MR3044369}):
\begin{subequations} \label{RMHD}
	\begin{alignat}{2}
	\label{RMHD.a}&\p_t(\rho \varGamma)+\nabla\cdot (\rho \varGamma v)=0,&\\
	\label{RMHD.b}&\p_t(\rho h \varGamma^2+\epsilon^2|H|^2-\epsilon^2q)
	+\nabla \cdot \big( \rho h \varGamma^2 v+\epsilon^2|H|^2 v -\epsilon^2( v\cdot H)H \big)=0,&\\
	\nonumber &\p_t\big( \rho h \varGamma^2 v+\epsilon^2|H|^2 v -\epsilon^2( v\cdot H)H \big)
	-\nabla\cdot (\varGamma^{-2}H\otimes H)
	+\nabla q\\
	\label{RMHD.c}&\quad
	+\nabla\cdot \big( (\rho h\varGamma^2+\epsilon^2 |H|^2)v\otimes v
	-\epsilon^2 (v\cdot H)(H\otimes v+v\otimes H) \big)=0,&\\
	\label{RMHD.d}&\p_t H-\nabla\times (v\times H)=0,  &
	\end{alignat}
\end{subequations}
and
\begin{align} \label{divH=0}
\nabla\cdot H=0,
\end{align}
where $q:=p+\frac{1}{2}\epsilon^{-2}|b|^2$ is the total pressure.
As in \cite[\S 34]{Lichnerowicz67}, it follows from \eqref{Gibbs} that
$u^{\alpha} \nabla_{\alpha} S=0$ for smooth solutions to \eqref{RMHD1},
and hence
\begin{align}
\label{S.eq}
(\p_t +v\cdot \nabla) S=0.
\end{align}
Let us impose the physical assumption \eqref{cs.def}
and define $V:=(p,w,H,S)^{\mathsf{T}}$ with
\begin{align} \label{w.def}
w:=\varGamma(v)v=(1-\epsilon^2 |v|^2)^{-1/2} v.
\end{align}
By properly applying the Lorentz transformation as in \cite{FT13MR3044369},
we can obtain the following symmetric hyperbolic system
in the smooth non-vacuum region $\{ \rho_* <\rho <\rho^*\}$,
which is equivalent to \eqref{RMHD} (and also to \eqref{RMHD1}):
\begin{align}\label{RMHD.vec}
B_0(V)\p_t V+B_i(V)\p_i V=0,
\end{align}
where
\begin{align} \label{B.def}
B_0(V):=
\begin{pmatrix}
\dfrac{\varGamma}{\rho a^2}& \epsilon^2 v^{\mathsf{T}}& 0& 0\\[1mm]
\epsilon^2 v &\mathcal{A}_0&O_d& 0\\[1.5mm]
0& O_d&\mathcal{M}_0&0\\[1.5mm]
\w0&\w0&\w0&\w1
\end{pmatrix},\ 
B_i(V)=\begin{pmatrix}
\dfrac{\varGamma v_i}{\rho a^2}& \bm{e}_i^{\mathsf{T}}& 0& 0\\[1mm]
\bm{e}_i&\mathcal{A}_i&\mathcal{N}_i^{\mathsf{T}}& 0\\[1.5mm]
0& \mathcal{N}_i&v_i\mathcal{M}_0&0\\[1.5mm]
\w0&\w0&\w0&\w{v_i}
\end{pmatrix},
\end{align}
with
\begin{align*}
\mathcal{M}_0:= \;&\varGamma^{-1} (I_d+\epsilon^2\varGamma^2 v\otimes v) ,\\
\mathcal{A}_0:=\;&
\left(\rho h\varGamma+{\epsilon^2 \varGamma^{-1}|H|^2} \right)(I_d-\epsilon^2v\otimes v )
- \epsilon^2 \varGamma^{-1} |b|^2  v\otimes v\\
&- \epsilon^2 \varGamma^{-1}  H\otimes H
+ \epsilon^4 \varGamma^{-1}   (v\cdot H )(H\otimes v+v\otimes H),\\[1mm]
\mathcal{A}_i:=\;&
v_i\left\{
\left(\rho h\varGamma+{\epsilon^2\varGamma^{-1} |H|^2}\right)(I_d-\epsilon^2v\otimes v )
+\epsilon^2{\varGamma^{-1}} (|b|^2 v\otimes v  -H\otimes H)\right\}\\
&
+{\epsilon^2 {\varGamma^{-1}} H_i}
\left\{\varGamma^{-2} {(H\otimes v+v\otimes H)}
-2(v\cdot H )(I_d-\epsilon^2 v\otimes v) \right\}\\
&+{\varGamma^{-1}} (\epsilon^2 (v\cdot H )H-|b|^2v)\otimes  {\bm{e}_i}
+{\varGamma^{-1}} {\bm{e}_i} \otimes(\epsilon^2 (v\cdot H )H-|b|^2v)  ,\\[1mm]
\mathcal{N}_i:= \;&
\left(\varGamma^{-2} {H}+\epsilon^2(v\cdot H )v \right)\otimes
\left(\bm{e}_i- \epsilon^2 v_i v \right)- {\varGamma^{-2}} {H_i} I_d,
\quad  \textrm{for }i=1,\ldots,d.
\end{align*}
Here, $a$ and $|b|^2$ are given in \eqref{a.def} and \eqref{|b|} respectively.
See Appendix \ref{sec:appB} for a direct verification of \eqref{RMHD.vec}
and the positive-definiteness of $B_0(V)$.

We consider the free boundary problem in ideal RMHD, that is, to solve the equations \eqref{RMHD} (or equivalently \eqref{RMHD1}) in $\varOmega(t)$ supplemented with the initial and boundary conditions
\eqref{IC1}--\eqref{BC1}.
As in the non-relativistic case, we take $U:=(q,v,H,S)^{\mathsf{T}}$ as the primary unknowns and deduce from \eqref{RMHD.vec} the hyperbolic symmetric system \eqref{MHD.vec},
with coefficient matrices $A_{\alpha}(U)$ being defined by
\setlength{\arraycolsep}{4.5pt}
\begin{align}
\label{Ai.def.r}
A_{\alpha}(U):= {\bm{J}}^{\mathsf{T}} B_{\alpha}(V) {\bm{J}}
\ \ \textrm{with }{\bm{J}}:=\dfrac{\p V}{\p U}
=\begin{pmatrix}
1 &  \bm{a}^{\mathsf{T}} & -\bm{b}^{\mathsf{T}} &0\\
0 &   \varGamma^2\mathcal{M}_0 & O_d &0\\
0 & O_d & I_d &0\\
\w{0} & \w{0} & \w{0}& \w{1}
\end{pmatrix},
\end{align}
for ${\alpha}=0,\ldots,d,$
where  $B_{{\alpha}}(V)$ are given in \eqref{B.def},
$\bm{a}:=\epsilon^2 (|H|^2 v -(v\cdot H) H ),$
and $\bm{b}:=\varGamma^{-2}{H}+\epsilon^2 (v\cdot H)v.$
We can compute $\det {\bm{J}}=\varGamma^5>0$,
meaning that ${\bm{J}}$ is invertible.
In the new variables, from \eqref{NP1b} and \eqref{A1t.def}--\eqref{HN}, we have
\begin{align*}
\widetilde{A}_1(U, \varPhi):=
{\bm{J}}^{\mathsf{T}}\begin{pmatrix}
0 & \bm{c}^{\mathsf{T}} & 0 &0 \\
\bm{c}
&  \widetilde{\mathcal{A}}_1 & \bm{c}\otimes\bm{b}&0\\
0 & \bm{b}\otimes \bm{c} & O_d & 0\\
\w{0} & \w{0} & \w{0} & \w{0}
\end{pmatrix}{\bm{J}}\quad
\textrm{on } [0,T]\times\varSigma,
\end{align*}
where $\bm{c}:=N-\epsilon^2 v_N v$ and
\begin{align*}
\widetilde{\mathcal{A}}_1:=\;&{\epsilon^2 {\varGamma^{-1}} v_N}\left\{
2 |b|^2 v\otimes v -\epsilon^2 (v\cdot H) (v\otimes H+H\otimes v)
\right\}\\
&+ \varGamma^{-1} (\epsilon^2 (v\cdot H )H-|b|^2v)\otimes N+
\varGamma^{-1}N \otimes  (\epsilon^2 (v\cdot H )H-|b|^2v).
\end{align*}
From \eqref{varGamma.def}, we get
$\varGamma \mathcal{M}_0\bm{c}=   N$
and $\mathcal{M}_0\widetilde{\mathcal{A}}_1\mathcal{M}_0
=-\varGamma^{-3} (N\otimes \bm{a} +\bm{a} \otimes N)$,
so that
\begin{align} \label{A1t.rela}
\widetilde{A}_1(U, \varPhi)=
 \begin{pmatrix}
0 & \varGamma N^{\mathsf{T}} & 0 &0 \\
\varGamma N
&  O_d &O_d  &0\\
0 & O_d  & O_d & 0\\
\w{0} & \w{0} & \w{0} & \w{0}
\end{pmatrix} \quad
\textrm{on } [0,T]\times\varSigma.
\end{align}
Having the identity \eqref{A1t.rela}
and the symmetric form \eqref{MHD.vec}
with the matrices $A_{{\alpha}}(U)$ defined by \eqref{Ai.def.r} in hand,
we can prove Theorem \ref{thm:2}
in an entirely similar way as for the non-relativistic case in Sections \ref{sec:linear} and \ref{sec:Nash}.

\appendix
\titleformat{\section}{\large\bfseries\centering}{Appendix \thesection}{1em}{}
\section[Conventional Notation in the Vector Calculus]{Conventional Notation in the Vector Calculus} \label{sec:appA}

For readers' convenience, we collect the conventional notation in the vector calculus.
The spatial dimension is denoted by $d=2,3$. We abbreviate the partial differentials as
\begin{align*}
\p_t:=\frac{\p}{\p t},\quad \p_i:=\frac{\p }{\p x_i}
\quad \textrm{for }i=1,\ldots,d.
\end{align*}
We denote the gradient by $\nabla:=(\p_1,\ldots,\p_d)^{\mathsf{T}}$.
For any $d\times d$ matrix $F=(F_{ij})$,
vectors $u=(u_1,\ldots,u_d)^{\mathsf{T}}$ and
$v=(v_1,\ldots,v_d)^{\mathsf{T}}$,
and scalar $a$,
the symbol $u\otimes v$ denotes the $d\times d$ matrix with $(i,j)$-entry $u_i v_j$,
and
\begin{align*}
&\nabla\cdot F:=  (\p_j F_{1j},\ldots,\p_j F_{dj})^{\mathsf{T}},
\quad \nabla\cdot u:= \p_i u_{i},\\
&u\times v:=
\left\{
\begin{aligned}
&u_1 v_2 -u_2 v_1\quad & \textrm{if }d=2,\\
&(u_2 v_3 -u_3 v_2,u_3 v_1 -u_1 v_3,u_1 v_2 -u_2 v_1)^{\mathsf{T}}\quad & \textrm{if }d=3,
\end{aligned}
\right.\\
&\nabla\times v:=
\left\{
\begin{aligned}
&\p_1 v_2 -\p_2 v_1\quad & \textrm{if }d=2,\\
&(\p_2 v_3 -\p_3 v_2,\p_3 v_1 -\p_1 v_3,\p_1 v_2 -\p_2 v_1)^{\mathsf{T}}\quad & \textrm{if }d=3,
\end{aligned}
\right.\\[0.5mm]
&u\times a = -a\times u:=(u_2 a, -u_1 a)^{\mathsf{T}},
\quad
\nabla\times a :=(\p_2a , -\p_1 a )^{\mathsf{T}}
\quad \textrm{if }d=2.
\end{align*}
The notation above was employed by {\sc Kawashima} \cite[p.\;144]{Kawashima84}
to write down the electromagnetic fluid system in two spatial dimensions.
The compressible MHD equations \eqref{MHD1} with $d=2$ follow
from the assumption that all the quantities in \eqref{MHD1} are independent of $x_3$
and the components $v_3$ and $H_3$ are identically zero.

\section[Symmetrization for RMHD]{Symmetrization for RMHD} \label{sec:appB}

Let us deduce the symmetric system \eqref{RMHD.vec} from \eqref{RMHD}--\eqref{S.eq}.
First, the last equation for $S$ in \eqref{RMHD.vec} is exactly \eqref{S.eq}.
In view of \eqref{varGamma.def} and \eqref{w.def}, we have
$\varGamma=(1+\epsilon^2|w|^2)^{1/2}$ and $v=\varGamma^{-1}w$, so that
\begin{align} \label{appB.1}
\p_{\alpha} \varGamma=\epsilon^2 v_i\p_{\alpha} w_i,\ \
\p_{\alpha} v=\varGamma^{-1}\p_{\alpha}  w-\epsilon^2 \varGamma^{-1} v v_i \p_{\alpha}  w_i
\quad\textrm{for }\alpha=0,\ldots, d.
\end{align}
It follows from the identities \eqref{S.eq}, \eqref{RMHD.a}, and \eqref{appB.1} that
\begin{align*}
\varGamma (\p_t+v_i\p_i) p=-\rho a^2(\epsilon^2 v_i \p_t w_i +\p_i w_i ),
\end{align*}
which immediately gives the first equation for $p$ in \eqref{RMHD.vec}.
Using \eqref{RMHD.d} and \eqref{divH=0}, we have
$(\p_t+v\cdot \nabla) H-(H\cdot \nabla)v+H\nabla \cdot v=0$,
which together with \eqref{appB.1} yields
\begin{align} \label{appB.2}
(\p_t+v\cdot \nabla) H +\mathcal{M}_i \p_i w=0,
\end{align}
with $
\mathcal{M}_i:=\varGamma^{-1}
\{
H\otimes \bm{e}_i -H_i I_d -\epsilon^2 (v_i H- H_i v )\otimes v
\}.
$
Thanks to \eqref{RMHD.b} and \eqref{appB.1}, we infer from \eqref{RMHD.c} that
\begin{align} \nonumber
\left(\rho h\varGamma+ {\epsilon^2{\varGamma^{-1}} |H|^2} \right)
(I_d -\epsilon^2 v\otimes v)(\p_t +v\cdot \nabla) w+
\epsilon^2 v\p_t p+\nabla p+\sum_{i=1}^{4}\mathcal{T}_i=0,
\end{align}
where 
 \begin{align*}
 \mathcal{T}_1:=\;& \frac{1}{2} v\p_t|b|^2 -\epsilon^2\p_t ((v\cdot H)H ),\\
\mathcal{T}_2:=\;&  -\nabla\cdot (\varGamma^{-2}H\otimes H),\quad
\mathcal{T}_3:=\frac{1}{2}\epsilon^{-2}\nabla |b|^2,\\
\mathcal{T}_4:=
\;&-\epsilon^2 \nabla\cdot ( (v\cdot H)(H\otimes v+v\otimes H) )
+\epsilon^2 v\nabla\cdot ( (v\cdot H) H) \\
=\;&-\epsilon^2
(v\cdot H)( (v\cdot\nabla)H+(H\cdot\nabla)v+H\nabla\cdot v )
-\epsilon^2   H  (v\cdot\nabla) (v\cdot H)  .
\end{align*}
By virtue of \eqref{|b|}, \eqref{appB.1}, and \eqref{divH=0}, we deduce
\begin{align*}
&\mathcal{T}_1=
\epsilon^2 \varGamma^{-1}
\big\{\epsilon^2 (v\cdot H) (H\otimes v+v\otimes H)- |b|^2  v\otimes v-H\otimes H\big\}\p_t w
+\mathcal{T}_{1a},\\
&\mathcal{T}_2=
-\varGamma^{-2} (H\cdot \nabla)H_i +2\epsilon^2 \varGamma^{-3} H_i H\otimes v \p_i w, \\
&\mathcal{T}_3=
(\varGamma^{-2} H_i+ \epsilon^2 (v\cdot H) v_i )\nabla H_i
+\varGamma^{-1}\bm{e}_i\otimes (\epsilon^2(v\cdot H) H-|b|^2 v  )\p_i w,
\end{align*}
with
$\mathcal{T}_{1a}:=\epsilon^2 (v\varGamma^{-2} H_i
+\epsilon^2 (v\cdot H) v v_i -H v_i)\p_t H_i-\epsilon^2 (v\cdot H)\p_t H.$
Then we utilize \eqref{appB.1} and \eqref{appB.2} for
calculating the terms $\mathcal{T}_4$ and $\mathcal{T}_{1a}$ respectively
to derive the equations for $w$ in \eqref{RMHD.vec}.
Noticing that $\mathcal{M}_0\mathcal{M}_i=\mathcal{N}_i$,
we obtain the equations for $H$ in \eqref{RMHD.vec} from
the left-multiplication of \eqref{appB.2} by $\mathcal{M}_0$.

Next we show that the matrix $B_0(V)$ is positive definite in the non-vacuum region $\{\rho_*<\rho <\rho^* \}$.
For any $\bm{u}\in\mathbb{R}^d\setminus \{ 0\}$,
we get $\bm{u}^{\mathsf{T}}\mathcal{M}_0\bm{u}\geq \varGamma^{-1} |\bm{u}|^2$,
that is, $\mathcal{M}_0\geq \varGamma^{-1}I_d$.
Since
\setlength{\arraycolsep}{3.5pt}
\begin{align*}
\begin{pmatrix}
\dfrac{\varGamma}{\rho a^2}& \epsilon^2 v^{\mathsf{T}} \\[4mm]
\epsilon^2 v &\mathcal{A}_0
\end{pmatrix}
=
\bm{P}^{\mathsf{T}}\begin{pmatrix}
\dfrac{\varGamma}{\rho a^2}& 0 \\[1.5mm]
0 &\mathcal{A}_0-\epsilon^4\dfrac{\rho a^2}{\varGamma} v v^{\mathsf{T}}
\end{pmatrix}\bm{P}
\ \ \textrm{with }
\bm{P}:=\begin{pmatrix}
1 & \dfrac{\rho a^2}{\varGamma}\epsilon^2 v^{\mathsf{T}} \\[4mm]
0 &I_d
\end{pmatrix},
\end{align*}
it suffices to show that the matrix
$\mathcal{A}_0-\epsilon^4 \rho a^2 \varGamma^{-1} v v^{\mathsf{T}}$ is positive definite.
For any $\bm{u}\in\mathbb{R}^d\setminus \{ 0\}$, we have
\begin{align*}
\bm{u}^{\mathsf{T}}\big(\mathcal{A}_0-\epsilon^4 \rho a^2 \varGamma^{-1} v v^{\mathsf{T}}\big)\bm{u}
=\mathcal{T}_5+\mathcal{T}_6,
\end{align*}
where $\mathcal{T}_5:=\rho h \varGamma |\bm{u}|^2   -(\epsilon^2\rho h \varGamma +\epsilon^4 \rho a^2 \varGamma^{-1}) (v\cdot \bm{u})^2$ and
\begin{align*}
\mathcal{T}_6:=\;& {\epsilon^2}{\varGamma^{-1}} \big\{  |\bm{u}|^2|H|^2  -\epsilon^2   (1+\varGamma^{-2})|H|^2 (v\cdot \bm{u})^2\\
&\,\quad \qquad -\epsilon^4 (v\cdot \bm{u})^2 (v\cdot H)^2   -(\bm{u}\cdot H)^2+2\epsilon^2 (v\cdot H)(v\cdot \bm{u})(H\cdot \bm{u}) \big\}\\
=\;&
{\epsilon^2}{\varGamma^{-1}}\big\{  |\bm{u}|^2|H|^2
-\epsilon^2 (v\cdot \bm{u})^2  (1+\varGamma^{-2})|H|^2
-(H\cdot (\epsilon^2(v\cdot\bm{u})v-\bm{u} ) )^2\big\}\\
\geq\;&
{\epsilon^2}{\varGamma^{-1}}|H|^2
\big\{  |\bm{u}|^2-\epsilon^2 (2-\epsilon^2 |v|^2)(v\cdot \bm{u})^2 -
|\epsilon^2(v\cdot\bm{u})v-\bm{u}|^2\big\}= 0,
\end{align*}
owing to \eqref{varGamma.def}.
By virtue of \eqref{cs.def} and \eqref{varGamma.def}, we infer
\begin{align*}
\mathcal{T}_5\geq \;& \rho h \varGamma |\bm{u}|^2
-(\epsilon^2\rho h \varGamma +\epsilon^4 \rho a^2 \varGamma^{-1}) |v|^2|\bm{u}|^2\\
=\;&\rho h \varGamma^{-1} |\bm{u}|^2
\left(1 -c_{\rm s}^2 \epsilon^4 |v|^2\right)
\geq \rho h \varGamma^{-1} |\bm{u}|^2 (1-\epsilon^2 |v|^2)
=\rho h \varGamma^{-3} |\bm{u}|^2.
\end{align*}
Therefore, we obtain that
$\mathcal{A}_0-\epsilon^4 \rho a^2 \varGamma^{-1} v v^{\mathsf{T}}
$
and $B_0(V)$ are positive definite.


\bigskip

\noindent{\it Acknowledgements.} \   The authors would like to thank the referees for helpful comments and suggestions that contributed to improving the quality
of redaction.

\bigskip
\noindent{\bf Compliance with Ethical Standards}

\vspace*{2mm}
\noindent {\bf Conflict of interest} \  The authors declare that they have no conflict of interest.



{\footnotesize
	  }

\end{document}